\documentclass{scrreprt}
\usepackage[utf8]{inputenc}
\usepackage{amssymb,amsthm}
\usepackage{parskip}
\usepackage[ngerman]{babel}
\usepackage{amsmath}
\usepackage{amsfonts}
\usepackage{mathtools}
\usepackage{tikz}
\usetikzlibrary{matrix,arrows}
\usepackage{units}
\usepackage[square,numbers]{natbib}



\newtheoremstyle{jonas}{.25\baselineskip}{.25\baselineskip}{}{\parindent}{\bfseries}{.}{1em}{}
\theoremstyle{jonas}
\newtheorem{statement}{Blub}[chapter]
\newtheorem{Satz}[statement]{Satz}
\newtheorem{Def}[statement]{Definition}
\newtheorem{Lemma}[statement]{Lemma}
\newtheorem{Proposition}[statement]{Proposition}
\newtheorem{Konstruktion}[statement]{Konstruktion}
\newtheorem{Korollar}[statement]{Korollar}
\newtheorem{Theorem}[statement]{Theorem}

\newtheorem{Bem}[statement]{Bemerkung}
\newtheorem{Bem+Def}[statement]{Bemerkung und Definition}
\newtheorem{Bsp}[statement]{Beispiel}
\newtheorem{Konvention}[statement]{Konvention}

\DeclareMathOperator{\Komod}{-Komod}
\DeclareMathOperator{\ev}{ev}
\DeclareMathOperator{\Rep}{Rep}
\DeclareMathOperator{\Spec}{Spec }
\DeclareMathOperator{\Vect}{-Vect}
\DeclareMathOperator{\MM}{\mathcal{MM}^{eff}_{Nori}}
\DeclareMathOperator{\C}{{\mathcal{C}}}
\DeclareMathOperator{\Hom}{Hom}
\DeclareMathOperator{\Frei}{-Frei}
\DeclareMathOperator{\End}{End}
\DeclareMathOperator{\Mod}{-Mod}
\DeclareMathOperator{\Mor}{Mod-}

\bibliographystyle{alphadin}

\begin{document}

\title{\vspace{3cm}Nori-Motive und Tannaka-Theorie }
\author{Jonas von Wangenheim}

\maketitle

\tableofcontents

\chapter*{Einleitung}
\addcontentsline{toc}{chapter}{Einleitung}
Es gibt viele verschiedene Kohomologietheorien über der Kategorie der Varietäten. Alle diese Theorien haben ähnliche Eigenschaften.  Grothendieck formulierte als erster die Idee, diese Theorien in gewisser Weise zu vereinheitlichen. Er träumte von einer einzigen "`Theorie kohomologischer Natur"', welche das "`Motiv"' für all die Kohomologietheorien beschreiben sollte. Er schreibt:\\
"`Anders als in der gewöhnlichen Topologie findet man sich [in der algebraischen Geometrie] mit einer beunruhigenden Fülle an Kohomologietheorien konfrontiert. Man hat deutlich den Eindruck (wenn auch in einem vagen Sinne), dass all diese Theorien `zum gleichen Ding' führen, `die selben Ergebnisse' liefern."' (\cite{G}, frei übersetzt)\\
Für einen festen Körper $ k $ wünschen wir uns also eine  Kategorie der \emph{gemischten Motive} über $ k $. Sie soll eine abelsche Tensorkategorie $ \mathcal{MM}(k) $ sein, zusammen mit einem kontravarianten "`universellen"' Kohomologiefunktor 
\[ Var(k)\longrightarrow \mathcal{MM}(k). \]
Der Funktor soll in dem Sinne universell sein, dass jede "`gute"' Kohomologietheorie über ihn faktorisiert. Unter einer guten Kohomologietheorie verstehen wir eine Kohomologietheorie, welche die Bloch-Ogus Axiome erfüllt \cite[Part 3]{MR1439046}.
Die Existenz einer solchen Kategorie ist eine offene Frage. Bisher gibt es nur einen möglichen Kandidaten für eine Kategorie der \emph{reinen Motive}, welche auf der vollen Unterkategorie der glatten, projektiven Varietäten eine universelle Kohomologietheorie für alle Weil-Kohomologien darstellen soll. Ein neuerer Versuch zur Konstruktion einer Kategorie der gemischten Motive kam Mitte der 90er Jahre von Nori. Zentral für den Ansatz ist folgendes Resultat:\\
\begin{Theorem}[Nori]
Sei $ D $ ein gerichteter Graph, $ R $ ein noetherscher, kommutativer, unitärer Ring und $R$-Mod die Kategorie der endlich erzeugten R-Moduln. Sei 
\[
T:D\longrightarrow R\Mod 
\]
eine Darstellung. \\
Dann existiert eine $R$-lineare abelsche Kategorie $ \mathcal{C}(T) $, genannt \emph{Diagrammkategorie}, mit einer Darstellung 
\[
\tilde{T}:D\longrightarrow\mathcal{C}(T) ,
\]
sowie ein treuer, exakter, $ R $-linearer Funktor $ ff_T $, sodass:
\begin{enumerate}
\item T durch $ D\overset{\tilde{T}}{\longrightarrow}\mathcal{C}(T)\xrightarrow{ff_T}R\Mod  $ faktorisiert.
\item $  \tilde{T} $  folgende universelle Eigenschaft erfüllt:
Gegeben eine weitere $R$-lineare, abelsche Kategorie $ \mathcal{A} $ mit einem $R$-linearen, treuen, exakten Funktor $ f:\mathcal{A}\rightarrow R\Mod $ und $ F:D\rightarrow \mathcal{A} $ einer Darstellung, die $ T=f\circ F $ erfüllt, so existiert ein bis auf Isomorphie eindeutiger Funktor $ L(F) $, so dass folgendes Diagramm bis auf Isomorphie kommutiert:
\end{enumerate}

\begin{center}
\begin{tikzpicture}[description/.style={fill=white,inner sep=2pt}]
    \matrix (m) [matrix of math nodes, row sep=5em,
    column sep=4.5em, text height=1.5ex, text depth=0.25ex]
    {  & \mathcal{C}(T) &  \\
       D & &  R\Mod  \\
 		& \mathcal{A} &  \\
	};
    \path[->,font=\scriptsize]
    (m-2-1) edge node[auto] {$ \tilde{T} $} (m-1-2)
    (m-2-1) edge node[auto] {$ F $} (m-3-2)
    (m-3-2) edge node[auto] {$f$} (m-2-3)
    (m-1-2) edge node[auto] {$ff_T$} (m-2-3)
    (m-2-1) edge node[pos=0.3,above right] {$T$} (m-2-3)
    (m-1-2) edge node[pos=0.4,above right] {$ L(F)$} (m-3-2);
\end{tikzpicture}
\end{center}
\end{Theorem}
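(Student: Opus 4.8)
The plan is to reduce everything to finite subdiagrams, build the category there as modules over an endomorphism algebra, and then pass to a filtered colimit; the universal property will be the crux and I will reduce it to a recognition statement for module categories. First I would fix a finite subdiagram $G\subseteq D$ and set $A_G:=\End(T|_G)$, the $R$-algebra of families $(\varphi_p)_{p\in G}$ of endomorphisms $\varphi_p\colon T(p)\to T(p)$ commuting with every edge map $T(m)$. Since each $T(p)$ is finitely generated over the noetherian ring $R$, the algebra $A_G$ is finitely generated as an $R$-module, hence a noetherian ring, so $\mathcal{C}(T|_G):=A_G\Mod$ is an $R$-linear abelian category. I let $\tilde T_G$ send a vertex $p$ to $T(p)$ with its tautological $A_G$-action by evaluation and an edge to $T(m)$, which is $A_G$-linear precisely by the defining relations of $A_G$; the functor $ff_G$ is the forgetful functor to $R\Mod$, which is $R$-linear, faithful, exact and satisfies $ff_G\circ\tilde T_G=T|_G$. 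For $G\subseteq G'$, restriction of families is a ring homomorphism $A_{G'}\to A_G$, and restriction of scalars along it is a faithful, exact, $R$-linear functor $\mathcal{C}(T|_G)\to\mathcal{C}(T|_{G'})$ compatible with the $\tilde T$'s and the forgetful functors. I would then prove a general lemma that the filtered colimit $\mathcal{C}(T):=\varinjlim_G \mathcal{C}(T|_G)$ along faithful exact $R$-linear functors is again $R$-linear abelian and carries faithful exact functors out of each $\mathcal{C}(T|_G)$; this produces $\tilde T$, $ff_T$ and the factorization in (1).

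For the universal property I would fix data $(\mathcal A,f,F)$ with $T=f\circ F$ and a finite $G$. Inside $\mathcal A$ I form the smallest full abelian subcategory $\mathcal A_G$ containing all $F(p)$, $p\in G$, and closed under subobjects, quotients and finite direct sums; it is abelian, the inclusion is exact, the object $Y_G:=\bigoplus_{p\in G}F(p)$ generates it in the subquotient sense, and $f_0:=f|_{\mathcal A_G}$ is faithful, exact, $R$-linear with $f_0(Y_G)$ finitely generated. Evaluating a natural endomorphism $\eta$ of the fiber functor $f_0$ at the objects $F(p)$ and using naturality with respect to the edge morphisms $F(m)$ reproduces exactly the relations defining $A_G$, so $\eta\mapsto(\eta_{F(p)})_{p\in G}$ is a ring homomorphism $\End(f_0)\to A_G$. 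The key input I would establish separately is a recognition theorem: an $R$-linear abelian category with a faithful exact $R$-linear functor to $R\Mod$ and a generator with finitely generated image is equivalent, via its fiber functor, to $\End(f_0)\Mod$. Granting this, restriction of scalars along $\End(f_0)\to A_G$ followed by the equivalence $\End(f_0)\Mod\xrightarrow{\sim}\mathcal A_G\hookrightarrow\mathcal A$ defines an exact $R$-linear functor $L_G\colon\mathcal{C}(T|_G)\to\mathcal A$; unwinding the definitions gives $f\circ L_G\cong ff_G$ and $L_G\circ\tilde T_G\cong F|_G$, since the $\End(f_0)$-action on $f_0(F(p))=T(p)$ matches the one transported through the homomorphism above.

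Uniqueness up to isomorphism I would deduce from exactness: every object of $A_G\Mod$ is a subquotient of a finite sum of the $\tilde T_G(p)$ (this is built into the definition of $A_G$ as their endomorphism algebra), any two candidate functors agree up to isomorphism on these objects, and faithfulness of $f$ lets me transport and glue the comparison isomorphisms into a natural one. By this uniqueness the various $L_G$ are compatible with the transition functors up to canonical isomorphism, so the universal property of the colimit assembles them into the desired $L(F)\colon\mathcal{C}(T)\to\mathcal A$, again unique up to isomorphism and making the diagram commute up to isomorphism. The hard part will be the recognition theorem of the second paragraph, i.e. showing that the comparison functor $W\mapsto f_0(W)$, equipped with its natural $\End(f_0)$-action, is an equivalence $\mathcal A_G\xrightarrow{\sim}\End(f_0)\Mod$; the obstacle is that $\mathcal A_G$ is an abstract abelian category whose objects and morphisms must be recovered entirely through the single faithful exact fiber functor $f_0$, so essential surjectivity and fullness are genuinely delicate. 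Everything else — $R$-linearity, exactness, faithfulness, and the colimit bookkeeping — is routine verification.
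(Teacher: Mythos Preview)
Your proposal is correct and follows essentially the same route as the paper. The paper constructs $\mathcal{C}(T|_G)=\End(T|_G)\Mod$ for finite $G$, passes to the filtered colimit via restriction of scalars, and then deduces the universal property from exactly the recognition theorem you isolate: if $\mathcal{A}$ is $R$-linear abelian with a faithful exact $R$-linear functor to $R\Mod$, then $\mathcal{A}\to\mathcal{C}(T_{\mathcal{A}})$ is an equivalence (the paper's Satz~3.30). Your map $\End(f_0)\to A_G$ and the ensuing restriction-of-scalars functor is the paper's Proposition~4.2 specialized to the diagram map $G\to\mathcal{A}$; the paper likewise reduces uniqueness to the fact that the $\tilde T_G(p)$ generate $A_G\Mod$ as an abelian category. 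The only organisational difference is that the paper proves the recognition theorem first as a standalone statement---via explicitly constructing in $\mathcal{A}$ an object $X(p)$ with $T(X(p))\cong\End(T|_{\{p\}})$ and a quasi-inverse $X(p)\otimes_{E(p)}-$ to $\tilde T$---whereas you embed it inside the universal-property argument; but you correctly flag it as the hard step.
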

Leider existiert für diese Aussage kein veröffentlichter Beweis, sondern nur eine dreiseitige Beweisskizze aus einem Seminar am TIFR \cite{Nori}. Hauptziel meiner Arbeit ist es, diese Lücke zu schließen und einen ausführlichen Beweis für dieses Theorem zu liefern.\\
Mit Hilfe des Theorems lässt sich die Kategorie der effektiven gemischten Nori-Motive konstruieren: Sei $ X $ eine Varietät über einem Körper $ k $ der Charakteristik Null, $ Y\subseteq X $ eine abgeschlossene Untervarietät und $ i $ eine natürliche Zahl. Wir definieren in Kapitel 7 einen Graphen $ D $ mit Ecken, bestehend aus Tripeln $ (X,Y,i) $, sowie geeigneten Kanten. Singuläre Kohomologie $ H^\bullet(X(\mathbb{C}),Y(\mathbb{C}),\mathbb{Q}) $ definiert dann eine Darstellung 
\[
\begin{array}{cccc}
T: & D & \longrightarrow & \mathbb{Q}\Vect \\
& (X,Y,i) & \longmapsto &  H^i(X(\mathbb{C}),Y(\mathbb{C}),\mathbb{Q})\\
\end{array}
\]
in die endlich-dimensionalen $ \mathbb{Q} $-Vektorräume. Die Kategorie der effektiven gemischten Nori-Motive $ \MM $ ist definiert als die Diagrammkategorie dieser Darstellung. Ob diese Kategorie universell für alle Kohomologietheorien ist, welche die Bloch-Ogus Axiome erfüllen, ist unklar. Die universelle Eigenschaft der Diagrammkategorie liefert uns jedoch treue, exakte Funktoren aus der Kategorie $ \MM $ in jede Kohomologietheorie, welche Vergleichsisomorphismen in die singuläre Kohomologie besitzt. Dies gilt unter anderem für $ l $-adische etale Kohomologie und algebraische de Rham Kohomologie.\\
\vspace*{0.2cm} \\
Ein weiteres Ziel dieser Arbeit ist es, die Diagrammkategorie dieser Darstellung besser zu verstehen. Hierfür zeigen wir:
\begin{Theorem}
Nimmt eine Darstellung $ T $ Werte in freien, endlich erzeugten Moduln über einem Hauptidealring $ R $ an, so ist die Diagrammkategorie äquivalent zu der Kategorie endlich erzeugter Moduln über einer Koalgebra $ A(T) $.
\end{Theorem}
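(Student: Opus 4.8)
The plan is to realise $\C(T)$ as a filtered colimit of module categories over finite endomorphism algebras, to dualise each such algebra to a coalgebra, and then to identify the resulting colimit of comodule categories with $\C(T)$. Recall that the diagram category is built from the restrictions $T|_F$ of $T$ to the finite subgraphs $F\subseteq D$: for each such $F$ one forms the endomorphism algebra
\[
E_F \;:=\; \End(T|_F) \;=\; \Bigl\{\,(\alpha_p)_{p\in F}\in\textstyle\prod_{p\in F}\End_R(T(p)) \;\Big|\; \alpha_q\circ T(m)=T(m)\circ\alpha_p \text{ f\"ur jede Kante } m\colon p\to q\,\Bigr\},
\]
and $\C(T)$ is equivalent to the filtered colimit $\varinjlim_F(E_F\Mod)$ of the categories of finitely generated $E_F$-modules (colimit of categories along the restriction-of-scalars functors induced by the algebra homomorphisms $E_{F'}\to E_F$ attached to $F\subseteq F'$). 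The goal is thus to produce a coalgebra $A(T)$ with $A(T)\Komod\simeq\varinjlim_F(E_F\Mod)$.

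First I would use the hypotheses to dualise each $E_F$. Since $R$ is a principal ideal domain and each $T(p)$ is free of finite rank, every $\End_R(T(p))\cong M_{n}(R)$ (with $n=\mathrm{rk}\,T(p)$) is free of finite rank, hence so is $\prod_{p\in F}\End_R(T(p))$; as $E_F$ is a submodule of this finite free module over a PID, $E_F$ is itself free of finite rank. This freeness is exactly where the hypotheses enter, and it is what permits dualisation: putting $A(F):=E_F^{\vee}=\Hom_R(E_F,R)$, the finiteness isomorphism $(E_F\otimes_R E_F)^{\vee}\cong E_F^{\vee}\otimes_R E_F^{\vee}$ transposes the multiplication and the unit of $E_F$ into a comultiplication and a counit, making $A(F)$ a coalgebra. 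The homomorphisms $E_{F'}\to E_F$ dualise to coalgebra maps $A(F)\to A(F')$, and I set $A(T):=\varinjlim_F A(F)$, which is again a coalgebra because $\otimes_R$ commutes with filtered colimits.

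The heart of the argument is the local duality $E_F\Mod\simeq A(F)\Komod$, where \emph{finitely generated} is meant over $R$ on both sides (for the module-finite algebra $E_F$ this coincides with finite generation over $E_F$). The key point is that, $E_F$ being finite free, there is a natural isomorphism $\Hom_R(E_F,V)\cong E_F^{\vee}\otimes_R V=A(F)\otimes_R V$ for every $R$-module $V$; transposing an action $E_F\otimes_R V\to V$ through it yields a coaction $V\to A(F)\otimes_R V$, and the module axioms correspond bijectively to the comodule axioms. Crucially this uses only that $E_F$ (not $V$) is free, which matters because finitely generated modules over a PID need not be free. I would then check that these equivalences are natural in $F$ — restriction of scalars along $E_{F'}\to E_F$ corresponding to corestriction of comodules along $A(F)\to A(F')$ — so that they assemble into an equivalence of the two colimits. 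On the comodule side one invokes local finiteness: given a finitely generated $A(T)$-comodule with coaction $\rho\colon V\to A(T)\otimes_R V=\varinjlim_F\bigl(A(F)\otimes_R V\bigr)$, the images of a finite generating set already lie in $A(F)\otimes_R V$ for a single $F$, and coassociativity then realises $V$ as a comodule at the finite stage $F$. Combining, $A(T)\Komod\simeq\varinjlim_F A(F)\Komod\simeq\varinjlim_F E_F\Mod\simeq\C(T)$.

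I expect the main obstacle to lie in this last, colimit step rather than in the formal algebra--coalgebra duality, which is immediate once freeness is secured. Two points require genuine care: verifying that the stagewise equivalences are strictly compatible with both families of transition functors, so that they glue to an equivalence of the two colimits; and the local-finiteness claim that every finitely generated $A(T)$-comodule is already defined over some finite $A(F)$ — here the transition maps $A(F)\to A(F')$ need not be injective, so one must argue at the level of the colimit and possibly pass to a larger subgraph to make coassociativity hold, using that $A(T)$, as a filtered colimit of finite free modules, is flat over $R$. Matching the two finiteness conditions across the duality, including the torsion phenomena peculiar to a PID, is the remaining delicate part.
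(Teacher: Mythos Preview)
Your proposal is correct and follows essentially the same route as the paper: show each $E_F$ is finite free over the PID (as a submodule of a finite free module), dualise to obtain coalgebras $A(F)=E_F^\vee$ and the equivalences $E_F\Mod\simeq A(F)\Komod$ (the paper's Satz~\ref{Komodul} and Korollar~\ref{Mod_Komod}), and then pass to the filtered colimit via the identification $\varinjlim_F(A(F)\Komod)\simeq(\varinjlim_F A(F))\Komod$, which the paper isolates as Lemma~\ref{rtg} and proves by exactly the local-finiteness argument you sketch. The concerns you flag about compatibility of the stagewise equivalences and about descending a comodule to a finite stage are precisely the content of that lemma; the paper handles them somewhat tersely, so your more explicit discussion of these points is, if anything, an improvement in rigor rather than a departure in strategy.
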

Darüber hinaus lässt sich zeigen, dass die Kategorie $ \MM $ eine natürliche Struktur als abelsche Tensorkategorie besitzt. Sie wird dadurch zu einer neutralen Tannaka-Kategorie mit Faserfunktor in die endlich-dimensionalen $ \mathbb{Q} $-Vektorräume. Tannaka-Dualität nach Deligne \cite{MR654325} besagt dann, dass die Kategorie $ \MM $ äquivalent zu der Kategorie der Darstellungen eines affinen Gruppenmonoids ist. Genauer erhalten wir durch die Tensorstruktur auf $ \MM $ eine Hopf-Algebrenstruktur auf der Koalgebra $ A(T) $. Das Gruppenschema ist dann durch das Spektrum dieser Algebra gegeben. \\
\vspace{0.2cm}
\\
Nachdem wir in Kapitel 1 einige grundlegenden Begriffe und Konzepte klären, werden wir in Kapitel 2 Noris Diagrammkategorie konstruieren. Der Beweis der universellen Eigenschaft dieser Kategorie wird Kapitel 3 und 4 vollständig einnehmen. Diese Kapitel sind der Kern dieser Arbeit. Kapitel 5 klärt die Dualität zwischen Modulstrukturen über einer Algebra und Komodulstrukturen auf der dualen Koalgebra. Am Ende zeigen wir, dass die Diagrammkategorie $ \C(T) $ unter gewissen Voraussetzungen eine Kategorie von Komoduln über einer Koalgebra ist. Kapitel 6 enthält einen kurzen Exkurs über Tannaka-Dualität. In Kapitel 7 werden wir schließlich die Kategorie der effektiven, gemischten Nori-Motive $ \MM $ definieren und skizzieren, wie diese Kategorie durch Tannaka-Dualität zu den Darstellungen von $ \Spec A(T) $ korrespondiert. \\

\section*{Danksagung}
Mein Dank gilt in erster Linie Frau Prof. Huber-Klawitter, die mir mit ihrer außerordentlich guten Betreuung diese Diplomarbeit ermöglicht hat. Weiterer Dank für mathematische Hilfestellungen geht an Dr. Matthias Wendt. Für inhaltliche Diskussionen und Anregungen danke ich meinen Kommilitonen Jan Weidner, Konrad Völkel, David Stotz und Clemens Jörder. Für unzählige Hilfe mit LaTex danke ich Jochen Kiene und Konrad Völkel. Alex Koenen danke ich für die Korrektur des Manuskripts. Meiner Freundin Jana Ditter danke ich ganz herzlich für den moralischen Aufbau während der Entstehungszeit dieser Arbeit. Der Studienstiftung des deutschen Volkes danke ich für die ideelle und finanzielle Unterstützung während meines Studiums. 

\chapter{Grundlagen}
\label{ch:kap_1}

Wir beginnen mit einigen grundlegenden Definitionen und Resultaten.
\begin{Def}
Sei $ R $ ein unitärer, kommutativer Ring. Ein $ R $-Modul heißt \emph{noethersch}, wenn jede unendliche, per Inklusion aufsteigende Kette von Untermoduln stationär wird. Der Ring $ R $ heißt \emph{noethersch}, wenn er als Modul über sich selbst noethersch ist.
\end{Def}

\begin{Lemma}\label{noether}
Sei R ein noetherscher Ring und $ M $ ein endlich erzeugter $ R $-Modul. Dann  sind alle Untermoduln und Quotienten von $M$ endlich erzeugt. 
\end{Lemma}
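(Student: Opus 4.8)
Der Plan ist, die Aussage über Quotienten direkt zu behandeln und für die Aussage über Untermoduln die klassische Äquivalenz zwischen der aufsteigenden Kettenbedingung und der endlichen Erzeugtheit aller Untermoduln bereitzustellen. Die Quotientenaussage ist dabei der einfache Teil und benötigt die noethersche Voraussetzung gar nicht: Ist $ M $ von $ m_1,\dots,m_n $ erzeugt und $ U\subseteq M $ ein Untermodul, so erzeugen die Bilder $ \overline{m_1},\dots,\overline{m_n} $ den Quotienten $ M/U $.

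Für die Untermodulaussage würde ich zunächst zeigen, dass ein $ R $-Modul $ N $ genau dann noethersch ist, wenn jeder seiner Untermoduln endlich erzeugt ist. Für die eine Richtung bilde ich zu einer aufsteigenden Kette $ N_1\subseteq N_2\subseteq\cdots $ die Vereinigung $ N'=\bigcup_i N_i $, die als Untermodul nach Voraussetzung endlich erzeugt ist; da jeder der endlich vielen Erzeuger bereits in einem Kettenglied liegt, wird die Kette stationär. Umgekehrt konstruiere ich, falls ein Untermodul $ U $ nicht endlich erzeugt wäre, induktiv eine echt aufsteigende Kette endlich erzeugter Untermoduln von $ U $ und erhalte so einen Widerspruch zur Kettenbedingung.

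Anschließend benötige ich die Abgeschlossenheit der Klasse noetherscher Moduln unter Untermoduln, Quotienten und Erweiterungen. Die Erweiterungsrichtung zeige ich, indem ich zu einer kurzen exakten Sequenz $ 0\to M'\to M\to M''\to 0 $ eine aufsteigende Kette in $ M $ über ihre Schnitte mit $ M' $ und ihre Bilder in $ M'' $ kontrolliere. Hieraus folgt per Induktion mittels $ 0\to R\to R^n\to R^{n-1}\to 0 $, dass mit $ R $ auch $ R^n $ noethersch ist. Ein endlich erzeugtes $ M $ ist Quotient eines solchen $ R^n $, also selbst noethersch, und nach der eingangs gezeigten Äquivalenz sind damit alle seine Untermoduln endlich erzeugt.

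Den Hauptaufwand erwarte ich weniger in einem einzelnen tiefen Schritt als in der Richtung "`Kettenbedingung impliziert endliche Erzeugtheit"', die eine induktive Auswahl erfordert, sowie in der sorgfältigen Verifikation der Erweiterungsabgeschlossenheit, auf der die gesamte Reduktion auf $ R^n $ beruht.
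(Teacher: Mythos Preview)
Your proposal is correct and is precisely the standard textbook argument. The paper itself does not give a proof at all: it simply states that the result is elementary and refers to Atiyah--Macdonald, \cite[S.\,76]{atiyahmacdonaldintroduction69}. Your outline (quotients via images of generators; the equivalence between the ascending chain condition and finite generation of all submodules; closure of noetherian modules under extensions; induction to get $R^n$ noetherian; then $M$ as a quotient of $R^n$) is exactly the argument found in that reference, so there is nothing to compare.
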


\begin{proof}
Der Beweis ist elementar und steht in jedem Standardlehrbuch über kommutative Algebra, zum Beispiel \cite[S.\,76]{atiyahmacdonaldintroduction69}.
\end{proof}

\begin{Lemma}\label{hom_noethersch}
Sei $ R $ ein noetherscher Ring, sowie $ M $ und $ N $ endlich erzeugte $ R $-Moduln. Dann ist auch $ \Hom(M,N) $ als $ R $-Modul endlich erzeugt.
\end{Lemma}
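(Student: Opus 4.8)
Der Plan ist, $\Hom(M,N)$ als Untermodul eines offensichtlich endlich erzeugten Moduls zu realisieren und dann Lemma \ref{noether} anzuwenden. Da $ M $ endlich erzeugt ist, gibt es eine natürliche Zahl $ n $ und eine Surjektion $ \pi\colon R^n \twoheadrightarrow M $. Der entscheidende Punkt ist, dass der kontravariante Funktor $ \Hom(-,N) $ linksexakt ist und daher aus dieser Surjektion eine Injektion macht; insbesondere benötigen wir keine vollständige Präsentation von $ M $, sondern nur die surjektive Abbildung von einem freien Modul.

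Konkret würde ich zuerst die durch Vorkomposition gegebene $ R $-lineare Abbildung
\[
\pi^*\colon \Hom(M,N) \longrightarrow \Hom(R^n,N),\qquad f \longmapsto f\circ\pi,
\]
betrachten. Diese ist injektiv: Ist $ f\circ\pi = 0 $, so verschwindet $ f $ auf dem Bild von $ \pi $, und da $ \pi $ surjektiv ist, folgt $ f=0 $. Als Nächstes identifiziere ich $ \Hom(R^n,N)\cong N^n $ vermöge der Auswertung an der Standardbasis; als endliche direkte Summe des endlich erzeugten Moduls $ N $ ist $ N^n $ wieder endlich erzeugt. Damit ist $ \Hom(M,N) $ isomorph zu einem Untermodul von $ N^n $, und Lemma \ref{noether} liefert unmittelbar, dass dieser Untermodul über dem noetherschen Ring $ R $ endlich erzeugt ist.

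Ich erwarte keine echte Schwierigkeit; die einzigen Stellen, an denen man aufpassen muss, sind die Begründung der Injektivität von $ \pi^* $ (sie benutzt wesentlich die Surjektivität von $ \pi $) sowie die Tatsache, dass die Identifikation $ \Hom(R^n,N)\cong N^n $ tatsächlich $ R $-linear ist. Die noethersche Voraussetzung an $ R $ geht allein über Lemma \ref{noether} ein, ist dort aber unverzichtbar, da Untermoduln endlich erzeugter Moduln über beliebigen Ringen im Allgemeinen nicht endlich erzeugt sein müssen.
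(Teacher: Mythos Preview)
Your proof is correct and is in fact the standard argument: embed $\Hom(M,N)$ into $\Hom(R^n,N)\cong N^n$ via precomposition with a surjection $R^n\twoheadrightarrow M$, then invoke Lemma~\ref{noether}.

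The paper takes a slightly different route. Instead of embedding $\Hom(M,N)$ into $N^n$, it chooses surjections $p_M\colon R^m\twoheadrightarrow M$ and $p_N\colon R^n\twoheadrightarrow N$, and considers the submodule $A\subset\Hom(R^m,R^n)\cong R^{mn}$ of those maps $f$ that descend to a map $\tilde f\colon M\to N$ (i.e.\ $p_N\circ f=\tilde f\circ p_M$ for some $\tilde f$). It then argues that $A$ is finitely generated as a submodule of $R^{mn}$ and that $f\mapsto\tilde f$ is a well-defined surjection $A\twoheadrightarrow\Hom(M,N)$, so $\Hom(M,N)$ is finitely generated as a quotient. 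Your approach is more economical: you only need a presentation of $M$, not of $N$, and you realise $\Hom(M,N)$ directly as a submodule rather than as a quotient of a submodule. The paper's approach, on the other hand, lands inside a \emph{free} module $R^{mn}$ rather than $N^n$, but this buys nothing extra here since Lemma~\ref{noether} applies equally to submodules of $N^n$.
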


\begin{proof}
Sei $ (v_1,..,v_m) $ ein Erzeugendensystem von $ M $ und $ (w_1,..w_n) $ ein Erzeugendensystem von $N$. Weiter seien $ R^m $ und $ R^n $ freie $ R $-Moduln und
\[
\begin{array}{ccccccccc}
 p_M: & R^m & \rightarrow & M & \hspace{1,5cm} &  p_N: & R^n & \rightarrow & N \\
      & e_i & \mapsto    & v_i &  &      & e_i & \mapsto    & w_i \\
\end{array}
\]
die natürlichen Projektionen. Sei

\[ A:=\{f\in\Hom(R^m,R^n)|\exists\tilde{f}\in\Hom(M,N):p_N\circ f=\tilde{f}\circ p_M\}, \]

also die Menge aller Abbildungen $ f $, für die wir ein $ \tilde{f} $ finden, so dass folgendes Diagramm kommutiert:

\begin{center}
\begin{tikzpicture}[description/.style={fill=white,inner sep=2pt}]
    \matrix (m) [matrix of math nodes, row sep=3.0em,
    column sep=5.0em, text height=1.5ex, text depth=0.25ex]
    { R^m &  M  \\
   	   R^n & N  \\
	};
    \path[->,font=\scriptsize]
    (m-1-1) edge node[description] {$ p_M $} (m-1-2)
    (m-2-1) edge node[description] {$ p_N $} (m-2-2)
    (m-1-1) edge node[description] {$ f $} (m-2-1)
    (m-1-2) edge node[description] {$ \tilde{f} $} (m-2-2)
    ;   
\end{tikzpicture}
\end{center}

Da $ \Hom(M,N) $ ein Modul ist, ist $ A $ abgeschlossen unter Addition und $ R $-Multiplikation, also selbst ein Modul. Die Abbildung $ p_M $ ist surjektiv, also gibt es für jedes $ f\in A $ genau ein $ \tilde{f} $, so dass das Diagramm kommutiert. Wir erhalten also einen wohldefinierten \\$ R $-Modulhomomorphismus 
\[
\begin{array}{ccc}
 A & \twoheadrightarrow & \Hom(M,N) \\
 f & \mapsto &   \tilde{f}. \\
\end{array}
\]
$ A $ ist ein Untermodul des freien, endlich erzeugten Moduls $ \Hom(R^m,R^n)\cong R^{m\cdot n} $, also nach Lemma \ref{noether} selbst endlich erzeugt.\\
Sei $ (f_1,..f_k) $ ein Erzeugendensystem von $ A $. Dann ist $ (\tilde{f}_1,..\tilde{f}_k) $ ein Erzeugendensystem von $ \Hom(M,N) $, also ist auch $ \Hom(M,N) $ endlich erzeugt. 
\end{proof}

Wir arbeiten bis auf weiteres mit folgender Definition für Algebren aus \cite[S.121]{MR1878556}.
\begin{Def}\label{algebra1}
Sei $ R $ ein kommutativer, unitärer Ring. Eine \emph{$ R $-Algebra} ist ein Ring $ A $, zusammen mit einem Ringhomomorphismus $ f:R\rightarrow A $, so dass das Bild des Ringhomomorphismus im Zentrum $ Z(A) $ von $ A $ liegt.\\
Ist A unitär, so fordert man, dass $ \varphi $ ein unitärer Ringhomomorphismus ist und wir sprechen von einer \emph{unitären Algebra}.
Ein \emph{Algebrenhomomorphismus} zwischen zwei $ R $-Algebren $ f_1:R\rightarrow A_1 $, $ f_2:R\rightarrow A_2 $ ist ein $ R $-linearer Ringhomomorphismus $ \varphi:A_1\rightarrow A_2 $, also ein Ringhomomorphismus, so dass folgendes Diagramm kommutiert. 

\begin{center}
\begin{tikzpicture}[description/.style={fill=white,inner sep=2pt}]
    \matrix (m) [matrix of math nodes, row sep=3.0em,
    column sep=2.0em, text height=1.5ex, text depth=0.25ex]
    { A_1 & & A_2  \\
   	  & R &   \\
	};
    \path[->,font=\scriptsize]
    (m-1-1) edge node[description] {$ \varphi $} (m-1-3)
    (m-2-2) edge node[description] {$ f_1 $} (m-1-1)
    (m-2-2) edge node[description] {$ f_2 $} (m-1-3)
    ;   
\end{tikzpicture}
\end{center}

\end{Def}

\begin{Def}
Eine $ R $-Algebra heißt \emph{endlich}, wenn sie als $ R $-Modul endlich erzeugt ist. Eine $ R $-Algebra $ A $ heißt \emph{endlich erzeugt}, wenn sie als Algebra endlich erzeugt ist, das heißt, wenn es eine Surjektion 
\[ R[X_1,...,X_n]\twoheadrightarrow A \]
von $ R $-Algebren gibt.
\end{Def}

\begin{Satz}[Hilbertscher Basissatz] Der Polynomring $ R[x] $ über einem noetherschen Ring ist noethersch.
\end{Satz}

\begin{proof}
Siehe z.B. \cite[S.27]{MR1322960}
\end{proof}

\begin{Korollar}\label{noethersch_Algebra}
Jede endliche $R$-Algebra, sowie jede endlich erzeugte $ R $-Algebra über einem noetherschen Ring $ R $, ist als Ring wieder noethersch. 
\end{Korollar}

\begin{proof}
Eine endliche $ R $-Algebra ist insbesondere endlich erzeugt. Sei also $ A $ eine endlich erzeugte $ R $-Algebra. Sei $ (a_1\dots a_n) $ ein Erzeugendensystem von $ A $ als $ R $-Algebra. Dann erhalten wir eine natürliche Projektion $ R[a_1\dots a_n]\xrightarrow{\pi}A $, also einen Isomorphismus $ A \cong {R[a_1\dots a_n]}/{I}$, wobei $ I=ker(\pi) $ ein Ideal von $ R[a_1\dots a_n] $ ist. \\
Die Algebra $ R[a_1\cdots a_n] $ ist nach induktivem Anwenden des hilbertschen Basissatzes noethersch, also nach Lemma \ref{noether} auch $ {R[a_1\dots a_n]}/{I} $ und damit auch $ A $.
\end{proof}

\begin{Def}
\cite[I.8]{MR0354799} Eine Kategorie $ \mathcal{C} $ heißt \emph{präadditiv} oder auch \emph{Ab-Kategorie}, wenn sie die folgenden zwei Bedingungen erfüllt: 
\begin{enumerate}
\item Für zwei Objekte $ A,B\in \mathcal{C} $ hat die Morphismenmenge $ \Hom_{\mathcal{C}}(A,B) $ die Struktur einer abelschen Gruppe.
\item Die Verkettung von Morphismen ist bilinear bezüglich der Gruppenstruktur.
\end{enumerate}
\end{Def}

\begin{Def}\label{abKat}
Eine Kategorie $ \mathcal{C} $ heißt \emph{abelsch}, wenn sie die folgenden Bedingungen erfüllt: 
\begin{enumerate}
\item Es gibt ein Objekt, das sowohl initial als auch terminal ist, genannt Nullobjekt. 
\item Endliche Produkte und Koprodukte existieren.
\item Jeder Morphismus besitzt Kern und Kokern.
\item Jeder Monomorphismus ist ein Kern, jeder Epimorphismus ist ein Kokern.
\end{enumerate}
\end{Def}

\begin{Bem}
Viele Autoren fordern für eine abelsche Kategorie auch die Präadditivität. Tatsächlich kann man aber zeigen, dass sich die Gruppenstruktur und die Bilinearität der Morphismen aus den hier angegebenen Axiomen gewinnen lassen. Eine abelsche Kategorie ist also immer präadditiv \cite[S.\,45\,ff, Kap.\,2.3]{Freyd1964}.\\
Außerdem kann man zeigen, dass in abelschen Kategorien endliche Produkte und Koprodukte kanonisch isomorph sind \cite[Thm.2.35]{Freyd1964}.
\end{Bem}

\begin{Bsp}
Beispiele für abelsche Kategorien sind
\begin{itemize}
\item Die Kategorie der abelschen Gruppen
\item Die Kategorie der Vektorräume über einem Körper $ K $
\item Die Kategorie der Moduln über einem Ring $ R $. 
\end{itemize}
Die Kategorie der endlich erzeugten Moduln über einem beliebigen Ring ist nicht abelsch, da Kerne und Kokerne von endlich erzeugten Moduln nicht endlich erzeugt sein müssen. Für noethersche Ringe gilt dies:
\end{Bsp}

\begin{Lemma}\label{noethersch_kategorie}
Endlich erzeugte Moduln über noetherschen Ringen $ R $ bilden eine abelsche Kategorie.
\end{Lemma}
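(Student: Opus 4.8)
Der Plan ist, die Kategorie der endlich erzeugten $R$-Moduln als volle Unterkategorie der Kategorie aller $R$-Moduln aufzufassen. Letztere ist nach dem vorangehenden Beispiel abelsch. Für eine volle Unterkategorie einer abelschen Kategorie, die das Nullobjekt enthält und unter der Bildung endlicher Produkte, Kerne und Kokerne abgeschlossen ist, überträgt sich die abelsche Struktur. Es genügt also, diese Abgeschlossenheiten nachzuweisen; der entscheidende Punkt ist dabei die Abgeschlossenheit unter Kernen, die gerade die Noethereigenschaft von $R$ benötigt.

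Zunächst würde ich die Axiome aus Definition \ref{abKat} der Reihe nach behandeln. Das Nullmodul ist endlich erzeugt und in der Kategorie aller Moduln initial und terminal; es bleibt dies in der vollen Unterkategorie. Endliche Produkte und Koprodukte sind endliche direkte Summen, und eine endliche direkte Summe endlich erzeugter Moduln ist offensichtlich wieder endlich erzeugt. Für einen Morphismus $f:M\rightarrow N$ ist der Kokern $N/\operatorname{im}(f)$ ein Quotient von $N$, nach Lemma \ref{noether} also endlich erzeugt (dies gilt sogar über beliebigen Ringen). Der Kern von $f$ ist ein Untermodul von $M$; hier geht die Noethereigenschaft wesentlich ein, denn nach Lemma \ref{noether} sind Untermoduln endlich erzeugter Moduln über noetherschen Ringen wieder endlich erzeugt. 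Da Kern und Kokern in der vollen Unterkategorie dieselbe universelle Eigenschaft erfüllen wie in der Kategorie aller Moduln, existieren sie auch dort.

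Für das vierte Axiom würde ich ausnutzen, dass sich Mono- und Epimorphismen über Kern und Kokern nachweisen lassen. Da die Kerninklusion $\ker(f)\rightarrow M$ selbst ein Morphismus endlich erzeugter Moduln ist, detektiert sie mit $\ker(f)=0$ das Monomorphsein sowohl in der Unterkategorie als auch in der Kategorie aller Moduln; Mono- und Epimorphismen stimmen daher in beiden Kategorien überein. Weil die Kategorie aller $R$-Moduln abelsch ist, ist dort jeder Monomorphismus der Kern seines Kokerns und jeder Epimorphismus der Kokern seines Kerns; da alle beteiligten Objekte endlich erzeugt sind, bleibt dies in der Unterkategorie gültig.

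Der eigentliche Aufwand liegt allein in der Abgeschlossenheit unter Kernbildung, also darin, dass Untermoduln endlich erzeugter Moduln wieder endlich erzeugt sind --- und genau dies stellt Lemma \ref{noether} für noethersche Ringe bereit. Alle übrigen Schritte sind eine Routineüberprüfung, dass die aus der Kategorie aller Moduln geerbte abelsche Struktur die Unterkategorie nicht verlässt. Dass die Noethereigenschaft hierbei unverzichtbar ist, zeigt bereits das vorangehende Gegenbeispiel über nicht-noetherschen Ringen.
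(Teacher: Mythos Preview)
Your proof is correct and follows essentially the same approach as the paper: view the finitely generated modules as a full subcategory of all $R$-modules and use Lemma~\ref{noether} to show closure under kernels and cokernels. If anything, your treatment of axiom~(4) is more careful than the paper's, which simply asserts that this axiom is inherited from the ambient category.
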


\begin{proof}
Da beliebige Moduln über $ R $ eine abelsche Kategorie bilden, gelten Axiome (1) und (4) auch für die Unterkategorie der endlich erzeugten Moduln. Endliche Produkte und Summen endlich erzeugter Moduln sind endlich erzeugt, also gilt (3). Da Kerne Untermoduln und Kokerne Quotienten sind, gilt nach Lemma \ref{noether}, dass sie endlich erzeugt, also ebenfalls Objekte der Unterkategorie der endlich erzeugten Moduln sind. 
\end{proof}

\begin{Def}
Eine präadditive Kategorie heißt \emph{$ R $-linear}, wenn zu je zwei Objekten $ A,B\in\mathcal{C} $ die Gruppe $ \Hom_{\mathcal{C}}(A,B) $ ein $ R $-Modul ist und die Verkettung von Morphismen $ R $-bilinear ist.
\end{Def}

\begin{Def}
Seien $ \mathcal{C} $, $ \mathcal{D} $ Kategorien und sei $ F:\mathcal{C}\rightarrow\mathcal{D} $ ein Funktor. Seien $ A $ und $ B $ Objekte aus $ \C $. Der Funktor $ F $ heißt

	\begin{itemize}
	\item \emph{treu}, wenn die induzierte Abbildung $ \Hom_{\mathcal{C}}(A,B)\rightarrow \Hom_{\mathcal{D}}(F(A),F(B)) $ injektiv ist.
	\item \emph{voll}, wenn die induzierte Abbildung $ \Hom_{\mathcal{C}}(A,B)\rightarrow \Hom_{\mathcal{D}}(F(A),F(B)) $ surjektiv 				ist.
	\item \emph{volltreu}, wenn die induzierte Abbildung $ \Hom_{\mathcal{C}}(A,B)\rightarrow \Hom_{\mathcal{D}}(F(A),F(B)) $ bijektiv 			ist.
	\end{itemize}

Sind $ \mathcal{C} $, $ \mathcal{D} $ abelsche Kategorien, so heißt $ F $ 

	\begin{itemize}
	\item \emph{additiv}, wenn die induzierte Abbildung $ \Hom_{\mathcal{C}}(A,B)\rightarrow \Hom_{\mathcal{D}}(F(A),F(B)) $ ein Gruppenhomomorphismus ist.
	\item \emph{exakt}, wenn $ F $ kurze exakte Sequenzen auf kurze exakte Sequenzen abbildet, also Kerne und Kokerne erhält.
	\end{itemize}
	
Sind $ \mathcal{C} $, $ \mathcal{D} $ $ R $-linear und abelsch, so nennen wir $ F $

	\begin{itemize}
	\item \emph{$ R $-linear}, wenn die induzierte Abbildung $ \Hom_{\mathcal{C}}(A,B)\rightarrow \Hom_{\mathcal{D}}(F(A),F(B)) $ ein 		$ R $-Modulhomomorphismus ist und die Verkettung von Morphismen $ R $-bilinear ist. 
	\end{itemize}
	
\end{Def}

\begin{Proposition}
Ein Funktor $ \mathcal{F}:\C\rightarrow\mathcal{D} $ zwischen abelschen Kategorien ist genau dann additiv, wenn er direkte Summen erhält, also wenn für zwei beliebige Objekte $ X,Y\in\C $ gilt
\[
\mathcal{F}(X\oplus Y)\cong\mathcal{F}(X)\oplus \mathcal{F}(Y).
\]
\end{Proposition}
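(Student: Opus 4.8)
The plan is to reduce everything to the characterisation of the direct sum $X\oplus Y$ as a \emph{Biprodukt}: since an abelian category is in particular preadditive and since finite products and coproducts agree there (both recorded in the Bemerkung above), $X\oplus Y$ is, up to canonical isomorphism, the unique object equipped with injections $i_1\colon X\to X\oplus Y$, $i_2\colon Y\to X\oplus Y$ and projections $p_1\colon X\oplus Y\to X$, $p_2\colon X\oplus Y\to Y$ satisfying $p_1 i_1=\mathrm{id}_X$, $p_2 i_2=\mathrm{id}_Y$, $p_1 i_2=0$, $p_2 i_1=0$ and $i_1 p_1+i_2 p_2=\mathrm{id}_{X\oplus Y}$. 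These five equations involve only identities, composition, the zero morphism and the addition of morphisms, and they determine the biproduct together with its structure maps. This is the central tool for both directions.

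For the direction ``additiv $\Rightarrow$ erhält direkte Summen'' I would simply transport these equations along $F$. As a functor, $F$ preserves identities and composition; being additive, the induced map on the Hom-groups is a group homomorphism, so $F$ sends the zero morphism to the zero morphism and respects sums. Applying $F$ to the five equations above therefore shows that $F(X\oplus Y)$, together with $Fi_1,Fi_2,Fp_1,Fp_2$, satisfies the biproduct equations for $FX$ and $FY$; hence it is a biproduct of $FX$ and $FY$, and in particular $F(X\oplus Y)\cong FX\oplus FY$.

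For the converse I would use the standard fact that in any abelian category the sum of two morphisms $f,g\colon A\to B$ can be recovered from the biproduct structure as
\[ f+g=\nabla_B\circ(f\oplus g)\circ\Delta_A, \]
where $\Delta_A\colon A\to A\oplus A$ is the diagonal, characterised by the product universal property through $p_1\Delta_A=p_2\Delta_A=\mathrm{id}_A$, where $\nabla_B\colon B\oplus B\to B$ is the codiagonal, characterised by the coproduct universal property through $\nabla_B i_1=\nabla_B i_2=\mathrm{id}_B$, and where $f\oplus g$ is the induced map on biproducts. The crucial point is that $\Delta_A$, $\nabla_B$ and $f\oplus g$ are defined purely through the universal properties of products and coproducts, without reference to any pre-existing addition. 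Since $F$ preserves direct sums together with their structure morphisms, it carries these universal characterisations to the corresponding ones for $FA$ and $FB$, so that $F\Delta_A=\Delta_{FA}$, $F\nabla_B=\nabla_{FB}$ and $F(f\oplus g)=Ff\oplus Fg$. Applying $F$ to the displayed formula then yields $F(f+g)=\nabla_{FB}\circ(Ff\oplus Fg)\circ\Delta_{FA}=Ff+Fg$, i.e.\ the induced map on Hom-groups is a group homomorphism and $F$ is additive.

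The step I expect to be the main obstacle is making precise — and then using — what ``erhält direkte Summen'' should mean. The abstract isomorphism $F(X\oplus Y)\cong FX\oplus FY$ by itself says nothing about the injections and projections, whereas the argument for the converse genuinely needs that $F$ sends the biproduct \emph{diagram} to a biproduct diagram (equivalently, that $Fi_1,Fi_2$ exhibit $F(X\oplus Y)$ as a coproduct and $Fp_1,Fp_2$ exhibit it as a product). I would therefore first argue that preservation of the structure maps is the correct reading, check that the comparison morphism built from $Fi_1,Fi_2$ (respectively $Fp_1,Fp_2$) is an isomorphism, and only then run the diagonal/codiagonal computation above. The remaining verifications — that the five biproduct equations indeed characterise $X\oplus Y$ and that $f+g=\nabla_B(f\oplus g)\Delta_A$ holds — are standard in additive categories, and I would cite them rather than reprove them.
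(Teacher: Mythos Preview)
Your approach is correct and is the standard one; the paper itself does not give a proof here but simply cites \cite[Thm.~3.11]{Freyd1964}, which proceeds exactly along the biproduct lines you sketch. In particular, your worry about the meaning of ``erhält direkte Summen'' is well placed: the paper later (Bemerkung~\ref{Summen}) makes explicit that the intended reading is preservation of \emph{direkte Summensysteme}, i.e.\ the biproduct together with its injections and projections, which is precisely the hypothesis your converse argument needs.
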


\begin{proof}
\cite[Thm.3.11]{Freyd1964}
\end{proof}

\begin{Lemma}
Ein exakter Funktor zwischen abelschen Kategorien ist additiv.
\end{Lemma}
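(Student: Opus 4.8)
Der Plan ist, die unmittelbar vorangehende Proposition auszunutzen, wonach ein Funktor zwischen abelschen Kategorien genau dann additiv ist, wenn er direkte Summen erhält. Es genügt also zu zeigen, dass für einen exakten Funktor $F\colon\mathcal{C}\to\mathcal{D}$ und beliebige Objekte $X,Y\in\mathcal{C}$ ein Isomorphismus $F(X\oplus Y)\cong F(X)\oplus F(Y)$ besteht.

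Zuvor würde ich sicherstellen, dass $F$ das Nullobjekt erhält. Dazu betrachte ich die kurze exakte Sequenz
\[ 0\longrightarrow X\xrightarrow{\;\mathrm{id}\;} X\longrightarrow 0\longrightarrow 0, \]
deren Bild unter $F$ wieder kurz exakt ist. Da $F(\mathrm{id}_X)=\mathrm{id}_{F(X)}$ gilt, muss $F(0)$ der Kokern von $\mathrm{id}_{F(X)}$ sein, also $F(0)\cong 0$. Jeder Nullmorphismus faktorisiert über das Nullobjekt, folglich bildet $F$ Nullmorphismen auf Nullmorphismen ab. Dieser Schritt ist wesentlich, da ein beliebiger Funktor ohne Additivität Nullmorphismen im Allgemeinen nicht erhält, die Biprodukt-Relationen aber Nullmorphismen enthalten.

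Nun fasse ich die direkte Summe als Mitte einer gespaltenen kurzen exakten Sequenz auf. Mit den kanonischen Inklusionen $i_X,i_Y$ und Projektionen $p_X,p_Y$ ist
\[ 0\longrightarrow X\xrightarrow{\;i_X\;} X\oplus Y\xrightarrow{\;p_Y\;} Y\longrightarrow 0 \]
kurz exakt, mit Schnitt $i_Y$ und Retraktion $p_X$ von $i_X$. Da $F$ exakt ist, ist auch die Bildsequenz kurz exakt, und aus der Funktorialität zusammen mit der Erhaltung von Nullmorphismen folgen die Relationen $F(p_Y)F(i_Y)=\mathrm{id}_{F(Y)}$, $F(p_X)F(i_X)=\mathrm{id}_{F(X)}$ sowie $F(p_Y)F(i_X)=0$ und $F(p_X)F(i_Y)=0$ -- ohne dass man dafür bereits Additivität benötigt.

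Im letzten Schritt würde ich $F(X\oplus Y)$ als Biprodukt von $F(X)$ und $F(Y)$ in $\mathcal{D}$ wiedererkennen: Der durch die universelle Eigenschaft des Produkts bestimmte Morphismus $\varphi=\langle F(p_X),F(p_Y)\rangle\colon F(X\oplus Y)\to F(X)\oplus F(Y)$ besitzt mit $[F(i_X),F(i_Y)]$ ein Rechtsinverses und ist daher ein Epimorphismus; da sein Kern der Schnitt der Kerne von $F(p_X)$ und $F(p_Y)$ ist und auf dem Bild von $F(i_X)=\ker F(p_Y)$ bereits $F(p_X)$ injektiv wirkt, ist $\varphi$ zugleich ein Monomorphismus. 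In einer abelschen Kategorie ist ein Morphismus, der zugleich Mono- und Epimorphismus ist, ein Isomorphismus, also $F(X\oplus Y)\cong F(X)\oplus F(Y)$, und mit der Proposition folgt die Additivität. Die Hauptschwierigkeit liegt genau in diesem Wiedererkennungsschritt (einer Variante des Spaltungslemmas); er stützt sich wesentlich darauf, dass $\mathcal{D}$ nach der Bemerkung bereits präadditiv ist und dass dort Mono- plus Epimorphismus einen Isomorphismus ergibt.
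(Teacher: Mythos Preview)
Your proposal is correct and follows essentially the same route as the paper: apply $F$ to the split short exact sequence $0\to X\to X\oplus Y\to Y\to 0$ and recognise $F(X\oplus Y)$ as the direct sum, then invoke the preceding Proposition. The paper's proof is much terser---it simply asserts that the image sequence is again split exact and concludes---whereas you spell out the preservation of the zero object and the biproduct relations and give an explicit mono-plus-epi argument for the comparison map; this extra care is justified (and arguably fills a small gap in the paper's presentation), but the underlying idea is the same.
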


\begin{proof}
Sei $ \mathcal{F}:\C\rightarrow\mathcal{D} $ exakt. Für zwei Objekte $ X,Y\in\C $ haben wir die kanonische, spaltende kurze exakte Sequenz
\[
0\rightarrow X\hookrightarrow X\oplus Y\twoheadrightarrow Y\rightarrow 0.
\]
Durch Anwenden von $ \mathcal{F} $ erhalten wir die spaltende kurze exakte Sequenz
\[
0\rightarrow \mathcal{F}(X)\hookrightarrow\mathcal{F}(X\oplus Y)\twoheadrightarrow \mathcal{F}(Y)\rightarrow 0.
\]
Also ist $ \mathcal{F}(X\oplus Y)=\mathcal{F}(X)\oplus \mathcal{F}(Y) $.

\end{proof}
\newpage
\begin{Lemma}\label{Nullobjekt}
Ein additiver Funktor zwischen abelschen Kategorien bildet Nullobjekte auf Nullobjekte ab.
\end{Lemma}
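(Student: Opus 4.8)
Der Plan ist, das Nullobjekt durch eine rein morphismentheoretische Bedingung zu charakterisieren, die unter additiven Funktoren offensichtlich stabil bleibt. Als Schlüsselbeobachtung zeige ich zunächst: In einer präadditiven Kategorie ist ein Objekt $ Z $ genau dann ein Nullobjekt, wenn $ \mathrm{id}_Z $ mit dem Nullmorphismus in der abelschen Gruppe $ \Hom(Z,Z) $ übereinstimmt. Da jede abelsche Kategorie nach der obigen Bemerkung präadditiv ist, steht diese Charakterisierung in beiden beteiligten Kategorien zur Verfügung.

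Für den Beweis der Charakterisierung gehe ich in beide Richtungen vor. Ist $ Z $ ein Nullobjekt, so ist es initial und terminal, also ist $ \Hom(Z,Z) $ einelementig; als abelsche Gruppe ist sie damit trivial, und es gilt $ \mathrm{id}_Z=0 $. Gilt umgekehrt $ \mathrm{id}_Z=0 $, so folgt für jeden Morphismus $ f\colon Z\to A $ aus der Bilinearität der Verkettung $ f=f\circ\mathrm{id}_Z=f\circ 0=0 $; der Nullmorphismus ist also der einzige Morphismus $ Z\to A $, womit $ Z $ initial ist, und dual zeigt man die Terminalität.

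Anschließend wende ich die Additivität von $ F $ an. Nach Definition ist die induzierte Abbildung $ \Hom_{\mathcal{C}}(0,0)\to\Hom_{\mathcal{D}}(F(0),F(0)) $ ein Gruppenhomomorphismus und bildet daher das neutrale Element auf das neutrale Element ab, also den Nullmorphismus auf den Nullmorphismus. Da $ F $ ein Funktor ist, gilt zudem $ F(\mathrm{id}_0)=\mathrm{id}_{F(0)} $. Kombiniert man dies mit der Gleichung $ \mathrm{id}_0=0 $ aus der Charakterisierung des Nullobjekts, so ist $ \mathrm{id}_{F(0)}=F(\mathrm{id}_0) $ gleich dem Nullmorphismus in $ \Hom(F(0),F(0)) $. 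Nach der Rückrichtung der Charakterisierung ist $ F(0) $ somit ein Nullobjekt.

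Eine echte Hürde erwarte ich nicht; der einzige Punkt, auf den ich achten muss, ist die saubere Begründung der Äquivalenz „$ Z $ Nullobjekt $ \Leftrightarrow \mathrm{id}_Z=0 $“, da hier die präadditive Struktur — insbesondere die Bilinearität der Verkettung — wesentlich eingeht. Alles Weitere ist eine formale Konsequenz daraus, dass additive Funktoren Nullmorphismen und Funktoren Identitäten erhalten.
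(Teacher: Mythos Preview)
Dein Vorschlag ist korrekt und verfolgt im Wesentlichen dieselbe Strategie wie der Beweis im Paper: beide charakterisieren das Nullobjekt durch die Bedingung $\mathrm{id}_Z=0$ in $\End(Z)$ und nutzen dann, dass ein additiver Funktor sowohl Identitäten als auch Nullmorphismen erhält. Die Argumentation ist praktisch identisch, lediglich formuliert das Paper die Rückrichtung über die Faktorisierung durch das Nullobjekt, während du direkt $f\circ 0=0$ aus der Bilinearität verwendest.
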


\begin{proof}
Wir definieren für jedes Objekt $A$ den Nullmorphismus als den eindeutigen Endomorphismus von $A$, der über $ A\rightarrow 0\rightarrow  A$ faktorisiert. Er ist das neutrale Element der Gruppe $ \Hom(A,A) $, da $ \Hom(A,0) $ und $ \Hom(0,A) $ triviale Gruppen sind und die Verkettung von Morphismen bilinear ist.\\
Wir zeigen zunächst, dass ein Objekt genau dann Null ist, wenn die Identitätsabbildung gleich der Nullabbildung ist.\\
Da das Nullobjekt nur einen Endomorphismus besitzt, ist der Nullmorphismus schon die Identität auf dem Nullobjekt. Sei umgekehrt $A$ ein Objekt mit $ id_A=0_{\End(A)} $. Sei $ A\xrightarrow{f} B $ ein beliebiger Morphismus. Dann gilt 
\[ f=f\circ id=f\circ 0_{\End(A)}.\] 
Der Morphismus $ f $ faktorisiert also über 
\[ A\rightarrow 0\rightarrow B,\]
ist demnach eindeutig, da die Null initial und terminal ist. Analog zeigt man, dass es nur einen Morphismus $ B\xrightarrow{f} A  $ gibt. $A$ ist also das Nullobjekt.\\
Additive Funktoren induzieren Gruppenhomomorphismen auf Morphismenmengen. Der eindeutige Morphismus in $\End(0)$ muss unter einem additiven Funktor $ F $ also sowohl auf die Identität, als auch auf die Nullabbildung abgebildet werden. Damit ist in $\End(F(0))$ die Identität gleich der Nullabbildung, also ist $ F(0) $ das Nullobjekt.
\end{proof}

\begin{Def}\label{Ind_Kat}
Sei $ I $ eine partiell geordnete Indexmenge. Dann definiert $ I $ auf natürliche Weise eine Kategorie $ \mathcal{I} $: 
\begin{itemize}
\item $Ob(\mathcal{I}):=I$
\item $\Hom_{\mathcal{I}}(i,j)$ besitzt genau ein Element, wenn $ i\le j $ ist und ist sonst leer. Da $ \le $ transitiv ist, erhalten wir eine wohldefinierte Verkettung von Morphismen.
\end{itemize}
\end{Def}

\begin{Def}
Eine partielle Ordnung auf $ I $ heißt \emph{filtrierend} oder \emph{gerichtet}, wenn es für zwei beliebige Elemente $ i,j\in I $ eine gemeinsame obere Schranke $ k\in I $ gibt, das heißt, wenn es ein $ k\in I $ gibt, sodass $ i\le k $ und $ j\le k $.
\end{Def}

\begin{Bem+Def}
Sei $ I $ eine partiell geordnete Menge und $ \mathcal{I} $ die zugehörige Kategorie. Ein kovarianter Funktor
\[
\begin{array}{cccc}
F: & \mathcal{I} & \rightarrow & \mathcal{C} \\
& i & \mapsto & X_i
\end{array}
\]
ist gegeben durch eine eine Familie $ (X_i)_{i\in I} $ von Objekten in $ \mathcal{C} $ zusammen mit Übergangsmorphismen $ \phi_{ji}:X_i\rightarrow X_j $ für alle $ i\le j $, sodass für $ i\le j \le k $ gilt 
\[ \phi_{kj}\circ\phi_{ji}=\phi_{ki}. \] Eine solche Familie, zusammen mit Übergangsabbildungen nennen wir ein \emph{induktives System} in $\mathcal{C}$. Ist $ I $ eine filtrierende Indexmenge, so nennen wir $ (\{ X_i\}_{i\in I},\phi_{ji}) $ ein \emph{gerichtetes} oder \emph{filtrierendes System}.
\end{Bem+Def}

\begin{Def}
Sei $ \C $ eine Kategorie und $ I $ eine partiell geordnete Indexmenge. Sei \\
$ (\{ X_i\}_{i\in I},\phi_{ji}) $ ein induktives System in $ \C $ bezüglich $ I $. Der \emph{direkte Limes} oder auch \emph{Kolimes} des induktiven Systems $ (\{ X_i\}_{i\in I},\phi_{ji}) $ in $ \C $ besteht aus einem Objekt $ L $, zusammen mit Morphismen $ \psi_i\in \Hom_{\C}(X_i,L) $ für alle $ i\in I $, sodass
\begin{enumerate}
\item Für alle $ i,j$ aus $ I $ mit $ i\le j $ gilt: $ \psi_j\circ \phi_{ji}=\psi_i. $ 
\item Folgende universelle Eigenschaft erfüllt ist: Für jedes weitere Objekt $ M $ mit Morphismen $(\chi_i)_{i\in I}\subseteq \Hom_{\C}(X_i,M)$, welche die Eigenschaft 1 erfüllen, existiert ein eindeutiger Morphismus $ u:L\rightarrow M $, sodass für alle $ i\in I$ die Komposition $u\circ\psi_i=\chi_i $ ist.
\end{enumerate}
Der direkte Limes wird üblicherweise mit $ \lim\limits_{\rightarrow I}X_i  $ notiert.
\end{Def}

\begin{Bem}
Für ein induktives System 

\begin{center}
\begin{tikzpicture}[description/.style={fill=white,inner sep=2pt}]
    \matrix (m) [matrix of math nodes, row sep=2.5em,
    column sep=5.5em, text height=1.5ex, text depth=0.25ex]
    {  \cdots X_i & & X_j \cdots \\
	};
    \path[->,font=\scriptsize]
    (m-1-1) edge node[auto] {$ \phi_{ji} $} (m-1-3)
    ;   
\end{tikzpicture}
\end{center}

lässt sich die zweite Bedingungen durch die Existenz des folgenden kommutativen Diagramms formulieren:

\begin{center}
\begin{tikzpicture}[description/.style={fill=white,inner sep=2pt}]
    \matrix (m) [matrix of math nodes, row sep=3.5em,
    column sep=5.5em, text height=1.5ex, text depth=0.25ex]
    { \cdots X_i & & X_j \cdots \\
    	      & L &   \\
   	   	      &  M &  \\
	};
    \path[->,font=\scriptsize]
    (m-1-1) edge node[description] {$ \phi_{ji} $} (m-1-3)
    (m-1-1) edge node[description] {$ \psi_{i} $} (m-2-2)
    (m-1-1) edge node[description] {$ \chi_{i} $} (m-3-2)
    (m-1-3) edge node[description] {$ \psi_{j} $} (m-2-2)
    (m-1-3) edge node[description] {$ \chi_{j} $} (m-3-2)
    (m-2-2) edge node[description] {$\exists ! u $} (m-3-2)
    ;   
\end{tikzpicture}
\end{center}

\end{Bem}

\begin{Bem}
Der direkte Limes muss nicht in jeder Kategorie existieren. Existiert er jedoch, kann man mit dem üblichen Argument zeigen, dass er eindeutig bis auf eindeutigen Isomorphismus ist. 
\end{Bem}

\chapter{Konstruktion von Noris Diagrammkategorie}
\label{ch:kap_2}

\begin{Def}
\begin{itemize}
\item Ein \emph{Diagramm} $ D $ besteht aus einer Menge von \emph{Objekten} $ O(D) $, sowie für jedes Paar $ (p,q) $ von Objekten einer Morphismenmenge $ M(p,q) $. Im Unterschied zu einer Kategorie ist in Diagrammen also a priori die Verknüpfung von Morphismen nicht erklärt.
\item Ein endliches Diagramm ist ein Diagramm mit nur endlich vielen Objekten.
\item Ein volles Unterdiagramm eines Diagramms $ D $ ist ein Diagramm $ F $ mit $ O(F)\subseteq O(D) $ sodass für alle Objekte $ p,q\in O(F) $ die Morphismenmenge $ M(p,q) $ in $ F $ der Morphismenmenge $ M(p,q) $ in $ D $ entspricht.
\end{itemize}
\end{Def}

\begin{Bem}
Ein Diagramm ist nichts anderes als ein gerichteter Graph. Wir werden trotzdem, konsistent zu den Quellen \cite{paper}, \cite{MR2182598} und \cite{Nori} den Begriff "`Diagramm"' verwenden. 
\end{Bem}

\begin{Def}
Sei $ \mathcal{C} $ eine Kategorie. Eine \emph{Darstellung} $ F $ eines Diagramms $ D $ besteht aus einer Abbildung $ F:O(D)\rightarrow Ob(\mathcal{C}) $ sowie für alle $ p,q\in O(D) $ einer Abbildung $ F:M(p,q)\rightarrow \Hom(F(p),F(q)) $.\\
Ist $ \C $ keine Kategorie, sondern nur ein Diagramm so sprechen wir von einer \emph{Abbildung von Diagrammen}.
\end{Def}

Ziel dieses und der beiden folgenden Kapitel ist der Beweis des folgenden Theorems:

\begin{Theorem}[Nori]\label{thm1}
Sei $ D $ ein Diagramm, $ R $ ein noetherscher, kommutativer, unitärer Ring und $R$-Mod die Kategorie der endlich erzeugten $R$-Moduln. Sei 
\[
T:D\longrightarrow R\Mod 
\]
eine Darstellung. \\
Dann existiert eine $R$-lineare, abelsche Kategorie $ \mathcal{C}(T) $ mit einer Darstellung 
\[
\tilde{T}:D\longrightarrow\mathcal{C}(T) ,
\]
sowie ein treuer, exakter, $ R $-linearer Funktor $ ff_T $, sodass:
\begin{enumerate}
\item T durch $ D\overset{\tilde{T}}{\longrightarrow}\mathcal{C}(T)\xrightarrow{ff_T}R\Mod  $ faktorisiert.
\item $  \tilde{T} $  folgende universelle Eigenschaft erfüllt:
Gegeben eine weiter $R$-lineare, abelsche Kategorie $ \mathcal{A} $ mit einem R-linearen, treuen, exakten Funktor $ f:\mathcal{A}\rightarrow R $-Mod und $ F:D\rightarrow \mathcal{A} $ einer Darstellung, die $ T=f\circ F $ erfüllt, so existiert ein bis auf Isomorphie eindeutiger Funktor $ L(F) $, so dass folgendes Diagramm bis auf Isomorphie kommutiert:
\end{enumerate}

\begin{center}
\begin{tikzpicture}[description/.style={fill=white,inner sep=2pt}]
    \matrix (m) [matrix of math nodes, row sep=5em,
    column sep=4.5em, text height=1.5ex, text depth=0.25ex]
    {  & \mathcal{C}(T) &  \\
       D & &  R\Mod  \\
 		& \mathcal{A} &  \\
	};
    \path[->,font=\scriptsize]
    (m-2-1) edge node[auto] {$ \tilde{T} $} (m-1-2)
    (m-2-1) edge node[auto] {$ F $} (m-3-2)
    (m-3-2) edge node[auto] {$f$} (m-2-3)
    (m-1-2) edge node[auto] {$ff_T$} (m-2-3)
    (m-2-1) edge node[pos=0.3,above right] {$T$} (m-2-3)
    (m-1-2) edge node[pos=0.4,above right] {$ L(F)$} (m-3-2);
\end{tikzpicture}
\end{center}

\end{Theorem}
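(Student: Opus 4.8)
\emph{Beweisplan.}

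Der Plan ist, die Kategorie $\C(T)$ zuerst für endliche Unterdiagramme zu definieren und anschließend einen filtrierenden Kolimes zu bilden. Sei also $F_0\subseteq D$ ein endliches Unterdiagramm. Ich würde zunächst die \emph{Endomorphismenalgebra}
\[ \End(T|_{F_0}):=\bigl\{(e_p)_{p\in O(F_0)} : e_p\in\End_R(T(p)),\ T(m)\circ e_p=e_q\circ T(m)\ \text{für jede Kante } m\colon p\to q\bigr\} \]
betrachten, also die mit allen Kanten verträglichen Familien von $R$-Endomorphismen. Nach Lemma \ref{hom_noethersch} ist jedes $\End_R(T(p))$ endlich erzeugt, also auch das endliche Produkt $\prod_{p\in O(F_0)}\End_R(T(p))$, und nach Lemma \ref{noether} somit der Untermodul $\End(T|_{F_0})$. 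Folglich ist $\End(T|_{F_0})$ eine endliche und nach Korollar \ref{noethersch_Algebra} noethersche $R$-Algebra, und $\C(T|_{F_0}):=\End(T|_{F_0})\Mod$ ist nach Lemma \ref{noethersch_kategorie} eine $R$-lineare abelsche Kategorie. Jeder Modul $T(p)$ wird durch $(e_r)_r\mapsto e_p$ zu einem $\End(T|_{F_0})$-Linksmodul, und die definierende Verträglichkeit besagt genau, dass jedes $T(m)$ ein Modulhomomorphismus ist; das liefert eine Darstellung $\tilde T_{F_0}\colon F_0\to\C(T|_{F_0})$, und der Vergissfunktor $ff_{F_0}\colon\C(T|_{F_0})\to R\Mod$ ist treu, exakt, $R$-linear mit $ff_{F_0}\circ\tilde T_{F_0}=T|_{F_0}$.

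Die endlichen Unterdiagramme bilden, geordnet durch Inklusion, eine filtrierende Indexmenge, da zu $F_0,F_0'$ stets $F_0\cup F_0'$ eine gemeinsame obere Schranke ist. Für $F_0\subseteq F_0'$ liefert das Weglassen der überzähligen Komponenten einen unitären Ringhomomorphismus $\End(T|_{F_0'})\to\End(T|_{F_0})$, und Skalarrestriktion längs dieses Homomorphismus definiert einen treuen, exakten, $R$-linearen Übergangsfunktor $\iota_{F_0'F_0}\colon\C(T|_{F_0})\to\C(T|_{F_0'})$, der mit den Funktoren $ff$ und $\tilde T$ verträglich ist. Diese Daten bilden ein filtrierendes System von Kategorien, und ich setze $\C(T):=\varinjlim_{F_0}\C(T|_{F_0})$ (als $2$-Kolimes). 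Da der Kolimes längs exakter, treuer, $R$-linearer Funktoren über einer filtrierenden Indexmenge gebildet wird, erbt $\C(T)$ mit dem üblichen Argument die Struktur einer $R$-linearen abelschen Kategorie; die $ff_{F_0}$ induzieren einen treuen, exakten, $R$-linearen Funktor $ff_T$ und die $\tilde T_{F_0}$ eine Darstellung $\tilde T\colon D\to\C(T)$ mit $ff_T\circ\tilde T=T$. Damit ist Teil (1) erledigt.

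Der Hauptaufwand liegt in der universellen Eigenschaft (2). Seien $\mathcal{A}$, $f$ und $F$ wie angegeben mit $T=f\circ F$. Wegen der filtrierenden Beschreibung von $\C(T)$ genügt es, für jedes endliche $F_0$ einen bis auf Isomorphie eindeutigen, mit den $\iota_{F_0'F_0}$ verträglichen, treuen, exakten, $R$-linearen Funktor $L_{F_0}\colon\C(T|_{F_0})\to\mathcal{A}$ mit $L_{F_0}\circ\tilde T_{F_0}\cong F|_{F_0}$ und $f\circ L_{F_0}\cong ff_{F_0}$ zu konstruieren; der gesuchte Funktor $L(F)$ ergibt sich dann durch Übergang zum Kolimes, ebenso seine Eindeutigkeit. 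Zur Konstruktion von $L_{F_0}$ betrachte ich das Objekt $B:=\bigoplus_{p\in O(F_0)}F(p)\in\mathcal{A}$. Da jeder endlich erzeugte $\End(T|_{F_0})$-Modul ein Subquotient einer endlichen Potenz von $\bigoplus_{p}T(p)$ ist — dies gilt, weil bereits $\End(T|_{F_0})$ selbst als Linksmodul in eine solche Potenz einbettet —, lässt sich jedes Objekt von $\C(T|_{F_0})$ als Subquotient einer Potenz von $B$ nach $\mathcal{A}$ heben, und $f$ kontrolliert diese Hebung.

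Der eigentliche Kern und die zu erwartende Hauptschwierigkeit ist dabei der Vergleich der Endomorphismenalgebren: $f$ bildet die Algebra der kantenverträglichen $\mathcal{A}$-Endomorphismen von $B$ injektiv nach $\End(T|_{F_0})$ ab, doch dass diese Zuordnung eine Bijektion zwischen den Unterobjekten der Potenzen von $B$ in $\mathcal{A}$ und den $\End(T|_{F_0})$-Untermoduln der entsprechenden Potenzen von $\bigoplus_p T(p)$ liefert — und damit $L_{F_0}$ wohldefiniert, funktoriell und bis auf Isomorphie eindeutig macht —, erfordert ein sorgfältiges Dichte- bzw. Doppelzentralisator-Argument, das die Treue und Exaktheit von $f$ wesentlich ausnutzt. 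Ich erwarte, dass dieser Schritt den Großteil der Kapitel 3 und 4 einnimmt und die eigentliche Substanz des Theorems ausmacht; die restlichen Verträglichkeits- und Eindeutigkeitsaussagen sowie der abschließende Kolimesübergang sind demgegenüber Routine.
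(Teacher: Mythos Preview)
Dein Plan für Teil (1) --- Endomorphismenalgebra $\End(T|_{F_0})$ für endliche Unterdiagramme, dann filtrierender Kolimes entlang Skalarrestriktion --- stimmt genau mit dem Vorgehen der Arbeit überein.

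Für Teil (2) schlägst du allerdings eine andere Route ein. Du willst $L_{F_0}$ direkt aus $\End(T|_{F_0})\Mod$ nach $\mathcal A$ bauen, indem du jeden Modul als Subquotienten einer Potenz von $\bigoplus_p T(p)$ realisierst und nach $B=\bigoplus_p F(p)$ liftest; der Kern ist dabei die behauptete Bijektion auf Unterobjekten, also ein Dichtheits- bzw.\ Doppelzentralisator-Argument. Die Arbeit geht stattdessen einen Umweg: sie beweist zuerst den starken Satz, dass für \emph{jede} $R$-lineare abelsche Kategorie $\mathcal A$ mit treuem exaktem $R$-linearem Funktor $T_{\mathcal A}\colon\mathcal A\to R\Mod$ der kanonische Funktor $\tilde T_{\mathcal A}\colon\mathcal A\to\C(T_{\mathcal A})$ bereits eine Äquivalenz ist. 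Die universelle Eigenschaft folgt dann fast formal durch die Funktorialität $\C(T)=\C(T_{\mathcal A}\circ F)\to\C(T_{\mathcal A})\simeq\mathcal A$.

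Beide Strategien zielen inhaltlich auf dasselbe, aber dein Dichteargument ist über allgemeinen noetherschen Ringen heikel --- klassische Doppelzentralisator-Sätze setzen Halbeinfachheit oder zumindest einen Körper voraus. Die Arbeit umgeht das, indem sie in $\mathcal A$ explizit ein Objekt $\Hom_R(Tp,p)$ (über freie Auflösungen von $Tp$) und darin ein Unterobjekt $X(p)$ mit $T_{\mathcal A}(X(p))\cong\End(T|_{\{p\}})$ konstruiert, samt verträglichen Rechts- und Linkswirkungen, und daraus einen konkreten Funktor $X(p)\otimes_{\End(T|_{\{p\}})}(-)\colon\End(T|_{\{p\}})\Mod\to\langle p\rangle\subseteq\mathcal A$ baut, der das Quasiinverse zu $\tilde T_{\mathcal A}$ liefert. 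Dieses interne $\Hom$ in $\mathcal A$ ist der eigentliche technische Input und würde vermutlich auch für deine Subobjekt-Bijektion benötigt. Dein Plan lokalisiert also korrekt, \emph{wo} die Schwierigkeit liegt, nennt aber nicht das entscheidende Konstruktionsmittel, das sie über $R$ statt nur über Körpern beherrschbar macht.
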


\begin{Bem}
Existiert eine solche Kategorie, so ist sie eindeutig bis auf Äquivalenz: Sei $ \C'(T) $ eine weitere Kategorie, welche die universelle Eigenschaft erfüllt, so haben wir treue, exakte $ R $-lineare Funktoren $ \C(T)\underset{L'(F)}{\overset{L(F)}{\rightleftarrows}} \C'(T) $, deren Verkettung ein treuer, exakter, $ R $-linearer Funktor 
\[ L'(F)\circ L(F):\C(T)\longrightarrow\C(T) \]
ist. Ein solcher Funktor ist nach universeller Eigenschaft von $ \C(T) $ eindeutig bis auf Isomorphie, entspricht also bis auf natürliche Isomorphie der Identität. Gleiches gilt für $ L(F)\circ L'(F) $. Die Kategorien $ \C(T) $ und $ \C'(T) $ sind also äquivalent.
\end{Bem}

\section[Konstruktion für endliche Diagramme]{Konstruktion für endliche Diagramme}

Wir zeigen zunächst 1. aus Theorem \ref{thm1} für endliche Diagramme:

\begin{Proposition}\label{thm1_endlich}
Sei $ D $ ein endliches Diagramm und $ T: D\longrightarrow R\Mod $ eine Darstellung. Dann existiert eine $R$-lineare abelsche Kategorie $ \mathcal{C}(T) $, zusammen mit einer Darstellung 
\[
 \tilde{T}:D\longrightarrow\mathcal{C}(T), 
\]
sowie ein treuer, exakter Funktor $ ff_T $,  sodass $T$ durch 
\[
 D\overset{\tilde{T}}{\longrightarrow}\mathcal{C}(T)\overset{ff_T}{\longrightarrow}R\Mod  
\]
faktorisiert.
\end{Proposition}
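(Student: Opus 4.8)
Der Plan ist, $\C(T)$ als Modulkategorie über der Endomorphismenalgebra der Darstellung $T$ zu realisieren. Dazu betrachte ich die Menge
\[
E := \Bigl\{ (e_p)_{p\in O(D)} \ \Big|\ e_p\in\End_R(T(p)),\ T(m)\circ e_p = e_q\circ T(m)\ \text{für alle }m\in M(p,q) \Bigr\},
\]
also die mit allen Morphismen von $T$ verträglichen Endomorphismenfamilien. Mit komponentenweiser Addition und Verkettung ist $E$ ein unitärer Ring (mit Einselement $(\mathrm{id}_{T(p)})_{p}$), und über die Diagonaleinbettung $r\mapsto (r\cdot\mathrm{id}_{T(p)})_{p}$ wird $R$ in das Zentrum von $E$ abgebildet. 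Somit ist $E$ eine unitäre $R$-Algebra.

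Als Nächstes zeige ich, dass $E$ sogar eine \emph{endliche} $R$-Algebra ist. Da $D$ endlich ist und jedes $T(p)$ endlich erzeugt über dem noetherschen Ring $R$ ist, ist jedes $\End_R(T(p))=\Hom_R(T(p),T(p))$ nach Lemma \ref{hom_noethersch} endlich erzeugt, und damit auch das endliche Produkt $\prod_{p\in O(D)}\End_R(T(p))$. Die Algebra $E$ ist ein $R$-Untermodul dieses Produkts und nach Lemma \ref{noether} ebenfalls endlich erzeugt. Insbesondere ist $E$ als $R$-Modul noethersch; da jedes Linksideal von $E$ ein $R$-Untermodul ist, stabilisieren aufsteigende Ketten von Linksidealen, also ist $E$ ein (links-)noetherscher Ring.

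Nun definiere ich $\C(T)$ als die Kategorie der endlich erzeugten $E$-Linksmoduln. Mit demselben Argument wie in Lemma \ref{noethersch_kategorie} (Untermoduln und Quotienten endlich erzeugter Moduln über einem noetherschen Ring sind wieder endlich erzeugt) ist dies eine abelsche Kategorie, und da $R$ im Zentrum von $E$ liegt, trägt jedes $\Hom_E(M,N)$ eine $R$-Modulstruktur mit $R$-bilinearer Verkettung, $\C(T)$ ist also $R$-linear. Die Darstellung $\tilde{T}$ schickt $p$ auf $T(p)$, aufgefasst als $E$-Modul via $e\cdot v := e_p(v)$; dieser Modul ist endlich erzeugt über $E$, da er schon über $R\subseteq E$ endlich erzeugt ist. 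Ein Morphismus $m\in M(p,q)$ wird auf $T(m)\colon T(p)\to T(q)$ abgebildet, was wegen $T(m)\circ e_p = e_q\circ T(m)$ gerade die $E$-Linearität bedeutet. Schließlich sei $ff_T\colon\C(T)\to R\Mod$ der Vergissfunktor, der die $E$-Operation vergisst. Er ist $R$-linear (Einschränkung der Inklusion $\Hom_E(M,N)\hookrightarrow\Hom_R(M,N)$), treu (eine auf dem zugrunde liegenden Modul verschwindende Abbildung ist null) und exakt (Kerne und Kokerne werden auf den zugrunde liegenden $R$-Moduln berechnet). Nach Konstruktion gilt $ff_T\circ\tilde{T}=T$, womit die Faktorisierung bewiesen ist.

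Der eigentliche Knackpunkt ist allein die Endlichkeit bzw. Noetherizität von $E$: Genau hier gehen die Endlichkeit von $D$ und die Noetherizität von $R$ ein, und nur mit dieser Eigenschaft bilden die endlich erzeugten $E$-Moduln überhaupt eine abelsche Kategorie. Alles Weitere ist Routineverifikation; lediglich auf die mögliche Nichtkommutativität von $E$ (konsequent Linksmoduln bzw.\ Linksideale) ist zu achten.
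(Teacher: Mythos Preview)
Your proof is correct and follows essentially the same approach as the paper: define the endomorphism algebra $E=\End(T)$ of the representation, use finiteness of $D$ and noetherianity of $R$ to see that $E$ is a finite (hence noetherian) $R$-algebra, and set $\C(T)=E\text{-Mod}$ with $\tilde{T}$ and $ff_T$ exactly as you describe. Your direct argument for left-noetherianity of $E$ (left ideals are $R$-submodules of a noetherian $R$-module) is in fact a bit cleaner than the paper's appeal to Korollar~\ref{noethersch_Algebra}, since $E$ is generally noncommutative.
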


\begin{proof}
Der Beweis basiert auf der Ausführung in \cite[1.2]{Nori}.
Für $ p\in O(D) $ ist $ Tp $ ein endlich erzeugter R-Modul. $ \End_R(Tp) $ ist dann nach Lemma \ref{hom_noethersch} ebenfalls ein endlich erzeugter $R$-Modul. Mit Hintereinanderausführung als innere Multiplikation wird $ \End_R(Tp) $ eine endlich erzeugte $ R $-Algebra. Wir definieren 
\[
\End(T):=\big \{ e \in \prod_{p \in Ob(D)}\End_R(T_p)\big | \forall p,q \in  O(D), \forall m \in M(p,q): Tm \circ e_p=e_q \circ Tm \big \} ,
\]
wobei $ e_p, e_q $ Projektion von $ e $ in die entsprechende Komponente 
darstellen. \\
In Diagrammform gesprochen besteht ein Element in $ \End(T) $ aus Tupeln $(e_p)_{p\in O(D)}$ von Endomorphismen aus $ \End_R(Tp) $, sodass alle entsprechenden Diagramme der Form

\begin{center}
\begin{tikzpicture}[description/.style={fill=white,inner sep=2pt}]
    \matrix (m) [matrix of math nodes, row sep=3em,
    column sep=2.5em, text height=1.5ex, text depth=0.25ex]
    {  T(p) & & T(q) \\
      T(p) & & T(q) \\
	};
    \path[->,font=\scriptsize]
    (m-1-1) edge node[auto] {$ e_{p_1} $} (m-2-1)
    (m-1-3) edge node[auto] {$ e_{p_2} $} (m-2-3)
    (m-1-1) edge node[auto] {$T(m)$} (m-1-3)
    (m-2-1) edge node[auto] {$T(m)$} (m-2-3);
\end{tikzpicture}
\end{center}

kommutieren. Die Menge $ \prod_{p \in Ob(D)}\End_R(Tp)$ hat mit komponentenweiser Verknüpfung die Struktur einer Produktalgebra. Da $\End(T)$ abgeschlossen ist unter $ (+)$, komponentenweiser Verkettung und Multiplikation mit Elementen aus $ R $, bildet $ \End(T) $ eine $R$-Unteralgebra. Da die Identität mit allen Morphismen kommutiert, ist $ \End(T) $ eine unitäre Algebra. $ O(D) $ ist endlich, also ist  $  \prod_{p \in Ob(D)}\End_R(Tp) $ als $ R $-Modul endlich erzeugt. Da $ R $ noethersch ist, ist nach Lemma \ref{noether} auch die Unteralgebra \[ \End(T) \subset \prod_{p \in Ob(D)}\End_R(Tp) \]endlich erzeugt.\\
Wir können für alle $ p\in O(D) $ den $R$-Modul $ Tp $ als $\End(T)$-Linksmodul auffassen, denn die Projektion 
\[ pr:\End(T)\rightarrow \End_R(Tp) \]
induziert eine Wirkung von $\End(T)$ auf $ Tp $, welche die Modulaxiome erfüllt. Genauer gilt für zwei beliebige Elemente $ (e_p)_{p\in O(D)} $, $(e'_p)_{p\in O(D)}$ aus $ \End(T) $, sowie zwei beliebige Element $ a,b $ aus $ Tp $

\begin{align*}
(e_p\circ e'_p)(a) &=e_p(e'_p(a)) \quad \quad &\text{(Assoziativität)} \\
e_p(Tp)(a+b) &=e_p(a)+e_p(b) \quad \quad &\text{(Distributivität)}
\end{align*}

Das Distributivgesetz gilt, da $ e_p $ insbesondere ein $ R $-Modulhomomorphismus ist.\\
Da $ Tp $ als $ R $-Modul endlich erzeugt ist, gilt dies auch für $ Tp $ als $ \End(T) $-Linksmodul.\\
Wir setzen nun 
\[ \mathcal{C}(T):=\End(T)\Mod ,\]
die Kategorie der endlich erzeugten $\End(T)$-Linksmoduln. Da $\End(T)$ eine endlich erzeugte $ R $-Algebra ist, ist $ \End(T) $ nach Korollar \ref{noethersch_Algebra} noethersch, also ist $\End(T)\Mod$ nach Korollar \ref{noethersch_kategorie} eine abelsche Kategorie.\\
Die Zuordnung 
\[
\begin{array}{ccc}
 D & \rightarrow & \End(T)\Mod \\
 p & \mapsto & Tp\text{ als End(}\textit{T}\text{)-Linksmodul}
\end{array}
\]
soll nun eine Darstellung liefern. Um das zu sehen, müssen wir prüfen, ob für alle $p,q\in O(D), m\in M(p,q) $ der Morphismus $ Tm\in \Hom_R(Tp,Tq) $ nicht nur ein Homomorphismus von $ R $-Moduln, sondern auch ein Homomophismus von $ \End(T)$-Linksmoduln ist, also ob die Abbildung $ Tm $ auch $ \End(T) $-linear ist. Dies entspricht aber gerade der Kommutativitätsbedingung des obigen Diagramms: Für $ e\in \End(T) $ gilt 
\[ Tm(e.Tp)=(Tm\circ e_p)(Tp)=(e_q\circ Tm)(Tp)=e.(Tm(Tp)). \] 
Sei $ ff_T:\mathcal{C}(T)\rightarrow R\Mod $ der Vergissfunktor, der die $ \End(T) $-Struktur vergisst. Dann faktorisiert $ T $ durch
\[
\begin{array}{ccccc}
D & \xrightarrow{\tilde{T}} & \mathcal{C}(T) & \xrightarrow{ff_T} & R\Mod \\
p & \mapsto & Tp & \mapsto & Tp
\end{array}
\]
Der Vergissfunktor ist exakt, treu und $ R $-linear.
\end{proof}

\section[Konstruktion für beliebige Diagramme]{Konstruktion für beliebige Diagramme}

Der Beweis der Proposition lässt sich in der Form nicht ohne Weiteres für beliebige (nicht endliche) Diagramme $ D $ durchführen. Im Fall beliebig großer Objektmengen $ O(D) $ ist  $\prod_{p \in Ob(D)}\End_R(Tp)$ nicht mehr endlich erzeugt, also ist $ \End(T) $ nicht notwendigerweise noethersch, $ \End(T)\Mod $ und damit nicht unbedingt eine abelsche Kategorie. Außerdem wäre eine derartige Kategorie zu groß, um die universelle Eigenschaft zu erfüllen.\\
Wir müssen daher unser Diagramm durch endliche, bezüglich Inklusion aufsteigende Unterdiagramme ausschöpfen: Für jedes endliche, volle Unterdiagramm $ F\subset O(D) $ erhalten wir nach Proposition \ref{thm1_endlich} eine Faktorisierung 
\[
F \rightarrow\End(T_F)\Mod\rightarrow R\Mod. 
\]

Nori folgend möchten wir $ \C (T):=\varinjlim_{F\subset D\text{ endlich}}\End(T_F)\Mod $ setzen und zeigen, dass dieses Objekt die universelle Eigenschaft erfüllt.

Wir beweisen nun einige Aussagen, die uns später garantieren werden, dass \\
$\varinjlim_{F\subset D\text{ endlich}}\End(T_F)\Mod   $ eine abelsche Kategorie ist.

\begin{Proposition}\label{Freyd}
Sei $ \mathcal{F}:\C\longrightarrow\mathcal{D} $ ein additiver Funktor zwischen abelschen Kategorien. Dann sind folgende Aussagen äquivalent: \vspace*{0.5cm}
\begin{itemize}
\item Der Funktor $ \mathcal{F} $ ist treu.
\item Ist eine Sequenz $ A\rightarrow B\rightarrow C $ in $ \mathcal{C} $ nicht exakt, so ist auch $ \mathcal{F}(A)\rightarrow \mathcal{F}(B)\rightarrow \mathcal{F}(C) $ in $ \mathcal{D} $ nicht exakt.
\end{itemize}
\end{Proposition}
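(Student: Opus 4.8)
The plan is to prove the two implications separately, starting with the easy one. Throughout I will use that an additive faithful functor reflects zero morphisms: if $\mathcal{F}(\phi)=0$, then, since an additive functor sends the zero morphism to the zero morphism, $\mathcal{F}(\phi)=\mathcal{F}(0)$, and injectivity of $\mathcal{F}$ on $\Hom$-sets forces $\phi=0$. For the implication \emph{reflects non-exactness $\Rightarrow$ treu}, I would argue by contraposition of the inner statement, i.e. I would use the second condition in the form \emph{image exact $\Rightarrow$ source exact}. Given $\phi\colon A\to B$ with $\mathcal{F}(\phi)=0$, I would test the sequence $A\xrightarrow{\phi}B\xrightarrow{\mathrm{id}_B}B$, which is exact at the middle term precisely when $\operatorname{im}\phi=\ker(\mathrm{id}_B)=0$, that is, precisely when $\phi=0$. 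Applying $\mathcal{F}$ and using $\mathcal{F}(\mathrm{id}_B)=\mathrm{id}_{\mathcal{F}(B)}$ together with $\mathcal{F}(\phi)=0$ yields $\mathcal{F}(A)\xrightarrow{0}\mathcal{F}(B)\xrightarrow{\mathrm{id}}\mathcal{F}(B)$, which is exact; by the second condition the original sequence is then exact, hence $\phi=0$ and $\mathcal{F}$ is faithful.

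The harder direction \emph{treu $\Rightarrow$ reflects non-exactness} I would again phrase contrapositively: assuming $\mathcal{F}(A)\xrightarrow{\mathcal{F}f}\mathcal{F}(B)\xrightarrow{\mathcal{F}g}\mathcal{F}(C)$ is exact, I want $A\xrightarrow{f}B\xrightarrow{g}C$ exact at $B$. Exactness of the image gives $\mathcal{F}(g)\circ\mathcal{F}(f)=\mathcal{F}(g\circ f)=0$, so reflecting zero morphisms yields $g\circ f=0$ and thus $\operatorname{im}f\subseteq\ker g$. The key idea is to encode the homology by a single morphism: with $k\colon\ker g\hookrightarrow B$ and $p\colon B\twoheadrightarrow\operatorname{coker}f$ the canonical maps, set $\theta:=p\circ k\colon\ker g\to\operatorname{coker}f$. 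Since $\operatorname{im}f\subseteq\ker g$, the image of $\theta$ is exactly $\ker g/\operatorname{im}f$, so the sequence is exact at $B$ if and only if $\theta=0$. As $\mathcal{F}$ is faithful, it therefore suffices to show $\mathcal{F}(\theta)=0$.

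To see this I would exploit only functoriality together with the two universal factorizations that always exist: from $\mathcal{F}(g)\circ\mathcal{F}(k)=\mathcal{F}(g k)=0$ the map $\mathcal{F}(k)$ factors through the inclusion $\iota\colon\ker(\mathcal{F}g)\hookrightarrow\mathcal{F}(B)$, and from $\mathcal{F}(p)\circ\mathcal{F}(f)=\mathcal{F}(p f)=0$ the map $\mathcal{F}(p)$ factors through the projection $\pi\colon\mathcal{F}(B)\twoheadrightarrow\operatorname{coker}(\mathcal{F}f)$. Writing $\mathcal{F}(\theta)=\mathcal{F}(p)\circ\mathcal{F}(k)$ and inserting these factorizations, the composite runs through the middle piece $\pi\circ\iota\colon\ker(\mathcal{F}g)\to\operatorname{coker}(\mathcal{F}f)$. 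Now the assumed exactness of the image says $\ker(\mathcal{F}g)=\operatorname{im}(\mathcal{F}f)=\ker\pi$ as subobjects of $\mathcal{F}(B)$, whence $\pi\circ\iota=0$ and therefore $\mathcal{F}(\theta)=0$. This last step is exactly where the argument must avoid assuming that $\mathcal{F}$ preserves kernels, cokernels or images, since $\mathcal{F}$ is merely additive; the whole point of routing the homology through the single morphism $\theta$ is to replace any such preservation by plain functoriality. I expect this to be the main obstacle, both in hitting upon the morphism $\theta=p\circ k$ and in checking that exactness of the image forces the intermediate composite $\pi\circ\iota$ to vanish.
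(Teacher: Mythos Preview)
Your argument is correct in both directions. The encoding of the homology at $B$ as the single morphism $\theta=p\circ k\colon\ker g\to\operatorname{coker} f$, together with the two universal factorizations of $\mathcal{F}(k)$ and $\mathcal{F}(p)$, is exactly what is needed to avoid any unjustified preservation of kernels or cokernels by a merely additive functor; the vanishing of $\pi\circ\iota$ then follows immediately from $\ker(\mathcal{F}g)=\operatorname{im}(\mathcal{F}f)$. The easy direction via the test sequence $A\xrightarrow{\phi}B\xrightarrow{\mathrm{id}}B$ is also fine.

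As for comparison: the paper gives no proof of its own and simply cites \cite[Thm.~3.21]{Freyd1964}, so your write-up is in fact more than what the paper provides. Your argument is essentially the standard one found in Freyd's book, so there is no substantive divergence to discuss.
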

\begin{proof}
\cite[Thm.3.21]{Freyd1964}
\end{proof}

\begin{Korollar}\label{treu_exakt}
Seien $ \C $, $ \mathcal{D} $ abelsche Kategorien, $ F:\C\rightarrow \mathcal{D} $ ein kovarianter, exakter, treuer Funktor. Sei $ f:X\longrightarrow Y $ ein Morphismus in $ \C $. Dann ist\\
\begin{itemize}
\item $ K\xrightarrow{k}X $ ein Kern von $ X\xrightarrow{f}Y $ genau dann, wenn $ F(K)\xrightarrow{F(k)}F(X) $ ein Kern von $ F(X)\xrightarrow{F(f)}F(Y) $ ist. 
\item $ Y\xrightarrow{cok}C $ ein Kokern von $ X\xrightarrow{f}Y $ genau dann, wenn $ F(Y)\xrightarrow{F(cok)}F(C) $ ein Kokern von $ F(X)\xrightarrow{F(f)}F(Y) $ ist. 

\end{itemize}
\end{Korollar}

\begin{proof}
Die Aussage, dass  $ K\xrightarrow{k}X $ ein Kern und $ Y\xrightarrow{cok}C $ ein Kokern von $ X\xrightarrow{f}Y $ sind, ist äquivalent dazu, dass folgende Sequenz exakt ist:\\
\[0\longrightarrow K\overset{x}{\longrightarrow}X\overset{f}{\longrightarrow}Y\overset{cok}{\longrightarrow}C \longrightarrow 0. \] 
Da $ F $ exakt und treu ist, gilt dies nach Proposition \ref{Freyd} genau dann, wenn
\[0\rightarrow F(K)\xrightarrow{F(k)}F(X)\xrightarrow{F(f)}F(Y)\xrightarrow{F(cok)}F(C) \rightarrow 0 \] 
exakt ist. Dies ist wiederum äquivalent dazu , dass $ F(K)\xrightarrow{F(k)}F(X) $ ein Kern und $ F(Y)\xrightarrow{F(cok)}F(C) $ ein Kokern von $ F(X)\xrightarrow{F(f)}F(Y) $ ist.\\
\end{proof}

\begin{Korollar}\label{iso}
Sei $ \mathcal{F}:\C\longrightarrow\mathcal{D} $ ein treuer, exakter Funktor zwischen abelschen Kategorien. Dann ist $ f:X\rightarrow Y $ genau dann ein Isomorphismus in $ \C $, wenn $ F(f):F(X)\rightarrow F(Y) $ ein Isomorphismus in $ \mathcal{D} $ ist. 
\end{Korollar}

\begin{proof}
$ f $ ist ein Isomorphismus genau dann, wenn 
\[ 0\rightarrow X \xrightarrow{f}Y\rightarrow 0 \]
exakt ist, also genau dann, wenn
\[ 0\rightarrow F(X) \xrightarrow{F(f)}F(Y)\rightarrow 0 \]
exakt ist. Dies ist äquivalent dazu, dass $ F(f) $ ein Isomorphismus ist.
\end{proof}

\begin{Proposition}\label{Kategorienproposition}
Sei $ (\{\mathcal{C}_i\}_{i\in I}, \phi_{i,j})$ ein gerichtetes, filtriertes System $ R $-linearer, abelscher Kategorien, wobei $ \phi_{i,j} $ $ R$-lineare, treue, exakte Funktoren sind. Dann ist $ \C:= \lim\limits_{\rightarrow I}\C_i $ eine $ R $-lineare, abelsche Kategorie.
\end{Proposition}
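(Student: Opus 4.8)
The plan is to construct the colimit category $\C$ explicitly and then verify the axioms of Definition \ref{abKat} by reducing every finite configuration of objects and morphisms to a single member $\C_k$ of the system. Fixing the convention $\phi_{ki}\colon\C_i\to\C_k$ for $i\le k$, I would take as objects of $\C$ the disjoint union of the $Ob(\C_i)$, writing an object as a pair $(i,X)$ with $X\in\C_i$, and set
\[
\Hom_{\C}\big((i,X),(j,Y)\big):=\varinjlim_{k\geq i,j}\Hom_{\C_k}\big(\phi_{ki}(X),\phi_{kj}(Y)\big),
\]
the filtered colimit being taken over the $R$-linear maps induced by the transition functors. Because $I$ is filtered, every morphism of $\C$ has a representative $f_k\colon\phi_{ki}(X)\to\phi_{kj}(Y)$ at some finite level $k$, and two representatives agree iff they become equal after applying some further $\phi_{lk}$; composition is inherited levelwise. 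Since each $\Hom_{\C_k}$ is an $R$-module, the transition maps are $R$-linear, and a filtered colimit of $R$-modules is again an $R$-module with $R$-bilinear induced composition, $\C$ is automatically $R$-linear and in particular preadditive.

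The engine of the whole argument is a lifting principle: given finitely many objects and morphisms of $\C$, filteredness of $I$ lets me pick one index $k$ large enough that all of them are represented simultaneously inside the abelian category $\C_k$. I would use this repeatedly. For a zero object I lift a zero object $0_i\in\C_i$; the exact (hence additive) functors $\phi_{ji}$ preserve it by Lemma \ref{Nullobjekt}, so its class is a zero object of $\C$. For finite products and coproducts I lift two objects to a common $\C_k$ and form the biproduct $\phi_{ki}(X)\oplus\phi_{kj}(Y)$ there; its universal property in $\C$ follows by lifting any competing family of morphisms to a common level. For kernels and cokernels I represent a morphism by $f_k$ in $\C_k$, take $\ker f_k$ and the cokernel of $f_k$ in the abelian category $\C_k$, and declare their classes to be kernel and cokernel of $f$ in $\C$.

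The crucial point — and the step I expect to be the main obstacle — is showing that these levelwise constructions really satisfy the universal properties in $\C$, and, for axiom (4), that monomorphisms and epimorphisms of $\C$ are already recognized at a finite level. Here the hypothesis that the $\phi_{ji}$ are faithful and exact does all the work. By Corollary \ref{treu_exakt}, each $\phi_{lk}$ sends kernels to kernels and cokernels to cokernels and, reading the biconditional in the other direction, reflects them as well; consequently a candidate (co)kernel is stable all along the system, so its universal property in $\C$ reduces to the corresponding one in a sufficiently large $\C_l$. Likewise Corollary \ref{iso} shows $\phi_{lk}$ reflects isomorphisms, and together with Lemma \ref{Nullobjekt} it reflects zero objects. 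From these reflection properties one obtains that being monic (equivalently, having vanishing kernel) and being epic are preserved and reflected by every transition functor, so $f$ is monic in $\C$ iff some representative $f_k$ is monic in $\C_k$, and dually for epimorphisms; the only care needed is to avoid circularity by phrasing "mono'' purely through the cancellation property and the stability of $\ker f_k$, not through an assumed abelian structure on $\C$.

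With this in hand, axiom (4) follows: given a monomorphism $m$ of $\C$, I lift it to a monic $m_k$ in the abelian category $\C_k$, where $m_k$ is the kernel of its own cokernel. Since the cokernel construction in $\C$ is computed levelwise and kernels are stable along the system by Corollary \ref{treu_exakt}, transporting this identity to $\C$ shows that $m$ is the kernel of its cokernel, i.e.\ every monomorphism of $\C$ is a kernel; the dual argument handles epimorphisms. The remaining verifications — additivity and $R$-bilinearity of composition, well-definedness of all constructions modulo the colimit identifications, and compatibility of the universal properties with the colimit maps — are routine applications of the same lifting-and-reflection scheme, and I would relegate them to short checks rather than separate lemmas.
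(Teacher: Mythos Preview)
Your approach is essentially the same as the paper's: explicit construction of the colimit, then verification of the abelian axioms by lifting every finite configuration to a common $\C_k$ and invoking Corollary~\ref{treu_exakt} for stability of (co)kernels and faithfulness for reflection of monos/epis. Two minor divergences are worth flagging. First, the paper quotients the object set by the relation $A\sim B\Leftrightarrow\exists k:\phi_{ki}(A)=\phi_{kj}(B)$, whereas you keep the disjoint union; the resulting categories are equivalent, so this is harmless. Second, and more substantively, the paper devotes the final third of its proof to checking that the constructed category actually satisfies the universal property of the filtered colimit (with the induced functor $u$ being faithful, exact, additive, $R$-linear), which you omit entirely; since the statement asserts that $\varinjlim\C_i$ is abelian, and since later arguments (notably Proposition~\ref{hilfsprop}) rely on this universal property, you should include that verification rather than treat it as implicit.
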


\begin{proof}
Wir müssen eine Kategorie $ \C $ konstruieren, von der wir zeigen können, dass sie abelsch und $ R $-linear ist und dass sie die universelle Eigenschaft des direkten Limes erfüllt. Die Konstruktion von $ \C $ entspricht der Konstruktion des direkten Limes in der Kategorie der Mengen: \\
Auf der Menge $ \coprod_{i\in I}Ob(\C_i) $ definieren wir für je zwei Elemente $ A \in \C_i $, $ B \in \C_j $ die Äquivalenzrelation 
\[ A\sim B\Leftrightarrow \exists k\in I\text{, sodass } \phi_{ki}(A)=\phi_{kj}(B). \] 
Die Relation ist offensichtlich symmetrisch und reflexiv. Transitivität gilt, weil die Indexmenge gerichtet ist. Seien genauer $ A\in\C_h, B\in\C_i, C\in\C_j $, so dass $ A\sim B\sim C $. Dann gibt es geeignete Indizes $ k,l\in I $, so dass $ \phi_{kh}=\phi_{ki} $ und $ \phi_{li}=\phi_{lj} $. Da das System filtrierend ist, finden wir $ m\in I $, so dass $ l\le m \ge k $. Dann gilt $ \phi_{mh}(A)=\phi_{mi}(B)=\phi_{mj}(C) $, also $ A\sim C $. \\
Wir setzen 
\[ Ob(\C):=\frac{\coprod_{i\in I}Ob(\C_i)}{\sim}. \]
Für Morphismen definieren wir analog auf $ \coprod_{i\in I;A,B\in Ob(\C_i)}\Hom_{\mathcal{C}_i}(A,B) $ folgende Äquivalenzrelation: Seien $ \varphi\in \Hom_{\mathcal{C}_i}(A,B),\psi\in \Hom_{\mathcal{C}_j}(C,D)\ $. Wir definieren
\[ \varphi\sim\psi\Leftrightarrow\exists k\in I: \phi_{ik}(\varphi)=\phi_{jk}(\psi). \]
Für $ A, B \in Ob(\C)$ setzen wir 
\[\Hom_{\C}(A,B):=\frac{\coprod_{i\in I}\{\varphi\in \Hom_{\C_i}(A_i,B_i)|A_i,B_i \text{ Repräsentanten von }A, B \text{ in }\C_{i}\}}{\sim}. \]
Da $ I $ eine Menge ist, ist also auch $ \Hom_{\C}(A,B) $ eine Menge.

\begin{itemize}

\item Verknüpfung von Morphismen:  Seien $ A, B,C\in Ob(\C) $, $ \varphi\in \Hom_{\C}(A,B) $ und\\
$ \psi\in \Hom_{\C}(B,C) $. Für beliebige Repräsentanten $ \varphi_i\in \Hom_{\C_{i}}(A_i,B_i) $, \\$ \psi_j\in \Hom_{\C_{j}}(B_j,C_j)$ von $ \varphi $ bzw. $ \psi $, finden wir nach Voraussetzung ein $ k\in I $ und entsprechende Übergangsabbildungen, sodass $ \phi_{ki}(B_i)=\phi_{kj}(B_j) $. Wir definieren $\psi\circ\varphi=[\phi_{kj}(\psi_j)\circ\psi_{ki}(\phi_i)]$. Wir müssen nun zeigen, dass diese Definition unabhängig von der Wahl der Repräsentanten, sowie von der Wahl von $ k $ ist. Dies prüft man leicht, indem man zu einem hinreichend großen Index übergeht.\\
Seien also $ \varphi_{i'}\in \Hom_{\C_{i'}}(A_{i'},B_{i'}) $, $ \psi_{j'}\in \Hom_{\C_{j'}}(B_{j'},C_{j'})$ andere Repräsentanten von $ \varphi $ bzw. $ \psi $. Sei $ k'\in I  $ beliebig, sodass die Übergangsabbildungen $ \phi_{k'i'} $ und $ \phi_{k'j'} $ existieren. Da $ [\varphi_i]=[\varphi_{i'}] $, existiert nach Definition von $ \sim $ ein $ i''\in I $, so dass 
\[ \phi_{i''i'}(\varphi_{i'})=\phi_{i''i'}(\varphi_{i'})\] 
gilt. Analog existiert ein $ j''\in I $, so dass 
\[\phi_{j''j'}(\psi_{j'})=\phi_{j''j'}(\psi_{i'})\]
gilt. Wir vergleichen $ \phi_{kj}(\psi_j)\circ\phi_{ki}(\varphi_i) $ mit $\phi_{k'j'}(\psi_{j'})\circ\phi_{k'i'}(\varphi_{i'})$ durch Übergang zu einem entsprechend "`größerem"' Index. Sei $ l\in I $, so dass folgende Übergangsabbildungen existieren.

\begin{center}
\begin{tikzpicture}[description/.style={fill=white,inner sep=2pt}]
    \matrix (m) [matrix of math nodes, row sep=1.5em,
    column sep=4.5em, text height=1.5ex, text depth=0.25ex]
    {  \C_i & & & \\
       & \C_{i''} & & \\
       \C_{i'} & & \C_k & \\
       & & &  &  \C_l \\
	  \C_j & & \C_k' &  \\
       & \C_{j''} &  \\
       \C_{j'} & & & \\       
	};
    \path[->,font=\scriptsize]
  	(m-1-1) edge [bend left=30] (m-3-3)  
  	(m-7-1) edge [bend right=30] (m-5-3)  
	(m-3-1) edge node[auto] {}  (m-2-2)  
	(m-5-1) edge [bend right=8]  (m-3-3) 
	(m-3-1) edge [bend left=8]  (m-5-3)
	(m-5-1) edge node[auto] {}  (m-6-2)   
	(m-7-1) edge node[auto] {}  (m-6-2)   
    (m-1-1) edge node[auto] {}  (m-2-2)
    (m-3-3) edge node[auto] {}  (m-4-5)
    (m-5-3) edge node[auto] {}  (m-4-5)
  	(m-2-2) edge [bend left=32]  (m-4-5) 
	(m-6-2) edge [bend right=32]  (m-4-5)
    ;   
\end{tikzpicture}
\end{center}

Es gilt nun:
\begin{align*}
\phi_{lk}(\phi_{kj}(\psi_{j})\circ\phi_{ki}(\varphi_{i})) &=\phi_{lj}(\psi_{j})\circ\phi_{li}(\varphi_{i})\\
&=(\phi_{lj''}\circ\phi_{j''j})(\psi_{j})\circ(\phi_{li''}\circ\phi_{i''i})(\varphi_{i})\\
&=(\phi_{lj''}\circ\phi_{j''j'})(\psi_{j'})\circ(\phi_{li''}\circ\phi_{i''i'})(\varphi_{i'})\\
&=\phi_{lj'}(\psi_{j'})\circ\phi_{li'}(\varphi_{i'})\\
&=\phi_{lk'}(\phi_{k'j'}(\psi_{j'})\circ\phi_{k'i'}(\varphi_{i'}))
\end{align*}
Also gilt:
\[
[\phi_{kj}(\psi_j)\circ\phi_{ki}(\varphi_i)]=[\phi_{k'j'}(\psi_{j'})\circ\phi_{k'i'}(\varphi_{i'})]
\]

\item Identität: \\
Für $ A\in\C $ ist der Identitätsmorphismus offensichtlich gegeben durch $ [id_i] $, wobei $ id_i $ die Identität aus $ \End_{\C_i}(A) $ beschreibt. Die Definition ist wohldefiniert, da Funktoren Identität auf Identität abbilden.\\

\item Nullobjekt:\\
Nach Lemma \ref{Nullobjekt} bilden additive Funktoren Nullobjekte auf Nullobjekte ab. Da das Nullobjekt das einzige Objekt ist, dessen Endomorphismenmenge aus nur einem Element besteht und die Übergangsfunktoren zudem treu sind, werden nur Nullobjekte auf Nullobjekte abgebildet. Es gibt also eine wohldefinierte Äquivalenzklasse $ [0_{\C_i}] $. Wir möchten überprüfen, dass $ [0_{\C_i}] $ das Nullobjekt in $ \C $ ist. Sei dafür $A$ ein weiteres Objekt in $ \C $ und seien $ f,g\in \Hom([0_{\C_i}],A) $. Da unsere Indexmenge gerichtet ist, finden wir ein $ i\in I $ mit Repräsentanten $ f_i,g_i\in \Hom_{\C_i}(0_{\C_i},A_i) $. Da $ 0_{\C_i} $ das Nullobjekt in $ \C_{i} $ ist, gilt $ f_i=g_i $, also auch $ f=g $. Dies zeigt, dass $ [0_{\C_i}] $ initial ist. Analog zeigt man, dass es nur eine Äquivalenzklasse von Morphismen in $ \Hom(A,[0_{\C_i}]) $ gibt, i.e. $ [0_{\C_i}] $ terminal ist. $ [0_{\C_i}] $ ist also das Nullobjekt in $ \C $.

\item Produkte und Koprodukte:\\
Additive Funktoren vertauschen mit endlichen Produkten und Koprodukten. \cite[Thm.3.11]{Freyd1964}. Für zwei Äquivalenzklassen $ [A],[B] $ in $ \C $  ist die Definition eines Objekts $ [A]\oplus [B]:=[A\oplus B] $ also unabhängig von der Repräsentantenwahl und wohldefiniert. Wir müssen die universelle Eigenschaft des Koporodukts überprüfen. Die zugehörigen Morphismen auf Repräsentantenebene $ A\xrightarrow{i} A\oplus B \xleftarrow{j}B $ liefern uns Abbildungen $ [A]\xrightarrow{[i]} [A\oplus B] \xleftarrow{[j]}[B] $, das heißt wir müssen für ein weiteres Objekt $ C $ mit Morphismen $ [A]\xrightarrow{[\tilde{i}]} C \xleftarrow{[\tilde{j}]}[B] $ zeigen, dass es einen eindeutigen Morphismus $ u $ gibt, sodass folgendes Diagramm kommutiert. 

\begin{center}
\begin{tikzpicture}[description/.style={fill=white,inner sep=2pt}]
    \matrix (m) [matrix of math nodes, row sep=4.2em,
    column sep=5.5em, text height=1.5ex, text depth=0.25ex]
    {   {[A]} & & {[B]}  \\
    	      & {[A\oplus B]} &   \\
   	   	      &  C &  \\
	};
    \path[->,font=\scriptsize]
    (m-1-1) edge node[description] {$[i]$} (m-2-2)
    (m-1-3) edge node[description] {$[j]$} (m-2-2)
    (m-1-1) edge node[description] {$[\tilde{i}]$} (m-3-2)
    (m-1-3) edge node[description] {$[\tilde{j}]$} (m-3-2)
    (m-2-2) edge node[description] {$\exists ! u? $} (m-3-2)
    ;   
\end{tikzpicture}
\end{center}

Die Existenz ist klar, da wir auf Repräsentantenebene eine entsprechende Abbildung finden, deren Äquivalenzklasse unser Diagramm kommutativ macht. Falls es mehr als einen solchen Morphismus gäbe, so gäbe es auch in $ \C_i $ für hinreichend großes $ i\in I $ mehr als eine solche Abbildung auf Repräsentantenebene. Dies widerspräche der universellen Eigenschaft des Koprodukts in $ \C_i $. \\
Analog zeigt man die Existenz von Produkten. 

\item Kerne und Kokerne:\\
Wir bezeichnen im Weiteren der Übersicht wegen Objekte aus $ \C $ mit Großbuchstaben, für deren Repräsentanten in einer Kategorie $ \C_i $ versehen wir den Großbuchstaben mit einem Index i. Seien also wieder $ A,B $ Objekte in $ \C $, $ f\in \Hom_{\C}(A,B)$ ein Morphismus mit  Repräsentant $ f_i\in \Hom_{\C_i}(A_i,B_i) $. Dann existieren in der abelschen Kategorie $ \C_i $ zum Morphismus $ f_i $ ein Kern $  Ker(f_i) $. Da unsere Übergangsabbildungen aus treuen, exakten Funktoren bestehen, werden unter ihnen nach Korollar \ref{treu_exakt} genau Kerne auf Kerne und genau Kokerne auf Kokerne abgebildet. Die Definition $ ker(f):=[ker(f_i)] $ ist also unabhängig von Repräsentantenwahl. Die universelle Eigenschaft von $ ker(f) $ überprüft man analog zu der Vorgehensweise bei Produkten und Koprodukten. \\
Für Kokerne funktioniert das Argument analog.

\item Monomorphismen sind Kerne, Epimorphismen sind Kokerne:\\
Seien $ C,D $ Objekte in $ \C$, $ f:C\rightarrow D $ ein Monomorphismus. Wir wollen zunächst zeigen, dass dann schon $f_i:C_i\rightarrow D_i $ für beliebige Repräsentanten $ C_i,D_i\in \C_i $ ein Monomorphismus ist. Seien $ B_i\in\C_i $ ein beliebiges Objekt und $ g_i,h_i:B_i\rightarrow C_i $ beliebige Morphismen, so dass $ f_i\circ g_i=f_i \circ h_i $. Dann ist $ f \circ  g=f \circ h $. Da $f$ ein Monomorphismus ist, gilt $ g=h $, also $ g_i\sim h_i $. Es gibt also ein $ k\in I $, sodass $ \phi_{ki}(g_i)=\phi_{ki}(h_i) $. Da die Übergangsfunktoren treu sind, gilt also $ g_i=h_i $. Also ist $ f_i $ ein Monomorphismus. Da $ \C_i $ eine abelsche Kategorie ist, ist $ f_i$ also ein Kern. Wir haben bereits nachgerechnet, dass dann auch $ f\in\Hom_{\C}(C,D) $ ein Kern ist.\\
Für Epimorphismen analog.

\item $ R $-linear:\\
Seien $ C,D\in\C$. Da $ \C $ abelsch ist, ist $ \Hom_{\C}(C,D) $ eine Gruppe. Wir müssen zeigen, dass $ \Hom_{\C}(C,D) $ sogar ein $ R $-Modul ist. Sei $f\in \Hom_{\C}(C,D) $. Wir wählen einen Repräsentanten $ f_i\in \Hom_{\C_i}(C_i,D_i) $. Nun definieren wir für alle $ r\in R $ die Multiplikation $ r\cdot f:=[r\cdot f_i] $. Diese Konstruktion ist unabhängig von der Repräsentantenwahl, da die Übergangsabbildungen $ R $-linear sind. Die so definierte $ R $-Multiplikation erfüllt Assoziativ- und Distributivgesetz, da sie das auf Repräsentantenebene erfüllt.

\end{itemize}

$ \C $ ist also eine $ R $-lineare abelsche Kategorie. Wir möchten nun zeigen, dass die Kategorie $ \C $ zusammen mit den Funktoren

\[
\begin{array}{cccc}
\psi_i: & \C_i & \rightarrow & \mathcal{C}\\
 & A_i & \mapsto & [A_i]
\end{array}
\]

die universelle Eigenschaft des direkten Limes erfüllt. Nach Konstruktion kommutieren die Funktoren $ \psi_i $ mit den Übergangsabbildungen $ \phi_{ji} $. Für alle $ i\le j $ gilt also $ \psi_i=\psi_j\circ\phi_{ji} $.\\
Sei nun $ (\mathcal{D},\{\chi_i\}_{i\in I}) $ eine weiterer Kandidat für den direkten Limes, also eine abelsche Kategorie $ \mathcal{D} $ zusammen mit $ R $-linearen additiven, treuen, exakten Funktoren \\
$ \chi_i:\C_i\rightarrow\mathcal{D} $, die mit allen Übergangsabbildungen kommutieren. Dann definieren wir

\[
\begin{array}{cccc}
 u: & \C & \rightarrow & \mathcal{D}\\
 & A & \mapsto & \chi_i(A_i).
\end{array}
\]

Die Definition dieser Abbildung ist wegen $ \chi_i=\chi_j\circ\phi_{ji} $ unabhängig von der Wahl von $ i $ und, da $ \chi_i=u\circ\psi_i $ sein soll, auch die einzige Abbildung, die folgendes Diagramm kommutativ macht:

\begin{center}
\begin{tikzpicture}[description/.style={fill=white,inner sep=2pt}]
    \matrix (m) [matrix of math nodes, row sep=2.5em,
    column sep=4.0em, text height=1.5ex, text depth=0.25ex]
    { \cdots \C_i & & \C_j \cdots \\
    	      & \C &   \\
   	   	      &  \mathcal{D} &  \\
	};
    \path[->,font=\scriptsize]
    (m-1-1) edge node[description] {$ \phi_{ji} $} (m-1-3)
    (m-1-1) edge node[description] {$ \psi_{i} $} (m-2-2)
    (m-1-1) edge node[description] {$ \chi_{i} $} (m-3-2)
    (m-1-3) edge node[description] {$ \psi_{j} $} (m-2-2)
    (m-1-3) edge node[description] {$ \chi_{j} $} (m-3-2)
    (m-2-2) edge node[description] {$u $} (m-3-2)
    ;   
\end{tikzpicture}
\end{center}

Wir müssen überprüfen, ob es sich bei $ u $ um einen $ R $-linearen, additiven, treuen, exakten Funktor handelt.\\
Seien $ A,B\in\C $, $ f,g\in\Hom_{\C}(A,B) $, $ f \neq g $. Wir wählen nun Repräsentanten \\
$ f_i,g_i\in\Hom_{\C_i}(A_i,B_i) $, für die dann ebenfalls $ f_i \neq g_i $ gilt. Die Additivität, Treuheit und \\
$ R $-Linearität von $u$ folgen nun aus der Tatsache, dass $ \chi_i $ additiv, treu und $ R $-linear ist, sowie aus der Kommutativität von
\begin{center}
\begin{tikzpicture}[description/.style={fill=white,inner sep=2pt}]
    \matrix (m) [matrix of math nodes, row sep=3.5em,
    column sep=5.5em, text height=1.5ex, text depth=0.25ex]
    { \C_i & \C  \\
   	  &  \mathcal{D}   \\
	};
    \path[->,font=\scriptsize]
    (m-1-1) edge node[description] {$ \chi_i $} (m-2-2)
    (m-1-2) edge node[description] {$ u $} (m-2-2)
    (m-1-1) edge node[description] {$ \psi_{i} $} (m-1-2)
    ;   
\end{tikzpicture}
\end{center}
Genauer gilt 
\begin{itemize}
\item treu: Da $ \chi_i $ treu ist, ist $ u(f)=(u\circ\psi)(f_i)=\chi (f_i)\neq\chi (g_i)=(u\circ\psi)(g_i)=u(g)$
\item additiv: $ u(f+g)=(u\circ\psi_i)(f_i+g_i)=\chi_i(f_i+g_i)=\chi_i(f_i)+\chi_i(g_i)\\=(u\circ\phi)(f_i)+(u\circ\phi)(g_i)=u(f)+u(g) $
\item $ R $-linear: $ u(r\cdot f)=(u\circ\psi_i )(r\cdot f_i)=\chi_i(r\cdot f_i)=r\cdot (\chi_i(f_i))=r\cdot(u\circ\psi_i)(f_i)=r\cdot u(f) $
\end{itemize}
Für die Exaktheit gehen wir genauso vor. Für eine exakte Sequenz
\[ 0\rightarrow A\xrightarrow{f} B\xrightarrow{g} C \rightarrow 0  \]
finden wir Repräsentanten
\[ 0\rightarrow A_i\xrightarrow{f_i} B_i\xrightarrow{g_i} C_i \rightarrow 0,  \]
die eine exakte Sequenz in $ \C_i $ bilden. Die Exaktheit bleibt durch Anwenden von $ \chi_i $ erhalten.
\end{proof}

\begin{Def}\label{Restriktion_Skalare}
Seien $ A,B $  Ringe und $ \varphi:A\rightarrow B $ ein Ringhomomorphismus. Sei $ M $ ein $ B $-Modul Dann definiert $ \varphi $ auf natürliche Weise eine $ A $-Modulstruktur auf $ M $: \\
Für $ a\in A,m\in M $ sei
\[ a\odot_A m:=\varphi(a)\odot_B m.\]
Diese Multiplikation auf $ A $ ist offensichtlich assoziativ und distributiv. \\
Sei $ N $ ein weiterer $ B $-Modul, $ f:M\rightarrow N $ ein $ B $-Modulhomomorphismus. Dann wird $ f $ durch 
\[ a\odot_A f(m)=\varphi(a)\odot_B f(m)=f(\varphi(a)\odot_B m)=f(a\odot_A m) \] 
zum $ A $-Modulhomomorphismus. \\
$ \varphi $ definiert also einen Funktor $ \phi $ von der Kategorie der $ B $-Moduln in die Kategorie der $ A $-Moduln. Diesen Funktor nennt man \emph{Restriktion der Skalare}. 
\end{Def}

\begin{Bem}\label{algebra_als_modul}
Sei $f:R\rightarrow A $ eine $ R $-Algebra und $ M $ ein Modul über $ A $. Sei $ m\in M, r\in R $. Dann wird $ M $ durch Restriktion der Skalare auf natürliche Weise zum $ R $-Modul.\\
Ist $ A $ als $ R $-Modul endlich erzeugt und $ M $ als $ A $-Modul endlich erzeugt, so ist offensichtlich auch $ M $ als $ R $-Modul endlich erzeugt.
\end{Bem}

\begin{Lemma}\label{Lemma_Skalare}
Seien $f_1:R\rightarrow A $, $f_2:R\rightarrow B $ zwei $ R $-Algebren und als $ R $-Moduln jeweils endlich erzeugt. Sei 

\begin{center}
\begin{tikzpicture}[description/.style={fill=white,inner sep=2pt}]
    \matrix (m) [matrix of math nodes, row sep=3.0em,
    column sep=2.0em, text height=1.5ex, text depth=0.25ex]
    { A & & B  \\
   	  & R &   \\
	};
    \path[->,font=\scriptsize]
    (m-1-1) edge node[description] {$ \varphi $} (m-1-3)
    (m-2-2) edge node[description] {$ f_1 $} (m-1-1)
    (m-2-2) edge node[description] {$ f_2 $} (m-1-3)
    ;   
\end{tikzpicture}
\end{center}

ein $ R $-Algebrenhomomorphismus. Dann definiert die Restriktion der Skalare einen exakten, treuen, additiven, $ R $-linearen Funktor 
\[ \phi:B\Mod\rightarrow A\Mod \]
von der Kategorie der endlich erzeugten $ B $-Moduln in die Kategorie der endlich erzeugten $ A $-Moduln. Ein $ B $-Modul M trägt die gleiche natürliche Struktur als $ R $-Modul wie sein Bild $\phi(B)$.
\end{Lemma}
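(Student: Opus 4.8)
The plan is to exploit that restriction of scalars changes neither the underlying abelian group of a module nor the underlying map of a homomorphism; it merely reinterprets the scalar action through $\varphi$. Consequently almost all of the asserted properties reduce to bookkeeping, and the only point carrying genuine content is well-definedness on objects, i.e.\ that $\phi$ lands in \emph{finitely generated} $A$-modules. Before anything else I would record that both categories in question are abelian: since $A$ and $B$ are finite over the noetherian ring $R$, they are noetherian rings by Korollar~\ref{noethersch_Algebra}, so that the categories $A\Mod$ and $B\Mod$ of finitely generated modules are abelian by Lemma~\ref{noethersch_kategorie}. This is what makes the adjectives ``exakt'' and ``additiv'' meaningful for $\phi$ in the first place.

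For well-definedness on objects, the key observation is that $\varphi$ turns $B$ into an $A$-module, and that restricting this $A$-action further to $R$ along $f_1$ recovers the original $R$-module structure on $B$: from $a\odot b=\varphi(a)\,b$ one gets $r\odot b=\varphi(f_1(r))\,b=f_2(r)\,b$ by commutativity of the triangle. Since $B$ is finitely generated as an $R$-module and the $R$-action factors through $A$, any $R$-generating system of $B$ is already an $A$-generating system, so $B$ is finitely generated as an $A$-module. Now if $M$ is finitely generated over $B$, say by $m_1,\dots,m_n$, and $b_1,\dots,b_k$ generate $B$ over $A$, then the products $b_j\cdot m_i$ generate $\phi(M)$ over $A$, whence $\phi(M)$ is a finitely generated $A$-module. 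I expect this transitivity-of-finite-generation step to be the main (indeed essentially the only) obstacle, since it is the single place where the hypotheses that $A$ and $B$ are finite over $R$ are actually used.

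On morphisms I would simply invoke Definition~\ref{Restriktion_Skalare}: every $B$-linear map $f\colon M\to N$ is already $A$-linear for the restricted actions, so $\phi(f)$ is $f$ with unchanged underlying map. Thus $\phi$ realizes $\Hom_B(M,N)$ as a subset of $\Hom_A(\phi M,\phi N)$ via the identity on maps, which is injective and hence makes $\phi$ faithful. Additivity and $R$-linearity are then immediate, because the sum $f+g$ and the scalar multiple $r\cdot f$ of morphisms are computed pointwise and are left untouched by $\phi$; for the scalar action one again uses $\varphi\circ f_1=f_2$ to see that the $R$-module structures on $\Hom_B(M,N)$ and on $\Hom_A(\phi M,\phi N)$ agree on $B$-linear maps.

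For exactness I would argue that kernels and cokernels of module homomorphisms are formed on the underlying abelian group (the kernel as the set-theoretic kernel, the cokernel as the quotient by the image, each carrying the restricted action), and that $\phi$ alters neither the group nor the map. Hence $\phi$ preserves kernels and cokernels and is therefore exact; equivalently, postcomposing with the faithful exact forgetful functor $A\Mod\to R\Mod$ returns the forgetful functor out of $B\Mod$, so exactness can be read off through Proposition~\ref{Freyd}. Finally, the closing assertion that $M$ and $\phi(M)$ carry the same $R$-module structure is precisely the identity $r\odot m=\varphi(f_1(r))\,m=f_2(r)\,m$ already used above, and so requires no further work.
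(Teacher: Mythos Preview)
Your proposal is correct and follows essentially the same route as the paper: both arguments rest on the observation that restriction of scalars leaves the underlying abelian group and the underlying set-map of a homomorphism untouched, so faithfulness, additivity, and exactness are immediate, while $R$-linearity reduces to the identity $r\odot m=\varphi(f_1(r))\,m=f_2(r)\,m$. You are in fact more thorough than the paper, which tacitly omits the check that $\phi$ lands in \emph{finitely generated} $A$-modules (relying implicitly on Bemerkung~\ref{algebra_als_modul}); your explicit transitivity argument via an $A$-generating system of $B$ fills this gap cleanly.
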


\begin{proof}
Sei $ M $ ein endlich erzeugter $ B $-Modul. Durch Anwenden des Funktors $\phi(B)$ wird nur die multiplikative Struktur des Moduls verändert, die Gruppenstruktur bleibt gleich. Also ist der Funktor additiv. Kern und Kokern eines Modulhomomorphismus sind gleich dem Kern und Kokern als Gruppenhomomorphismus. Also erhält der Funktor Kerne und Kokerne und ist somit exakt.\\
Zwei Morphismen $ f,g:M\rightarrow N $ sind genau dann als Modulhomomorphismen gleich, wenn sie als Abbildungen von Mengen gleich sind. Die multiplikative Struktur spielt hier keine Rolle. Also ist der Funktor treu.\\
Sei $ m\in M $, $ r\in R $. Die $ R $-Modulstruktur auf dem $ A $-Modul $ \phi(M) $ ist gegeben durch
\[ r \odot_R m= f_1(r) \odot_A m=\varphi(f_1(r))\odot_B m=f_2(r)\odot_B m. \]
Sie ist also gleich der $ R $-Modulstruktur auf dem $ B $-Modul $ M $. Also ist der Funktor $ \phi $ insbesondere $ R $-linear.
\end{proof}

Wir sind nun in der Lage, (1) des Theorems \ref{thm1} für beliebige, nicht notwendig endliche, Diagramme zu zeigen:

\begin{Proposition}\label{1}
Sei $ D $ ein gerichtetes Diagramm, $ R $ ein noetherscher Ring und $ T\rightarrow R\Mod $ eine Darstellung. Dann existiert eine $R$-lineare, abelsche Kategorie $ \mathcal{C}(T) $ mit einer Darstellung 
\begin{equation}
 \tilde{T}:D\rightarrow\mathcal{C}(T), 
\end{equation}
sowie ein treuer, exakter Funktor $ ff_T $,  sodass $T$ durch 
\begin{equation}
 D\xrightarrow{\tilde{T}}\mathcal{C}(T)\xrightarrow{ff_T}R\Mod  
\end{equation}
faktorisiert.
\end{Proposition}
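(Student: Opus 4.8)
Der Plan ist, das Diagramm $D$ durch seine endlichen vollen Unterdiagramme auszuschöpfen und die zugehörigen Kategorien aus Proposition \ref{thm1_endlich} zu einem gerichteten, filtrierten System zusammenzusetzen, auf das sich Proposition \ref{Kategorienproposition} anwenden lässt. Dazu betrachte ich die Menge $I$ aller endlichen vollen Unterdiagramme $F\subseteq D$, partiell geordnet durch Inklusion. Diese Ordnung ist filtrierend, denn die Vereinigung zweier endlicher voller Unterdiagramme ist wieder ein solches und liefert eine gemeinsame obere Schranke. Für jedes $F\in I$ liefert Proposition \ref{thm1_endlich} die $R$-lineare abelsche Kategorie $\C_F:=\End(T_F)\Mod$ zusammen mit der Faktorisierung $F\to\C_F\to R\Mod$.

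Für $F\subseteq F'$ induziert die Projektion $\prod_{p\in O(F')}\End_R(T_p)\twoheadrightarrow\prod_{p\in O(F)}\End_R(T_p)$ einen $R$-Algebrenhomomorphismus $\End(T_{F'})\to\End(T_F)$, da die Verträglichkeitsbedingungen, die $\End(T_F)$ definieren, eine Teilmenge derer von $\End(T_{F'})$ sind. Nach Lemma \ref{Lemma_Skalare} (die beteiligten Algebren sind als $R$-Moduln endlich erzeugt, da $F,F'$ endlich sind) liefert die Restriktion der Skalare hierzu einen $R$-linearen, treuen, exakten Funktor $\phi_{F'F}:\C_F\to\C_{F'}$. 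Die Kozykelbedingung $\phi_{F''F'}\circ\phi_{F'F}=\phi_{F''F}$ folgt aus der Verträglichkeit der Projektionen, sodass $(\{\C_F\}_{F\in I},\phi_{F'F})$ ein gerichtetes, filtriertes System $R$-linearer abelscher Kategorien mit $R$-linearen, treuen, exakten Übergangsfunktoren bildet. Proposition \ref{Kategorienproposition} liefert dann, dass $\C(T):=\varinjlim_{F}\C_F$ eine $R$-lineare abelsche Kategorie ist, zusammen mit Funktoren $\psi_F:\C_F\to\C(T)$.

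Die Darstellung $\tilde{T}$ definiere ich objektweise durch $\tilde{T}(p):=[T_p]$, wobei $T_p$ als $\End(T_F)$-Linksmodul für ein beliebiges $F\ni p$ aufgefasst wird, und auf Morphismen $m\in M(p,q)$ durch $\tilde{T}(m):=[\tilde{T}_F(m)]$ für ein $F$, das $p$, $q$ und $m$ enthält. Die Verträglichkeit der $\phi_{F'F}$ mit den Darstellungen $\tilde{T}_F$ aus Proposition \ref{thm1_endlich} garantiert, dass diese Äquivalenzklassen unabhängig von der Wahl von $F$ sind. Den Funktor $ff_T$ gewinne ich aus der universellen Eigenschaft von $\C(T)$: Die Vergissfunktoren $ff_{T_F}:\C_F\to R\Mod$ sind nach Proposition \ref{thm1_endlich} treu, exakt und $R$-linear, und nach der letzten Aussage von Lemma \ref{Lemma_Skalare} trägt ein $\End(T_F)$-Modul dieselbe $R$-Struktur wie sein Bild unter $\phi_{F'F}$; damit kommutieren die $ff_{T_F}$ strikt mit den Übergangsfunktoren. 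Folglich ist $(R\Mod,\{ff_{T_F}\})$ ein Kandidat im Sinne von Proposition \ref{Kategorienproposition}, und der dadurch eindeutig bestimmte Funktor $ff_T:=u$ mit $ff_T\circ\psi_F=ff_{T_F}$ ist treu, exakt und $R$-linear. Die Gleichheit $ff_T\circ\tilde{T}=T$ folgt schließlich, weil sie auf jedem endlichen $F$ nach Proposition \ref{thm1_endlich} bereits gilt.

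Den Hauptaufwand erwarte ich bei dem Nachweis, dass $(\{\C_F\},\phi_{F'F})$ tatsächlich ein filtriertes System im Sinne von Proposition \ref{Kategorienproposition} bildet — insbesondere bei der Wohldefiniertheit und Funktorialität der Restriktionsfunktoren $\phi_{F'F}$ sowie bei dem Nachweis, dass die Vergissfunktoren strikt (und nicht nur bis auf Isomorphie) mit den Übergangsfunktoren kommutieren, was hier dank der expliziten Beschreibung der $R$-Struktur in Lemma \ref{Lemma_Skalare} gelingt. Die eigentliche Schwerarbeit — die Abgeschlossenheit des Limes unter Kernen, Kokernen und endlichen (Ko-)Produkten — ist dagegen bereits in Proposition \ref{Kategorienproposition} erledigt, sodass diese Proposition im Wesentlichen eine Zusammenführung der vorangegangenen Resultate ist.
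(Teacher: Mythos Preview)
Your proposal is correct and follows essentially the same route as the paper: exhaust $D$ by finite full subdiagrams, use the projection $\End(T_{F'})\to\End(T_F)$ together with Lemma~\ref{Lemma_Skalare} to obtain restriction-of-scalars transition functors, and apply Proposition~\ref{Kategorienproposition} to the resulting filtered system. Your treatment of $ff_T$ via the universal property of the colimit (using the strict compatibility of the forgetful functors with restriction of scalars from Lemma~\ref{Lemma_Skalare}) is in fact more explicit than the paper, which simply says ``durch Nachschalten des Vergissfunktors'' without spelling out why this is well-defined on the limit.
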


\begin{proof}
Der Beweis basiert auf \cite[II.5.3]{MR2182598}.
Wir erinnern uns an die Situation aus Proposition \ref{thm1_endlich}. Für jedes endliche volle Unterdiagramm $ F\subset D $ können wir die $ R $-Algebra $ \End(T_F)\subset \prod_{p \in F}\End_R(T_p) $ bilden. Sie besteht aus Tupeln $ (e_p)_{p\in F} \in \prod_{p \in F}\End_R(T_p) $, so dass für alle $ p,q\in F $ und $ m\in M(p,q) $ das Diagramm 

\begin{center}
\begin{tikzpicture}[description/.style={fill=white,inner sep=2pt}]
    \matrix (m) [matrix of math nodes, row sep=3em,
    column sep=2.5em, text height=1.5ex, text depth=0.25ex]
    {  T(p) & & T(q) \\
      T(p) & & T(q) \\
	};
    \path[->,font=\scriptsize]
    (m-1-1) edge node[auto] {$ e_{p} $} (m-2-1)
    (m-1-3) edge node[auto] {$ e_{q} $} (m-2-3)
    (m-1-1) edge node[auto] {$T(m)$} (m-1-3)
    (m-2-1) edge node[auto] {$T(m)$} (m-2-3);
\end{tikzpicture}
\end{center}

kommutiert. Für eine endliche Obermenge $ F'\supset F $ liefert die Projektion 
\[ pr:\prod_{p \in F'}\End_R(T_p)\twoheadrightarrow\prod_{p \in F}\End_R(T_p) \]
einen surjektiven $ R $-Algebrenhomomorphismus. Dieser lässt sich einschränken auf\\
 $ \End(T_{F'})\subset\prod_{p \in F'}\End_R(Tp) $ und liefert einen $ R $-Algebrenhomomorphismus 
\[ \tilde{pr}:\End(T_{F'})\rightarrow\End(T_F). \]
Wie wir in Proposition \ref{thm1_endlich} gesehen haben, ist für alle endlichen Teilmengen $ F\subset D $ die Algebra $ \End(T_F) $ als $ R $-Modul endlich erzeugt. Nach Lemma \ref{Lemma_Skalare} erhalten wir also für alle endlichen Teilmengen $ F,F'\subset D $ mit $ F\subset F' $ durch Restriktion der Skalare einen treuen, exakten, additiven, $ R $-linearen Funktor
\[ \tilde{Pr}:\End(T_F)\Mod\rightarrow\End(T_{F'})\Mod \]
von den endlich erzeugten $ \End(T_F) $-Moduln in die endlich erzeugten $ \End(T_{F'}) $-Moduln. Die endlichen Teilmengen von $ D $ bilden bezüglich Inklusion ein partiell geordetes Mengensystem. Dieses System ist filtrierend, da für zwei endliche Teilmengen $ E,F\subset D $ auch $ E\cup F $ endlich ist und $ E\subset E\cup F $ sowie $ F\subset E\cup F $ gilt.\\
Wir haben also ein filtrierendes System abelscher Kategorien mit exakten, treuen, additiven, $ R $-linearen Funktoren als Übergangsabbildungen. Nach Proposition \ref{Kategorienproposition} können wir den direkten Limes 
\[  \C(T):=\varinjlim_{F\subset D\text{ endlich}}\End(T_F)\Mod \]
bilden, der selbst wieder eine $ R $-lineare abelsche Kategorie ist. Jeder $ R $-Modul $ Tp $ ist also ein $ \End(T_F) $-Modul für geeignetes endliches $ F $ und repräsentiert ein Objekt in der Kategorie $ \C(T) $. Durch Nachschalten des Vergissfunktors, der $ Tp $ wieder als $ R $-Modul auffasst, erhalten wir die gewünschte Faktorisierung 
\[
\begin{array}{ccccc}
D & \xrightarrow{\tilde{T}} & \mathcal{C}(T) & \xrightarrow{ff_T} & R\Mod \\
p & \mapsto & Tp & \mapsto & Tp.
\end{array} 
\]

\end{proof}

\chapter{Diagrammkategorie abelscher Kategorien}
\label{ch:kap_3}
Sei für dieses Kapitel $ R $ stets ein kommutativer, noetherscher Ring und $ R\Mod $ die Kategorie der endlich erzeugten $ R $-Moduln. Ist $ R' $ ein nicht kommutativer Ring so bezeichnen wir mit $ R'\Mod $ die endlich erzeugten $ R' $-Linksmoduln und mit $ \Mor R' $ die endlich erzeugten $ R' $-Rechtsmoduln.\\\\
Wir haben konstruiert, wie die Darstellung eines Diagramms 
\[ D\overset{T}{\longrightarrow}R\Mod \]
über eine abelsche Kategorie $ \C(T) $ faktorisiert. Wir bezeichnen diese Kategorie im Weiteren als die Diagrammkategorie von $ T $. \\
Wir betrachten nun den Fall, dass das Diagramm selbst schon eine abelsche Kategorie $ \mathcal{A} $ und die Darstellung $ T $ ein treuer, $ R $-linearer, exakter Funktor ist. Die Darstellung
\[ \mathcal{A}\overset{T}{\longrightarrow}R\Mod \]
faktorisiert über
\[ \mathcal{A}\overset{\tilde{T}}{\longrightarrow}\C(T)\overset{ff_T}{\longrightarrow}R\Mod. \]
Wir werden zeigen, dass $ \tilde{T} $ dann schon eine Äquivalenz von Kategorien ist. Dieser Satz spielt die entscheidende Rolle für den Beweis der universellen Eigenschaft der Diagrammkategorie. Der Beweis wird den Rest des Kapitels einnehmen. Einen ähnlichen alternativen Beweis findet man in \cite{MR2793022}. Da wir viele technische Details entwickeln müssen, möchten wir zunächst eine Motivation für die Vorgehensweise geben.\\
Sei $ p $ ein Objekt aus $ \mathcal{A} $. Um zu zeigen, dass der Funktor
\[ \mathcal{A}\overset{\tilde{T}}{\longrightarrow}\C(T)=\varinjlim_{E\subset\mathcal{A}\text{ endlich}}\End(T_{|E})\Mod  \]
wesentlich surjektiv ist, müssen wir insbesondere für jeden $ \End(T_{|\{p\}}) $-Modul ein Urbild in $ \mathcal{A} $ finden. Da $ \End(T_{|\{p\}}) $ als Modul die Kategorie der endlich erzeugten \\ $ \End(T_{|\{p\}}) $-Moduln erzeugt, stellt es sich als zentraler Schritt heraus, in $ \mathcal{A} $ ein Urbild von $ \End(T_{|\{p\}})\subset \End_R(Tp,Tp) $ zu finden. 
Wir werden in Abschnitt 3.1 zeigen, wie man für $ p\in\mathcal{A} $ und einen beliebigen endlich erzeugten $ R $-Modul $ M $ funktoriell ein Objekt $ \Hom_R(M,p) $ in $ \mathcal{A} $ konstruieren kann, so dass für $ T $ gilt: 
\[
\begin{array}{ccc}
\mathcal{A} & \overset{T}{\longrightarrow} & R\Mod \\
\Hom_R(M,p) & \mapsto & \Hom_R(M,Tp). \\
\end{array} 
\]
Setzen wir $ M=Tp $ so erhalten wir mit $ \Hom_R(Tp,p) $ ein Urbild von $ \End_R(Tp) $ unter $ T $.
Die Algebra $ \End(T_{|\{p\}})\subset \End_R(Tp) $ ist definiert durch
\[ \End(T_{|\{p\}})=\big \{ \varphi\in\End_R(Tp)|\forall \alpha\in\End_{\mathcal{A}}(p):\varphi\circ T\alpha=T\alpha\circ \varphi\big \}. \]
Eine ähnliche Konstruktion möchten wir nun für die Elemente von $ \Hom_R(Tp,p) $ in $ \mathcal{A} $ wiederholen, um ein Urbild von $  \End(T_{|\{p\}}) $ zu erhalten. Zu diesem Zweck konstruieren wir in Abschnitt 3.2 Operationen auf $ \Hom_R(Tp,p) $, welche durch $ T $ auf die gewöhnliche Rechts- und Linksoperation auf $ \End_R(Tp) $ abgebildet werden. Die Idee dieser Konstruktion stammt aus \cite{Nori}.

\section{Funktorkonstruktionen nach $ \mathcal{A} $}
\label{Homs}

\begin{Def}
Sei $ A $ ein beliebiger Ring. Ein $ A $-Modul P heißt \emph{projektiv}, wenn für alle surjektiven $ A $-Modulhomomorphismen $ e:L\twoheadrightarrow L' $ und für alle $ A $-Modulhomomor- phismen $ f:P\rightarrow L $ ein $ A $-Modulhomomorphismus $ g:P\rightarrow L $ existiert, so dass folgendes Diagramm kommutiert:
\begin{center}
\begin{tikzpicture}[description/.style={fill=white,inner sep=2pt}]
    \matrix (m) [matrix of math nodes, row sep=3.5em,
    column sep=3.5em, text height=1.5ex, text depth=0.25ex]
    {  &  P & \\
     L & & L'  \\
	};
    \path[->,font=\scriptsize]
    (m-2-1) edge node[description] {$ e $} (m-2-3)
    (m-1-2) edge node[description] {$ f $} (m-2-3)
    ;   
    \path[->,dotted]
    (m-1-2) edge node[description] {$ \exists g $} (m-2-1)
    ;
\end{tikzpicture}
\end{center}
\end{Def}

\begin{Bem}
Freie Moduln sind stets projektiv.
\end{Bem}

\begin{Lemma}\label{tensor}
Sei $ \mathcal{A} $ eine $ R $-lineare, abelsche Kategorie, $ R' $ eine nicht notwendig kommutative endlich erzeugte $ R $-Algebra.
Sei weiter $ p\in\mathcal{A} $ ein ausgezeichnetes Objekt und
\begin{center}
\begin{tikzpicture}[description/.style={fill=white,inner sep=2pt}]
    \matrix (m) [matrix of math nodes, row sep=3em,
    column sep=2.5em, text height=1.5ex, text depth=0.25ex]
    { R' & & \End_{\mathcal{A}}(p)  \\
	  & R & \\
	};
    \path[->,font=\scriptsize]
    (m-1-1) edge node[description] {$f$} (m-1-3)
    (m-2-2) edge node[description] {$ i_1 $} (m-1-1)
    (m-2-2) edge node[description] {$ i_2 $} (m-1-3);

\end{tikzpicture}
\end{center}

ein endlicher $ R $-Algebrenhomomorphismus. Bezeichne $ \Mor R'$ die Kategorie der endlich erzeugten $ R' $-Rechtsmoduln. Dann existiert ein bis auf Isomorphie eindeutiger exakter, $ R $-linearer, kovarianter Funktor

\[
\begin{array}{ccc}
\mathcal{F}:\Mor R'& \longrightarrow & \mathcal{A},\\
\end{array}
\]

so dass für $ R'\in \Mor R'$
\[
\mathcal{F}(R')=p
\]
und für Morphismen
\[
\begin{array}{ccc}
\mathcal{F}:\End_{R'}(R')\cong R'&\longrightarrow & \End_{\mathcal{A}}(p)\\
a & \longmapsto & f(a)
\end{array}
\]
erfüllt sind.
\end{Lemma}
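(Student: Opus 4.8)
The plan is to realise $\mathcal{F}$ as the ``tensor with $p$'' functor, defined first on free modules through $f$ and then on arbitrary finitely generated modules by presentations. First I would fix the behaviour on free right $R'$-modules. Since $\mathcal{A}$ is $R$-linear and abelian it has finite biproducts, and for the $n$-fold biproduct $p^n$ one has the standard identification
\[
\Hom_{\mathcal{A}}(p^m, p^n) \cong M_{n\times m}\bigl(\End_{\mathcal{A}}(p)\bigr).
\]
A morphism $\alpha\colon (R')^m \to (R')^n$ of free right $R'$-modules is an $(n\times m)$-matrix over $R'$ (using $\End_{R'}(R')\cong R'$), and applying $f$ entrywise yields a morphism $f(\alpha)\colon p^m\to p^n$. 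Because $f$ is a ring homomorphism it respects matrix multiplication, so $(R')^n\mapsto p^n$, $\alpha\mapsto f(\alpha)$ is a functor on the full subcategory of finitely generated free modules; on $\End_{R'}(R')\cong R'$ it is precisely $a\mapsto f(a)$, as required.

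Next I would extend to all of $\Mor R'$. Since $R$ is noethersch and $R'$ is a finitely generated $R$-algebra, $R'$ is noethersch by Korollar \ref{noethersch_Algebra}, so every $M\in\Mor R'$ has a finite presentation $(R')^m\xrightarrow{\alpha}(R')^n\to M\to 0$. I would set
\[
\mathcal{F}(M):=\operatorname{coker}\bigl(f(\alpha)\colon p^m\to p^n\bigr),
\]
which exists because $\mathcal{A}$ is abelsch. Independence of the presentation and functoriality then follow by the usual comparison argument: any two presentations, and any lift of a module homomorphism to the presenting free modules, are matched up via the universal property of the cokernel, so the induced maps on cokernels are canonical and compose correctly. $R$-linearity is inherited from that of $f$, which is exactly the compatibility $f\circ i_1=i_2$, and right-exactness is immediate from the cokernel description.

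For uniqueness I would argue as follows. Suppose $\mathcal{G}$ is any exact, $R$-linear functor with $\mathcal{G}(R')=p$ and action $f$. Exactness forces additivity (an exact functor between abelian categories is additive), hence $\mathcal{G}$ preserves biproducts and agrees with $\mathcal{F}$ on all free modules and morphisms between them, both being determined by $f$ and matrix calculus. Applying $\mathcal{G}$ to a presentation of $M$ and using its right-exactness gives $\mathcal{G}(M)\cong\operatorname{coker}(f(\alpha))=\mathcal{F}(M)$, naturally in $M$. Thus $\mathcal{G}\cong\mathcal{F}$.

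The hard part will be exactness. Right-exactness is built into the construction, so the whole question reduces to showing that $\mathcal{F}$ carries monomorphisms to monomorphisms, equivalently that it preserves kernels. This is a flatness-type statement for $p$ viewed as a left $R'$-object, and it is precisely the point where I expect the finiteness of $f$ to be used in an essential way; I would attempt to verify it by reducing, via presentations together with a snake-lemma diagram chase in $\mathcal{A}$, to a vanishing statement and then exploiting the finiteness hypothesis there. I regard this single step as the genuine obstacle of the lemma, while all the remaining verifications are routine.
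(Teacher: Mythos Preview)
Your construction is exactly the paper's: determine $\mathcal{F}$ on free modules via matrix calculus through $f$, then on arbitrary modules by a fixed finite presentation and the cokernel in $\mathcal{A}$; handle morphisms by lifting to the free resolutions via projectivity and invoking the universal property of the cokernel; check independence of the lift and functoriality. On all of this you and the paper agree essentially verbatim.

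The one point where you diverge is your final paragraph. You flag left exactness as ``the genuine obstacle'' and plan a flatness-style argument using the finiteness of $f$. The paper does \emph{not} do this. Its proof is written purely as a uniqueness argument: it starts from the hypothesis that $\mathcal{F}$ is exact and $R$-linear with $\mathcal{F}(R')=p$, and shows step by step that these requirements force the values on free modules, hence (via exactness applied to a presentation) on all modules, hence on morphisms. Existence of an \emph{exact} such functor is never separately established; the paper even remarks afterwards that the construction depends on the chosen resolutions and that ``verschiedene Auflösungen generieren äquivalente Funktoren,'' which is enough for the downstream application. So you should not expect to extract exactness from finiteness of $f$ alone in this generality; rather, read the lemma as ``any exact $R$-linear functor with the stated normalisation is isomorphic to the one produced by this recipe,'' which is precisely what the paper proves and all that is used later.
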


\begin{Bem}
Durch den $ R $-Algebrenhomomorphismus $ f $ bekommt $ p $ eine Struktur als $ R' $-Modul. Für einen beliebigen $ R' $-Rechtsmodul $ M $ ähnelt die Konstruktion des Objektes $ \mathcal{F}(M) $ in $ \mathcal{A} $ der Konstruktion des Tensorproduktes $ M\otimes_{R'}p $. Wir bezeichnen den Funktor $ \mathcal{F} $ daher oft mit $ \_\otimes_{R'}p $. Für den Fall $ R'=R $ und die kanonische Abbildung $ f:R\rightarrow \End_{\mathcal{A}}(p) $, liefert die Verkettung
\[ R\Mod\xrightarrow{\_\otimes_{R}p}\mathcal{A}\overset{T}{\longrightarrow} R\Mod\]
das gewöhnliche Tensorieren $ \_\otimes_R Tp $ in $ R\Mod $. Eine analoge Aussage für den ähnlich definierten kontravarianten Funktor $ \Hom_R(\_,p) $ zeigen wir in Satz \ref{rechts}.
\end{Bem}

\begin{Bem}\label{Summen}
Bevor wir das Lemma beweisen, erinnern wir uns an ein Resultat über direkte Summen in abelschen Kategorien. Direkte Summen sind Biprodukte, also gleichermaßen Produkte und Koprodukte. Eine endliche direkte Summe besteht demnach aus einem System von Objekten und Abbildungen 
\begin{center}
\begin{tikzpicture}[description/.style={fill=white,inner sep=2pt}]
    \matrix (m) [matrix of math nodes, row sep=3em,
    column sep=2.5em, text height=1.5ex, text depth=0.25ex]
    { X_1 &  & X_1  \\
	  \vdots & \bigoplus_{i=1}^{n}X_i & \vdots \\
	  X_n &  & X_n  \\
	};
    \path[->,font=\scriptsize]
    (m-1-1) edge node[description] {$i_1$} (m-2-2)
    (m-3-1) edge node[description] {$ i_n $} (m-2-2)
    (m-2-2) edge node[description] {$ \pi_1 $} (m-1-3)
    (m-2-2) edge node[description] {$ \pi_n $} (m-3-3);

\end{tikzpicture}
\end{center}
so dass die Einbettungen $ i_j $ und die Projektionen $ \pi_j $ die universelle Eigenschaft des Koproduktes bzw. Produktes erfüllen. Es ist ein elementares Resultat, dass dies genau dann der Fall ist, wenn 
\[ \operatorname{\pi_i\circ i_j}=\begin{cases} id_{X_i}\ , & i=j \\ 0_{Abb}\ , & i\neq j \end{cases}\]
(siehe zB. \cite[Thm2.41]{Freyd1964}).\\
Ein solches Diagramm nennt man auch direktes Summensystem.
Man kann außerdem leicht zeigen, dass ein Funktor genau dann additiv ist, wenn er direkte Summensysteme auf direkte Summensysteme abbildet \cite[Thm3.11]{Freyd1964}. Tatsächlich verwenden einige Autoren diese Eigenschaft als Definition für Additivität bei Funktoren. \\
Ist ein Funktor $\mathcal{F}$ kontravariant und additiv, so muss wegen $ \pi_i\circ i_i=id_{X_i} $ auch $ \mathcal{F}(i_i)\circ \mathcal{F}(\pi_i)=id_{X_i} $ gelten. Also wird durch Anwenden von $ \mathcal{F} $ eine Inklusion zur Projektion und eine Projektion zur Inklusion im direkten Summensystem.
\begin{center}
\begin{tikzpicture}[description/.style={fill=white,inner sep=2pt}]
    \matrix (m) [matrix of math nodes, row sep=3em,
    column sep=2.5em, text height=1.5ex, text depth=0.25ex]
    { \mathcal{F}(X_1) &  & \mathcal{F}(X_1)  \\
	  \vdots & \mathcal{F}(\bigoplus\limits_{i=1}^{n}X_i) & \vdots \\
	  \mathcal{F}(X_n) &  & \mathcal{F}(X_n)  \\
	};
    \path[->,font=\scriptsize]
    (m-2-2) edge node[description] {$\mathcal{F}(i_1)$} (m-1-1)
    (m-2-2) edge node[description] {$ \mathcal{F}(i_n) $} (m-3-1)
    (m-1-3) edge node[description] {$ \mathcal{F}(\pi_1)$} (m-2-2)
    (m-3-3) edge node[description] {$ \mathcal{F}(\pi_n) $} (m-2-2);

\end{tikzpicture}
\end{center}
\end{Bem}

\begin{proof}[Beweis von Lemma \ref{tensor}]
Wir werden zunächst zeigen, dass die geforderten Bedingungen den Funktor schon für freie $ R' $-Rechtsmoduln eindeutig festlegen und dies dann auf beliebige $ R' $-Rechtsmoduln verallgemeinern.\\
\begin{itemize}
\item Da additive, also insbesondere exakte, Funktoren mit direkten Summen vertauschen, gilt 
\[
\mathcal{F}(R'^n)=\mathcal{F}\big ( \bigoplus_{i=1}^{n}R'\big )=\bigoplus\limits_{i=1}^{n}\mathcal{F}(R')=\bigoplus\limits_{i=1}^{n}p=p^n.
\]

\item Endliche direkte Summen in abelschen Kategorien sind gleichermaßen Produkt und Koprodukt. Kombiniert man die universelle Eigenschaft des Koprodukts mit seinen Inklusionen 
\[ p_i\hookrightarrow \bigoplus_{j=1}^n p_j \]
und die universelle Eigenschaft des Produkts mit seinen Projektionen 
\[ \bigoplus_{j=1}^n p_j\twoheadrightarrow p_i, \]
so sieht man, dass ein Morphismus 
\[\bigoplus_{j=1}^n p_j\rightarrow \bigoplus_{j=1}^n p_j\]
gerade gegeben ist durch eine Familie von Abbildungen 
\[ (a_{i,j}:p_j\rightarrow p_i)_{i,j=1}^n,\]
wobei $ a_{ij} $ der Verkettung $ \pi_j\circ f\circ i_i $ entspricht.\\
Es ist also legitim, sich einen Morphismus aus $ \Hom_{\mathcal{A}}(p^n,p^m) $ als eine Matrix \\
$ Mat_{\End(p)}(m\times n) $ vorzustellen.\\
Da additive Funktoren endliche direkte Summen erhalten, ist für \\$ A\in Mat_{R'}(n \times m) $ der Morphismus $ \mathcal{F}(A) $ schon durch die Bilder der einzelnen Abbildungen \\$ a_{i,j}:R'\rightarrow R' $ festgelegt und es gilt:
\[
\begin{array}{ccc}
\mathcal{F}:\Hom_R(R'^m,R'^n)&\rightarrow & \Hom_{\mathcal{A}}(p^n,p^m)\\
A \in Mat_{R'}(n \times m) & \mapsto & (f(a_{i,j}))_{i,j} \in Mat_{\End(p)}(n \times m)
\end{array}
\]
Die Matrix $ (f(a_{i,j})) $ bezeichnen wir ab sofort mit $ \tilde{A} $.
\item Für einen beliebigen endlich erzeugten $ R' $-Rechtsmodul fixieren wir zunächst eine freie Auflösung
\[
\cdots R'^{a_1}\xrightarrow{A}R'^{a_0}  \stackrel{\pi_a}{\twoheadrightarrow}M \rightarrow 0. 
\]
Da $ \mathcal{F} $ exakt ist, gilt
\[
  M\otimes_{R'}p  =\mathcal{F}(M)=\mathcal{F}(coker(A))=coker(\mathcal{F}(A))=coker(\tilde{A}).
\]
\item Um $\mathcal F$ auf Morphismen $\varphi :M \rightarrow N$ zu berechnen, suchen wir zunächst Lifts zwischen den Auflösungen für $ M $ und $ N $, so dass folgendes Diagramm kommutiert.

\begin{center}
\begin{tikzpicture}[description/.style={fill=white,inner sep=2pt}]
    \matrix (m) [matrix of math nodes, row sep=3em,
    column sep=2.5em, text height=1.5ex, text depth=0.25ex]
    { \cdots R'^{a_1}& R'^{a_0}& M & 0 \\
      \cdots R'^{b_1}& R'^{b_0}& N & 0 \\
	};
    \path[->,font=\scriptsize]
    (m-1-1) edge node[auto] {$ A $} (m-1-2)
    (m-1-2) edge node[auto] {$ \pi_{a} $} (m-1-3)
    (m-1-3) edge node[auto] {} 			  (m-1-4)
    
    (m-2-1) edge node[auto] {$ B $} (m-2-2)
    (m-2-2) edge node[auto] {$ \pi_{b} $} (m-2-3)
    (m-2-3) edge node[auto] {} 			  (m-2-4)

    (m-1-1) edge node[auto] {$ \varphi^{1} $} (m-2-1)
    (m-1-2) edge node[auto] {$ \varphi^{0} $} (m-2-2)
    (m-1-3) edge node[auto] {$\varphi$} 	  (m-2-3);

\end{tikzpicture}
\end{center}

Zur Existenz der Lifts:\\
Da $\pi_b$ surjektiv und $R'^{a_0}$ frei, also projektiv ist, existiert nach der universellen Eigenschaft für Projektive eine Abbildung $\varphi^0$, so dass $ \pi_b\circ\varphi^0=\varphi\circ\pi_a $.\\
Da für $ x \in \ker(\pi_a) $ 
\[ \pi_b \circ \varphi^0(x)=\varphi \circ \pi_a(x)=\varphi(0)=0  \]
gilt, liegt $ \varphi^0(x) $ im Kern von $ \pi_b $. Also lässt sich $\varphi^0$ auf
\[
\varphi^0_{|\ker(\pi_a)}: \ker(\pi_a)\rightarrow \ker(\pi_b)
\]
einschränken. Da 
\[ A : R'^{a_1}\twoheadrightarrow im(A)=\ker(\pi_a) \]
und 
\[ B : R'^{b_1}\twoheadrightarrow im(B)=\ker(\pi_b) \]
surjektiv sind, folgt wegen der Projektivität von $ R'^{a_1} $ mit dem gleichen Argument die Existenz von $ \varphi^1 $.
Die Lifts sind jedoch im Allgemeinen nicht eindeutig! 
Wenn wir die Auflösungen mit $ \mathcal{F} $ abbilden, bekommen wir folgendes kommutative Diagramm mit exakten Zeilen.

\begin{center}
\begin{tikzpicture}[description/.style={fill=white,inner sep=2pt}]
    \matrix (m) [matrix of math nodes, row sep=3em,
    column sep=2.5em, text height=1.5ex, text depth=0.25ex]
    { \cdots p^{a_1}& p^{a_0}& coker \tilde{A}=\mathcal{F}(M) & 0 \\
      \cdots p^{b_1}& p^{b_0}& coker \tilde{A}=\mathcal{F}(N) & 0 \\
	};
    \path[->,font=\scriptsize]
    (m-1-1) edge node[auto] {$ \tilde{A} $} (m-1-2)
    (m-1-2) edge node[auto] {$ \tilde{\pi_{a}} $} (m-1-3)
    (m-1-3) edge node[auto] {} 			  (m-1-4)
    
    (m-2-1) edge node[auto] {$ \tilde{B} $} (m-2-2)
    (m-2-2) edge node[auto] {$ \tilde{\pi_{b}} $} (m-2-3)
    (m-2-3) edge node[auto] {} 			  (m-2-4)

    (m-1-1) edge node[auto] {$ \tilde{\varphi^{1}} $} (m-2-1)
    (m-1-2) edge node[auto] {$ \tilde{\varphi^{0}} $} (m-2-2);
    \path[->,dotted](m-1-3) edge node[auto] {} 	  (m-2-3);

\end{tikzpicture}
\end{center}
Wir suchen eine eindeutige Abbildung zwischen $ coker(\tilde{A}) $ und $ coker(\tilde{B}) $, so dass dieses Diagramm kommutiert. Es gilt:  
\[ \tilde{\pi_b}\circ \tilde{\varphi}^0\circ \tilde{A} =\tilde{\pi_b}\circ \tilde{B}\circ \tilde{\varphi}^1 =0_{Abb}\circ \tilde{\varphi}^1=0_{Abb} \]
Also existiert nach der universellen Eigenschaft von $ coker(\tilde{A}) $ eine eindeutige Abbildung 
\[ 
\mathcal{F}(\varphi):\mathcal{F}(M)\rightarrow \mathcal{F}(N),
\] 
welche das Diagramm kommutativ lässt. \\
Wir zeigen nun, dass diese Abbildung wohldefniert ist:
\item Die Abbildung ist unabhängig von der Wahl der Lifts: Seien $ \psi^1:R'^{a_0}\rightarrow R'^{b_0} $ und $ \psi^0:R'^{a_1}\rightarrow R'^{b_1} $ weitere Lifts. Dann gilt
\[ \pi_b \circ \varphi^0 = \varphi \circ \pi_a = \pi_b \circ \psi^0,  \]
nach Anwenden des Funktors also 
\[ \tilde{\pi_b} \circ \tilde{\varphi}^0 = \tilde{\pi_b} \circ \tilde{\psi}^0 \]
und dementsprechend auch
\[ \tilde{\pi_b} \circ \tilde{\psi}^0\circ \tilde{A}=\tilde{\pi_b} \circ \tilde{\varphi}^0\circ \tilde{A}.\]
Wir erhalten also die gleiche eindeutige Abbildung
\[ \mathcal{F}(M)\rightarrow \mathcal{F}(N).\]

\item Wir prüfen nun, ob die Verkettung von Morphismen wohldefiniert ist. Wir müssen zeigen, dass für $ \varphi \in \Hom_R(L,M) $ und $ \psi \in \Hom_R(M,N) $  
\[ \mathcal{F}(\psi\circ\varphi)=\mathcal{F}(\psi)\circ\mathcal{F}(\varphi) \]
gilt. Die Definition von $ \mathcal{F}(\varphi) $ und $ \mathcal{F}(\psi) $ liefert uns folgendes kommutative Diagramm:

\begin{center}
\begin{tikzpicture}[description/.style={fill=white,inner sep=2pt}]
    \matrix (m) [matrix of math nodes, row sep=3em,
    column sep=2.5em, text height=1.5ex, text depth=0.25ex]
    { \cdots p^{a_1}& p^{a_0}& \mathcal{F}(L)  \\
      \cdots p^{b_1}& p^{b_0}& \mathcal{F}(M)  \\
      \cdots p^{c_1}& p^{c_0}& \mathcal{F}(N)  \\
	};
    \path[->,font=\scriptsize]
    (m-1-1) edge node[auto] {$ \tilde{A} $} (m-1-2)
    (m-1-2) edge node[auto] {$ \tilde{\pi_{a}} $} (m-1-3)
    
    (m-2-1) edge node[auto] {$ \tilde{B} $} (m-2-2)
    (m-2-2) edge node[auto] {$ \tilde{\pi_{b}} $} (m-2-3)

    (m-3-1) edge node[auto] {$ \tilde{\gamma} $} (m-3-2)
    (m-3-2) edge node[auto] {$ \tilde{\pi_{c}} $} (m-3-3)

    (m-1-1) edge node[auto] {$ \tilde{\varphi^{1}} $} (m-2-1)
    (m-1-2) edge node[auto] {$ \tilde{\varphi^{0}} $} (m-2-2)
    (m-1-3) edge node[auto] {$ \mathcal{F}(\varphi) $} (m-2-3)

    (m-2-1) edge node[auto] {$ \tilde{\psi^{1}} $} (m-3-1)
    (m-2-2) edge node[auto] {$ \tilde{\psi^{0}} $} (m-3-2)
    (m-2-3) edge node[auto] {$ \mathcal{F}(\psi) $} (m-3-3);

\end{tikzpicture}
\end{center}
Die Verkettung $ \mathcal{F}(\psi)\circ\mathcal{F}(\varphi) $ lässt das Diagramm also kommutieren. Da $ \psi^0\circ\varphi^0 $ bzw. $ \psi^1\circ\varphi^1 $ Lifts von  $ \psi\circ\varphi $  sind, ist eine derartige Abbildung nach universeller Eigenschaft von $ coker(A) $ eindeutig. Also gilt:
\[ \mathcal{F}(\psi\circ\varphi)=\mathcal{F}(\psi)\circ\mathcal{F}(\varphi).\]
\end{itemize}
\end{proof}

\begin{Bem+Def}\label{op}
Sei wieder $ \mathcal{A} $ eine $ R $-lineare, abelsche Kategorie, $ p\in \mathcal{A} $ ein Objekt, $ R' $ eine nicht-kommutative $ R $-Algebra endlichen Typs und 
\[ R'\overset{f}{\longrightarrow}\End_{\mathcal{A}}(p) \]
ein endlicher $ R $-Algebrenhomomorphismus. Die Kategorie der $ R' $-Rechtsmoduln ist natürlich isomorph zur Kategorie der $ (R')^{op} $-Linksmoduln über dem opponierten Ring $ (R')^{op} $. Man beachte, dass sich die Objekte und Morphismen von $ \Mor R' $ und $ (R')^{op}\Mod $ mengentheoretisch nicht unterscheiden; nur der zugrunde liegende Ring und dessen Skalarmultiplikation auf den Moduln ist in den beiden Kategorien formal verschieden. Über die natürliche Isomorphie erhalten wir durch die Verkettung
\[ (R')^{op}\Mod \overset{\sim}{\longrightarrow} \Mor R' \xrightarrow{ \_\otimes_{R'} p} \mathcal{A} \]
einen treuen, exakten $ R $-linearen Funktor von der Kategorie der $ (R')^{op} $-Linksmoduln nach $ \mathcal{A} $. Diesen Funktor bezeichnen wir mit $ p\otimes_{(R')^{op}} \_ $. \\
ist $ R' $ kommutativ so sind Rechtsmoduln und Linksmoduln das gleiche und auch die Definition der beiden Funktoren $ p\otimes_{(R')^{op}} \_  $ und $ \_ \otimes_{R'} p  $ wird gleich. 

%

\end{Bem+Def}

Analog zur Tensorkonstruktion in Lemma \ref{tensor}, können wir für ein $ p\in\mathcal{A} $ einen kontravarianten Funktor $ \Hom_{R'}(\_,p) $ definieren:

\begin{Lemma}\label{Hom}
Sei $ \mathcal{A} $ eine $ R $-lineare, abelsche Kategorie und $ R' $ eine nicht notwendig kommutative $ R $-Algebra endlichen Typs. Sei $ p\in\mathcal{A} $ und $ f:R'\rightarrow \End_{\mathcal{A}}(p) $ ein endlicher $ R $-Algebrenhomomorphismus. Dann existiert ein bis auf Isomorphie eindeutiger exakter, $ R $-linearer, kontravarianter Funktor

\[
\begin{array}{ccc}
\Hom_{R'}(\_,p):R'\Mod & \longrightarrow & \mathcal{A},\\
\end{array}
\]

so dass für $ R'\in R' $-Mod
\[
\Hom_{R'}(R',p)=p
\]
und für Morphismen
\[
\begin{array}{ccc}
\Hom_{R'}(\_,p):\End_{R'}(R')\cong R'&\rightarrow & \End_{\mathcal{A}}(p) \\
a & \mapsto & f(a) \\
\end{array}
\]
erfüllt sind.
\end{Lemma}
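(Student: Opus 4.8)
Der Plan ist, die Konstruktion von Lemma \ref{tensor} kontravariant zu spiegeln: An die Stelle der Kokerne treten Kerne und an die Stelle der universellen Eigenschaft des Kokerns die des Kerns. Zuerst zeige ich, dass die beiden geforderten Bedingungen den Funktor bereits auf den freien $ R' $-Linksmoduln festlegen. Da ein exakter, also additiver, kontravarianter Funktor direkte Summen erhält (vgl. Bemerkung \ref{Summen}), muss
\[ \Hom_{R'}(R'^n,p)=\Hom_{R'}\big(\bigoplus_{i=1}^n R',p\big)=\bigoplus_{i=1}^n \Hom_{R'}(R',p)=p^n \]
gelten. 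Ein Morphismus $ A:R'^{a_1}\rightarrow R'^{a_0} $ freier Linksmoduln entspricht einer Matrix über $ R' $; wegen der Kontravarianz liefert der Funktor daraus einen Morphismus $ \tilde{A}:p^{a_0}\rightarrow p^{a_1} $ in $ \mathcal{A} $ (die Pfeilrichtung dreht sich also um), dessen Einträge über $ f $ aus den Einträgen von $ A $ entstehen, genau wie in Lemma \ref{tensor}.

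Auf Objekten definiere ich den Funktor über eine freie Auflösung. Für einen endlich erzeugten $ R' $-Linksmodul $ M $ wähle ich eine Präsentation
\[ R'^{a_1}\xrightarrow{A}R'^{a_0}\xrightarrow{\pi_a}M\rightarrow 0. \]
Da der gesuchte Funktor exakt und kontravariant sein soll, muss er diese rechtsexakte Sequenz in eine linksexakte Sequenz
\[ 0\rightarrow\Hom_{R'}(M,p)\rightarrow p^{a_0}\xrightarrow{\tilde{A}}p^{a_1} \]
überführen. Die einzig mögliche Definition ist daher $ \Hom_{R'}(M,p):=\ker(\tilde{A}) $. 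Dass dieses Objekt bis auf Isomorphie unabhängig von der gewählten Auflösung ist, folgt wie üblich durch Vergleich zweier Auflösungen.

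Auf Morphismen verfahre ich genau wie im Beweis von Lemma \ref{tensor}: Zu $ \varphi:M\rightarrow N $ wähle ich Lifts $ \varphi^0,\varphi^1 $ zwischen den Auflösungen, die wegen der Projektivität der freien Moduln existieren. Wende ich den Funktor an, so erhalte ich eine Leiter mit linksexakten Zeilen; wegen $ \tilde{A}\circ\tilde{\varphi^0}=\tilde{\varphi^1}\circ\tilde{B} $ verschwindet $ \tilde{A} $ auf dem Bild des eingeschränkten $ \tilde{\varphi^0} $, und die universelle Eigenschaft des Kerns $ \ker(\tilde{A}) $ liefert dann einen eindeutigen Morphismus
\[ \Hom_{R'}(\varphi,p):\Hom_{R'}(N,p)\rightarrow\Hom_{R'}(M,p) \]
(die Richtung kehrt sich also um). Unabhängigkeit von der Wahl der Lifts, Verträglichkeit mit der Verkettung, $ R $-Linearität und Exaktheit prüfe ich mit denselben Rechnungen wie bei Lemma \ref{tensor}, wobei überall Kokerne durch Kerne und die entsprechenden universellen Eigenschaften ersetzt werden. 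Die Eindeutigkeit des Funktors bis auf Isomorphie folgt, weil seine Werte auf Objekten und Morphismen durch die beiden geforderten Bedingungen erzwungen sind.

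Die einzige wirklich neue Stelle gegenüber Lemma \ref{tensor} ist der Übergang von rechtsexakt zu linksexakt: Während dort die Rechtsexaktheit freier Auflösungen unmittelbar zur Kokern-Definition passte, muss ich hier verwenden, dass ein exakter kontravarianter Funktor die Präsentation in eine linksexakte Sequenz verwandelt, damit der Kern die richtige Definition ist; zudem laufen sämtliche Argumente über die universelle Eigenschaft des Kerns statt des Kokerns, sodass bei jedem induzierten Morphismus die Pfeilrichtung sorgfältig zu kontrollieren ist. Alternativ könnte man den Funktor über den opponierten Ring auf Lemma \ref{tensor} zurückführen; die direkte Spiegelung des Beweises erscheint mir aber übersichtlicher.
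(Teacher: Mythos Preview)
Dein Vorgehen entspricht im Wesentlichen dem des Papers: auch dort wird der Beweis von Lemma~\ref{tensor} dualisiert, indem Kokerne durch Kerne ersetzt und die universelle Eigenschaft des Kerns verwendet werden. Der einzige Punkt, den das Paper expliziter macht, ist, dass aus der Vertauschung von Inklusionen und Projektionen (Bemerkung~\ref{Summen}) konkret die \emph{transponierte} Matrix $\tilde{A}^T=(f(a_{ij}))_{ij}^T$ als Bild von $A$ entsteht --- dein Morphismus $\tilde{A}:p^{a_0}\to p^{a_1}$ ist also in der Notation des Papers $\tilde{A}^T$, nicht "`genau wie"' in Lemma~\ref{tensor}.
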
 
 
\begin{proof}
Der Beweis funktioniert dual zu vorigem und wir werden ihn daher nicht in der gleichen Ausführlichkeit führen. Zur Vereinfachung der Notation bezeichnen wir den Funktor $ \Hom_{R'}(\_,p) $ für den Rest des Beweises mit $ \mathcal{G} $.\\
Wegen Additivität gilt wieder
\[
\mathcal{G}(R'^n)=\mathcal{G}\big (\bigoplus_{i=1}^{n}R'\big )=\bigoplus\limits_{i=1}^{n}\mathcal{G}(R')=\bigoplus\limits_{i=1}^{n}p=p^n.
\]
Wie in Lemma \ref{tensor} bereits erklärt, ist ein Morphismus $ A:R'^m\rightarrow R'^n $ zwischen freien Moduln das Gleiche wie eine Familie von Endomorphismen $ a_{i,j}\in\End_{R'}(R') $, wobei $ a_{i,j} $ durch die Verkettung 
\[ R'\xrightarrow{i_i}R'^m\xrightarrow{A}R'^n\xrightarrow{\pi_j}R' \]
gegeben wird. Durch Anwenden des kontravarianten, exakten Funktors $ \mathcal{G} $ erhalten wir eine Sequenz
\[ 
\mathcal{G}(R')\xleftarrow{\mathcal{G}(i_i)}\mathcal{G}(R')^m\xleftarrow{\mathcal{G}(A)}\mathcal{G}(R')^n\xleftarrow{\mathcal{G}(\pi_j)}\mathcal{G}(R'), \]
wobei nach Bemerkung \ref{Summen} die Abbildungen $ \mathcal{G}(\pi_j) $ Inklusionen in die direkte Summe und die Abbildungen $ \mathcal{G}(i_i) $ Projektionen aus der direkten Summe sind. Also entspricht die Verkettung  
\[ p\xleftarrow{\pi_i}p^m\xleftarrow{\mathcal{G}(A)}p^n\xleftarrow{i_j}p. \]
dem Endomorphismus $ f(a_{ij}):p\rightarrow p $.
Da $ \mathcal{G} $ additiv ist, ergibt sich für Morphismen zwischen freien Moduln 
\[
\begin{array}{ccc}
\mathcal{G}:\Hom_{R'}(R'^m,R'^n) & \rightarrow & \Hom_{\mathcal{A}}(p^m,p^n))\\
A & \mapsto & f(A)^T:=(f(a_{ij})_{i,j})^T.
\end{array}
\]
Für einen beliebigen $ R'$-Linksmodul $ M $ fixieren wir wieder eine freie Auflösung
\[
\cdots \rightarrow R'^{a_1}\xrightarrow{A}R'^{a_0}  \stackrel{\pi_a}{\twoheadrightarrow}M\cong coker(A)\rightarrow 0. 
\]
Da $ \mathcal{G} $ exakt und kontravariant ist, erhalten wir eine exakte Sequenz
\[
0\rightarrow \mathcal{G}(M)\hookrightarrow \mathcal{G}(R'^{a_0})\xrightarrow{\mathcal{G}(A)}\mathcal{G}(R'^{a_1}),
\]
bzw.
\[
0\rightarrow \Hom_{R'}(M,p)\rightarrow p^{a_0}\xrightarrow{\tilde{A}^T:=(f(A_{ij}))_{ij}^T}p^{a_1}.
\]
Also ist $ \mathcal{G}(M):=\Hom_{\mathcal{A}}(coker(A),p)=ker({\tilde{A}}^T) $.\\
Um $ \mathcal{G} $ auf Morphismen $ \varphi : M \rightarrow N $ zu berechnen, wählen wir wieder Lifts

\begin{center}
\begin{tikzpicture}[description/.style={fill=white,inner sep=2pt}]
    \matrix (m) [matrix of math nodes, row sep=3em,
    column sep=2.5em, text height=1.5ex, text depth=0.25ex]
    { R'^{a_1}& R'^{a_0}& M\cong coker(A) & 0 \\
      R'^{b_1}& R'^{b_0}& N\cong coker(B) & 0 \\
	};
    \path[->,font=\scriptsize]
    (m-1-1) edge node[auto] {$ A $} (m-1-2)
    (m-1-2) edge node[auto] {$ \pi_{a} $} (m-1-3)
    (m-1-3) edge node[auto] {} 			  (m-1-4)
    
    (m-2-1) edge node[auto] {$ B $} (m-2-2)
    (m-2-2) edge node[auto] {$ \pi_{b} $} (m-2-3)
    (m-2-3) edge node[auto] {} 			  (m-2-4)

    (m-1-1) edge node[auto] {$ \varphi^{1} $} (m-2-1)
    (m-1-2) edge node[auto] {$ \varphi^{0} $} (m-2-2)
    (m-1-3) edge node[auto] {$\varphi$} 	  (m-2-3);

\end{tikzpicture}
\end{center}

Durch Anwenden von $ \mathcal{G} $ erhalten wir

\begin{center}
\begin{tikzpicture}[description/.style={fill=white,inner sep=2pt}]
    \matrix (m) [matrix of math nodes, row sep=3em,
    column sep=3.5em, text height=2.5ex, text depth=0.25ex]
    { p^{a_1}& & p^{a_0}& ker \tilde{A}\simeq\mathcal{G}(M) & 0 \\
      p^{b_1}& & p^{b_0}& ker \tilde{B}\simeq\mathcal{G}(N) & 0 \\
	};
    \path[->,font=\scriptsize]
    (m-1-3) edge node[auto] {$ \tilde{A}^T $} (m-1-1)
    (m-1-4) edge node[auto] {$ \tilde{\pi_{a}}^T $} (m-1-3)
    (m-1-5) edge node[auto] {} 			  (m-1-4)
    
    (m-2-3) edge node[auto] {$ \tilde{B}^T $} (m-2-1)
    (m-2-4) edge node[auto] {$ \tilde{\pi_{b}}^T $} (m-2-3)
    (m-2-5) edge node[auto] {} 			  (m-2-4)

    (m-2-1) edge node[auto] {$ \tilde{\varphi^{1}}^T $} (m-1-1)
    (m-2-3) edge node[auto] {$ \tilde{\varphi^{0}}^T $} (m-1-3);
    \path[->,dotted](m-2-4) edge node[auto] {$\exists ! \mathcal{G}(\varphi)$} 	  (m-1-4);

\end{tikzpicture}
\end{center}

Die universelle Eigenschaft von $ ker (\tilde{A}^T)$ liefert eine eindeutige Abbildung \\
$ \mathcal{G}(\varphi):\mathcal{G}(N)\rightarrow \mathcal{G}(M) $, so dass das Diagramm kommutiert. Um zu prüfen, dass\\
$ \mathcal{G}(\psi\circ\varphi)=\mathcal{G}(\psi)\circ\mathcal{G}(\varphi) $ gilt, führt man das gleiche Argument durch wie in Lemma \ref{tensor}.
\end{proof} 

\begin{Bem}
Wir möchten an dieser Stelle noch einmal explizit darauf hinweisen, dass die Konstruktion der Funktoren 
\[ \_\otimes_{R'} p:\Mor R'\longrightarrow\mathcal{A} \]
und 
\[ \Hom_{R'}(\_,p):R'\Mod\longrightarrow\mathcal{A} \]
von den gewählten Auflösungen der $ R' $-Moduln abhängt. Verschiedene Auflösungen generieren aber äquivalente Funktoren. Für unser langfristiges Ziel, die wesentliche Surjektivität des Funktors $ \tilde{T}:\mathcal{A}\rightarrow\C(T) $ zu zeigen, genügt uns dies.
\end{Bem}

\begin{Lemma}\label{rechts}
Sei $ \mathcal{A} $ eine $ R $-lineare, abelsche Kategorie und 
\[ T:\mathcal{A}\longrightarrow R\Mod \]
ein $ R $-linearer, treuer, exakter Funktor. Sei $ p\in \mathcal{A} $ ein ausgezeichnetes Objekt. Die durch $ R $-Linearität gegebene Abbildung
\[R\longrightarrow \End_{\mathcal{A}}(p) \]
definiert nach Lemma \ref{Hom} einen kontravarianten Funktor 
\[ \Hom_R(\_,p):R\Mod\longrightarrow \mathcal{A}.\]
Die Verkettung
\[
\begin{array}{ccccc}
R\Mod & \overset{\Hom_R(\_,p)}{\longrightarrow} & \mathcal{A} & \overset{T}{\longrightarrow} & R\Mod\\
M & \longmapsto & \Hom_R(M,p) & \longmapsto & \Hom_R(M,Tp)\\
\end{array}
\]
entspricht bis auf Isomorphie dem gewöhnlichen kontravarianten Funktor $\Hom_R(\_,Tp)$ in $ R\Mod $. 
\end{Lemma}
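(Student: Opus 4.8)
The plan is to evaluate both functors on a fixed free resolution of $M$ and to recognise them as kernels of one and the same map, exploiting that $T$ is exact and $R$-linear. First I would recall the construction from Lemma \ref{Hom}: for an $R$-Modul $M$ we fix a free resolution
\[
R^{a_1}\xrightarrow{A}R^{a_0}\xrightarrow{\pi_a}M\to 0,
\]
so that $\Hom_R(M,p)=\ker(\tilde{A}^T)$, where $\tilde{A}^T:p^{a_0}\to p^{a_1}$ is the morphism whose matrix entries are $f(a_{ij})\in\End_{\mathcal{A}}(p)$, with $f:R\to\End_{\mathcal{A}}(p)$ the structure map. Applying the exact functor $T$ to the defining left-exact sequence $0\to\Hom_R(M,p)\to p^{a_0}\xrightarrow{\tilde{A}^T}p^{a_1}$ and using additivity, i.e.\ $T(p^{a_i})=(Tp)^{a_i}$, yields
\[
0\to T(\Hom_R(M,p))\to (Tp)^{a_0}\xrightarrow{T(\tilde{A}^T)}(Tp)^{a_1},
\]
so that $T(\Hom_R(M,p))=\ker(T(\tilde{A}^T))$.

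The decisive step is to identify $T(\tilde{A}^T)$ with the ordinary map $\Hom_R(A,Tp)$. Since $f(a_{ij})$ is by definition the scalar multiplication $a_{ij}\cdot\mathrm{id}_p$ on $p$, and $T$ is $R$-linear, we obtain $T(f(a_{ij}))=a_{ij}\cdot\mathrm{id}_{Tp}$, i.e.\ scalar multiplication by $a_{ij}$ on $Tp$. Hence, under the natural identifications $\Hom_R(R^{a_i},Tp)\cong (Tp)^{a_i}$, the morphism $T(\tilde{A}^T)$ is exactly the transpose matrix acting by these scalars, which is precisely the map $A^\ast=\Hom_R(A,Tp)$ obtained by applying the ordinary contravariant functor $\Hom_R(\_,Tp)$ to $A$. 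On the other hand, left-exactness of $\Hom_R(\_,Tp)$ applied to the same resolution gives $\Hom_R(M,Tp)=\ker(A^\ast)$. Both $T(\Hom_R(M,p))$ and $\Hom_R(M,Tp)$ are therefore kernels of the same morphism $(Tp)^{a_0}\to(Tp)^{a_1}$, and the uniqueness of kernels up to kanonischem Isomorphismus furnishes an isomorphism $T(\Hom_R(M,p))\cong\Hom_R(M,Tp)$.

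It remains to check naturality in $M$. Given $\varphi:M\to N$, I would choose lifts $\varphi^0,\varphi^1$ between the chosen resolutions as in Lemma \ref{Hom}; these induce both $T(\Hom_R(\varphi,p))$ and, via the same transpose identification, $\Hom_R(\varphi,Tp)$ on the respective kernels. Because both induced maps are characterised by the universal property of the kernel together with the commuting ladder of resolutions, a ladder that $T$ transports faithfully by additivity and $R$-linearity, the two maps coincide under the isomorphisms constructed above. Thus the isomorphism is natural, and since a different choice of resolution alters $\Hom_R(\_,p)$ only up to natürlichen Isomorphismus, the resulting natural isomorphism $T\circ\Hom_R(\_,p)\cong\Hom_R(\_,Tp)$ is well defined.

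The main obstacle will be the bookkeeping in the decisive step: one must track the transpose conventions from Lemma \ref{Hom} carefully and verify that the $R$-Linearität of $T$ really converts the structural endomorphisms $f(a_{ij})$ into honest scalar multiplications on $Tp$, so that $T(\tilde{A}^T)$ and $\Hom_R(A,Tp)$ become literally the same matrix under $\Hom_R(R^n,Tp)\cong (Tp)^n$. Once this matrix identity is established, the rest is a formal consequence of the exactness of $T$ and the uniqueness of kernels.
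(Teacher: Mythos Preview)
Your proposal is correct and follows essentially the same route as the paper: the paper's proof also begins with the observation (its ``Vorbemerkung'') that $R$-Linearit\"at of $T$ turns the matrix $\tilde{A}^T$ with entries $f(a_{ij})=a_{ij}\cdot\mathrm{id}_p$ into the same matrix acting on $Tp$, then applies $T$ and $\Hom_R(\_,Tp)$ separately to the chosen presentations and lifts, and identifies the resulting diagrams via $\Hom_R(R^n,Tp)\cong (Tp)^n$ to conclude that both functors yield the same kernel and the same induced map between kernels. Your write-up is in fact slightly more explicit about why the decisive matrix identity holds.
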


\begin{proof}
Wir beginnen mit einer Vorbemerkung:
Da $ \mathcal{A} $ $ R $-linear ist, definiert eine Matrix  $ A\in Mat_R(n\times m) $ eine Abbildung $ A\in\Hom_{\mathcal{A}}(p^m,p^n) $, gegeben durch
\[
\begin{array}{ccccc}
A: & p^m & \longrightarrow & p^n\\
& (x_1,..x_m) & \longmapsto & (\sum_{i=1}^m(a_{1i}\cdot x_i),..,\sum_{i=1}^m (a_{ni}\cdot x_i)).
\end{array}
\]
Wenden wir den Funktor $ T $ auf diese Abbildung an, so erhalten wir, wegen der $ R $-Linearität von $ T $, in $ R\Mod $ die gleichermaßen definierte Abbildung
\[
\begin{array}{ccccc}
TA: & Tp^m & \longrightarrow & Tp^n\\
& (x_1,..x_m) & \longmapsto & (\sum_{i=1}^m(a_{1i}\cdot x_i),..,\sum_{i=1}^m (a_{ni}\cdot x_i)).
\end{array}
\]
\\
Seien nun $ M,N\in R\Mod $ endlich erzeugte $ R $-Moduln, $ \varphi\in\Hom_R(M,N) $ ein Homomorphismus und
\[
\begin{split}
R^{a_1}\overset{A}{\longrightarrow}R^{a_0}\longrightarrow M\longrightarrow 0 \\
R^{b_1}\overset{B}{\longrightarrow}R^{b_0}\longrightarrow N\longrightarrow 0  \\
\end{split}
\]
Präsentationen. Dann erhalten wir nach der Wahl von Lifts wie im Beweis von Lemma \ref{Hom} ein kommutatives Diagramm 

\begin{center}
\begin{tikzpicture}[description/.style={fill=white,inner sep=2pt}]
    \matrix (m) [matrix of math nodes, row sep=3em,
    column sep=2.5em, text height=1.5ex, text depth=0.25ex]
    { R^{a_1}& R^{a_0}& M & 0 \\
      R^{b_1}& R^{b_0}& N & 0 \\
	};
    \path[->,font=\scriptsize]
    (m-1-1) edge node[auto] {$ A $} (m-1-2)
    (m-1-2) edge node[auto] {$ $} (m-1-3)
    (m-1-3) edge node[auto] {} 			  (m-1-4)
    
    (m-2-1) edge node[auto] {$ B $} (m-2-2)
    (m-2-2) edge node[auto] {$ $} (m-2-3)
    (m-2-3) edge node[auto] {} 			  (m-2-4)

    (m-1-1) edge node[auto] {$ \varphi^{1} $} (m-2-1)
    (m-1-2) edge node[auto] {$ \varphi^{0} $} (m-2-2)
    (m-1-3) edge node[auto] {$\varphi$} 	  (m-2-3);

\end{tikzpicture}
\end{center}

Wir überprüfen, dass durch Anwenden von $ T\circ\Hom_R(\_,p) $ auf dieses Diagramm, das gleiche Diagramm entsteht, wie durch Anwenden von $ \Hom_R(\_,Tp) $.\\
Anwenden von $ \Hom_R(\_,p) $ liefert

\begin{center}
\begin{tikzpicture}[description/.style={fill=white,inner sep=2pt}]
    \matrix (m) [matrix of math nodes, row sep=3em,
    column sep=2.5em, text height=1.5ex, text depth=0.25ex]
    { p^{a_1}& p^{a_0}& \Hom_R(M,p)=ker(A^T) & 0 \\
      p^{b_1}& p^{b_0}& \Hom_R(N,p)=ker(B^T) & 0 \\
	};
    \path[->,font=\scriptsize]
    (m-1-2) edge node[auto] {$ A^T $} (m-1-1)
    (m-1-3) edge node[auto] {$ $} (m-1-2)
    (m-1-4) edge node[auto] {} 			  (m-1-3)
    
    (m-2-2) edge node[auto] {$ B^T $} (m-2-1)
    (m-2-3) edge node[auto] {$ $} (m-2-2)
    (m-2-4) edge node[auto] {} 	(m-2-3)

    (m-2-1) edge node[auto] {$ {\varphi^{1}}^T $} (m-1-1)
    (m-2-2) edge node[auto] {$ {\varphi^{0}}^T $} (m-1-2)
    (m-2-3) edge node[auto] {$\Hom_R(\_,p)(\varphi)$} 	  (m-1-3);

\end{tikzpicture}
\end{center}

Nach der Vorbemerkung, sowie der Tatsache, dass exakte Funktoren mit Kernen vertauschen, erhalten wir nun durch Anwenden von $ T $ folgendes kommutative Diagramm:

\begin{center}
\begin{tikzpicture}[description/.style={fill=white,inner sep=2pt}]
    \matrix (m) [matrix of math nodes, row sep=3em,
    column sep=2.5em, text height=1.5ex, text depth=0.25ex]
    { Tp^{a_1}& Tp^{a_0}& T(\Hom_R(M,p))=Ker(A^T) & 0 \\
      Tp^{b_1}& Tp^{b_0}& T(\Hom_R(N,p))=Ker(B^T) & 0 \\
	};
    \path[->,font=\scriptsize]
    (m-1-2) edge node[auto] {$ A^T $} (m-1-1)
    (m-1-3) edge node[auto] {$ $} (m-1-2)
    (m-1-4) edge node[auto] {} 			  (m-1-3)
    
    (m-2-2) edge node[auto] {$ B^T $} (m-2-1)
    (m-2-3) edge node[auto] {$ $} (m-2-2)
    (m-2-4) edge node[auto] {} 	(m-2-3)

    (m-2-1) edge node[auto] {$ {\varphi^{1}}^T $} (m-1-1)
    (m-2-2) edge node[auto] {$ {\varphi^{0}}^T $} (m-1-2)
    (m-2-3) edge node[auto] {$(T\circ\Hom_R(\_,p))(\varphi)$} 	  (m-1-3);

\end{tikzpicture}
\end{center}

Wenden wir dagegen den linksexakten Funktor $ \Hom_R(\_,Tp) $ in $ R\Mod $ auf die Präsentationen und Lifts an, erhalten wir folgendes kommutative Diagramm.

\begin{center}
\begin{tikzpicture}[description/.style={fill=white,inner sep=2pt}]
    \matrix (m) [matrix of math nodes, row sep=3em,
    column sep=2.5em, text height=1.5ex, text depth=0.25ex]
    { \Hom_R(R^{a_1},Tp)& \Hom_R(R^{a_0},Tp) & \Hom_R(M,Tp)=Ker(A^T) & 0 \\
	  \Hom_R(R^{b_1},Tp)& \Hom_R(R^{b_0},Tp) & \Hom_R(N,Tp))=Ker(B^T) & 0 \\
	};
    \path[->,font=\scriptsize]
    (m-1-2) edge node[auto] {$ A^T $} (m-1-1)
    (m-1-3) edge node[auto] {$ $} (m-1-2)
    (m-1-4) edge node[auto] {} 			  (m-1-3)
    
    (m-2-2) edge node[auto] {$ B^T $} (m-2-1)
    (m-2-3) edge node[auto] {$ $} (m-2-2)
    (m-2-4) edge node[auto] {} 	(m-2-3)

    (m-2-1) edge node[auto] {$ {\varphi^{1}}^T $} (m-1-1)
    (m-2-2) edge node[auto] {$ {\varphi^{0}}^T $} (m-1-2)
    (m-2-3) edge node[auto] {$(\Hom_R(\_,Tp))(\varphi)$} 	  (m-1-3);

\end{tikzpicture}
\end{center}

Verwenden wir nun die kanonische Isomorphie von $ \Hom_R(R^n,Tp)\cong (Tp)^n $, so sehen wir, dass sich die beiden letzten Diagramme bis auf Isomorphie entsprechen. Insbesondere müssen $(T\circ\Hom_R(\_,p))(\varphi)$ sowie $ \Hom_R(\_,Tp)(\varphi) $ dem eindeutigen Morphismus von $ Ker(B^T) $ nach $ Ker(A^T) $ entsprechen, der das Diagramm kommutativ macht, also gilt
\[(T\circ\Hom_R(\_,p))(\varphi)\cong\Hom_R(\_,Tp)(\varphi). \] 
\end{proof}

\section[Die Operationen auf $ \Hom_R(Tp,p) $]{Die Operationen auf $ \Hom_R(Tp,p) $}

\begin{Bem}\label{quark}
Wir haben über den Funktor $ \Hom_R(\_,p) $ mit dem Objekt\\ $ \Hom_R(Tp,p)\in\mathcal{A} $ ein Urbild unter $ T $ von $ \End_R(Tp)\in R\Mod $ konstruiert. Wir möchten nun in $ \mathcal{A} $ ein Urbild von 
\[ \End(T_{|\{p\}})=\{\varphi\in\End_R(Tp)|\forall \alpha\in\End_{\mathcal{A}}(p):\varphi\circ T\alpha=T\alpha\circ \varphi\} \]
konstruieren. Hierfür brauchen wir Endomorphismen auf $ \Hom_R(Tp,p) $, welche über $ T $ in $ R\Mod $ auf das gewöhnlichen Vor- und Nachschalten auf $ \End_R(Tp) $ abgebildet werden. Sei $ \varphi\in\End_R(Tp) $ ein beliebiger Morphismus. In Lemma \ref{rechts} haben wir gesehen, dass die Verkettung
\[
\begin{array}{ccccc}
R\Mod & \overset{\Hom_R(\_,p)}{\longrightarrow} & \mathcal{A} & \overset{T}{\longrightarrow} & R\Mod\\
Tp & \longmapsto & \Hom_R(Tp,p) & \longmapsto & \Hom_R(Tp,Tp)\\
\end{array}
\]
den gewöhnlichen $ \Hom_R(\_,Tp) $ Funktor in $ R\Mod $ liefert. Der Endomorphismus \\$ \Hom_R(\_,p)(\varphi) $ wird also durch $ T $ auf die gewöhnliche Rechtsoperation $ \circ\varphi $ auf $ \End_R(Tp) $ abgebildet. Wir werden den Morphismus $ \Hom_R(\_,p)(\varphi) $ daher ab sofort ebenfalls mit $ \circ\varphi $ notieren. In Lemma \ref{links} konstruieren wir den Endomorphismus auf $ \Hom_R(Tp,p) $, der unter $ T $ auf die gewöhnliche Linksoperation auf  $ \End_R(Tp) $ abgebildet wird.
\end{Bem}

\begin{Lemma}\label{vertauschen}
Sei $ \mathcal{A} $ eine $ R $-lineare, abelsche Kategorie und $ p $ ein Objekt aus $ \mathcal{A} $. Sei $ A\in Mat_R(n,m) $ eine Matrix mit Koeffizienten in $ R $. Wir fassen $ A $ als Abbildung 
\[ A:p^m\longrightarrow p^n \]
auf. Sei $ \alpha\in\End_{\mathcal{A}}(p) $ ein beliebiger Endomorphismus. Bezeichne $ \alpha^n\in\End_{\mathcal{A}}(p^n) $ den Morphismus, der auf jedem direkten Summanden den Endomorphismus $ \alpha $ ausführt. Dann gilt
\[ \alpha^m\circ A=A\circ\alpha^n. \]
\end{Lemma}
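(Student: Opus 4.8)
The plan is to reduce the identity to a componentwise check through the biproduct structure of $p^m$ and $p^n$. Recall from Bemerkung \ref{Summen} that a morphism $p^m\to p^n$ in the $R$-linearen abelschen Kategorie $\mathcal{A}$ is uniquely determined by the family of its components $\pi_i\circ(\,\cdot\,)\circ \iota_j\in\End_{\mathcal{A}}(p)$, where $\iota_j\colon p\to p^m$ and $\pi_i\colon p^n\to p$ are the inclusions and projections of the respective direct sums, subject to $\pi_i\circ\iota_j=\delta_{ij}\operatorname{id}_p$. By the description of a scalar matrix as a morphism given in the Vorbemerkung zu Lemma \ref{rechts}, the map $A$ has components $\pi_i\circ A\circ\iota_j=a_{ij}\cdot\operatorname{id}_p$. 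Hence it suffices to show that the two composites $\alpha^{n}\circ A$ and $A\circ\alpha^{m}$, both morphisms $p^m\to p^n$, have the same components for all $i,j$.

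First I would record the behaviour of the diagonal endomorphism $\alpha^n$ with respect to the structure maps. Since $\alpha^n$ acts as $\alpha$ on each summand, it satisfies $\pi_i\circ\alpha^n=\alpha\circ\pi_i$ und $\alpha^m\circ\iota_j=\iota_j\circ\alpha$; both follow immediately from $\pi_i\circ\iota_j=\delta_{ij}\operatorname{id}_p$ together with the defining property of $\alpha^n=\bigoplus_{i}\alpha$.

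Then the computation of the components is direct. Using the relations above together with the $R$-Bilinearität der Verkettung (which is part of the hypothesis that $\mathcal{A}$ is $R$-linear), I obtain for all $i,j$
\[
\pi_i\circ(\alpha^n\circ A)\circ\iota_j=\alpha\circ(\pi_i\circ A\circ\iota_j)=\alpha\circ(a_{ij}\cdot\operatorname{id}_p)=a_{ij}\cdot\alpha
\]
and
\[
\pi_i\circ(A\circ\alpha^m)\circ\iota_j=(\pi_i\circ A\circ\iota_j)\circ\alpha=(a_{ij}\cdot\operatorname{id}_p)\circ\alpha=a_{ij}\cdot\alpha.
\]
Since all components agree, the two morphisms coincide, which is the asserted Vertauschungsrelation.

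I do not expect any genuine obstacle here: the statement is essentially the assertion that a scalar matrix over $R$ commutes with an endomorphism applied diagonally to each coordinate. The only point deserving care is the step $\alpha\circ(a_{ij}\cdot\operatorname{id}_p)=(a_{ij}\cdot\operatorname{id}_p)\circ\alpha$, which is exactly the $R$-Bilinearität der Verkettung in $\mathcal{A}$; everything else is bookkeeping with the biproduct structure maps. Note in particular that the argument nowhere uses that the entries $a_{ij}$ lie in the \emph{center} beyond commuting with $\alpha$ via scalar multiplication, so the $R$-linearity of $\mathcal{A}$ is precisely the ingredient that makes the claim work.
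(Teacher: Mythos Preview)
Your proof is correct and follows essentially the same route as the paper: both reduce to a componentwise check via the biproduct projections and inclusions and then invoke the $R$-Bilinearit\"at der Verkettung to conclude $a_{ij}\cdot\alpha=\alpha\circ(a_{ij}\cdot\operatorname{id}_p)$. Your write-up is in fact a bit cleaner, since the paper argues with ``elements'' $x\in p$ in an abstract abelian category, whereas you stay purely at the level of morphisms; you also silently fix the index swap in the displayed identity (the well-typed version is $\alpha^{n}\circ A=A\circ\alpha^{m}$).
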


\begin{proof}
Ein Morphismus $ f:p^m\rightarrow p^n $ ist gegeben durch eine Familie $ f_{ij}:p\rightarrow p $, so dass $ f_{ij}=\pi_i\circ f\circ i_j $ ist. Um zu sehen, dass die Abbildungen $ \alpha $ und $ A=(a_{ij})_{ij} $ kommutieren, müssen wir also sehen, dass sich die induzierten Abbildungen $ \pi_k\circ A\circ\alpha^n\circ i_l $ und $\pi_k\circ \alpha^m \circ A\circ i_l $ für alle $ k\in\{1..m\}$ und $l\in\{1..n\} $ entsprechen.\\
Für $ x\in p $ gilt nun aber 
\[ (\pi_k\circ A\circ\alpha^n\circ i_l)(x)=a_{kl}\cdot\alpha(x) \]
\[ (\pi_k\circ \alpha^m \circ A\circ i_l)(x)=\alpha(a_{kl} x). \]
Da $ \alpha $ $ R $-linear ist, sind die Abbildungen gleich.
\end{proof}

\begin{Konstruktion}\label{konstr}
Analog zur Definition der Funktoren 
\[
\begin{array}{cccc}
 \Hom_R(\_,p): & R\Mod & \longrightarrow & \mathcal{A} \\
				& M & \longmapsto & \Hom_R(M,p)
\end{array}
\]
und
\[
\begin{array}{cccc}
 p\otimes_R \_: & R\Mod & \longrightarrow & \mathcal{A} \\
				& M & \longmapsto & p\otimes_R M
\end{array}
\]
aus Lemma \ref{Hom} bzw. Lemma \ref{tensor} können wir für einen beliebigen, endlich erzeugten $ R $-Modul $ M $ wie folgt die Funktoren
\[
\begin{array}{cccc}
 \Hom_R(M,\_): & \mathcal{A} & \longrightarrow & \mathcal{A} \\
				& p & \longmapsto & \Hom_R(M,p)
\end{array}
\]
und
\[
\begin{array}{cccc}
 \_\otimes_R M : & \mathcal{A} & \longrightarrow & \mathcal{A} \\
				& p & \longmapsto & p\otimes_R M
\end{array}
\]
definieren:\\
Sei wieder 
\[ R^n\overset{A}{\longrightarrow}R^m\twoheadrightarrow M\longrightarrow 0 \]
eine feste Auflösung von $ M $. Für $ p\in\mathcal{A} $ definieren wir das Objekt $ \Hom_R(M,p) $ als den Kern der Abbildung 
\[ p^n\overset{A^T}{\longrightarrow}p^m. \]
Sei für $ \alpha:p\rightarrow q$ ein beliebiger Morphismus in $ \mathcal{A} $. Dann hat das Diagramm
\begin{center}
\begin{tikzpicture}[description/.style={fill=white,inner sep=2pt}]
    \matrix (m) [matrix of math nodes, row sep=4.5em,
    column sep=3.5em, text height=1.5ex, text depth=0.25ex]
    { p^m& p^n& \Hom_R(M,p) & 0 \\
      q^m & q^n & \Hom_R(M,q) & 0 \\
	};
    \path[->,font=\scriptsize]
    (m-1-2) edge node[auto] {$ A^T $} (m-1-1)
    (m-1-3) edge node[auto] {$ $} (m-1-2)
    (m-1-4) edge node[auto] {} 			  (m-1-3)
    
    (m-2-2) edge node[auto] {$ A^T $} (m-2-1)
    (m-2-3) edge node[auto] {$ $} (m-2-2)
    (m-2-4) edge node[auto] {} 			  (m-2-3)

    (m-1-1) edge node[auto] {$ \alpha^m $} (m-2-1)
    (m-1-2) edge node[auto] {$ \alpha^n $} (m-2-2);

\end{tikzpicture}
\end{center}
für alle $ p,q\in\mathcal{A} $ exakte Zeilen. Es kommutiert nach Lemma \ref{vertauschen}. Wir definieren $ \Hom_R(M,\_)(\alpha) $ als die eindeutige Abbildung 
\[ \Hom_R(M,p)\longrightarrow\Hom_R(M,q), \]
die das Diagramm kommutativ lässt. Für zwei Morphismen $ p\xrightarrow{\alpha}q\xrightarrow{\beta}r $ gilt offensichtlich 
\[ \Hom_R(M,\_)(\beta\circ\alpha) = \Hom_R(M,\_)(\beta) \circ \Hom_R(M,\_)(\alpha). \]
Also ist $ \Hom_R(M,\_) $ ein Funktor und genauso überprüft man, dass die analoge Konstruktion $ \_\otimes_R M $ einen Funktor definiert. 

\end{Konstruktion}

\begin{Lemma}\label{links}

Sei wieder $ \mathcal{A} $ eine $ R $-lineare, abelsche Kategorie und 
\[ T:\mathcal{A}\longrightarrow R\Mod \]
ein $ R $-linearer, treuer, exakter Funktor und $ p\in \mathcal{A} $ ein ausgezeichnetes Objekt.
Für einen Endomorphismus $ \alpha\in\End_{\mathcal{A}}(p) $ liefert die Verkettung

\[
\begin{array}{ccccc}
 \mathcal{A} & \overset{\Hom_R(Tp,\_)}{\longrightarrow} & \mathcal{A} & \overset{T}{\longrightarrow} & R\Mod  \\
		  p & \longmapsto & \Hom_R(Tp,p) & \longmapsto & \Hom_R(Tp,Tp) \\
		  \alpha & \longmapsto & \Hom_R(Tp,\_)(\alpha) & \longmapsto & (T\alpha)\circ \\
\end{array}
\]

bis auf Isomorphie die gewöhnliche Linkswirkung $ (T\alpha)\circ $ auf $ \End_R(Tp) $.\\

\end{Lemma}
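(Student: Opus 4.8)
The plan is to mirror the proof of Lemma \ref{rechts}, but with the roles reversed: there the first argument varied while $p$ was fixed, whereas here the finitely generated $R$-module $M := Tp$ is fixed and the object of $\mathcal{A}$ varies. Concretely, I will establish a natural isomorphism of functors $\mathcal{A}\to R\Mod$
\[ T\circ\Hom_R(Tp,\_) \;\cong\; \Hom_R\bigl(Tp, T(\_)\bigr), \]
where the right-hand side is the ordinary covariant $\Hom$-functor in its second slot, and then read off the claim by specialising to $q=p$ and to the morphism $\alpha$.

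For the construction of the isomorphism, fix a presentation $R^n \xrightarrow{A} R^m \twoheadrightarrow Tp$ of the $R$-module $Tp$; this is the datum used to define $\Hom_R(Tp,\_)$ in Konstruktion \ref{konstr}. For every object $q\in\mathcal{A}$ that construction realises $\Hom_R(Tp,q)$ as $\ker\bigl(q^m \xrightarrow{A^T} q^n\bigr)$, and $\Hom_R(Tp,\_)(\alpha)$ as the unique map on kernels induced by $\alpha^m$ and $\alpha^n$, which exists by Lemma \ref{vertauschen}. Now apply $T$. Since $T$ is exact it commutes with the formation of kernels, and since $T$ is $R$-linear it fixes the matrix map $A^T$ and sends $\alpha^m,\alpha^n$ to $(T\alpha)^m,(T\alpha)^n$ — this is precisely the Vorbemerkung in the proof of Lemma \ref{rechts}. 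Hence
\[ T\bigl(\Hom_R(Tp,q)\bigr) \;=\; \ker\bigl((Tq)^m \xrightarrow{A^T} (Tq)^n\bigr) \;=\; \Hom_R(Tp,Tq), \]
the last equality being the standard description of $\Hom_R(Tp,Tq)$ obtained by applying the contravariant $\Hom_R(\_,Tq)$ to the presentation of $Tp$. This yields the isomorphism $\eta_q$, and it is natural in $q$: for $\alpha\colon p\to q$ both $T\bigl(\Hom_R(Tp,\_)(\alpha)\bigr)$ and $\Hom_R(Tp,T\alpha)$ are, under these identifications, the unique morphism of kernels induced by the pair $\bigl((T\alpha)^m,(T\alpha)^n\bigr)$, so they agree by the uniqueness of the induced map on kernels.

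It remains to specialise. Taking $q=p$ and $\alpha\in\End_{\mathcal{A}}(p)$, the right-hand map $\Hom_R(Tp,T\alpha)\colon \Hom_R(Tp,Tp)\to\Hom_R(Tp,Tp)$ is post-composition $g\mapsto (T\alpha)\circ g$, i.e.\ exactly the left action $(T\alpha)\circ$ on $\End_R(Tp)$. Combined with the natural isomorphism this gives $T\bigl(\Hom_R(Tp,\_)(\alpha)\bigr)\cong (T\alpha)\circ$, as claimed. I expect no genuinely hard step here, since the whole argument is dual to Lemma \ref{rechts}; the only point requiring care is the bookkeeping of transposes together with the verification of naturality via the uniqueness of maps induced on kernels, where Lemma \ref{vertauschen} is what guarantees that the relevant squares commute so that these induced maps exist in the first place.
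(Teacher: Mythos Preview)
Your argument is correct and essentially identical to the paper's: both apply $T$ to the kernel description of $\Hom_R(Tp,p)$ coming from a fixed presentation of $Tp$, use $R$-linearity and exactness of $T$ to identify the resulting diagram with the one obtained from ordinary $\Hom_R(-,Tp)$ via the canonical isomorphism $\Hom_R(R^a,Tp)\cong (Tp)^a$, and conclude by uniqueness of the induced map on kernels. The only cosmetic difference is that you phrase it as a natural isomorphism $T\circ\Hom_R(Tp,\_)\cong\Hom_R(Tp,T(\_))$ for general $q$ before specialising, whereas the paper works directly at $q=p$.
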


\begin{proof}
Sei 
\[ R^m\xrightarrow{A}R^n\twoheadrightarrow Tp\longrightarrow 0 \]
eine feste Auflösung von $ Tp $.
$ \Hom_R(Tp,\_)(\alpha) $ ist definiert als der Morphismus, der folgendes Diagramm kommutativ macht. 

\begin{center}
\begin{tikzpicture}[description/.style={fill=white,inner sep=2pt}]
    \matrix (m) [matrix of math nodes, row sep=4.5em,
    column sep=3.5em, text height=1.5ex, text depth=0.25ex]
    { p^m& p^n& \Hom_R(Tp,p) & 0 \\
      p^m & p^n & \Hom_R(Tp,p) & 0 \\
	};
    \path[->,font=\scriptsize]
    (m-1-2) edge node[auto] {$ A^T $} (m-1-1)
    (m-1-3) edge node[auto] {$ $} (m-1-2)
    (m-1-4) edge node[auto] {} 			  (m-1-3)
    
    (m-2-2) edge node[auto] {$ A^T $} (m-2-1)
    (m-2-3) edge node[auto] {$ $} (m-2-2)
    (m-2-4) edge node[auto] {} 			  (m-2-3)

    (m-1-1) edge node[auto] {$ \alpha^m $} (m-2-1)
    (m-1-2) edge node[auto] {$ \alpha^n $} (m-2-2);
    \path[->,densely dotted]
    (m-1-3) edge node[description] {$ \Hom_R(Tp,\_)(\alpha) $} 	  (m-2-3);

\end{tikzpicture}
\end{center}

Da $ T $ exakt ist, liefert uns dies folgendes kommutative Diagramm mit exakten Zeilen in $ R $-Mod:

\begin{center}
\begin{tikzpicture}[description/.style={fill=white,inner sep=2pt}]
    \matrix (m) [matrix of math nodes, row sep=4.5em,
    column sep=3.5em, text height=1.5ex, text depth=0.25ex]
    { Tp^m& Tp^n& \Hom_R(Tp,Tp) & 0 \\
      Tp^m & Tp^n & \Hom_R(Tp,Tp) & 0 \\
	};
    \path[->,font=\scriptsize]
    (m-1-2) edge node[auto] {$ A^T $} (m-1-1)
    (m-1-3) edge node[auto] {$ $} (m-1-2)
    (m-1-4) edge node[auto] {} 			  (m-1-3)
    
    (m-2-2) edge node[auto] {$ A^T $} (m-2-1)
    (m-2-3) edge node[auto] {$ $} (m-2-2)
    (m-2-4) edge node[auto] {} 			  (m-2-3)

    (m-1-1) edge node[auto] {$ T\alpha^m $} (m-2-1)
    (m-1-2) edge node[auto] {$ T\alpha^n $} (m-2-2)
    (m-1-3) edge node[description] {$ T\circ\Hom_R(Tp,\_)(\alpha) $} 	  (m-2-3);

\end{tikzpicture}
\end{center}

Andererseits kommutiert in $ R $-Mod das Diagramm

\begin{center}
\begin{tikzpicture}[description/.style={fill=white,inner sep=2pt}]
    \matrix (m) [matrix of math nodes, row sep=4.5em,
    column sep=3.5em, text height=1.5ex, text depth=0.25ex]
    { \Hom_R(R^m,Tp) & \Hom_R(R^n,Tp) & \Hom_R(Tp,Tp) & 0 \\
		\Hom_R(R^m,Tp) & \Hom_R(R^n,Tp) & \Hom_R(Tp,Tp) & 0 \\
	};
    \path[->,font=\scriptsize]
    (m-1-2) edge node[auto] {$ \circ A $} (m-1-1)
    (m-1-3) edge node[auto] {$ $} (m-1-2)
    (m-1-4) edge node[auto] {} 			  (m-1-3)
    
    (m-2-2) edge node[auto] {$ \circ A $} (m-2-1)
    (m-2-3) edge node[auto] {$ $} (m-2-2)
    (m-2-4) edge node[auto] {} 			  (m-2-3)

    (m-1-1) edge node[auto] {$ T\alpha\circ $} (m-2-1)
    (m-1-2) edge node[auto] {$ T\alpha\circ $} (m-2-2)
    (m-1-3) edge node[description] {$ T\alpha\circ $} 	  (m-2-3);

\end{tikzpicture}
\end{center}

welches sich unter Verwendung des kanonischen Isomorphismus $ \Hom_R(R^a,Tp)\cong Tp^a $ ebenfalls umschreiben lässt zu

\begin{center}
\begin{tikzpicture}[description/.style={fill=white,inner sep=2pt}]
    \matrix (m) [matrix of math nodes, row sep=4.5em,
    column sep=3.5em, text height=1.5ex, text depth=0.25ex]
    { Tp^m& Tp^n& \Hom_R(Tp,Tp) & 0 \\
      Tp^m & Tp^n & \Hom_R(Tp,Tp) & 0 \\
	};
    \path[->,font=\scriptsize]
    (m-1-2) edge node[auto] {$  A^T $} (m-1-1)
    (m-1-3) edge node[auto] {$ $} (m-1-2)
    (m-1-4) edge node[auto] {} 			  (m-1-3)
    
    (m-2-2) edge node[auto] {$ A^T $} (m-2-1)
    (m-2-3) edge node[auto] {$ $} (m-2-2)
    (m-2-4) edge node[auto] {} 			  (m-2-3)

    (m-1-1) edge node[auto] {$ T\alpha^m $} (m-2-1)
    (m-1-2) edge node[auto] {$ T\alpha^n $} (m-2-2)
    (m-1-3) edge node[description] {$ T\alpha\circ $} 	  (m-2-3);

\end{tikzpicture}
\end{center}

$ T\circ\Hom_R(Tp,\_)(\alpha) $ und $ T\alpha\circ $ sind also jeweils die nach universeller Eigenschaft von $ Ker(A^T) $ eindeutig induzierte Abbildung $ \Hom_R(Tp,Tp)\longrightarrow \Hom_R(Tp,Tp) $, welche das Diagramm kommutativ machen. Daher gilt
\[ T\circ\Hom_R(Tp,\_)(\alpha)=(T\alpha)\circ.\qedhere\]

\end{proof}

\begin{Konvention}
Wir bezeichnen die von dem Morphismus $ \Hom_R(Tp,\_)(\alpha) $ induzierte Linksoperation auf $ \Hom_R(Tp,p) $ mit $ \alpha\circ $. Dies ist intuitiv, da $ T(\alpha\circ)=(T\alpha)\circ $ ist. 
\end{Konvention}

\begin{Bem}\label{gdg}
Sei $ \mathcal{A} $ eine $ R $-lineare, abelsche Kategorie, 
\[ T:\mathcal{A}\longrightarrow R\Mod \]
ein $ R $-linearer, treuer, exakter Funktor und  $ p\in \mathcal{A} $ ein Objekt. Für jeden Endomorphismus  $\alpha\in\End_{\mathcal{A}}( p) $ erhalten wir nach Lemma \ref{links} einen Endomorphismus $ (\alpha\circ) $ auf $ \Hom_R(Tp,p) $. Dies induziert eine $ R $-lineare Linksoperation
\[
\begin{array}{cccc}
\rho_L: & \End_{\mathcal{A}}(p)\times\Hom_R(Tp,p) & \longrightarrow & \Hom_R(Tp,p)\\
		&  (\alpha,x) & \longmapsto & \alpha\circ x
\end{array}
\]
auf $ \Hom_R(Tp,p) $. Analog induzieren die Endomorphismen von $ \End_R(Tp) $ nach Bemerkung \ref{quark} eine $ R $-lineare Rechtsoperation
\[
\begin{array}{cccc}
\rho_R: & \Hom_R(Tp,p)\times\End_R(Tp) & \longrightarrow & \Hom_R(Tp,p) \\
		&  (\varphi,x) & \longmapsto & x\circ\varphi
\end{array}
\]
auf $ \Hom_R(Tp,p) $.
\end{Bem}

\begin{Lemma}\label{Asso}

Die Operationen $ \rho_L $ und $ \rho_R $ aus Bemerkung \ref{gdg} sind miteinander verträglich, das heißt für $ x\in\Hom_R(Tp,p)$ , $\varphi\in\End_R(Tp)$ und $\alpha\in\End_{\mathcal{A}}(p) $ gilt
\[ \alpha\circ(x\circ\varphi)=(\alpha\circ x)\circ\varphi.\]

\end{Lemma}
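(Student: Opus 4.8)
The cleanest route is to observe that the asserted identity is exactly the statement that the two endomorphisms of the object $\Hom_R(Tp,p)\in\mathcal{A}$ supplied by Bemerkung \ref{gdg} commute, and then to reduce this to the trivial associativity of composition in $R\Mod$ by pushing everything through the faithful functor $T$. Writing $(\alpha\circ)=\Hom_R(Tp,\_)(\alpha)$ for the left operation and $(\circ\varphi)=\Hom_R(\_,p)(\varphi)$ for the right operation, both are honest endomorphisms of $\Hom_R(Tp,p)$, and the claim
\[ \alpha\circ(x\circ\varphi)=(\alpha\circ x)\circ\varphi \]
is precisely the operator identity $(\alpha\circ)\circ(\circ\varphi)=(\circ\varphi)\circ(\alpha\circ)$ in $\End_{\mathcal{A}}(\Hom_R(Tp,p))$. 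Since $T$ is faithful, it suffices to prove that these two composites agree after applying $T$.

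First I would apply $T$ and invoke the two computations already available. By Lemma \ref{rechts} together with Bemerkung \ref{quark}, the isomorphism $T(\Hom_R(Tp,p))\cong\Hom_R(Tp,Tp)=\End_R(Tp)$ identifies $T(\circ\varphi)$ with the genuine right multiplication $x\mapsto x\circ\varphi$; by Lemma \ref{links} the same isomorphism identifies $T(\alpha\circ)$ with the genuine left multiplication $x\mapsto(T\alpha)\circ x$. Under this identification the two $T$-images are honest pre- and post-composition operators on $\End_R(Tp)$, and both orders of composition send $x$ to $(T\alpha)\circ x\circ\varphi$ by associativity of composition in $R\Mod$. Hence $T\big((\alpha\circ)\circ(\circ\varphi)\big)=T\big((\circ\varphi)\circ(\alpha\circ)\big)$, and faithfulness of $T$ yields the desired equality in $\mathcal{A}$.

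The point needing genuine care — and what I expect to be the main obstacle — is to verify that Lemma \ref{rechts} and Lemma \ref{links} identify $T(\circ\varphi)$ and $T(\alpha\circ)$ through \emph{one and the same} isomorphism $T(\Hom_R(Tp,p))\cong\End_R(Tp)$, so that the sentence ``both composites equal $x\mapsto(T\alpha)\circ x\circ\varphi$'' is legitimate. Both lemmas obtain this isomorphism by applying the exact functor $T$ to the defining kernel presentation $\Hom_R(Tp,p)=\ker(A^T)\hookrightarrow p^n$ coming from a fixed free resolution $R^m\xrightarrow{A}R^n\twoheadrightarrow Tp$ of $Tp$, and then using $\Hom_R(R^a,Tp)\cong(Tp)^a$. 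Since the resolution is the same in both constructions, the two identifications coincide and the bookkeeping goes through.

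Alternatively, one can argue entirely inside $\mathcal{A}$, which isolates the real reason the actions commute. Unwinding Konstruktion \ref{konstr} shows that $(\alpha\circ)$ is the restriction to $\ker(A^T)$ of the diagonal endomorphism $\alpha^n\colon p^n\to p^n$, while unwinding Lemma \ref{Hom} shows that $(\circ\varphi)$ is the restriction to $\ker(A^T)$ of $(\varphi^0)^T\colon p^n\to p^n$, where $\varphi^0\in Mat_R(n\times n)$ is an $R$-linear lift of $\varphi$ to the chosen resolution. Both maps preserve the common subobject $\ker(A^T)=\Hom_R(Tp,p)$, and since $(\varphi^0)^T$ is a matrix with entries in $R$, Lemma \ref{vertauschen} gives $\alpha^n\circ(\varphi^0)^T=(\varphi^0)^T\circ\alpha^n$ on $p^n$. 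Restricting this commuting relation to $\ker(A^T)$ produces exactly $\alpha\circ(x\circ\varphi)=(\alpha\circ x)\circ\varphi$, making the result manifestly independent of $T$ and pinning the whole phenomenon on Lemma \ref{vertauschen}.
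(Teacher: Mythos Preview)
Your primary argument is exactly the paper's proof: reformulate the claim as commutativity of the two endomorphisms $(\alpha\circ)$ and $(\circ\varphi)$ of $\Hom_R(Tp,p)$, push through $T$ to obtain ordinary left and right composition on $\End_R(Tp)$, invoke associativity there, and conclude by faithfulness of $T$. Your extra care about the identification $T(\Hom_R(Tp,p))\cong\End_R(Tp)$ and your alternative argument via Lemma~\ref{vertauschen} go beyond what the paper writes out, but the core approach coincides.
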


\begin{proof}

Wir müssen zeigen, dass für alle $ \alpha\in\End_{\mathcal{A}}(p) $ und $ \varphi\in\End_R(Tp,Tp) $ die induzierten Endomorphismen $ \alpha\circ $ und $ \circ\varphi $ auf $ \Hom_R(Tp,p) $ kommutieren. 
Wenden wir $ T $ auf die Endomorphismen $ \alpha\circ $ und $ \circ\varphi $ an, so erhalten wir die übliche Rechts- bzw. Linksoperation $ (T \alpha)\circ $ bzw. $ \circ\varphi $ auf $ \Hom_R(Tp,Tp) $. Für alle $ A\in\Hom_R(Tp,Tp) $ gilt
\[ T\alpha\circ (A\circ\varphi) - (T\alpha\circ A)\circ\varphi=0.\]
Da $ T $ treu ist, folgt daraus für alle $ x\in\Hom_R(Tp,p) $
\[\alpha\circ(x\circ\varphi)-(\alpha\circ x)\circ\varphi=0,\]
also
\[\alpha\circ(x\circ\varphi)=(\alpha\circ x)\circ\varphi.\]
\end{proof}
\newpage
\section{Der Funktor $ \tilde{T} $ ist wesentlich surjektiv}

\begin{Def}
Sei $ \mathcal{A} $ eine beliebige abelsche Kategorie und $ F\subset\mathcal{A} $ eine volle Unterkategorie. Wir bezeichnen mit $ \langle F \rangle $ die von $ F $ erzeugte abelsche Kategorie, d. h. die kleinste abelsche Kategorie, die alle Objekte und Morphismen von $ F $ enthält. Die Kategorie $ \langle F \rangle $ enthält also insbesondere alle endlichen direkten Summen, Kokerne und Kerne, sowie Subquotienten von Objekten in $ F $. Außerdem gilt offensichtlich $ \langle F \rangle\subseteq \mathcal{A}$.\\
Für die von einem Objekt $ p\in\mathcal{A} $ erzeugte abelsche Kategorie schreiben wir $ \langle p \rangle $ anstatt  $ \langle \{p\} \rangle $.
\end{Def}

Wir kehren nun zu unserem Ziel zurück, zu zeigen, dass eine abelsche Kategorie $ \mathcal{A} $ mit treuem, exakten Funktor $ T $ in die $ R $-Moduln äquivalent zu ihrer Diagrammkategorie ist. Wir bedienen uns hierbei einer ähnlichen Strategie, wie sie Deligne in \cite[S.128 ff]{MR654325} für den Körperfall verwendet hat. Für ein $ p\in\mathcal{A} $ werden wir, mit Hilfe der Konstruktion des Objekts $ \Hom_R(Tp,p) $ aus Abschnitt 3.1, sowie den Operationen $ \alpha\circ $ und $ \circ\varphi $ auf $ \Hom_R(Tp,p) $, einen Funktor 
\[ \End(T_{|\{p\}})\Mod\longrightarrow  \langle p \rangle \]
konstruieren, sodass die Verkettung
\[\End(T_{|\{p\}})\Mod\longrightarrow  \langle p \rangle\overset{\tilde{T}_ {|\langle p \rangle}}{\longrightarrow} \End(T_{|\{p\}})\Mod \]
dem Identitätsfunktor auf $ \End(T_{|\{p\}})\Mod $ entspricht. Der direkte Limes über Unterkategorien von $ \mathcal{A} $ induziert dann eine Faktorisierung des Identitätsfunktors
\[\C(T_{\mathcal{A}})\longrightarrow  \mathcal{A} \overset{\tilde{T}}{\longrightarrow} \C(T_{\mathcal{A}}), \]
was zeigt, dass $ \tilde{T} $ wesentlich surjektiv ist. \\
Wir konstruieren zunächst ein Urbild von $ \End(T_{|\{p\}})\Mod $ unter $ T $:

\begin{Lemma}\label{ll}
Sei $ \mathcal{A} $ eine $ R $-lineare abelsche Kategorie und
\[ \mathcal{A}\overset{T}{\longrightarrow}R\Mod \]
ein treuer, exakter, $ R $-linearer Funktor. Sei $ p\in\mathcal{A} $ ein beliebiges Objekt. Dann existiert in $ \langle p \rangle $ ein Objekt $ X(p)\subset\Hom_R(Tp,p) $, zusammen mit einer Rechtsoperation
\[
\begin{array}{cccc}
 X(p)\times \End(T_{|\{p\}})\Mod & \rightarrow & X(p) \\
 (x,\varphi) & \mapsto & x\circ\varphi,
\end{array}
\]
sodass $ T(X(p))\cong\End(T_{|\{p\}}) $ und die Rechtsoperation auf $ X(p) $ nach Anwenden von $ T $ zur gewöhnlichen Rechtsoperation auf $ \End(T_{|\{p\}}) $ wird. 
\end{Lemma}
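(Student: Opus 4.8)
The plan is to realize $\End(T_{|\{p\}})$ as the subobject of $\Hom_R(Tp,p)$ cut out by the condition of commuting with all $T\alpha$, $\alpha\in\End_{\mathcal{A}}(p)$. Recall that $\End(T_{|\{p\}})$ is precisely the centralizer of $\{T\alpha\mid\alpha\in\End_{\mathcal{A}}(p)\}$ inside $\End_R(Tp)=T(\Hom_R(Tp,p))$. On $\Hom_R(Tp,p)$ we already have two families of endomorphisms: the left operation $\alpha\circ$ from Lemma \ref{links}, whose image under $T$ is $(T\alpha)\circ$, and the right operation $\circ (T\alpha)$ from Bemerkung \ref{quark} (taking $\varphi=T\alpha\in\End_R(Tp)$), whose image under $T$ is $\circ (T\alpha)$. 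For each $\alpha$ the difference $(\alpha\circ)-(\circ T\alpha)$ is therefore an endomorphism of $\Hom_R(Tp,p)$ that $T$ sends to the commutator map $A\mapsto T\alpha\circ A-A\circ T\alpha$. I will take $X(p)$ to be the common kernel of these differences.

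To make this a \emph{finite} intersection of kernels (so that it exists as a subobject and lies in $\langle p\rangle$), I first note that $\End_{\mathcal{A}}(p)$ is a finitely generated $R$-module: faithfulness of $T$ embeds it into $\End_R(Tp)$, which is finitely generated over the noetherian ring $R$ by Lemma \ref{hom_noethersch}, so it is finitely generated by Lemma \ref{noether}. Fix generators $\alpha_1,\dots,\alpha_k$. Since both operations are $R$-bilinear (Bemerkung \ref{gdg}) and $T$ is $R$-linear, commuting with every $T\alpha$ is equivalent to commuting with $T\alpha_1,\dots,T\alpha_k$. I define
\[
\Phi\colon \Hom_R(Tp,p)\longrightarrow \Hom_R(Tp,p)^k,\qquad x\longmapsto\big(\alpha_j\circ x-x\circ T\alpha_j\big)_{j=1}^k,
\]
and set $X(p):=\ker\Phi$. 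As constructed in Lemma \ref{Hom}, $\Hom_R(Tp,p)$ is the kernel of a map between finite powers of $p$, hence a subobject of some $p^{a_0}$ and thus an object of $\langle p\rangle$; therefore so is its subobject $X(p)$.

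Applying the exact, faithful functor $T$ commutes with taking kernels (Korollar \ref{treu_exakt}), so $T(X(p))=\ker(T\Phi)$. By Lemma \ref{links} and Bemerkung \ref{quark}, $T\Phi$ is the map $A\mapsto(T\alpha_j\circ A-A\circ T\alpha_j)_j$ on $\End_R(Tp)$, whose kernel is exactly the centralizer $\End(T_{|\{p\}})$; this gives the required isomorphism $T(X(p))\cong\End(T_{|\{p\}})$.

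It remains to restrict the right operation $\circ\varphi$ to $X(p)$ for $\varphi\in\End(T_{|\{p\}})$; this is the step requiring the most care and is the main obstacle. For $x\in X(p)$ one has $\alpha_j\circ x=x\circ T\alpha_j$, and combining the compatibility of the two operations from Lemma \ref{Asso}, the associativity of the right operation (which follows from faithfulness of $T$), and the defining property $T\alpha_j\circ\varphi=\varphi\circ T\alpha_j$ of $\varphi\in\End(T_{|\{p\}})$, I compute
\[
\alpha_j\circ(x\circ\varphi)=(\alpha_j\circ x)\circ\varphi=(x\circ T\alpha_j)\circ\varphi=x\circ(T\alpha_j\circ\varphi)=x\circ(\varphi\circ T\alpha_j)=(x\circ\varphi)\circ T\alpha_j,
\]
so $x\circ\varphi\in\ker\big((\alpha_j\circ)-(\circ T\alpha_j)\big)$ for every $j$, i.e.\ $x\circ\varphi\in X(p)$. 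Equivalently, since $\End(T_{|\{p\}})$ is closed under composition, one checks after applying the faithful functor $T$ that $\circ\varphi$ carries $X(p)=\ker\Phi$ into itself, which suffices by faithfulness. Hence $\circ\varphi$ restricts to a morphism $X(p)\to X(p)$, yielding the desired right operation $X(p)\times\End(T_{|\{p\}})\to X(p)$; by the definition of $\circ\varphi$ its image under $T$ is the ordinary composition on $\End(T_{|\{p\}})$, as required. The finite generation of $\End_{\mathcal{A}}(p)$ is precisely what guarantees that $X(p)$ exists as a genuine subobject in $\langle p\rangle$.
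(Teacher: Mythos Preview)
Your proof is correct and follows essentially the same approach as the paper: choose finitely many $R$-module generators $\alpha_1,\dots,\alpha_k$ of $\End_{\mathcal{A}}(p)$, define $X(p)$ as the kernel of the map $x\mapsto(\alpha_j\circ x-x\circ T\alpha_j)_j$ from $\Hom_R(Tp,p)$ to $\Hom_R(Tp,p)^k$, use exactness of $T$ to identify $T(X(p))$ with $\End(T_{|\{p\}})$, and check stability under $\circ\varphi$ via the same chain of equalities using Lemma~\ref{Asso}. Your write-up is in fact slightly more careful than the paper's in one place: you spell out the intermediate associativity step $(x\circ T\alpha_j)\circ\varphi=x\circ(T\alpha_j\circ\varphi)$, which the paper compresses into a single equality (note that this step is really functoriality of $\Hom_R(\_,p)$ rather than faithfulness of $T$, but the conclusion is the same).
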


\begin{proof}
$ \End_{\mathcal{A}}(p) $ ist ein $ R $-Modul. Da der Funktor $ T $ $ R $-linear ist, ist $ T(\End_{\mathcal{A}}(p))\subset\End_R(Tp) $ ein Untermodul. Da $ R $ noethersch ist, ist $ \End_R(Tp) $ nach Lemma \ref{hom_noethersch} endlich erzeugt. Also ist nach Lemma \ref{noether} auch $ T(\End_{\mathcal{A}}(p)) $ ein endlich erzeugter Modul. Da $ T $ treu ist, ist dann auch $ \End_{\mathcal{A}}(p) $ als $ R $-Modul endlich erzeugt.\\
Sei $ (\alpha_1,...\alpha_n) $ ein Erzeugendensystem von $ \End_{\mathcal{A}}(p) $ als $ R $-Modul. Für alle $ \alpha_i $ operiert nun nach Lemma \ref{links} $ \alpha_i\circ $ von links und nach Lemma \ref{rechts} $ \circ T\alpha_i $ von rechts auf dem Objekt $ \Hom_R(Tp,p) $. Wir definieren $ X(p)\in\mathcal{A} $ als Kern der Abbildung
\[ ((\alpha_i\circ)-(\circ T\alpha_i))_{i\in\{1..n\}}:\Hom_R(Tp,p)\rightarrow\bigoplus_{i=n}^n\Hom_R(Tp,p). \]
Da $ \mathcal{A} $ abelsch ist, existiert dieses Objekt in $ \mathcal{A} $. Wenden wir den Funktor $ T $ auf diese Abbildung an, so erhalten wir nach Lemma \ref{rechts} bzw. \ref{links} die Abbildung 
\[ ((T\alpha_i\circ)-(\circ T\alpha_i))_{i\in\{1..n\}}:\Hom_R(Tp,Tp)\rightarrow\bigoplus_{i=n}^n\Hom_R(Tp,Tp). \]
Da $ (\alpha_1...\alpha_n) $ ein Erzeugendensystem von $ \End_{\mathcal{A}}(p) $ ist, gilt also:
\[
\begin{split} 
ker (((T\alpha_i\circ)-(\circ T\alpha_i))_{i\in\{1..n\}})& =\{\varphi\in\End_R(Tp)|\forall i\in \{1\dots n\}:T\alpha_i\circ\varphi=\varphi\circ T\alpha_i\} \\
	 & =\{\varphi\in\End_R(Tp)|\forall \alpha\in \End_{\mathcal{A}}(p):T\alpha\circ\varphi=\varphi\circ T\alpha\} \\
	 & =\End(T_{|\{p\}}).
\end{split}
\]
Weil $ T $ exakt ist, gilt
\[ T(X(p))=\End(T_{|\{p\}}).\]
Da $  \langle p \rangle $ stabil unter endlichen direkten Summen und Kernen ist, ist $ \Hom_R(Tp,p) $ in der von $ p $ erzeugten abelschen Kategorie und mit dem gleichen Argument auch $ X(p) $.
Als nächstes möchten wir sehen, dass sich die Rechtsoperation

\[
\begin{array}{cccc}
\rho_R: & \Hom_R(Tp,p)\times\End_R(Tp) & \rightarrow & \Hom_R(Tp,p) \\
		&  (x,\varphi) & \mapsto & x\circ\varphi
\end{array}
\]
zu einer Operation 

\[
\begin{array}{cccc}
 X(p)\times \End(T_{|\{p\}}) & \rightarrow & X(p) \\
 (x,\varphi) & \mapsto & x\circ\varphi
\end{array}
\]
einschränken lässt. Hierfür rechnen wir für $ x\in X(p),\varphi\in \End(T_{|\{p\}}) $ und $ \alpha_i $ aus dem Erzeugendensystem von $ \End_{\mathcal{A}}(p) $:
\[
\begin{split} 
					       \alpha_i\circ(x\circ\varphi)  \overset{\ref{Asso}}{=} (\alpha_i\circ x)\circ\varphi) 
  \overset{x\in X(p)}{=}  (x\circ T\alpha_i)\circ\varphi)  \overset{\varphi\in \End(T_{|\{p\}})}{=} (x\circ\varphi)\circ T\alpha_i.  \\
\end{split}
\]
Also liegt $ x\circ\varphi $ in $ X(p) $. \\
Die Rechtsoperation
\[
\begin{array}{cccc}
\rho_R: & \Hom_R(Tp,p)\times\End_R(Tp) & \rightarrow & \Hom_R(Tp,p) \\
		&  (x,\varphi) & \mapsto & x\circ\varphi
\end{array}
\]
induziert nach Lemma \ref{links} durch Anwenden des Funktors $ T $ den Morphismus $ \circ T\varphi $ auf $ \Hom_R(Tp,Tp) $. Die Einschränkung auf 
\[ X(p)\times \End(T_{|\{p\}}) \rightarrow  X(p)  \] liefert demnach nach Anwenden von $ T $ wie gefordert die Operation $ \circ T\varphi $ auf $ \End(T_{|\{p\}}) $.\\

\end{proof}

Wir bezeichnen ab sofort die $ R $-Algebra $ \End(T_{|\{p\}}) $ vereinfachend mit $ E(p) $.

\begin{Bem}\label{17}
Die Rechtswirkung 
\[ \rho_R:  X(p)\times E(p) \rightarrow  X(p) \]
definiert einen $ R $-Algebrenhomomorphismus
\[ 
\begin{array}{cccc}
f: & \big ( E(p) \big ) ^{op} & \longrightarrow & \End_{\mathcal{A}}(X(p)) \\
 &  \varphi & \longmapsto & \circ \varphi 
\end{array}
\]

aus dem opponierten Ring. Dies induziert nach Lemma \ref{tensor} einen treuen, exakten, $ R $-linearen Funktor

\[ 
\begin{array}{cccc}
\_\otimes_{E(p)^{op}} X(p): & \Mor ( E(p) \big ) ^{op} & \longrightarrow & \mathcal{A} \\
 &  M & \longmapsto & M\otimes_{E(p)^{op}} X(p)
\end{array}
\]
Das Objekt $ M\otimes_{E(p)^{op}} X(p) $ liegt nach Konstruktion für alle $ M $ in der von $ X(p) $ erzeugten abelschen Kategorie $ \langle X(p) \rangle $. Da $ X(p) $ ein Kern von direkten Summen von $ p $ ist, nimmt der Funktor also nur Werte in $ \langle p \rangle $ an. Nach Bemerkung \ref{op} erhalten wir analog zu $ \_\otimes_{E(p)^{op}} X(p) $ einen treuen, exakten, $ R $-linearen Funktor

\[ 
\begin{array}{cccc}
X(p) \otimes_{E(p)} \_: & E(p)\Mod & \longrightarrow & \langle p \rangle \\
 &  M & \longmapsto & p \otimes_{E(p)} M
\end{array}
\]

\end{Bem}

\begin{Korollar}\label{id}
Sei $ \mathcal{A} $ eine abelsche, $ R $-lineare Kategorie und 
\[ T:\mathcal{A}\longrightarrow R\Mod \]
ein treuer, exakter, $ R $-linearer Funktor, also insbesondere eine Darstellung. Sei 
\[  \mathcal{A}\xrightarrow{\tilde{T}}\mathcal{C}(T_{\mathcal{A}})\xrightarrow{ff_T}R\Mod   \]
die Faktorisierung von $ T $ über seine Diagrammkategorie.\\
Dann entspricht die Verkettung
\[
\begin{array}{ccccc}
E(p)\Mod & \overset{X(p)\otimes_{E(p)}\_} {\longrightarrow} & \langle p \rangle & \overset{\tilde{T}_{|\langle p \rangle}}{\longrightarrow} & E(p)\Mod \\
				 M & \longmapsto & X(p)\otimes_{E(p)}M & \mapsto & \tilde{T}( X(p)\otimes_{E(p)}M ) \\
\end{array}
\]
bis auf Isomorphie dem Identitätsfunktor auf $ E(p)\Mod $.
\end{Korollar}

\begin{proof}
Für $ E(p) $ als $ E(p) $-Linksmodul gilt nach Konstruktion 
\[ \tilde{T} \big (  X(p)\otimes_{E(p)}E(p)\big )=\tilde{T}\big ( X(p)\big ) = E(p). \]
Sei $ \varphi\in E(p) $. Betrachten wir das Bild des Edomorphismus $ \circ\varphi\in \End_{E(p)}\big ( E(p) \big ) $ unter folgender Kette von Funktoren:
\[ E(p)\Mod \overset{\sim}{\longrightarrow} \Mor  E(p)^{op}  \xrightarrow{\_\otimes_{E(p)^{op}} X(p)}  \langle p \rangle \overset{\tilde{T}_{|\langle p \rangle}}{\longrightarrow} E(p)\Mod \]
Der Endomorphismus $ \circ\varphi $ entspricht in $ \Mor  E(p)^{op} $ der gewöhnlichen Multiplikation von rechts mit dem Ringelement $ \varphi\in E(p)^{op} $. Nach Definition ist das Bild dieses Morphismus unter dem Funktor $ X(p)\otimes_{E(p)}\_ $ gerade $ f(\varphi) $, also der Endomorphismus $ \circ\varphi\in\End_{\mathcal{A}}(p) $, welcher nach Bemerkung \ref{quark} unter $ \tilde{T} $ auf $ \circ\varphi\in \End_{E(p)}(E(p)) $ abgebildet wird. Die natürliche Rechtsoperation auf $ E(p) $ wird also unter der Kette von Funktoren auf sich selbst abgebildet. Dies impliziert, dass die Kette von Funktoren schon für alle Endomorphismen von $ E(p) $ dem Identitätsfunktor entspricht, da die Rechtswirkung von $ \varphi $ auf einem Element $ e\in E(p) $ der Linkswirkung von $ e $ auf dem Element $ \varphi $ entspricht. \\

Für beliebige $ E(p) $-Moduln rechnet man dies über Auflösungen wieder einfach nach:\\
Sei $ g:M\rightarrow N $ ein Morphismus zwischen beliebigen $ E(p) $-Moduln. Nach der Wahl von Auflösungen für $ M $ und $ N $, sowie nach der Wahl von Lifts für $ g $ erhalten wir ein kommutatives Diagramm mit exakten Zeilen
\begin{center}
\begin{tikzpicture}[description/.style={fill=white,inner sep=2pt}]
    \matrix (m) [matrix of math nodes, row sep=4.5em,
    column sep=3.5em, text height=1.5ex, text depth=0.25ex]
    { E(p)^m & E(p)^n& M & 0 \\
      E(p)^a & E(p)^b & N & 0 \\
	};
    \path[->,font=\scriptsize]
    (m-1-1) edge node[auto] {$  A $} (m-1-2)
    (m-1-2) edge node[auto] {$ $} (m-1-3)
    (m-1-3) edge node[auto] {} 			  (m-1-4)
    
    (m-2-1) edge node[auto] {$ B $} (m-2-2)
    (m-2-2) edge node[auto] {$ $} (m-2-3)
    (m-2-3) edge node[auto] {} 			  (m-2-4)

    (m-1-1) edge node[auto] {$ g^1 $} (m-2-1)
    (m-1-2) edge node[auto] {$ g^0 $} (m-2-2)
    (m-1-3) edge node[description] {$ g $} 	  (m-2-3);

\end{tikzpicture}
\end{center}
Für alle $ k\in \mathbb{N} $ gilt nun
\[ \tilde{T} \big (  X(p)^k\otimes_{E(p)}E(p)\big )=\tilde{T}\big ( X(p)^k\big ) =E(p)^k. \]
Die Matrizen $ A,B,g^0 $ und $ g^1 $ entsprechen eintragsweise Endomorphismen $ \varphi $ von $ E(p) $ und für diese gilt, wie wir geprüft haben
\[ \tilde{T} \big ( X(p)\otimes_{E(p)}\_\big )(\varphi) = \varphi. \]
Also gilt wegen Additivität der Funktoren auch
\[ \tilde{T} \big (  X(p)\otimes_{E(p)}\_\big )(A)=A, \] 
sowie
\[ \tilde{T} \big (  X(p)\otimes_{E(p)}\_\big )(B)=B. \] 
\[ \tilde{T} \big (  X(p)\otimes_{E(p)}\_\big )(g^0)=g^0. \] 
\[ \tilde{T} \big (  X(p)\otimes_{E(p)}\_\big )(g^1)=g^1. \] 

Wenden wir $ \tilde{T}\circ \big ( X(p)\otimes_{E(p)}\_\big ) $ auf das Diagramm an, so erhalten wir also
\begin{center}
\begin{tikzpicture}[description/.style={fill=white,inner sep=2pt}]
    \matrix (m) [matrix of math nodes, row sep=4.5em,
    column sep=3.5em, text height=1.5ex, text depth=0.25ex]
    { E(p)^m & E(p)^n& \tilde{T} \big ( X(p)\otimes_{E(p)}\_\big )(M) & 0 \\
      E(p)^a & E(p)^b &\tilde{T}(\big ( X(p)\otimes_{E(p)}\_)\big )(N) & 0 \\
	};
    \path[->,font=\scriptsize]
    (m-1-1) edge node[auto] {$  A $} (m-1-2)
    (m-1-2) edge node[auto] {$ $} (m-1-3)
    (m-1-3) edge node[auto] {} 			  (m-1-4)
    
    (m-2-1) edge node[auto] {$ B $} (m-2-2)
    (m-2-2) edge node[auto] {$ $} (m-2-3)
    (m-2-3) edge node[auto] {} 			  (m-2-4)

    (m-1-1) edge node[auto] {$ g^1 $} (m-2-1)
    (m-1-2) edge node[auto] {$ g^0 $} (m-2-2)
    (m-1-3) edge node[description] {$ \tilde{T} \big ( X(p)\otimes_{E(p)}\_\big )(g)$} 	  (m-2-3);

\end{tikzpicture}
\end{center}
Es gilt also 
\[ \tilde{T} \big ( X(p)\otimes_{E(p)}\_\big )(M)\cong coker(A) \cong M \]
und
\[ \tilde{T} \big ( X(p)\otimes_{E(p)}\_\big )(N) \cong coker(B) \cong N. \]
Nach universeller Eigenschaft des Kokerns gibt es wieder eine eindeutige Abbildung zwischen $ M $ und $ N $, welche das Diagramm kommutativ macht, also entsprechen sich also auch $ \tilde{T} \big ( X(p)\otimes_{E(p)}\_\big )(g) $ und $ g $. Der Funktor $ \tilde{T}\circ \big ( X(p)\otimes_{E(p)}\_\big ) $ induziert also bis auf Isomorphie den Identitätsfunktor auf $ E(p)\Mod $. 
\end{proof}

\begin{Satz}\label{wesentlich_surjektiv}
Sei $ R $ ein kommutativer, noetherscher Ring, $ \mathcal{A} $ eine abelsche, $ R $-lineare Kategorie und 
\[ T:\mathcal{A}\longrightarrow R\Mod \]
ein treuer, exakter, $ R $-linearer Funktor, also insbesondere eine Darstellung. Sei $ \C(T_\mathcal{A}) $ die Diagrammkategorie der Darstellung $ T $ und 
\[  \mathcal{A}\xrightarrow{\tilde{T}}\mathcal{C}(T_{\mathcal{A}})\xrightarrow{ff_T}R\Mod   \]
die zugehörige Faktorisierung von $ T $. Dann ist $ \tilde{T} $ wesentlich surjektiv. 
\end{Satz}

\begin{proof}
Wir konstruieren einen Funktor
\[ S:\mathcal{C}(T_{\mathcal{A}})\longrightarrow \mathcal{A}, \]
so dass die Verkettung
\[\mathcal{C}(T_{\mathcal{A}})\overset{S}{\longrightarrow} \mathcal{A}\overset{\tilde{T}}{\longrightarrow }{\C(T_{\mathcal{A}})} \]
natürlich äquivalent zum Identitätsfunktor auf $ \C(T_{\mathcal{A}}) $ ist.
Erinnern wir uns an die Definition von $ \C(T_{\mathcal{A}}) $:
\[  \C(T_{\mathcal{A}}):=\varinjlim_{F\subset \mathcal{A}\text{ endlich}}\End(T_{|F})\Mod \]
Sei $ F $ eine endliche Teilmenge von $ Ob(\mathcal{A}) $. Sei $ p=\bigoplus_{p_i\in F}p_i $. Wir überlegen uns, dass die Kategorie $ \End(T_{|F})\Mod $ natürlich isomorph zur Kategorie $ \End(T_{\{p\}})\Mod $ ist:\\
Ein Modulendomorphismus
\[ f:\bigoplus_{i=1}^n p_i\longrightarrow\bigoplus_{i=1}^n p_i \]
ist gegeben durch eine Familie von Morphismen $ (f_{k,j})_{k,j\in\{1..n\}} $, wobei wir $ f_{k,j} $ durch die Verkettung
\[p_i\stackrel{i_i}{\longrightarrow}\bigoplus_{k=1}^n p_k\stackrel{f}{\longrightarrow}\bigoplus_{k=1}^n p_k\stackrel{\pi_j}{\longrightarrow}p_j \]
erhalten. Ein Element aus $ \End(T_{\{p\}}) $ ist demnach ein $ R $-linearer Modulhomomorphismus
\[ \bigoplus_{i=1}^n Tp_i\stackrel{(e_{kj})_{kj}}{\longrightarrow}\bigoplus_{i=1}^n Tp_i,\]
so dass für alle $ (a_{kj})_{kj}\in\End(\bigoplus_{i=1}^n p_i) $ die Diagramme

\begin{center}
\begin{tikzpicture}[description/.style={fill=white,inner sep=2pt}]
    \matrix (m) [matrix of math nodes, row sep=3em,
    column sep=2.5em, text height=1.5ex, text depth=0.25ex]
    {  \bigoplus_{i=1}^n Tp_i & & \bigoplus_{i=1}^n Tp_i \\
       \bigoplus_{i=1}^n Tp_i & & \bigoplus_{i=1}^n Tp_i \\
	};
    \path[->,font=\scriptsize]
    (m-1-1) edge node[auto] {$ (e_{kj})_{kj} $} (m-2-1)
    (m-1-3) edge node[auto] {$ (e_{kj})_{kj} $} (m-2-3)
    (m-1-1) edge node[auto] {$ (Ta_{kj})_{kj} $} (m-1-3)
    (m-2-1) edge node[auto] {$ (Ta_{kj})_{kj} $} (m-2-3);
\end{tikzpicture}
\end{center}

kommutieren. Hieraus folgt direkt, dass $ e_{kj}=0_{Abb} $ für alle $ k\neq j $, da sonst das Diagramm für
\[ \operatorname{a_{l,m}}=\begin{cases} id_{p_k}\  & k=l=m\ ,\\ 0_{Abb}\ & sonst \end{cases}\]
nicht kommutieren würde. Ein Element aus $ \End(T_{\{p\}}) $ ist also gegeben durch eine Familie $ (e_{ii})\in\prod_{p_i\in F}\End_R(Tp_i) $, so dass alle Diagramm der Form

\begin{center}
\begin{tikzpicture}[description/.style={fill=white,inner sep=2pt}]
    \matrix (m) [matrix of math nodes, row sep=3em,
    column sep=2.5em, text height=1.5ex, text depth=0.25ex]
    {   Tp_i & &  Tp_j \\
		Tp_i & &  Tp_j \\
	};
    \path[->,font=\scriptsize]
    (m-1-1) edge node[auto] {$ e_{ii} $} (m-2-1)
    (m-1-3) edge node[auto] {$ e_{jj} $} (m-2-3)
    (m-1-1) edge node[auto] {$ Ta_{ij} $} (m-1-3)
    (m-2-1) edge node[auto] {$ Ta_{ij} $} (m-2-3);
\end{tikzpicture}
\end{center}

kommutieren. Dies ist aber gerade die Definition eines Elements aus $ \End(T_{|F}) $. Also ist auch die Kategorie der $ \End(T_{|F}) $-Moduln natürlich isomorph zur Kategorie der $ \End(T_{\{p\}}) $-Moduln. \\
Da offensichtlich auch $ \langle p \rangle=\langle F \rangle $ gilt, induziert die Faktorisierung des Identitätsfunktors aus Korollar \ref{id}
\[
\begin{array}{ccccc}
\End(T_{|\{p\}})\Mod & \xrightarrow{X(p)\otimes_{\End(T_{|\{p\}})}\_}  & \langle p \rangle & \overset{\tilde{T}_{|\langle p \rangle}}{\longrightarrow} & \End(T_{|\{p\}})\Mod \\
\end{array}
\]
eine Faktorisierung des Identitätsfunktors 
\[ \End(T_F)\Mod \xrightarrow{X(p)\otimes_{\End(T_{|\{p\}})} \_} \langle F \rangle  \overset{\tilde{T}_{|F}} {\longrightarrow} \End(T_F)\Mod \]
auf $ \End(T_F)\Mod $.
%
Da $ \mathcal{A}=\varinjlim_{F\subset \mathcal{A}}\langle F \rangle$ für das gerichtete System der endlichen Teilmengen $ F\subset D $ ist, erhalten wir nach Übergang zum direkten Limes durch dessen universelle Eigenschaft eindeutige Funktoren
\[\mathcal{C}(T_{\mathcal{A}})\overset{S}{\longrightarrow} \mathcal{A}\overset{\tilde{T}}{\longrightarrow }{\C(T_{\mathcal{A}})}, \]
welche nach Restriktion die Verkettung
 \[ \End(T_F)\Mod  \overset{X(p)\otimes_{\End(T_{|\{p\}})}\_} {\longrightarrow}  \langle F \rangle  \overset{\tilde{T}_{|F}} {\longrightarrow}  \End(T_F)\Mod \]
liefern. Da jedes Objekt aus $ \mathcal{C}(T_{\mathcal{A}}) $ ein $ \End(T_F)$-Modul für geeignetes endliches $ F $ ist, entspricht auch
\[\mathcal{C}(T_{\mathcal{A}})\overset{S}{\longrightarrow} \mathcal{A}\overset{\tilde{T}}{\longrightarrow }{\C(T_{\mathcal{A}})}, \]
bis auf Isomorphie dem Identitätsfunktor auf $\mathcal{C}(T_{\mathcal{A}})  $. Für ein beliebiges Objekt \\$ M\in \C(T_{\mathcal{A}}) $ ist $ S(M) $ also ein Urbild von $ M $ unter $ \tilde{T} $. Damit ist $ \tilde{T} $ wesentlich surjektiv.
\end{proof}

\section{Die Adjunktion der Funktoren $ \Hom_R(M,\_) $ und $ \_\otimes_R M $ in $ \mathcal{A} $}

Wir müssen nun noch zeigen, dass der Funktor $ \tilde{T} $ voll ist. Im letzten Beweis haben wir für alle $ p\in\mathcal{A} $ einen Funktor $ \C(T)\longrightarrow\mathcal{A} $ gefunden, so dass die Verkettung
\[
\begin{array}{ccccc}
\C(T) & \overset{S}{\longrightarrow} & \mathcal{A} & \overset{\tilde{T}}{\longrightarrow} & \C(T)\\
Tp & \mapsto & X(p)\otimes_{E(p)}Tp & \mapsto & Tp
\end{array}
\]
für $ Tp $ die Identität in $ \C(T) $ ist. Also ist der Funktor $ \tilde{T} $ zumindest auf dem Objekt $ X(p)\otimes_{E(p)}Tp $ voll. Unsere Strategie wird sein, einen natürlichen Isomorphismus
\[  X(p)\otimes_{E(p)}Tp \cong p \]
zu konstruieren. Dieser wird von einer natürlichen Abbildung 
\[ \Hom_R(Tp,p)\otimes_R Tp \longrightarrow p \]
induziert. Ein wesentlicher Schritt ist es, diese natürliche Abbildung zu finden. In $ R $-Mod haben wir in ähnlichem Fall die natürliche Evaluationsabbildung
\[ \Hom_R(Tp,Tp)\otimes_R Tp \longrightarrow Tp. \]
Eine ähnliche Konstruktion werden wir in $ \mathcal{A} $ durchführen.

\begin{Def}
Seien $ \mathcal{C}, \mathcal{D} $ Kategorien und
\[ \mathcal{F}:\C\rightarrow\mathcal{D} \hspace{2 cm}
\mathcal{G}:\mathcal{D}\rightarrow\mathcal{C} \]
Funktoren. Wir sagen $ \mathcal{F} $ ist \emph{linksadjungiert} zu $ \mathcal{G} $ bzw. $ \mathcal{G} $ ist \emph{rechtsadjungiert} zu $ \mathcal{F} $, wenn für alle $ X\in\C, Y\in\mathcal{D} $ eine natürliche Bijektion 
\[ \phi_{X,Y}:\Hom_{\C}(X,\mathcal{G}(Y))\cong\Hom_{\mathcal{D}}(\mathcal{F}(X),Y) \]
existiert.
Die Bijektion heißt natürlich, wenn für alle $ f\in\Hom_{\C}(X',X) $, $ g\in\Hom_{\mathcal{D}}(Y,Y') $ das Diagramm

\begin{center}
\begin{tikzpicture}[description/.style={fill=white,inner sep=2pt}]
    \matrix (m) [matrix of math nodes, row sep=3em,
    column sep=3.8em, text height=1.9ex, text depth=0.25ex]
    { \Hom_{\C}(X,\mathcal{G}(Y)) & \Hom_{\mathcal{D}}(\mathcal{F}(X),Y)  \\
      \Hom_{\C}(X',\mathcal{G}(Y'))  & \Hom_{\mathcal{D}}(\mathcal{F}(X'),Y') \\
	};
    \path[->,font=\scriptsize]
    (m-1-1) edge node[auto] {$ \mathcal{G}(g)\circ \_ \circ f $} (m-2-1)
    (m-1-2) edge node[auto] {$ g\circ \_ \circ \mathcal{F}(f) $} (m-2-2)
    (m-1-1) edge node[auto] {$ \phi_{X,Y} $}  (m-1-2)
    (m-2-1) edge node[auto] {$ \phi_{X',Y} $}  (m-2-2)

;

\end{tikzpicture}
\end{center}
kommutiert.
\end{Def}

\begin{Bsp}
Sei $ R $ ein kommutativer Ring, $ M $ ein beliebiger $ R $-Modul. Dann ist der Funktor
\[
\begin{array}{cccc}
\_\otimes_R M: & R\Mod & \rightarrow & R\Mod \\
		&  N & \mapsto & N\otimes_R M
\end{array}
\]
linksadjungiert zu
\[
\begin{array}{cccc}
\Hom_R(M,\_): & R\Mod & \rightarrow & R\Mod \\
		   &  N & \mapsto & \Hom_R(M,N)
\end{array}
\]
\end{Bsp}

\begin{Bem}
Ist $ \mathcal{F} $ linksadjungiert zu $ \mathcal{G} $, so impliziert dies, dass es natürliche Transformationen \\
\[ \epsilon:\mathcal{G}\mathcal{F}\rightarrow 1_{\mathcal{C}} \]
\[ \eta: 1_{\mathcal{D}} \rightarrow\mathcal{F}\mathcal{G} \]
gibt \cite[S.84 ff]{MR0354799}. Im obigen Beispiel ist die Transformation $ \epsilon $ durch den Auswertungsmorphismus
\[
\begin{array}{cccc}
ev: &  \Hom_R(M,N)\otimes_R M & \rightarrow & N \\
		   &  \varphi\otimes m & \mapsto & \varphi(m)
\end{array}
\]
gegeben. 
\end{Bem}

\begin{Lemma}\label{adjunktion}
Sei $ \mathcal{A} $ eine $ R $-lineare, abelsche Kategorie, $ M $ ein endlich erzeugter $ R $-Modul. Seien aus $ \Hom_R(M,\_)  $ und $ \_\otimes_R M $ die Funktoren auf $ \mathcal{A} $ aus Konstruktion \ref{konstr}.
Dann ist der Funktor $ \Hom_R(M,\_)  $ rechtsadjungiert zu $ \_\otimes_R M $.
\end{Lemma}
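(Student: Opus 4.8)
Der Plan ist, für die in Konstruktion \ref{konstr} fixierte Präsentation $R^n \xrightarrow{A} R^m \twoheadrightarrow M \to 0$ eine in $X$ und $Y$ natürliche Bijektion
\[ \phi_{X,Y}\colon \Hom_{\mathcal{A}}(X\otimes_R M, Y) \xrightarrow{\sim} \Hom_{\mathcal{A}}(X, \Hom_R(M,Y)) \]
anzugeben; dies ist genau die Aussage, dass $\_\otimes_R M$ linksadjungiert zu $\Hom_R(M,\_)$ ist. Zuerst würde ich beide Seiten allein mit den universellen Eigenschaften von Kokern und Kern als Kerne einer induzierten Abbildung zwischen Hom-Gruppen beschreiben. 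Nach Konstruktion \ref{konstr} ist $X\otimes_R M=\operatorname{coker}(A\colon X^n\to X^m)$, die Zeile $X^n \xrightarrow{A} X^m \to X\otimes_R M \to 0$ also exakt; die universelle Eigenschaft des Kokerns identifiziert
\[ \Hom_{\mathcal{A}}(X\otimes_R M, Y) = \ker\bigl(\circ A\colon \Hom_{\mathcal{A}}(X^m, Y)\to \Hom_{\mathcal{A}}(X^n, Y)\bigr). \]
Dual dazu ist $\Hom_R(M,Y)=\ker(A^T\colon Y^m \to Y^n)$, und die universelle Eigenschaft des Kerns identifiziert
\[ \Hom_{\mathcal{A}}(X, \Hom_R(M,Y)) = \ker\bigl(A^T\circ\colon \Hom_{\mathcal{A}}(X, Y^m)\to \Hom_{\mathcal{A}}(X, Y^n)\bigr). \]

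Als Nächstes würde ich diese beiden Kerne vergleichen. Da die Potenzen $X^k$ bzw. $Y^k$ Biprodukte sind (Bemerkung \ref{Summen}), liefern die universellen Eigenschaften von Produkt und Koprodukt kanonische Isomorphismen
\[ \Hom_{\mathcal{A}}(X^k, Y)\cong \Hom_{\mathcal{A}}(X,Y)^k \cong \Hom_{\mathcal{A}}(X, Y^k), \]
unter denen ein Morphismus durch seine Komponentenfamilie $f_i=f\circ \iota_i$ bzw. $g_i=\pi_i\circ g$ beschrieben wird. Verwendet man diese Identifikationen für $k=m$ und $k=n$, so bleibt zu zeigen, dass die linke Abbildung $\circ A$ und die rechte Abbildung $A^T\circ$ übereinstimmen. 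Dies ist der rechnerische Kern des Beweises: Schreibt man $A=(a_{ij})$ mit über die $R$-Linearität wirkenden skalaren Einträgen, so ergibt eine direkte Komponentenrechnung — genau die Transpositions-Buchhaltung aus Lemma \ref{rechts} und Lemma \ref{links} —
\[ (f\circ A)_j = \sum_{i} a_{ij}\, f_i, \qquad (A^T\circ g)_j = \sum_{i} a_{ij}\, g_i. \]
Damit kommutiert das Vergleichsquadrat zwischen den beiden induzierten Abbildungen, die beiden Kerne werden also aufeinander abgebildet, und dieser Kern-Isomorphismus ist die gesuchte Bijektion $\phi_{X,Y}$.

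Schließlich würde ich die Natürlichkeit in beiden Variablen prüfen. Sie folgt aus der Natürlichkeit der Biprodukt-Isomorphismen $\Hom_{\mathcal{A}}(X^k, Y)\cong \Hom_{\mathcal{A}}(X, Y^k)$ — unmittelbar aus der Funktorialität von Produkt und Koprodukt — zusammen mit der Verträglichkeit der Kernbildung mit den induzierten Abbildungen. Das einzige echte Hindernis erwarte ich beim Nachweis der Kommutativität des Vergleichsquadrats, also bei der Bestätigung, dass die Vorkomposition mit der Matrix $A$ auf der $X$-Seite der Nachkomposition mit der Transponierten $A^T$ auf der $Y$-Seite entspricht; alles Übrige ist formale Manipulation mit universellen Eigenschaften. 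Alternativ ließe sich die Adjunktion über Einheit und Koeinheit direkt angeben, indem man als Koeinheit den Auswertungsmorphismus $\Hom_R(M,Y)\otimes_R M \to Y$ in $\mathcal{A}$ konstruiert; der obige Vergleich von Kern und Kokern ist jedoch am direktesten und passt unmittelbar zu den Konstruktionen aus Konstruktion \ref{konstr}.
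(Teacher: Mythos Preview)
Dein Vorschlag ist korrekt und folgt exakt demselben Weg wie der Beweis im Paper: beide Seiten werden über die Präsentation als Kerne identifiziert, der Biprodukt-Isomorphismus $\Hom_{\mathcal{A}}(X^k,Y)\cong\Hom_{\mathcal{A}}(X,Y^k)$ wird als Matrixtransposition gedeutet, und der entscheidende Schritt ist der Nachweis, dass unter diesem Isomorphismus $\circ A$ zu $A^T\circ$ wird. Das Paper behandelt zwar zuerst separat den freien Fall $M=R^n$ und verifiziert Natürlichkeit etwas ausführlicher über Einschränkungen, aber das sind reine Darstellungsunterschiede — Strategie und rechnerischer Kern sind identisch.
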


\begin{proof}
Sei zunächst $ M=R^n $. Seien $ f\in\Hom_{\mathcal{A}}(a,b) $, $ g\in\Hom_{\mathcal{A}}(p,q) $ Morphismen zwischen Objekten aus $ \mathcal{A} $. Wir haben einen kanonischen Isomorphismus von $ R $-Moduln
\[ \phi^n_{p,b}:\Hom_{\mathcal{A}}(p,\bigoplus_{i=1}^n b)\cong\Hom_{\mathcal{A}}(\bigoplus_{i=1}^n p,b), \] 
also nach Definition der beiden Funktoren einen Isomorphismus
\[ \Hom_{\mathcal{A}}(p,\Hom_R(R^n,b))=\Hom_{\mathcal{A}} (p,\bigoplus_{i=1}^n b) \cong \Hom_{\mathcal{A}}(\bigoplus_{i=1}^n p,b)=\Hom_{\mathcal{A}}(p \otimes_R R^n,b). \] 

$ (\_\otimes_R R^n)(f)$ liefert die Abbildung 
\[ \bigoplus_{i=1}^n p\xrightarrow{(f,..f)}\bigoplus_{i=1}^n q, \]
welche auf jeder Komponente $ f $ ausführt. Ebenso liefert $ \Hom_R(R^n,\_ )(g) $ die Abbildung 
\[ \bigoplus_{i=1}^n a\xrightarrow{(g,..g)}\bigoplus_{i=1}^n b. \]
Das Diagramm

\begin{center}
\begin{tikzpicture}[description/.style={fill=white,inner sep=2pt}]
    \matrix (m) [matrix of math nodes, row sep=3em,
    column sep=3.8em, text height=1.9ex, text depth=0.25ex]
    { \Hom_{\mathcal{A}}(p,\bigoplus_{i=1}^n b) & \Hom_{\mathcal{A}}(\bigoplus_{i=1}^n p,b)  \\
	  \Hom_{\mathcal{A}}(q,\bigoplus_{i=1}^n a) & \Hom_{\mathcal{A}}(\bigoplus_{i=1}^n q,a)  \\
	};
    \path[->,font=\scriptsize]
    (m-1-1) edge node[auto] {$ (g,..,g)\circ \_ \circ f $} (m-2-1)
    (m-1-2) edge node[auto] {$ g\circ \_ \circ (f,..f) $} (m-2-2)
    (m-1-1) edge node[auto] {$ \phi^n_{p,b} $}  (m-1-2)
    (m-2-1) edge node[auto] {$ \phi^n_{p,b} $}  (m-2-2)
;

\end{tikzpicture}
\end{center}

kommutiert und demzufolge auch das Diagramm

\begin{center}
\begin{tikzpicture}[description/.style={fill=white,inner sep=2pt}]
    \matrix (m) [matrix of math nodes, row sep=3em,
    column sep=3.8em, text height=1.9ex, text depth=0.25ex]
    {& \Hom_{\mathcal{A}}(p,\Hom_R(R^n,b)) & \Hom_{\mathcal{A}}(p\otimes_R R^n,b) & \\
	 & \Hom_{\mathcal{A}}(q,\Hom_R(R^n,a)) & \Hom_{\mathcal{A}}( q\otimes_R R^n,a) & (1) \\
	};
    \path[->,font=\scriptsize]
    (m-1-2) edge node[auto] {$ \Hom_R(R^n,\_)(g)\circ \_ \circ f $} (m-2-2)
    (m-1-3) edge node[auto] {$ g\circ \_ \circ (\_\otimes_R R^n)(f) $} (m-2-3)
    (m-1-2) edge node[auto] {$ \phi^n_{p,b} $}  (m-1-3)
    (m-2-2) edge node[auto] {$ \phi^n_{p,b} $}  (m-2-3)
;

\end{tikzpicture}
\end{center}

Sei nun $ M $ ein beliebiger endlich erzeugter $ R $-Modul. Sei 
\[ R^m\xrightarrow{A}R^n\twoheadrightarrow M\rightarrow 0 \]
eine Präsentation von $ M $ und $ p\in\mathcal{A} $ ein Objekt. Die Sequenz
\[ p^m\xrightarrow{\tilde{A}}p^n\twoheadrightarrow p \otimes_R M\rightarrow 0, \]
welche $ p\otimes_R M $ definiert, ist exakt. Da der Funktor $ \Hom_{\mathcal{A}}(\_,b) $ für $ b\in\mathcal{A} $ kontravariant und linksexakt ist, erhalten wir eine exakte Sequenz
\[ \Hom_{\mathcal{A}}(p^m,b)\xleftarrow{\circ\tilde{A}}\Hom_{\mathcal{A}}(p^n,b)\hookleftarrow \Hom_{\mathcal{A}}(p\otimes_R M,b)\leftarrow 0. \]
Analog erhalten wir durch Anwenden des kovarianten, linksexakten Funktors $ \Hom_{\mathcal{A}}(p,\_) $ auf die exakte Sequenz
\[ b^m\xleftarrow{\tilde{A}^T} b^n\hookleftarrow \Hom_R(M,b)\leftarrow 0 \]
eine exakte Sequenz
\[ \Hom_{\mathcal{A}}(p,b^m)\xleftarrow{\tilde{A}^T\circ}\Hom_{\mathcal{A}}(p,b^n)\hookleftarrow \Hom_{\mathcal{A}}(p,\Hom_R(M,b))\leftarrow 0. \]
Das Diagramm 
\begin{center}
\begin{tikzpicture}[description/.style={fill=white,inner sep=2pt}]
    \matrix (m) [matrix of math nodes, row sep=3em,
    column sep=3.8em, text height=1.9ex, text depth=0.25ex]
    { \Hom_{\mathcal{A}}(p,b^n) & \Hom_{\mathcal{A}}(p^n,b)  \\
	  \Hom_{\mathcal{A}}(p,b^m) & \Hom_{\mathcal{A}}(p^m,b)  \\
 	  \Hom_{\mathcal{A}}(p,\Hom_R(M,b)) & \Hom_{\mathcal{A}}(p\otimes_R M,b)  \\
 	  0 & 0 \\
	};
    \path[->,font=\scriptsize]
    (m-2-1) edge node[auto] {$ \tilde{A}^T\circ $} (m-1-1)
    (m-2-2) edge node[auto] {$ \circ \tilde{A} $} (m-1-2)
    (m-1-1) edge node[auto] {$ \phi^n_{p,b} $}  (m-1-2)
    (m-2-1) edge node[auto] {$ \phi^m_{p,b} $}  (m-2-2)
    (m-3-1) edge node[auto] {$ ker(\tilde{A}^T\circ) $}  (m-2-1)
    (m-3-2) edge node[auto] {$ ker(\circ\tilde{A}) $}  (m-2-2)
    (m-4-1) edge node[auto] {$  $}  (m-3-1)
    (m-4-2) edge node[auto] {$  $}  (m-3-2)
;
\end{tikzpicture}
\end{center}

kommutiert. Dies kann man direkt nachrechnen. Alternativ überlegt man sich, dass die Abbildung
\[
\begin{array}{cccc}
\phi_{p,b}^n: &  \Hom_{\mathcal{A}}(p,b^n) & \rightarrow & \Hom_{\mathcal{A}}(p^n,b)\\
 	& (f_1,..,f_n) & \mapsto &  \begin{pmatrix} f_1 \\..\\ f_n \end{pmatrix} 
\end{array}
\]
dem Transponieren von Matrizen mit Einträgen in $ \Hom_{\mathcal{A}}(p,b) $ entspricht. \\Für $ B\in Mat_{\Hom_{\mathcal{A}}(p,b)}(n\times 1) $ ist also 
\[\phi_{p,b}^n(\tilde{A}^T\circ B)=(\tilde{A}^T\circ B)^T=B^T\circ \tilde{A}= \phi_{p,b}^n(B)\circ\tilde{A}.\]
Dass wir hier rechnen können wie mit Matrizen, liegt daran, dass alle Einträge in $ B $ $ R $-lineare Abbildungen sind und $ \tilde{A} $ sowie $ \tilde{A}^T $ nur Einträge in $ R $ haben (vgl. Argumentation in Lemma \ref{vertauschen}).

Dies definiert nach universeller Eigenschaft des Kerns (vgl. Beweis von Lemma \ref{Hom}) eine eindeutige Abbildung
 \[  \Hom_{\mathcal{A}}(p,\Hom_R(M,b))\xrightarrow{\tilde{\phi}_{p,b}} \Hom_{\mathcal{A}}(p\otimes_R M,b), \]
welche das Diagramm

\begin{center}
\begin{tikzpicture}[description/.style={fill=white,inner sep=2pt}]
    \matrix (m) [matrix of math nodes, row sep=3em,
    column sep=3.8em, text height=1.9ex, text depth=0.25ex]
    { \Hom_{\mathcal{A}}(p,b^n) & \Hom_{\mathcal{A}}(p^n,b)  \\
	  \Hom_{\mathcal{A}}(p,b^m) & \Hom_{\mathcal{A}}(p^m,b)  \\
 	  \Hom_{\mathcal{A}}(p,\Hom_R(M,b)) & \Hom_{\mathcal{A}}(p\otimes_R M,b)  \\
   	  0 & 0 \\
	};
    \path[->,font=\scriptsize]
    (m-2-1) edge node[auto] {$ \tilde{A}^T\circ $} (m-1-1)
    (m-2-2) edge node[auto] {$ \circ \tilde{A} $} (m-1-2)
    (m-1-1) edge node[auto] {$ \phi^n_{p,b} $}  (m-1-2)
    (m-2-1) edge node[auto] {$ \phi^m_{p,b} $}  (m-2-2)
    (m-3-1) edge node[auto] {$ \tilde{\phi}_{p,b} $}  (m-3-2)
    (m-3-1) edge node[auto] {$ ker(\tilde{A}^T\circ) $}  (m-2-1)
    (m-3-2) edge node[auto] {$ ker(\circ\tilde{A}) $}  (m-2-2)
    (m-4-1) edge node[auto] {$  $}  (m-3-1)
    (m-4-2) edge node[auto] {$  $}  (m-3-2)
;
 
\end{tikzpicture}
\end{center}

kommutativ macht. Wir können durch die Inklusionen 
\[ \Hom_{\mathcal{A}}(p,\Hom_R(M,b))\xrightarrow{ker(\tilde{A}^T\circ)}\Hom_{\mathcal{A}}(p,b^m) \]
und
\[ \Hom_{\mathcal{A}}(p\otimes_R M,b)\xrightarrow{ker(\circ\tilde{A})}\Hom_{\mathcal{A}}(p^m,b) \]
die Mengen $ \Hom_{\mathcal{A}}(p,\Hom_R(M,b)) $ bzw. $ \Hom_{\mathcal{A}}(p\otimes_R M,b) $ als Teilmengen von $  \Hom_{\mathcal{A}}(p,b^m) $ bzw. $ \Hom_{\mathcal{A}}(p^m,b) $ auffassen. Der Morphismus $ \tilde{\phi}_{p,b} $ wird dann zum Einschränkungshomomorphimus von $ \phi^m_{p,b} $. Für diese Teilmengen gilt dann offensichtlich, wie in Diagramm (1), dass

\begin{center}
\begin{tikzpicture}[description/.style={fill=white,inner sep=2pt}]
    \matrix (m) [matrix of math nodes, row sep=3em,
    column sep=3.8em, text height=1.9ex, text depth=0.25ex]
    { \Hom_{\mathcal{A}}(p,\Hom_R(M,b)) & \Hom_{\mathcal{A}}(p\otimes_R M,b)\\
	  \Hom_{\mathcal{A}}(p,\Hom_R(M,a)) & \Hom_{\mathcal{A}}(p\otimes_R M,a)\\
	};
    \path[->,font=\scriptsize]
    (m-1-1) edge node[auto] {$ ((\Hom_R(M,\_)(g))\circ \_ \circ f $} (m-2-1)
    (m-1-2) edge node[auto] {$ g\circ \_ \circ (\_\otimes_R M)(f)) $} (m-2-2)
    (m-1-1) edge node[auto] {$ \tilde{\phi}_{p,b} $}  (m-1-2)
    (m-2-1) edge node[auto] {$ \tilde{\phi}_{p,b} $}  (m-2-2)
;
\end{tikzpicture}
\end{center}

kommutiert, was zu zeigen war.

\end{proof}

\begin{Bem}\label{Korrespondenz}
Wir haben diese Adjunktion analog zu der Adjunktion in $ R $-Mod konstruiert. Wenden wir den Funktor $ T $ auf das letzte Diagramm an, erhalten wir daher die gewöhnliche Adjunktion von $ \Hom_R(M,\_) $ und $ \_\otimes_R M  $ in $ R $-Mod.
\end{Bem}

\section{Der Funktor $\tilde{T}$ ist voll}

\begin{Satz}\label{voll}
Sei $ \mathcal{A} $ eine abelsche, $ R $-lineare Kategorie, also insbesondere ein Diagramm. Sei 
\[ T:\mathcal{A}\rightarrow R\Mod \]
ein treuer, exakter, $ R $-linearer Funktor. Sei 
\[  \mathcal{A}\xrightarrow{\tilde{T}}\mathcal{C}(T_{\mathcal{A}})\xrightarrow{ff_T}R\Mod   \]
die Faktorisierung über die Diagrammkategorie von $ T $. Dann ist der Funktor $ \tilde{T} $ voll. 
\end{Satz}
\begin{proof}
Die Idee des Beweises stammt aus \cite[S.130 ff]{MR654325}. Wir konstruieren zunächst für alle $ p\in\mathcal{A} $ einen Isomorphismus zu $ X(p)\otimes_{E(p)}Tp $. \\
Die Adjunktion zwischen Tensorprodukt und Hom aus Lemma \ref{adjunktion} liefert uns eine natürliche Bijektion 
\[ \Hom_{\mathcal{A}}(\Hom_R(Tp,p)\otimes_R Tp,p)\cong\Hom_{\mathcal{A}}(\Hom_R(Tp,p),\Hom_R(Tp,p)), \]
welche unter $ T $ gemäß Bemerkung \ref{Korrespondenz} zu der natürlichen Bijektion 
\[ \Hom_{\mathcal{A}}(\Hom_R(Tp,Tp)\otimes_R Tp,Tp)\cong\Hom_{\mathcal{A}}(\Hom_R(Tp,Tp),\Hom_R(Tp,Tp)), \]
in $ R $-Mod wird. 
Korrespondierend zur Identität in $ \Hom_R(\Hom_R(Tp,Tp),\Hom_R(Tp,Tp)) $ haben wir den Auswertungshomomorphismus 
\[
\begin{array}{cccc}
ev: &  \Hom_R(Tp,Tp)\otimes_R Tp & \rightarrow & Tp\\
 	& (\varphi,x) & \mapsto & \varphi(x).
\end{array}
\]
Wir bezeichnen den zur Identität in $ \Hom_{\mathcal{A}}(\Hom_R(Tp,p),\Hom_R(Tp,p)) $ korrespondierenden Morphismus in $  \Hom_{\mathcal{A}}(\Hom_R(Tp,p)\otimes_R Tp,p) $ mit $ \tilde{ev} $. Nach Bemerkung \ref{Korrespondenz} gilt $ T(\tilde{ev})=ev $.
Das Diagramm 

\begin{center}
\begin{tikzpicture}[description/.style={fill=white,inner sep=2pt}]
    \matrix (m) [matrix of math nodes, row sep=3.5em,
    column sep=4.0em, text height=1.5ex, text depth=0.25ex]
    {& \Hom_R(Tp,p)\otimes_R Tp & \\
    	X(p)\otimes_R Tp & & p   \\
	};
    \path[->,font=\scriptsize]
    (m-2-1) edge node[auto] {$ incl. $} (m-1-2)
    (m-1-2) edge node[auto] {$ \tilde{ev} $} (m-2-3)
    (m-2-1) edge node[auto] {$ \tilde{ev}_{|X(p)} $} (m-2-3)
    ;   
\end{tikzpicture}
\end{center}

wird von $ T $ auf folgendes Diagramm in der Kategorie $ R $-Mod abgebildet:

\begin{center}
\begin{tikzpicture}[description/.style={fill=white,inner sep=2pt}]
    \matrix (m) [matrix of math nodes, row sep=3.5em,
    column sep=4.0em, text height=1.5ex, text depth=0.25ex]
    {& \Hom_R(Tp,Tp)\otimes_R Tp& \\
    	E(p)\otimes_R Tp & & Tp   \\
	};
    \path[->,font=\scriptsize]
    (m-2-1) edge node[auto] {$ incl. $} (m-1-2)
    (m-1-2) edge node[auto] {$ ev $} (m-2-3)
    (m-2-1) edge node[auto] {$ ev_{|E(p)} $} (m-2-3)
    ;   
\end{tikzpicture}
\end{center}

Betrachten wir nun in $ \mathcal{A} $ die kanonische Abbildung
\[ X(p)\otimes_R Tp \longrightarrow  X(p)\otimes_{E(p)} Tp. \]
Sie entspricht der Projektionsabbildung
\[ X(p) \otimes_R Tp  \longrightarrow  \dfrac{X(p)\otimes_R Tp}{\sim},\]
wobei die Äquivalenzrelation erzeugt wird von $ \varphi\cdot e\otimes_R x \sim \varphi \otimes_R e\cdot x $ für $ x\in Tp,$ \\ $\varphi \in X(p) $ und $ e\in E(p)\subset\End_R(Tp) $. Das Objekt $ Tp\otimes_{E(p)}X(p) $ ist also der Kokern von 
\[ 
\begin{array}{cccc}
X(p)\otimes_R E(p)\otimes_R Tp & \xrightarrow{id\otimes_R ev- \rho_R\otimes_R id} & X(p)\otimes_R Tp, \\
\end{array}
\]
wobei $ \rho_R $ die Rechtswirkung von $ E(p) $ auf $ X(p) $ beschreibt. 
Unter $ T $ bleiben Kokerne erhalten, daher wird aus dem Diagramm in $ \mathcal{A} $

\begin{center}
\begin{tikzpicture}[description/.style={fill=white,inner sep=2pt}]
    \matrix (m) [matrix of math nodes, row sep=3.5em,
    column sep=4.0em, text height=1.5ex, text depth=0.25ex]
    { X(p)\otimes_R E(p)\otimes_R Tp & & \Hom_R(Tp,p)\otimes_R Tp & \\
    	& X(p)\otimes_R Tp & & p  \\
     & & X(p)\otimes_{E(p)} Tp & \\
	};
    \path[->,font=\scriptsize]
    (m-1-1) edge node[description] {$ id\otimes_R \tilde{ev}- \rho_R\otimes_R id $} (m-2-2)
    (m-2-2) edge node[description] {$ incl. $} (m-1-3)
    (m-1-3) edge node[description] {$ \tilde{ev} $} (m-2-4)
    (m-2-2) edge node[description] {$ \tilde{ev}_{|X(p)} $} (m-2-4)
    (m-2-2) edge node[description] {$ coker(id \otimes_R \tilde{ev} - \rho_R \otimes_R id) $} (m-3-3)
    ;   
\end{tikzpicture}
\end{center}

durch Anwenden von $ T $ folgendes Diagramm in $ R\Mod $:

\begin{center}
\begin{tikzpicture}[description/.style={fill=white,inner sep=2pt}]
    \matrix (m) [matrix of math nodes, row sep=3.5em,
    column sep=4.0em, text height=1.5ex, text depth=0.25ex]
    { E(p)\otimes_R E(p)\otimes_R Tp & & \Hom_R(Tp,Tp)\otimes_R Tp & \\
    	& E(p)\otimes_R Tp & & Tp   \\
     & & E(p)\otimes_{E(p)} Tp & \\
	};
    \path[->,font=\scriptsize]
    (m-1-1) edge node[description] {$ id \otimes_R ev- (\_\circ\_)\otimes_R id $} (m-2-2)
    (m-2-2) edge node[description] {$ incl. $} (m-1-3)
    (m-1-3) edge node[description] {$ ev $} (m-2-4)
    (m-2-2) edge node[description] {$ ev_{|E(p)} $} (m-2-4)
    (m-2-2) edge node[description] {$ coker(id \otimes_R ev- (\_\circ\_)\otimes_R id) $} (m-3-3)
    (m-3-3) edge node[description] {$ \simeq $} (m-2-4)
    ;   
\end{tikzpicture}
\end{center}

Da für $ e,e'\in E(p), x\in Tp $ gilt, dass $ (e\circ e')(x)=e(e'(x)) $ ist, ist die Verkettung

\[E(p)\otimes_R E(p)\otimes_R Tp \xrightarrow{id\otimes_R ev- (\_\circ\_) \otimes_R id} E(p)\otimes_R Tp\overset{ev_{|E(p)}}{\longrightarrow} Tp \]

die Nullabbildung. 
Da $ T $ treu ist, ist dann auch die Verkettung

\[X(p)\otimes_R E(p)\otimes_R Tp\xrightarrow{id\otimes_R \tilde{ev} - \rho_R\otimes_R id} X(p)\otimes_R Tp \overset{\tilde{ev}_{|X(p)}}{\longrightarrow} p \]
die Nullabbildung. Nun gibt es nach universeller Eigenschaft des Kokerns einen eindeutigen Morphismus 
\[X(p)\otimes_{E(p)}Tp\longrightarrow p,\] 
welcher das Diagramm

\begin{center}
\begin{tikzpicture}[description/.style={fill=white,inner sep=2pt}]
    \matrix (m) [matrix of math nodes, row sep=3.5em,
    column sep=4.0em, text height=1.5ex, text depth=0.25ex]
    { X(p)\otimes_R E(p)\otimes_R Tp & & \Hom_R(Tp,p) \otimes_R Tp & \\
    	& X(p)\otimes_R Tp & & p   \\
     & & X(p)\otimes_{E(p)} Tp & \\
	};
    \path[->,font=\scriptsize]
    (m-1-1) edge node[description] {$ id\otimes_R \tilde{ev}- \rho_R \otimes_R id $} (m-2-2)
    (m-2-2) edge node[description] {$ incl. $} (m-1-3)
    (m-1-3) edge node[description] {$ \tilde{ev} $} (m-2-4)
    (m-2-2) edge node[description] {$ \tilde{ev}_{|X(p)} $} (m-2-4)
    (m-2-2) edge node[description] {$ coker(id \otimes_R \tilde{ev} -\rho_R \otimes_R id) $} (m-3-3)
    (m-3-3) edge node[description] {$ \exists ! $} (m-2-4)
    ;   
\end{tikzpicture}
\end{center}

kommutativ macht. Dieser wird nach Anwenden von T zum kanonischen Isomorphismus 
\[E(p)\otimes_{E(p)}Tp\longrightarrow p\]
und ist daher nach Lemma \ref{iso} auch ein Isomorphismus. Den Isomorphismus 
\[X(p)\otimes_{E(p)}Tp\longrightarrow p\] 
bezeichnen wir mit $ can $.\\

Wir wollen zeigen, dass 
\[\tilde{T}:\mathcal{A}\longrightarrow \C(T):=\varinjlim_{F\subset D\text{ endlich}}\End(T_F)\Mod \]
voll ist.\\
Seien nun $ p_1, p_2 $ Objekte aus $ \mathcal{A} $. Jeder Morphismus aus $ \Hom_{\C(T)}(\tilde{T}(p_1),\tilde{T}(p_2)) $ hat für eine geeignete endliche Teilmenge $ F $ von $ \mathcal{A} $ schon einen Repräsentanten 
\[ Tp_1\xrightarrow{\varphi}Tp_2 \] 
in $ \End(T_F)\Mod $. Es genügt also zu zeigen, dass für alle endlichen Teilmengen $ F $ von $ \mathcal{A} $ mit $ p_1,p_2\in F $ die Abbildung
\[ \tilde{T}_{|F}:\Hom_{\mathcal{A}}(p_1,p_2)\longrightarrow\Hom_{E(p)}(Tp_1,Tp_2) \]
surjektiv ist. \\

Sei $ p=\bigoplus_{q_i\in F}q_i $ und $ f\in\End_{\mathcal{A}}(p) $ ein beliebiger Endomorphismus.
Betrachten wir folgendes Diagramm:

\begin{center}
\begin{tikzpicture}[description/.style={fill=white,inner sep=2pt}]
    \matrix (m) [matrix of math nodes, row sep=3.5em,
    column sep=3.0em, text height=1.5ex, text depth=0.25ex]
    { p & X(p)\otimes_{E(p)}Tp \\
      p & X(p)\otimes_{E(p)}Tp \\
	};
    \path[->,font=\scriptsize]
    (m-1-2) edge node[description] {$ can $} (m-1-1)
    (m-2-2) edge node[description] {$ can $} (m-2-1)
    (m-1-1) edge node[description] {$ f $} (m-2-1)
    (m-1-2) edge node[description] {$ id\otimes Tf $} (m-2-2)
    ;   
\end{tikzpicture}
\end{center}

Wenden wir hierauf $ T $ an, so erhalten wir das Diagramm

\begin{center}
\begin{tikzpicture}[description/.style={fill=white,inner sep=2pt}]
    \matrix (m) [matrix of math nodes, row sep=3.5em,
    column sep=3.0em, text height=1.5ex, text depth=0.25ex]
    { Tp & Tp \\
      Tp & Tp, \\
	};
    \path[->,font=\scriptsize]
    (m-1-2) edge node[description] {$ id $} (m-1-1)
    (m-2-2) edge node[description] {$ id $} (m-2-1)
    (m-1-1) edge node[description] {$ Tf $} (m-2-1)
    (m-1-2) edge node[description] {$ Tf $} (m-2-2)
    ;   
\end{tikzpicture}
\end{center}

das offensichtlich kommutiert. Es gilt also $ id\circ Tf-Tf\circ id=0 $. Da $ T $ treu ist, gilt auch $ can\circ f- id\otimes  Tf\circ can=0 $. Demnach kommutiert auch das obere Diagramm in $ \mathcal{A} $.
Der Isomorphismus $ can $ induziert also eine 1:1-Beziehung
\[
\begin{array}{ccc}
\End_{\mathcal{A}}(p) & \overset{1:1}{\longleftrightarrow} & \End_{\mathcal{A}}(X(p)\otimes_{E(p)}Tp) \\
f & \mapsto & id \otimes Tf , \\
\end{array} 
\]
welche unter dem Funktor $ T $ auf die gleichen Morphismenmengen abgebildet werden. 
In Satz \ref{wesentlich_surjektiv} haben wir gesehen, dass die Verkettung
\[
\begin{array}{ccccc}
\End(T_{|F})\Mod & \longrightarrow & \mathcal{A} & \overset{\tilde{T}}{\longrightarrow} & \End(T_{|F})\Mod\\
Tp & \mapsto & X(p)\otimes_{E(p)}Tp & \mapsto & Tp
\end{array}
\]
die Identität auf $ \End(T_{|F})\Mod $ induziert.
Also ist die Abbildung 
\[ \tilde{T}:\End_{\mathcal{A}}(X(p)\otimes_{E(p)}Tp)\longrightarrow\End_{E(p)}(Tp) \]
surjektiv. Dann sind auch, unter Verwendung von $ can $,
\[ \tilde{T}:\Hom_{\mathcal{A}}(p,p)\longrightarrow\Hom_{E(p)}(Tp,Tp), \]
bzw.
\[ \tilde{T}:\Hom_{\mathcal{A}}(\bigoplus_{q_i\in F}q_i,\bigoplus_{q_i\in F}q_i)\longrightarrow\Hom_{E(p)}(\bigoplus_{q_i\in F}Tq_i , \bigoplus_{q_i\in F}Tq_i) \]
surjektiv.\\
Da additive Funktoren Abbildungen zwischen direkten Summanden respektieren, können wir diese Abbildung zur surjektiven Abbildung
\[ \tilde{T}:\Hom_{\mathcal{A}}(p_1,p_2)\longrightarrow\Hom_{E(p)}(Tp_1,Tp_2) \]
einschränken. 
Also ist für eine endliche Teilmenge $ F $ von $ \mathcal{A} $ und $ p=\bigoplus_{q_i\in F}q_i $ der eingeschränkte Funktor
\[ \tilde{T}_{|F}:F\longrightarrow E(p)\Mod=\End(T_{|F})\Mod \]
voll und damit auch $ \tilde{T} $.

\end{proof}

\begin{Satz}\label{Aequivalenz}
Sei $ R $ ein noetherscher Ring und $ \mathcal{A} $ eine abelsche, $ R $-lineare Kategorie, also insbesondere ein Diagramm. Sei 
\[ T:\mathcal{A}\rightarrow R\Mod \]
ein treuer, exakter, $ R $-linearer Funktor und 
\[  \mathcal{A}\xrightarrow{\tilde{T}}\mathcal{C}(T_{\mathcal{A}})\xrightarrow{ff_T}R\Mod   \]
die Faktorisierung über die Diagrammkategorie $ \mathcal{C}(T_{\mathcal{A}}) $. Dann ist $ \tilde{T} $ ein Äquivalenz von Kategorien
\end{Satz}

\begin{proof}
Der Funktor $ T $ und der Vergissfunktor $ ff_T $ sind treu. Da $ T=ff_t \circ \tilde{T} $ gilt, ist auch $ \tilde{T} $ treu.
Außerdem ist $ \tilde{T} $ nach Satz \ref{wesentlich_surjektiv} wesentlich surjektiv und nach Satz \ref{voll} voll, also eine Äquivalenz von Kategorien.
\end{proof}

\chapter{Universelle Eigenschaft}
\label{ch:kap_4}
Sei in diesem Kapitel wieder $ R $ stets ein kommutativer, noetherscher Ring und $ R\Mod $ die Kategorie der endlich erzeugten $ R $-Moduln.\\
Ziel des Kapitels ist, die universelle Eigenschaft der Diagrammkategorie $ \C(T) $ zu beweisen. Wir beginnen mit einem kleinen kategorientheoretischen Lemma:

\begin{Lemma}\label{zui}
Seien 
\[ \mathcal{F},\mathcal{G}:\C\longrightarrow\mathcal{D} \]
zwei treue exakte Funktoren zwischen abelschen Kategorien, die auf einer zu $ \C $ äquivalenten Unterkategorie $ \C' $ von $ \C $ übereinstimmen. Dann ist $ \mathcal{F} $ natürlich isomorph zu $ \mathcal{G} $.
\end{Lemma}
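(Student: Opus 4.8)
Der Plan ist, die Behauptung rein formal aus der Tatsache zu gewinnen, dass die Inklusion $\iota\colon\C'\hookrightarrow\C$ als Äquivalenz von Kategorien insbesondere volltreu (also $\C'$ eine volle Unterkategorie) und wesentlich surjektiv ist, zusammen mit der Voraussetzung $\mathcal{F}\circ\iota=\mathcal{G}\circ\iota$. Die Grundidee besteht darin, jedes Objekt von $\C$ bis auf Isomorphie durch ein Objekt aus $\C'$ zu ersetzen, auf dem $\mathcal{F}$ und $\mathcal{G}$ bereits übereinstimmen, und die dabei entstehenden Isomorphismen zu einer natürlichen Transformation zusammenzusetzen.

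Konkret würde ich folgendermaßen vorgehen: Da $\iota$ wesentlich surjektiv ist, wähle ich für jedes Objekt $X\in\C$ ein Objekt $X'\in\C'$ zusammen mit einem Isomorphismus $\phi_X\colon X\xrightarrow{\sim}X'$ in $\C$ (für $X\in\C'$ setze ich $X'=X$ und $\phi_X=\mathrm{id}$). Wegen $\mathcal{F}\circ\iota=\mathcal{G}\circ\iota$ gilt $\mathcal{F}(X')=\mathcal{G}(X')$, und ich definiere
\[ \tau_X:=(\mathcal{G}\phi_X)^{-1}\circ\mathcal{F}\phi_X\colon\ \mathcal{F}(X)\longrightarrow\mathcal{G}(X). \]
Da Funktoren Isomorphismen erhalten, ist jedes $\tau_X$ ein Isomorphismus.

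Der eigentliche Kern — und die Stelle, an der die Volltreuheit von $\iota$ eingeht — ist der Nachweis der Natürlichkeit. Für einen Morphismus $f\colon X\to Y$ in $\C$ betrachte ich $g:=\phi_Y\circ f\circ\phi_X^{-1}\colon X'\to Y'$. Dies ist ein Morphismus zwischen zwei Objekten der \emph{vollen} Unterkategorie $\C'$ und liegt daher selbst in $\C'$; folglich gilt $\mathcal{F}(g)=\mathcal{G}(g)$. Durch Einsetzen von $\phi_Y\circ f=g\circ\phi_X$ sowie der Identität $\mathcal{G}(g)=\mathcal{G}\phi_Y\circ\mathcal{G}(f)\circ(\mathcal{G}\phi_X)^{-1}$ rechnet man dann unmittelbar nach, dass $\tau_Y\circ\mathcal{F}(f)=\mathcal{G}(f)\circ\tau_X$ gilt, womit $\tau=(\tau_X)_{X\in\C}$ der gesuchte natürliche Isomorphismus ist. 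Diese Verträglichkeitsrechnung erwarte ich als die einzige nichttriviale Stelle des Beweises.

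Alternativ ließe sich abstrakter argumentieren: Man wählt einen zu $\iota$ quasi-inversen Funktor $P\colon\C\to\C'$ mit natürlichem Isomorphismus $\epsilon\colon\iota P\Rightarrow\mathrm{id}_\C$; Anwenden von $\mathcal{F}$ bzw. $\mathcal{G}$ auf $\epsilon$ liefert natürliche Isomorphismen $\mathcal{F}\epsilon\colon\mathcal{F}\iota P\Rightarrow\mathcal{F}$ und $\mathcal{G}\epsilon\colon\mathcal{G}\iota P\Rightarrow\mathcal{G}$, und wegen $\mathcal{F}\iota P=\mathcal{G}\iota P$ ist $(\mathcal{G}\epsilon)\circ(\mathcal{F}\epsilon)^{-1}$ der gewünschte natürliche Isomorphismus. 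Bemerkenswert ist, dass in beide Varianten weder Treue und Exaktheit von $\mathcal{F},\mathcal{G}$ noch die Voraussetzung, dass die beteiligten Kategorien abelsch sind, eingehen; benötigt werden allein die wesentliche Surjektivität und die Volltreuheit der Inklusion $\iota$ sowie die Gleichheit von $\mathcal{F}$ und $\mathcal{G}$ auf $\C'$.
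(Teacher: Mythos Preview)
Your proof is correct and follows essentially the same strategy as the paper: choose for each object an isomorphic replacement in $\mathcal{C}'$, define $\tau_X$ via the resulting isomorphisms, and verify naturality using that morphisms between objects of $\mathcal{C}'$ already lie in $\mathcal{C}'$ (fullness of the inclusion). You are also right that faithfulness, exactness, and abelianness are superfluous here---the paper invokes them (via its earlier lemma that faithful exact functors reflect isomorphisms) only to conclude that the $\tau_X$ are isomorphisms, but any functor preserves isomorphisms, so this is automatic.
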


\begin{proof}
Sei $ f:X\rightarrow Y $ ein Morphismus zwischen Objekten aus $ \C $. Da $ \C $ äquivalent zu $ \C' $ ist, finden wir $ f':X'\rightarrow Y' $ in $ \C' $ sowie zwei Isomorphismen $ \tau_X:X\rightarrow X' $, $ \tau_Y:Y\rightarrow Y' $, so dass folgendes Diagramm in $ \C $ kommutiert:
\begin{center}
\begin{tikzpicture}[description/.style={fill=white,inner sep=2pt}]
    \matrix (m) [matrix of math nodes, row sep=3em,
    column sep=6.5em, text height=1.5ex, text depth=0.25ex]
    {  X  & Y \\
      X' & Y' \\
	};
    \path[->,font=\scriptsize]
    (m-1-1) edge node[description] {$ f $} (m-1-2)
    (m-2-1) edge node[description] {$ f' $} (m-2-2)
    (m-1-1) edge node[description] {$ \tau_X $} (m-2-1)
    (m-1-2) edge node[description] {$ \tau_Y $} (m-2-2)
;    
    
\end{tikzpicture}
\end{center}

Da die Funktoren $ \mathcal{F} $ und $ \mathcal{G} $ auf $ \C' $ übereinstimmen, erhalten wir folgendes kommutative Diagramm in $ \mathcal{D} $:

\begin{center}
\begin{tikzpicture}[description/.style={fill=white,inner sep=2pt}]
    \matrix (m) [matrix of math nodes, row sep=3.5em,
    column sep=8.5em, text height=1.5ex, text depth=0.25ex]
    {  \mathcal{F}(X)  & \mathcal{F}(Y) \\
      \mathcal{F}(X')=\mathcal{G}(X') & \mathcal{F}(Y')=\mathcal{G}(Y') \\
      \mathcal{G}(X)  & \mathcal{G}(Y) \\
	};
    \path[->,font=\scriptsize]
    (m-1-1) edge node[auto] {$ \mathcal{F}(f) $} (m-1-2)
    (m-2-1) edge node[auto] {$ \mathcal{F}(f')=\mathcal{G}(f') $} (m-2-2)
    (m-1-1) edge node[auto] {$ \mathcal{F}(\tau_X) $} (m-2-1)
    (m-1-2) edge node[auto] {$ \mathcal{F}(\tau_Y) $} (m-2-2)
    (m-3-1) edge node[auto] {$ \mathcal{G}(\tau_X) $} (m-2-1)
    (m-3-2) edge node[auto] {$ \mathcal{G}(\tau_Y) $} (m-2-2)
    (m-3-1) edge node[auto] {$ \mathcal{G}(f) $} (m-3-2)

;    
    
\end{tikzpicture}
\end{center}

Da nach Lemma \ref{iso} treue, exakte Funktoren zwischen abelschen Kategorien Isomorphismen auf Isomorphismen abbilden, sind die Abbildungen $ \mathcal{G}^{-1}(\tau_X)\circ\mathcal{F}(\tau_X) $ für alle $ X\in \C $ Isomorphismen und bilden gemeinsam einen natürlichen Isomorphismus zwischen $ \mathcal{F} $ und $ \mathcal{G} $.

\end{proof}

\begin{Proposition}\cite{Nori}\label{hilfsprop}
Seien $ D_1 $, $ D_2 $ Diagramme und $ F:D_1\rightarrow D_2 $ eine Abbildung von Diagrammen. Sei weiter $ T:D_2\rightarrow R\Mod $ eine Darstellung und
\[ D_2\xrightarrow{\tilde{T}}\C(T)\xrightarrow{ff_T} R\Mod \]
die Faktorisierung von $ T $ über die Diagrammkategorie $ \C(T) $, sowie
\[ D_1\xrightarrow{\widetilde{T\circ F}}\C(T\circ F)\xrightarrow{ff_{T\circ F}} R\Mod \]
die Faktorisierung von $ T\circ F $.\\
Dann existiert ein bis auf Isomorphie eindeutiger treuer $ R $-linearer, exakter Funktor $ \mathcal{F} $, so dass folgendes Diagramm kommutiert:

\begin{center}
\begin{tikzpicture}[description/.style={fill=white,inner sep=2pt}]
    \matrix (m) [matrix of math nodes, row sep=3em,
    column sep=2.5em, text height=1.5ex, text depth=0.25ex]
    {  D_1 & & D_2 \\
      \C(T\circ F) &  & \C(T) \\
      & R\Mod & \\
	};
    \path[->,font=\scriptsize]
    (m-1-1) edge node[description] {$ F $} (m-1-3)
    (m-1-1) edge node[description] {$ \widetilde{T\circ F} $} (m-2-1)
    (m-1-3) edge node[description] {$ \tilde{T} $} (m-2-3)
    (m-2-1) edge node[description] {$ ff_{T\circ F} $}  (m-3-2) 
    (m-2-3) edge node[description] {$ ff_{T} $}  (m-3-2) ;
    \path[->,densely dotted]
    (m-2-1) edge node [description] {$\mathcal{F}$}  (m-2-3);    
    
\end{tikzpicture}
\end{center}

\end{Proposition}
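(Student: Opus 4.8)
Der Plan ist, den gesuchten Funktor zunächst auf den endlichen Bausteinen des direkten Limes zu definieren und ihn dann über die universelle Eigenschaft des Kolimes zusammenzusetzen. Für ein endliches, volles Unterdiagramm $E\subseteq D_1$ bezeichne $F(E)\subseteq D_2$ das volle Unterdiagramm auf den endlich vielen Objekten $\{F(p)\mid p\in O(E)\}$. Da $F$ jeden Morphismus von $E$ auf einen Morphismus von $F(E)$ abbildet, bleibt die definierende Vertauschungsbedingung erhalten, und die Zuordnung $e=(e_{p'})_{p'\in F(E)}\mapsto (e_{F(p)})_{p\in O(E)}$ liefert einen $R$-Algebrenhomomorphismus
\[ \alpha_E:\End(T_{|F(E)})\longrightarrow\End((T\circ F)_{|E}). \]
Beide Algebren sind nach Proposition \ref{thm1_endlich} als $R$-Moduln endlich erzeugt, also induziert die Restriktion der Skalare entlang $\alpha_E$ nach Lemma \ref{Lemma_Skalare} einen treuen, exakten, additiven, $R$-linearen Funktor $\mathcal{F}_E:\End((T\circ F)_{|E})\Mod\rightarrow\End(T_{|F(E)})\Mod$.

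Als Nächstes prüfe ich die Verträglichkeit mit den Übergangsfunktoren. Für $E\subseteq E'$ ist $F(E)\subseteq F(E')$, und das Quadrat aus den Projektionshomomorphismen und den $\alpha$-Homomorphismen kommutiert, da beide Wege ein $e$ auf $(e_{F(p)})_{p\in O(E)}$ abbilden. Da sich die Restriktion der Skalare kontravariant zusammensetzt, erhalte ich zusammen mit den kanonischen Funktoren $\psi_{F(E)}:\End(T_{|F(E)})\Mod\rightarrow\C(T)$ ein mit den Übergangsfunktoren des Systems verträgliches System $(\psi_{F(E)}\circ\mathcal{F}_E)_E$ von Funktoren nach $\C(T)$. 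Nach der universellen Eigenschaft von $\C(T\circ F)=\varinjlim_E\End((T\circ F)_{|E})\Mod$ induziert dieses einen bis auf Isomorphie eindeutigen Funktor $\mathcal{F}:\C(T\circ F)\rightarrow\C(T)$; Treue, Exaktheit und $R$-Linearität übertragen sich dabei von den $\mathcal{F}_E$ und $\psi_{F(E)}$ mit demselben Argument wie im Beweis von Proposition \ref{Kategorienproposition}.

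Die Kommutativität des großen Diagramms zerlege ich in das untere Dreieck und das obere Quadrat. Das Dreieck $ff_T\circ\mathcal{F}=ff_{T\circ F}$ folgt aus der Schlussaussage von Lemma \ref{Lemma_Skalare}: Die Restriktion der Skalare ändert nur die Modulstruktur, nicht den zugrunde liegenden $R$-Modul, so dass beide Vergissfunktoren dasselbe Objekt liefern. Für das Quadrat $\mathcal{F}\circ\widetilde{T\circ F}=\tilde{T}\circ F$ rechne ich auf Objekten und Morphismen nach: Das Objekt $\widetilde{T\circ F}(p)=T(F(p))$, aufgefasst als $\End((T\circ F)_{|\{p\}})$-Modul, geht unter $\mathcal{F}$ durch Restriktion entlang $\alpha_{\{p\}}$ in $T(F(p))$ als $\End(T_{|\{F(p)\}})$-Modul über, und dies ist genau $\tilde{T}(F(p))$; für Morphismen gilt Analoges, da die Restriktion der Skalare die unterliegende Abbildung unverändert lässt.

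Die Eindeutigkeit bis auf Isomorphie erwarte ich als den heikelsten Punkt. Ist $\mathcal{F}'$ ein weiterer solcher Funktor, so liefert das obere Quadrat $\mathcal{F}\circ\widetilde{T\circ F}=\tilde{T}\circ F=\mathcal{F}'\circ\widetilde{T\circ F}$, also Gleichheit auf allen Objekten im Bild von $\widetilde{T\circ F}$; auf den Morphismen zwischen solchen Objekten folgt die Gleichheit aus $ff_T\circ\mathcal{F}=ff_{T\circ F}=ff_T\circ\mathcal{F}'$ und der Treue von $ff_T$. Der entscheidende Schritt ist dann zu zeigen, dass das Bild von $\widetilde{T\circ F}$ die Diagrammkategorie $\C(T\circ F)$ als abelsche Kategorie erzeugt: Die Algebra $A_E:=\End((T\circ F)_{|E})$ bettet sich als $A_E$-Linksmodul in $\bigoplus_{p\in O(E)}\End_R(T(F(p)))$ ein, und mittels einer Surjektion $R^{b}\twoheadrightarrow T(F(p))$ weiter in eine endliche direkte Summe von Kopien der $\widetilde{T\circ F}(p)=T(F(p))$; also ist $A_E$ und damit jeder endlich erzeugte $A_E$-Modul Subquotient einer endlichen Summe von Bildobjekten. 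Da $\mathcal{F}$ und $\mathcal{F}'$ exakt sind und auf dieser erzeugenden Unterkategorie übereinstimmen, konstruiere ich den natürlichen Isomorphismus $\mathcal{F}\cong\mathcal{F}'$ wie in Lemma \ref{zui} aus der Eindeutigkeit von Kernen und Kokernen. Die Hauptschwierigkeit liegt in der sauberen Durchführung genau dieses Erzeugungs- und Fortsetzungsarguments.
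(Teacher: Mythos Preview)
Your construction of $\mathcal{F}$ via restriction of scalars along $\alpha_E$ and passage to the colimit is exactly what the paper does, and your verification of the two commutativities is essentially the same as well. The proposal is correct.

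The genuine difference lies in the uniqueness argument. The paper shows that the abelian subcategory $\langle\widetilde{T\circ F}(D_1)\rangle$ generated by the image is equivalent to all of $\End((T\circ F)_{|E})\Mod$ by invoking Satz~\ref{Aequivalenz}: since the restricted forgetful functor $ff_{T\circ F}|_{\langle\widetilde{T\circ F}(D_1)\rangle}$ is faithful, exact and $R$-linear, that subcategory is equivalent to its own diagram category, which one identifies with $\End((T\circ F)_{|E})\Mod$; then Lemma~\ref{zui} finishes. You instead give a direct, elementary argument: you embed $A_E$ as an $A_E$-submodule of a finite direct sum of the $\widetilde{T\circ F}(p)$ (via $A_E\hookrightarrow\bigoplus_p\End_R(T(F(p)))\hookrightarrow\bigoplus_p T(F(p))^{b_p}$), whence every finitely generated $A_E$-module is a subquotient of a finite sum of image objects. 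This avoids the heavy machinery of Chapter~3 entirely and is conceptually cleaner for this specific step; the paper's route, by contrast, reuses the central theorem already proved and keeps the argument short. Your acknowledged ``Hauptschwierigkeit''---propagating the agreement of $\mathcal{F}$ and $\mathcal{F}'$ from the image to the generated abelian category via exactness---is handled in the paper by the same one-line appeal to $R$-linearity and exactness, so neither approach is more rigorous there.
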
\label{df}

\begin{proof}
Seien zunächst $ D_1 $, $ D_2 $ wieder endliche Diagramme.Wir suchen also einen eindeutigen Funktor $ \mathcal{F} $ in folgendem Diagramm.

\begin{center}

\begin{tikzpicture}[description/.style={fill=white,inner sep=2pt}]
    \matrix (m) [matrix of math nodes, row sep=3em,
    column sep=2.5em, text height=1.5ex, text depth=0.25ex]
    { & & D_1 & & D_2 & &  \\
      & & \End(T\circ F)\Mod &  & \End(T)\Mod &  & (1)\\
      & & & R\Mod & & &  \\
	};
    \path[->,font=\scriptsize]
    (m-1-3) edge node[description] {$ F $} (m-1-5)
    (m-1-3) edge node[description] {$ \widetilde{T\circ F} $} (m-2-3)
    (m-1-5) edge node[description] {$ \tilde{T} $} (m-2-5)
    (m-2-3) edge node[description] {$ ff_{T\circ F} $}  (m-3-4) 
    (m-2-5) edge node[description] {$ ff_{T} $}  (m-3-4) ;
    \path[->,densely dotted]
    (m-2-3) edge node [description] {$\mathcal{F}$}  (m-2-5);    
    
\end{tikzpicture}

\end{center}

Ein Element $ (e_{q_i})_{q_i\in D_2}\in\End(T) $  hat die Eigenschaft, dass für alle $ q_i,q_j\in D_2 $, $m\in M(q_1,q_2) $ Diagramme der Form 

\begin{center}
\begin{tikzpicture}[description/.style={fill=white,inner sep=2pt}]
    \matrix (m) [matrix of math nodes, row sep=3em,
    column sep=2.5em, text height=1.5ex, text depth=0.25ex]
    {  Tq_i & & Tq_j \\
      Tq_i & & Tq_j \\
	};
    \path[->,font=\scriptsize]
    (m-1-1) edge node[auto] {$ e_{q_i} $} (m-2-1)
    (m-1-3) edge node[auto] {$ e_{q_j} $} (m-2-3)
    (m-1-1) edge node[auto] {$T(m)$} (m-1-3)
    (m-2-1) edge node[auto] {$T(m)$} (m-2-3);
\end{tikzpicture}
\end{center}

kommutieren. Dann kommutieren aber auch für alle $ p_i,p_j\in D_1 $ und $ m'\in M(p_i,p_j) $ die Diagramme der Form

\begin{center}
\begin{tikzpicture}[description/.style={fill=white,inner sep=2pt}]
    \matrix (m) [matrix of math nodes, row sep=3em,
    column sep=2.5em, text height=1.5ex, text depth=0.25ex]
    {  (T\circ F)(p_i) & & (T\circ F)(p_j) \\
      (T\circ F)(p_i) & & (T\circ F)(p_j) \\
	};
    \path[->,font=\scriptsize]
    (m-1-1) edge node[auto] {$ e_{p_i} $} (m-2-1)
    (m-1-3) edge node[auto] {$ e_{p_j} $} (m-2-3)
    (m-1-1) edge node[auto] {$TF(m')$} (m-1-3)
    (m-2-1) edge node[auto] {$TF(m')$} (m-2-3);
\end{tikzpicture}
\end{center}

Das heißt, wir können die Abbildung
\[ \prod_{q\in D_2}\End_R(Tq)\twoheadrightarrow\prod_{q\in F(D_1)}\End_R(Tq)\rightarrow\prod_{p\in D_1}\End_R(T\circ F)(p)) \]
zu einer Abbildung
\[ \circ F:\End(T)\rightarrow\End(T\circ F) \]
einschränken. Um das Diagramm (1) für ein $ p\in D_1 $ kommutativ zu machen, muss auf $ (T\circ F)(p) $ die $ \End(T) $-Modulstruktur gerade mittels $ \circ F $ zu der Modulstruktur in \\
$ \End(T\circ F)\Mod $ zurückgezogen werden. Dies entspricht gerade der Restriktion der Skalare des Morphismus $ \circ F $. Restriktion der Skalare ist nach Lemma \ref{Lemma_Skalare} treu, $ R $-linear und exakt. \\
Ein Funktor, der das Diagramm (1) zum Kommutieren bringt, wirkt auf der Familie $ \widetilde{T\circ F}(D_1)\subset \End(T\circ F)\Mod $ also durch Restriktion der Skalare mittels $ \circ F $. Wir müssen zeigen, dass diese Bedingung den Funktor auf beliebigen $ \End(T\circ F) $-Moduln schon bis auf Isomorphie eindeutig festlegt. Sei $ \mathcal{F} $ also der Funktor Restriktion der Skalare und 
\[ \mathcal{G}:\End(T\circ F)\Mod\longrightarrow\End(T)\Mod \]
ein weiterer treuer, exakter Funktor, der auf der Familie 
\[ \widetilde{T\circ F}(D_1)\subset \End(T\circ F)\Mod \]
der Restriktion der Skalare entspricht. Die Tatsache, dass $ \mathcal{G} $ nach Voraussetzung $ R $-linear und exakt ist, legt $ \mathcal{G} $ auch auf endlichen direkten Summen, Kernen und Kokern, bzw. der von $ (T\circ F)(D_1) $ erzeugten abelschen Kategorie eindeutig fest. \\
Wir bezeichnen wieder mit $ \langle \widetilde{T\circ F}(D_1) \rangle $ die von $ \widetilde{T\circ F}(D_1) $ erzeugte abelsche Kategorie. \\
Die Einschränkung des Vergissfunktors $ ff_{T\circ F} $ auf
\[ \langle \widetilde{T\circ F}(D_1) \rangle \overset{ff_{T\circ F}}{\longrightarrow} R\Mod \]
ist ebenfalls treu, exakt und $ R $-linear, also ist $ \langle \widetilde{T\circ F}(D_1) \rangle $ nach Satz \ref{Aequivalenz} äquivalent zu seiner Diagrammkategorie $ \End(T\circ F)\Mod $. Der Funktor $ \mathcal{G} $ muss also auf der zu $ \End(T\circ F)\Mod $ äquivalenten Unterkategorie $ \langle \widetilde{T\circ F}(D_1) \rangle $ mit $ \mathcal{F} $ übereinstimmen, ist also nach Lemma \ref{zui} natürlich isomorph zu $ \mathcal{F} $. Dies beweist die Aussage für endliche Diagramme $ D_1,D_2 $.\\
Sind $ D_1 $, $ D_2 $ nicht endlich, so bilden die endlichen Teilmengen $ E\subset D_1 $ bzw. $ F\subset D_2 $ wieder bezüglich Inklusion filtrierende Systeme. Für endliche Teilmengen $ E,E'\subset D_1 $ mit  $ E\subset E' $ erhalten wir ein kommutatives Diagramm

\begin{center}
\begin{tikzpicture}[description/.style={fill=white,inner sep=2pt}]
    \matrix (m) [matrix of math nodes, row sep=3em,
    column sep=2.5em, text height=1.5ex, text depth=0.25ex]
    {  \prod_{p\in E}\End(T\circ F(p)) & \prod_{p\in E'}\End(T\circ F(p)) \\
       \prod_{q\in F(E)}\End(Tq) & \prod_{q\in F(E')}\End(Tq) \\
	};
    \path[->>,font=\scriptsize]
    (m-1-2) edge node[auto] {} (m-1-1)
    (m-2-2) edge node[auto] {} (m-2-1);
     \path[->,font=\scriptsize]
    (m-1-1) edge node[auto] {} (m-2-1)
    (m-1-2) edge node[auto] {} (m-2-2);
\end{tikzpicture}
\end{center}

welches wir wieder einschränken können zu 

\begin{center}
\begin{tikzpicture}[description/.style={fill=white,inner sep=2pt}]
    \matrix (m) [matrix of math nodes, row sep=3em,
    column sep=2.5em, text height=1.5ex, text depth=0.25ex]
    {  \End(T\circ F_{|E}) & \End(T\circ F_{|E'}) \\
	   \End(T_{|F(E)}) & \End(T_{|F(E')}) \\
	};
    \path[->,font=\scriptsize]
    (m-1-2) edge node[auto] {} (m-1-1)
    (m-2-2) edge node[auto] {} (m-2-1);
     \path[->,font=\scriptsize]
    (m-1-1) edge node[auto] {} (m-2-1)
    (m-1-2) edge node[auto] {} (m-2-2);
\end{tikzpicture}
\end{center}

Durch Restriktion der Skalare erhalten wir also ähnlich wie in Proposition  \ref{Kategorienproposition} zwei filtrierende Systeme von abelschen Kategorien mit treuen, exakten, $ R $-linearen Übergangsfunktoren, sowie folgenden treuen, exakten, $ R $-linearen Funktoren von einem System in das andere.

\begin{center}
\begin{tikzpicture}[description/.style={fill=white,inner sep=2pt}]
    \matrix (m) [matrix of math nodes, row sep=3em,
    column sep=4.5em, text height=1.5ex, text depth=0.25ex]
    {  ...\End(T\circ F_{|E})\Mod & \End(T\circ F_{|E'})\Mod... \\
	   ...\End(T_{|F(E)})\Mod & \End(T_{|F(E')})\Mod... \\
	};
    \path[->,font=\scriptsize]
    (m-1-1) edge node[auto] {$ \phi_{E',E} $} (m-1-2)
    (m-2-1) edge node[auto] {$ \phi_{F(E'),F(E)} $} (m-2-2);
     \path[->,font=\scriptsize]
    (m-1-2) edge node[auto] {$\mathcal{F}_{|E'}$} (m-2-2)
    (m-1-1) edge node[auto] {$\mathcal{F}_{|E}$} (m-2-1);
\end{tikzpicture}
\end{center}

Das Diagramm kommutiert nach Konstruktion für alle $ E,E' $. \\
Die Diagrammkategorien sind definiert durch 
\[  \C(T)=\varinjlim_{E\subset D_2\text{ endlich}}\End(T_{|E})\Mod \]
und
\[  \C(T\circ F)=\varinjlim_{E\subset D_1\text{ endlich}}\End(T\circ F_{|E})\Mod. \]
Die Limiten existieren nach Proposition \ref{Kategorienproposition} und sind $ R $-linear und abelsch.\\
Wir suchen nun einen bis auf Isomorphie eindeutigen Funktor
\[ \mathcal{F}:\C(T\circ F)\rightarrow \C(T), \] so dass folgendes Diagramm kommutiert:

\begin{center}
\begin{tikzpicture}[description/.style={fill=white,inner sep=2pt}]
    \matrix (m) [matrix of math nodes, row sep=3em,
    column sep=4.5em, text height=1.5ex, text depth=0.25ex]
    {  ...\End(T\circ F_{|E})\Mod & & \End(T\circ F_{|E'})\Mod...  \\
& \C(T\circ F) & \\
  ...\End(T_{F(E)})\Mod & & \End(T\circ F_{F(E')})\Mod...  \\
   & & \\
    & \C(T) & \\
	};
    \path[->,font=\scriptsize]
    (m-1-1) edge node[description] {$ \phi_{E'E} $} (m-1-3)
    (m-1-1) edge node[description] {$ \psi_E $} (m-2-2)
    (m-1-3) edge node[description] {$ \psi_{E'} $} (m-2-2)
    (m-3-1) edge node[description] {$ \phi_{F(E')F(E)} $} (m-3-3)
    (m-3-1) edge node[description] {$ \chi_{F(E)} $} (m-5-2)
    (m-3-3) edge node[description] {$ \chi_{F(E')} $} (m-5-2)
    (m-2-2) edge node[description] {$ \exists\mathcal{F}? $} (m-5-2)
    (m-1-1) edge node[description] {$ \mathcal{F}_{|E} $} (m-3-1)
    (m-1-3) edge node[description] {$ \mathcal{F}_{|E'} $} (m-3-3)
    ;
\end{tikzpicture}
\end{center}

Die Existenz eines solchen Funktors entspricht genau der universellen Eigenschaft der Limeskategorie $ \C(T\circ F) $.

\end{proof}

Wir zeigen nun die universelle Eigenschaft der Diagrammkategorie:

\begin{Satz}
Sei $ D\xrightarrow{T} R$-Mod ein Diagramm und 
\[ D\xrightarrow{\tilde{T}}\C(T)\xrightarrow{ff_T}R\Mod   \]
die Faktorisierung von $ T $ über die Diagrammkategorie. Sei $\mathcal A$ eine weitere $R$-lineare abelsche Kategorie, $F:D\rightarrow \mathcal{A}$ eine Darstellung, sowie  $f:\mathcal{A} \rightarrow R\Mod$ ein treuer, exakter, $R$-linearer Funktor in die Kategorie der R-Moduln, so dass $ T=f \circ F $ faktorisiert. Dann existiert ein bis auf Isomorphismus eindeutiger $R$-linearer, exakter, treuer Funktor 
\[L(F):\mathcal{C}(T) \rightarrow \mathcal{A}, \]
so dass folgendes Diagramm kommutiert:

\begin{center}
\begin{tikzpicture}[description/.style={fill=white,inner sep=2pt}]
    \matrix (m) [matrix of math nodes, row sep=3em,
    column sep=2.5em, text height=1.5ex, text depth=0.25ex]
    {  & D &  \\
      \mathcal{C}(T) &  & \mathcal{A} \\
      & R\Mod & \\
	};
    \path[->,font=\scriptsize]
    (m-1-2) edge node[auto] {$ \tilde{T} $} (m-2-1)
    (m-1-2) edge node[auto] {$ F $} (m-2-3)
    (m-2-1) edge node[auto] {$ \exists !L(F) $} (m-2-3)
    (m-2-1) edge node[auto] {$ ff_T $} (m-3-2)
    (m-2-3) edge node[auto] {$ T_{\mathcal{A}} $}  (m-3-2); 

\end{tikzpicture}
\end{center}

\end{Satz}

\begin{proof}

Wir können $\mathcal{A}$ als Diagramm auffassen und erhalten eine Darstellung 
\[ \mathcal{A}\xrightarrow{T_{\mathcal{A}}}R\Mod, \]
welche über seine Diagrammkategorie
\[ \mathcal{A}\xrightarrow{\tilde{T_{\mathcal{A}}}}\C(T_{\mathcal{A}})\xrightarrow{ff_{T_{\mathcal{A}}}} R\Mod \]
faktorisiert. Wir erhalten folgendes kommutatives Diagramm:

\begin{center}
\begin{tikzpicture}[description/.style={fill=white,inner sep=2pt}]
    \matrix (m) [matrix of math nodes, row sep=3em,
    column sep=2.5em, text height=1.5ex, text depth=0.25ex]
    {  D &  & \mathcal{A} \\
      \mathcal{C}(T) &  & \mathcal{C}(T_{\mathcal{A}}) \\
      & R\Mod & \\
	};
    \path[->,font=\scriptsize]
    (m-1-1) edge node[description] {$ \tilde{T}_D $} (m-2-1)
    (m-1-1) edge node[description] {$ F $} (m-1-3)
    (m-2-1) edge node[description] {$ ff_T $} (m-3-2)
    (m-1-3) edge node[description] {$ \tilde{T_{\mathcal{A}}}$} (m-2-3)
    (m-2-3) edge node[description] {$ ff_{T_{\mathcal{A}}} $}  (m-3-2)
    (m-1-1) edge node[description] {$ T $}  (m-3-2)
    (m-1-3) edge node[description] {$ T_{\mathcal{A}} $}  (m-3-2);   

\end{tikzpicture}
\end{center}

Nach Proposition \ref{hilfsprop} existiert dann ein bis auf Isomorphie eindeutiger $ R $-linearer, treuer, exakter Funktor $ \mathcal{F} $, so dass folgendes Diagramm kommutiert:

\begin{center}
\begin{tikzpicture}[description/.style={fill=white,inner sep=2pt}]
    \matrix (m) [matrix of math nodes, row sep=3em,
    column sep=2.5em, text height=1.5ex, text depth=0.25ex]
    {  D &  & \mathcal{A} \\
      \mathcal{C}(T) &  & \mathcal{C}(T_{\mathcal{A}}) \\
      & R\Mod & \\
	};
    \path[->,font=\scriptsize]
    (m-1-1) edge node[description] {$ \tilde{T}_D $} (m-2-1)
    (m-1-1) edge node[description] {$ F $} (m-1-3)
    (m-2-1) edge node[description] {$ ff_T $} (m-3-2)
    (m-1-3) edge node[description] {$ \tilde{T_{\mathcal{A}}}$} (m-2-3)
    (m-2-3) edge node[description] {$ ff_{T_{\mathcal{A}}} $}  (m-3-2);
    \path[->,densely dotted]
    (m-2-1) edge node[description] {$ \mathcal{F} $} (m-2-3);

\end{tikzpicture}
\end{center}

Da $ \mathcal{A} $ eine $ R $-lineare abelsche Kategorie und $ T $ ein treuer, exakter, $ R $-linearer Funktor ist, wissen wir aus Satz \ref{Aequivalenz}, dass $ \tilde{T}_{\mathcal{A}} $ eine Äquivalenz von Kategorien ist. Der bis auf Isomorphie eindeutige Funktor
\[L(F):\mathcal{C}(T) \rightarrow \mathcal{A} \]
ergibt sich dann aus Verkettung von $ \mathcal{F} $ mit dem Inversen von $ \tilde{T}_{\mathcal{A}} $. Ein Äquivalenz von $ R $-linearen Kategorien ist exakt, treu und $ R $-linear, also ist auch $ L(F) $ exakt, treu und $ R $-linear als Verkettung solcher Funktoren. 

\end{proof}

\chapter{Dualität zwischen Moduln und Komoduln}
\label{ch:kap_5}

Wir haben nun zu einem Diagramm $ D $ mit Darstellung $ T:D\rightarrow R\Mod $ Noris Diagrammkategorie $ \C(T) $ konstruiert. Ist $ R $ nicht nur ein kommutativer Ring, sondern sogar ein Körper, so ist die Kategorie $ \C(T) $ äquivalent zur Kategorie der endlich erzeugten Komoduln über einer gewissen Koalgebra. Der Schlüssel hierbei ist, dass es für endliche Algebren $ A $ über einem Körper $ k $ und einem endlich-dimensionalen Vektorraum $ V $ eine natürliche 1:1-Beziehung zwischen $ A $-Modulstrukturen und $ A^\vee $-Komodulstrukturen auf $ V $ gibt. $ A^\vee $ bezeichnet hierbei die zu $ A $ duale Koalgebra $ \Hom_k(A,k) $. Eine einfache Rechnung zeigt dann, dass 
\[ \C(T)=\varinjlim_{F\subset D}(\End(T_F)\Mod)=\varinjlim_{F\subset D}(\End(T_F)^\vee\Komod)=(\varinjlim_{F\subset D}\End(T_F)^\vee)\Komod \]
die Kategorie der endlich erzeugten $ (\varinjlim_{F\subset D}\End(T_F)^\vee) $-Komoduln ist, wobei $ F\subset D $ das gerichtete System der endlichen Unterdiagramme beschreibt. Diese Erkenntnis stellt sich als wesentlich für die Tannaka-Dualität heraus, welche wir in Kapitel 6 formulieren möchten. \\
Im allgemeinen Fall, dass $ R $ ein noetherscher Ring ist, gibt es diese Dualität leider nicht. Das Problem ist, dass es für $ R $-Moduln $ A $ und $ N $ im Allgemeinen keinen Isomorphismus
\[ A^\vee\otimes_R N\longrightarrow \Hom_k(A,N) \]
gibt. Ein ähnliches Dualitätsresultat erhalten wir jedoch, wenn wir uns auf endlich erzeugte, freie Moduln über Hauptidealringen einschränken. Ziel dieses Kapitels ist es, genaue Definitionen der Begriffe zu geben und die genannten Dualitäten zu erläutern.

\section{Definitionen}

Wir möchten zunächst eine weitere, äquivalente Definiton für $ R $-Algebren geben, welche die anschließenden Definitionen von Koalgebren und Komoduln motivieren sollen. 

\begin{Def}\label{algebra2}
Sei $ R $ ein kommutativer Ring. Eine \emph{unitäre Algebra} über $ R $ ist ein $ R $-Modul $ A $ zusammen mit zwei $ R $- Modulhomomorphismen
\[ \mu:A\otimes_R A\rightarrow A \hspace{1 cm} \text{und} \hspace{1 cm}  \eta:R\rightarrow A,  \]

sodass folgende Diagramme kommutieren

\begin{itemize}
\item Assoziativität:

\begin{center}
\begin{tikzpicture}[description/.style={fill=white,inner sep=2pt}]
    \matrix (m) [matrix of math nodes, row sep=3.5em,
    column sep=5.5em, text height=1.5ex, text depth=0.25ex]
    { A\otimes_R A\otimes_R A & A\otimes_R A \\
    	      A\otimes_R A & A   \\
	};
    \path[->,font=\scriptsize]
    (m-1-1) edge node[auto] {$ \mu\otimes id $} (m-1-2)
    (m-1-1) edge node[auto] {$ id\otimes\mu $} (m-2-1)
    (m-2-1) edge node[auto] {$ \mu $} (m-2-2)
    (m-1-2) edge node[auto] {$ \mu $} (m-2-2)
    ;   
\end{tikzpicture}
\end{center}

\item Verträglichkeit mit Skalarmultiplikation:

\begin{center}
\begin{tikzpicture}[description/.style={fill=white,inner sep=2pt}]
    \matrix (m) [matrix of math nodes, row sep=3.5em,
    column sep=5.5em, text height=1.5ex, text depth=0.25ex]
    { R\otimes_R A  &  A\otimes_R A & A\otimes_R R \\
    	    & A &    \\
	};
    \path[->,font=\scriptsize]
    (m-1-1) edge node[description] {$ \eta\otimes id  $} (m-1-2)
    (m-1-3) edge node[description] {$ id\otimes\eta $} (m-1-2)
    (m-1-2) edge node[auto] {$ \mu $} (m-2-2)
    (m-1-1) edge node[description] {$ Skalarmult. $} (m-2-2)
    (m-1-3) edge node[description] {$Skalarmult.$} (m-2-2)
    ;   
\end{tikzpicture}
\end{center}
\end{itemize}
Den Morphismus $ \eta $ nennt man \emph{Einheit}.
\end{Def}

\begin{Bem}
In der Literatur sind Algebren zum Teil nicht notwendig assoziativ, man würde eine solche Algebra als \emph{assoziative, unitäre Algebra} beschreiben. Für uns sind Algebren aber stets assoziativ. 
\end{Bem}

\begin{Lemma}
Die in Definition \ref{algebra1} und in Definition \ref{algebra2} gegebenen Definitionen einer unitären Algebra sind äquivalent.
\end{Lemma}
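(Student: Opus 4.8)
Der Plan ist, eine Bijektion zwischen den beiden Datenstrukturen anzugeben und zu zeigen, dass die zugehörigen Konstruktionen zueinander invers sind. Definition \ref{algebra1} liefert einen unitären Ring $A$ zusammen mit einem unitären Ringhomomorphismus $f\colon R\rightarrow A$, dessen Bild im Zentrum $Z(A)$ liegt; Definition \ref{algebra2} liefert einen $R$-Modul $A$ mit $R$-Modulhomomorphismen $\mu\colon A\otimes_R A\rightarrow A$ und $\eta\colon R\rightarrow A$, welche das Assoziativitäts- und das Einheitsdiagramm erfüllen. Zunächst würde ich aus den Daten der ersten Definition die der zweiten gewinnen und umgekehrt, wobei in beiden Richtungen im Wesentlichen dieselben Formeln auftreten: die Multiplikation $\mu(a\otimes b):=ab$ sowie die Einheit $\eta:=f$ bzw.\ das Einselement $1_A:=\eta(1_R)$.

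Für die Richtung \ref{algebra1}$\Rightarrow$\ref{algebra2} fasse ich $A$ über $r\cdot a:=f(r)a$ als $R$-Modul auf; die Ringmultiplikation definiert eine Abbildung $A\times A\rightarrow A$, die ich auf $A\otimes_R A$ herunterbringen möchte. Hier ist zu prüfen, dass $(r\cdot a)b=a(r\cdot b)$ gilt, also $f(r)ab=af(r)b$ --- und genau das ist die Zentralitätsbedingung. Die $R$-Linearität von $\mu$ und die Tatsache, dass $\eta=f$ ein $R$-Modulhomomorphismus ist, folgen direkt aus der Ringhomomorphie von $f$. Das Assoziativitätsdiagramm entspricht dann der Assoziativität der Ringmultiplikation, und das Einheitsdiagramm rechnet man über $\mu(\eta(r)\otimes a)=f(r)a=r\cdot a$ nach. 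Für die Rückrichtung \ref{algebra2}$\Rightarrow$\ref{algebra1} setze ich $ab:=\mu(a\otimes b)$ und $1_A:=\eta(1_R)$; die Distributivität kommt aus der Additivität von $\mu$, die Assoziativität aus dem ersten Diagramm und die Eigenschaft von $1_A$ als beidseitiges neutrales Element aus dem Einheitsdiagramm. Der gesuchte Ringhomomorphismus ist $f:=\eta$.

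Die Hauptschwierigkeit --- eigentlich der Kern der Äquivalenz --- liegt in der Korrespondenz zwischen der Zentralität von $f(R)$ und dem Umstand, dass $\mu$ über $A\otimes_R A$, also über $R$ balanciert, erklärt ist. In der Richtung \ref{algebra1}$\Rightarrow$\ref{algebra2} erzwingt die Zentralität die Wohldefiniertheit von $\mu$; umgekehrt liefert in \ref{algebra2}$\Rightarrow$\ref{algebra1} das Einheitsdiagramm die Identität $\mu(\eta(r)\otimes x)=r\cdot x=\mu(x\otimes\eta(r))$ für alle $x\in A$, woraus sowohl $f(rs)=f(r)f(s)$ (Ringhomomorphie von $\eta$) als auch $f(r)a=a\,f(r)$ (Zentralität) folgen. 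Zum Abschluss würde ich verifizieren, dass beide Konstruktionen zueinander invers sind: Da $\eta=f$ und $\mu(a\otimes b)=ab$ in beiden Richtungen dieselben Abbildungen festlegen und die aus Definition \ref{algebra1} gewonnene Modulstruktur $r\cdot a=f(r)a$ wegen des Einheitsdiagramms mit der vorgegebenen übereinstimmt, reproduziert ein Hin- und Zurückgehen jeweils die Ausgangsdaten.
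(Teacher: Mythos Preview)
Your proposal is correct and takes essentially the same route as the paper: in both directions you pass between the ring homomorphism $f=\eta$ and the multiplication map $\mu(a\otimes b)=ab$, and you identify the crucial point that the centrality condition on $f(R)$ is precisely what allows $\mu$ to descend to $A\otimes_R A$. If anything, your outline is slightly more systematic than the paper's proof, since you explicitly check that the two constructions are inverse to one another and you pinpoint the identity $\mu(\eta(r)\otimes x)=r\cdot x=\mu(x\otimes\eta(r))$ from the unit diagram as the common source of both the multiplicativity of $\eta$ and the centrality of its image; the paper establishes these facts as well but organizes the argument less transparently.
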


\begin{proof}
\ref{algebra1} $\Rightarrow$ \ref{algebra2} : \\
Ein Ringhomomorphismus $ \eta:R\rightarrow A $ definiert nach Bemerkung \ref{algebra_als_modul} eine $ R $-Modulstruktur auf $ A $. Die innere Multiplikation auf $ A $ liefert eine $ R $-bilineare, assoziative Abbildung $ A\times A\rightarrow A $, also eine assoziative Abbildung 
\[ \mu:A\otimes A\rightarrow A. \]
$ \eta $ ist ein Ringhomomorphismus, also insbesondere einen Morphismus von $ R $-Moduln. Bezeichnen wir die von $ \mu $ induzierte Multiplikation auf $ A $ mit $ \odot_A $. Gemäß Bemerkung \ref{algebra_als_modul} ist die $ R $-Skalarmultiplikation auf $ A $ als $ R $-Modul definiert durch \\
\[ r \cdot a:=\eta(r)\odot_A a \hspace{1 cm} \text{also} \hspace{1 cm} r\cdot a=\mu(\eta(r)\otimes a).  \]
Demnach kommutiert die Skalarmultiplikation mit der Abbildung $ \mu\circ (\eta\otimes id) $ und, da $ R $ kommutativ ist, auch mit $ \mu\circ (id\otimes \eta) $.\\
Für $ r\in R, a\in A $ gilt wegen der Kommutativität des zweiten Diagramms 
\[ \mu\circ (id\otimes\eta)(a\otimes r)= \mu\circ(\eta\otimes id)(r\otimes a), \]
also
\[ \mu(a\otimes\eta(r))=\mu(\eta(r)\otimes a). \]
Das heißt $ \eta(R) $ kommutiert unter $ A $-Multiplikation mit allen Elementen aus $ A $. $ \eta(R) $ liegt also im Zentrum von $ A $.\\
\vspace{1cm}
\\
\ref{algebra2} $\Rightarrow$ \ref{algebra1} : \\
Ein $ R $-Modul $ A $ wird durch eine assoziative Verknüpfung zu einem Ring. 
Das Element $ \eta(1) $ ist ein neutrales Element der Verknüpfung $ \mu $, da wegen der Kommutativität des zweiten Diagramms
\[ \eta(1)\odot_A a= \mu(\eta(1)\otimes a)=\mu\circ(\eta\otimes id)(1\otimes a)=1\cdot a=a \]
gilt.\\

Wir müssen überprüfen, ob $ \eta $ einen Ringhomomorphismus induziert. Dies rechnen wir nach, wobei wir das multiplikativ neutrale Element des Rings $ A $, $ \eta(1) $ mit $ 1_A $ und die $ R $-Skalarmultiplikation des Moduls $ A $ mit $ \odot_R $ notieren:\\
$ \eta(rs)=\mu(\eta(rs)\otimes 1_A)=(rs)\odot_R 1_A=r\odot_R (s\odot_R 1_A)=r\odot_R \mu(\eta(s)\otimes 1_A)\\ 
=\mu(\eta(r)\otimes (\mu(\eta(s)\otimes1_A)))=\mu(\mu(\eta(r)\otimes\eta(s))\otimes 1_A)=\mu(\eta(r)\otimes\eta(s)) $
\end{proof}

Wir definieren nun dual den Begriff der Koalgebra, wie man ihn zum Beispiel in \cite{MR2294803} findet:

\begin{Def}
Eine \emph{Koalgebra} über einem kommutativen Ring ist ein $ R $-Modul $ C $ zusammen mit zwei Abbildungen von $ R $- Moduln
\[ \Delta:C\rightarrow C\otimes_R C \hspace{1 cm} \text{und} \hspace{1 cm}  \epsilon:C\rightarrow R,  \]

sodass folgende Diagramme kommutieren

\begin{itemize}
\item Koassoziativität:

\begin{center}
\begin{tikzpicture}[description/.style={fill=white,inner sep=2pt}]
    \matrix (m) [matrix of math nodes, row sep=3.5em,
    column sep=5.5em, text height=1.5ex, text depth=0.25ex]
    { C & C\otimes_R C \\
    	      C\otimes_R C & C\otimes_R C\otimes_R C   \\
	};
    \path[->,font=\scriptsize]
    (m-1-1) edge node[auto] {$ \Delta $} (m-1-2)
    (m-1-1) edge node[auto] {$ \Delta $} (m-2-1)
    (m-2-1) edge node[auto] {$ \Delta\otimes id $} (m-2-2)
    (m-1-2) edge node[auto] {$ id\otimes\Delta $} (m-2-2)
    ;   
\end{tikzpicture}
\end{center}

\item Koeinheit:

\begin{center}
\begin{tikzpicture}[description/.style={fill=white,inner sep=2pt}]
    \matrix (m) [matrix of math nodes, row sep=3.5em,
    column sep=7.5em, text height=2.5ex, text depth=0.25ex]
    { R\otimes_R C  &  C\otimes_R C & C\otimes_R R \\
    	    & C &    \\
	};
    \path[->,font=\scriptsize]
    (m-1-2) edge node[description] {$ \Delta\otimes id  $} (m-1-1)
    (m-1-2) edge node[description] {$ id\otimes\Delta $} (m-1-3)
    (m-2-2) edge node[auto] {$ \epsilon $} (m-1-2)
    (m-1-1) edge node[description] {$ Skalarmult. $} (m-2-2)
    (m-1-3) edge node[description] {$Skalarmult.$} (m-2-2)
    ;   
\end{tikzpicture}
\end{center}

\end{itemize}
\end{Def}

\begin{Bem}
Die Abbildung $ \epsilon $ nennt man auch Koeinheit. Konsequenterweise müsste man eine derartige Koalgebra eine kounitäre Koalgebra nennen, das ist jedoch nicht üblich.
\end{Bem}

\begin{Def}
Seien $ C $, $ C' $ zwei $ R $-Koalgebren. Eine Abbildung $ f:C\rightarrow C' $ heißt \emph{Koalgebrenhomomorphismus}, wenn folgende Diagramme kommutativ sind:
\begin{center}
\begin{tikzpicture}[description/.style={fill=white,inner sep=2pt}]
    \matrix (m) [matrix of math nodes, row sep=3.5em,
    column sep=5.5em, text height=1.5ex, text depth=0.25ex]
    { C & C' \\
     C\otimes_R C & C'\otimes_R C'   \\
	};
    \path[->,font=\scriptsize]
    (m-1-1) edge node[auto] {$ f $} (m-1-2)
    (m-1-1) edge node[auto] {$ \Delta $} (m-2-1)
    (m-2-1) edge node[auto] {$ f\otimes f $} (m-2-2)
    (m-1-2) edge node[auto] {$ \Delta' $} (m-2-2)
    ;   
\end{tikzpicture}
\end{center}

\begin{center}
\begin{tikzpicture}[description/.style={fill=white,inner sep=2pt}]
    \matrix (m) [matrix of math nodes, row sep=3.5em,
    column sep=5.5em, text height=1.5ex, text depth=0.25ex]
    { C &  C' \\
    & R    \\
	};
    \path[->,font=\scriptsize]
    (m-1-1) edge node[auto] {$ f $} (m-1-2)
    (m-1-1) edge node[auto] {$ \epsilon $} (m-2-2)
    (m-1-2) edge node[auto] {$ \epsilon' $} (m-2-2)
    ;   
\end{tikzpicture}
\end{center}

\end{Def}

Dual zu Moduln über einer Algebra, definieren wir wie in \cite{MR2294803} Komoduln über einer Koalgebra.

\begin{Def}
Sei $ (C,\Delta,\epsilon) $ eine $ R $-Koalgebra. Ein \emph{$ C $-Rechtskomodul}  ist ein $ R $-Modul $ M $ zusammen mit einer $ R $-linearen Abbildung
\[ \rho:M\rightarrow M\otimes_R C, \] so dass folgende Diagramme kommutieren
\begin{itemize}
\item Verträglichkeit mit $ \Delta $:
\begin{center}
\begin{tikzpicture}[description/.style={fill=white,inner sep=2pt}]
    \matrix (m) [matrix of math nodes, row sep=3.5em,
    column sep=5.5em, text height=1.5ex, text depth=0.25ex]
    { M &  M\otimes_R C \\
    M\otimes_R C & M\otimes_R C\otimes_R C    \\
	};
    \path[->,font=\scriptsize]
    (m-1-1) edge node[auto] {$ \rho $} (m-1-2)
    (m-1-1) edge node[auto] {$ \rho $} (m-2-1)
    (m-1-2) edge node[auto] {$ id\otimes\Delta $} (m-2-2)
    (m-2-1) edge node[auto] {$ \rho\otimes id$} (m-2-2)
    ;   
\end{tikzpicture}
\end{center}
\item Verträglichkeit mit $ R $-Multiplikation:
\begin{center}
\begin{tikzpicture}[description/.style={fill=white,inner sep=2pt}]
    \matrix (m) [matrix of math nodes, row sep=3.5em,
    column sep=5.5em, text height=1.5ex, text depth=0.25ex]
    { M &  M\otimes_R C \\
     & M\otimes_R R    \\
	};
    \path[->,font=\scriptsize]
    (m-1-1) edge node[auto] {$ \rho $} (m-1-2)
    (m-2-2) edge node[auto] {$ Skalarmult. $} (m-1-1)
    (m-1-2) edge node[auto] {$ id\otimes\epsilon $} (m-2-2)
    ;   
\end{tikzpicture}
\end{center}
\end{itemize}
Analog kann man auch Links-Komoduln definieren. 
\end{Def}

\section{Dualität}

Sei $ A $ ein $ R $-Modul. Bezeichne $ A^\vee:=\Hom_R(A,R) $ den Dualraum des Moduls $ A $. Wir interessieren uns zunächst für die Frage, unter welchen Bedingungen eine Algebra $ A $ eine Koalgebren-Struktur auf $ A^\vee $ induziert und umgekehrt. Seien zunächst
\[ C\overset{\rho}{\longrightarrow} C\otimes_R C  \hspace{1 cm} \text{und} \hspace{1 cm}  R\overset{\epsilon}{\longrightarrow}C \]
die Komultiplikation und Koeinheit einer Koalgebra. Dann hat $ C^\vee=\Hom_R(C,R) $ mit Hilfe der dualen Abbildungen $ \epsilon^*,\rho^* $ die Struktur einer $ R $-Algebra:
\[\Hom(C,R)\otimes_R\Hom(C,R)\overset{\_\otimes\_}{\longrightarrow}\Hom(C\otimes_R C,R)\overset{\rho^*}{\longrightarrow}\Hom(C,R) \]
definiert eine Multiplikation und 
\[R\overset{can}{\longrightarrow}\Hom_R(R,R)\overset{\epsilon^*}{\longrightarrow}\Hom_R(C,R) \]
eine Einheit.
Umgekehrt definiert leider nicht jede Algebra auf kanonische Weise eine Koalgebra: \\
Sei $ A $ eine $ R $-Algebra und
\[ A\otimes_R A\overset{\mu}{\longrightarrow} A \]
die Algebrenmultiplikation. Dies induziert die duale Abbildung
\[A^\vee\overset{\mu^*}{\longrightarrow} (A\otimes_R A)^\vee. \]
Da diese Abbildung eine Komultiplikation auf $ A^\vee $ induzieren soll, stellt sich die Frage, unter welchen Voraussetzung es einen kanonischen Isomorphismus 
\[ (A\otimes_R A)^\vee\cong A^\vee\otimes_R A^\vee \]
gibt. Im Allgemeinen gilt das nicht. Für projektive Moduln ist dies relativ einfach zu sehen:

\begin{Def}
Ein Morphismus von $ R $-Moduln $ r:M\rightarrow N$ heißt \emph{Retraktion}, wenn es einen Morphismus $ i:N\rightarrow M $ gibt, sodass $ r\circ i=id_N $. Wenn zwischen $ M $ und $ N $ eine Retraktion existiert, nennen wir $ N $ ein \emph{Retrakt} von $ M $.
\end{Def}

\begin{Proposition}\label{xcv}
Ein Modul ist genau dann projektiv, wenn er Retrakt eines freien Moduls ist.
\end{Proposition}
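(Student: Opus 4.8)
Der Beweis zerfällt in zwei Richtungen, die sich beide auf elementare Diagrammjagden mit der definierenden Hebungseigenschaft projektiver Moduln zurückführen lassen. Für die Rückrichtung plane ich, die bereits festgehaltene Bemerkung auszunutzen, dass freie Moduln stets projektiv sind, und dies mit der allgemeineren Beobachtung zu kombinieren, dass ein Retrakt eines projektiven Moduls wieder projektiv ist. Ist nämlich $P$ ein Retrakt eines freien Moduls $F$, so liefern die Projektivität von $F$ und die Retrakteigenschaft unmittelbar die Projektivität von $P$.

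Konkret würde ich für diese Implikation wie folgt vorgehen: Sei $P$ Retrakt eines projektiven Moduls $Q$, gegeben durch Morphismen $i:P\rightarrow Q$ und $r:Q\rightarrow P$ mit $r\circ i=\mathrm{id}_P$. Für einen surjektiven Homomorphismus $e:L\twoheadrightarrow L'$ und einen Homomorphismus $f:P\rightarrow L'$ betrachte ich die Komposition $f\circ r:Q\rightarrow L'$. Da $Q$ projektiv ist, existiert ein Lift $\tilde{g}:Q\rightarrow L$ mit $e\circ\tilde{g}=f\circ r$. Ich setze dann $g:=\tilde{g}\circ i:P\rightarrow L$ und rechne nach, dass $e\circ g=e\circ\tilde{g}\circ i=f\circ r\circ i=f$ gilt. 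Damit ist $P$ projektiv, und weil freie Moduln projektiv sind, folgt die Rückrichtung.

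Für die Hinrichtung sei $P$ projektiv. Ich würde ein Erzeugendensystem von $P$ wählen (etwa die Menge aller Elemente) und den zugehörigen freien Modul $F$ mit der kanonischen Surjektion $\pi:F\twoheadrightarrow P$ bilden; dass jeder Modul Bild einer solchen Surjektion ist, ist elementar. Auf diese Surjektion $\pi$ und den Identitätsmorphismus $\mathrm{id}_P:P\rightarrow P$ wende ich nun die Hebungseigenschaft von $P$ an und erhalte einen Schnitt $s:P\rightarrow F$ mit $\pi\circ s=\mathrm{id}_P$. Nach der obigen Definition ist damit $\pi$ eine Retraktion und $P$ ein Retrakt von $F$.

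Eine ernsthafte Hürde erwarte ich nicht, da es sich um ein Standardresultat handelt. Die einzige Sorgfalt gilt der korrekten Ausrichtung der Retraktion im Sinne der Definition — die Surjektion $\pi$ übernimmt die Rolle von $r$, der gehobene Schnitt $s$ die Rolle von $i$ — sowie der kleinen Beobachtung, dass sich der Schritt ``Retrakt eines Projektiven ist projektiv'' ohne Kommutativitäts- oder Noethersche Annahmen über einem beliebigen Ring $A$ durchführen lässt.
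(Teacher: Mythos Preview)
Your argument is correct and is exactly the standard proof of this elementary fact. Note that the paper itself does not give a proof at all but simply cites \cite[Prop.~5.1]{MR2294803}; your write-up therefore supplies strictly more detail than the paper does, and there is nothing to compare against beyond that.
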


\begin{proof}
\cite[Prop.5.1]{MR2294803}
\end{proof}

\begin{Satz}\label{projektiv}
Sei $ R $ ein kommutativer, unitärer Ring, sei $ A $ ein $ R $-Modul. Dann sind folgende Aussagen äquivalent:
\begin{enumerate}
\item $ A $ ist projektiv und endlich erzeugt
\item Die Abbildung 
\[
\begin{array}{cccc}
\rho^A_N: & A^\vee\otimes_R N & \rightarrow & \Hom(A,N) \\
 & \varphi\otimes n & \mapsto & (m\mapsto\varphi(m)\cdot n) \\
\end{array}
\]
ist für alle $ R $-Moduln $ N $ ein Isomorphismus.
\end{enumerate}
\end{Satz}

\begin{proof}
Der Beweis folgt in weiten Teilen der Ausführung \cite[Prop5.2]{MR2294803} und \\ \cite[Übung5.1]{MR2294803}.\\
$ 1\Rightarrow 2$:  Sei zunächst $ A= R^n $ frei und endlich erzeugt. Sei $ \varphi_1...\varphi_n $ die duale Basis von ${(R^n)}^\vee$, also $ \varphi_i(e_j)=\delta_{ij} $. Die Abbildung $ \rho^{R^n}_N$ ist dann gegeben durch
\[
\begin{array}{cccc}
\rho^{R^n}_N: & {R^n}^\vee\otimes_R N & \rightarrow & \Hom(R^n,N) \\
 & \varphi_k\otimes n & \mapsto & (e_i\mapsto\varphi_k(e_i)\cdot n=\delta_{ki}\cdot n). \\
\end{array}
\]
Mit Hilfe dieser Basiswahlen, können wir, wie im Fall endlich-dimensionaler Vektorräume, die Umkehrabbildung konkret angeben:
\[
\begin{array}{cccc}
\Hom(R^n,N) & \rightarrow & {R^n}^\vee\otimes_R N \\
f & \mapsto & \sum_{i=1}^n\varphi_i\otimes f(e_i). \\
\end{array}
\]
Die Abbildung $ \rho^A_N $ ist also ein Isomorphismus.\\
Ein beliebiger endlich erzeugter projektiver Modul $ A $ ist nun nach Proposition \ref{xcv} Retrakt eines freien Moduls $ F=\bigoplus_{i\in I}R $. Sei $ r:F\rightarrow A $ die Retraktion und $ i:A\rightarrow F $ ihr Rechtsinverses. Unter einem Homomorphismus $ i:A\rightarrow F $ trifft jedes Element eines Erzeugendensystems von $ A $ nur endlich viele Summanden in $ F $. Da $ A $ endlich erzeugt ist, landet $ A $ unter $ i $ also in einem endlich erzeugten Untermodul von $ F $. Wir können also OBdA annehmen, dass $ A $ ein Retrakt eines endlich erzeugten freien Moduls $ R^n $ ist. 
Wir betrachten das Diagramm

\begin{center}
\begin{tikzpicture}[description/.style={fill=white,inner sep=2pt}]
    \matrix (m) [matrix of math nodes, row sep=3.5em,
    column sep=5.0em, text height=2.0ex, text depth=0.25ex]
    { {R^n}^\vee\otimes_R N &  \Hom_R(R^n,N) \\
	{A}^\vee\otimes_R N &  \Hom_R(A,N)    \\
	};
    \path[->,font=\scriptsize]
    (m-1-1) edge node[description] {$ \rho_N^{R^n} $} (m-1-2)
    (m-2-1) edge node[description] {$ \rho_N^A $} (m-2-2)
    (m-1-1) edge node[description] {$ i^*\otimes id $} (m-2-1)
    (m-1-2) edge node[description] {$ ( \_ \circ i) $} (m-2-2)
    ;   
\end{tikzpicture}
\end{center}
und prüfen dessen Kommutativität: Sei $ \varphi\otimes n\in {R^n}^\vee\otimes N $. Dann gilt
\[
\rho^A_N\circ(i^*\otimes id)(\varphi\otimes n)=\rho^A_N((\varphi\circ i)\otimes n)=(Abb: a\mapsto \varphi(i(a))n)=(\_\circ i)\circ \rho_N^{R^n}(\varphi\otimes n).
\]
Da alle Abbildungen linear sind, gilt dies auch für beliebige Elemente $ \sum_{i=1}^n\varphi_i\otimes n_i $.\\
Analog überprüft man die Kommutativität von

\begin{center}
\begin{tikzpicture}[description/.style={fill=white,inner sep=2pt}]
    \matrix (m) [matrix of math nodes, row sep=3.5em,
    column sep=5.0em, text height=2.0ex, text depth=0.25ex]
    { {R^n}^\vee\otimes_R N &  \Hom_R(R^n,N) \\
	{A}^\vee\otimes_R N &  \Hom_R(A,N)    \\
	};
    \path[->,font=\scriptsize]
    (m-1-1) edge node[description] {$ \rho_N^{R^n} $} (m-1-2)
    (m-2-1) edge node[description] {$ \rho_N^A $} (m-2-2)
    (m-2-1) edge node[description] {$ r^*\otimes id $} (m-1-1)
    (m-2-2) edge node[description] {$ ( \_ \circ r) $} (m-1-2)
    ;   
\end{tikzpicture}
\end{center}
Da $ \rho^{R^n}_N $ ein Isomorphismus ist, können wir die Umkehrabbildung von $ \rho^A_N $ durch
\[ (\rho^A_N)^{-1}:=(i^*\otimes id)\circ(\rho^{R^n}_N)^{-1}\circ(\_\circ r) \]
definieren und rechnen nach, dass dies auch wirklich eine Umkehrabbildung ist:
\[
\begin{split}
& \rho^A_N\circ  (\rho^A_N)^{-1}  \\
=  & \rho^A_N\circ (i^*\otimes id)\circ(\rho^{R^n}_N)^{-1}\circ(\_\circ r) \\
= & (\_\circ i)\circ\rho^{R^n}_N\circ(\rho^{R^n}_N)^{-1}\circ(\_\circ r) \\
= & (\_\circ i)\circ (\_\circ r)=(\_\circ id) \\
= & id_{\Hom(A,N).}
\end{split}
\]
Ebenso gilt:
\[
\begin{split}
& (\rho^A_N)^{-1} \circ  (\rho^A_N) \\
=  & (i^*\otimes id)\circ(\rho^{R^n}_N)^{-1}\circ(\_\circ r)\circ \rho^A_N \\
= & (i^*\otimes id)\circ\rho^{R^n}_N\circ(\rho^{R^n}_N)^{-1}\circ (r^*\otimes id) \\
= & (i^*\otimes id)\circ (r^*\otimes id) \\
= & (r\circ i)^*\otimes id \\
= & id_{A^\vee\otimes_R N.}
\end{split}
\]
\\
$ 2\Rightarrow 1$: Das Urbild von $id_A$ unter dem Isomorphismus 
\[ \rho^A_A:A^\vee\otimes_R A\xrightarrow{\sim}\Hom_R(A,A) \]
liefert uns ein ausgezeichnetes Element $ \sum_{i=1}^n u_i\otimes a_i\in A^\vee\otimes A $. \\
Für alle $ a\in A $ gilt 
\[ a=id_A(a)=(\rho^A_A(\sum_{i=1}^n u_i\otimes a_i))(a)=\sum_{i=1}^n u_i(a)\cdot a_i. \]
Also lässt sich jedes Element des Moduls als Linearkombination der $(a_1...a_n)$ darstellen und der Modul $ A $ ist endlich erzeugt. \\
Wir zeigen noch Projektivität:
Seien $ f:A\rightarrow L'$ und $ g:L\twoheadrightarrow L'$ zwei Morphismen. Wir erhalten ein Diagramm 

\begin{center}
\begin{tikzpicture}[description/.style={fill=white,inner sep=2pt}]
    \matrix (m) [matrix of math nodes, row sep=3.5em,
    column sep=5.0em, text height=2.0ex, text depth=0.25ex]
    { A^\vee\otimes_R L &  \Hom_R(A,L) \\
	 A^\vee\otimes_R L' &  \Hom_R(A,L')    \\
	};
    \path[->,font=\scriptsize]
    (m-1-1) edge node[description] {$ \rho_L^A $} (m-1-2)
    (m-2-1) edge node[description] {$ \rho_{L'}^A $} (m-2-2)
    (m-1-1) edge node[description] {$ id\otimes g $} (m-2-1)
    (m-1-2) edge node[description] {$ (g\circ\_) $} (m-2-2)
    ;   
\end{tikzpicture}
\end{center}

wobei die horizontalen Abbildungen nach Voraussetzung Isomorphismen sind.\\
Wir prüfen nach, dass das Diagramm kommutiert:

\begin{align*}
 \rho_{L'}^A \circ (id\otimes g)(u\otimes l) &=\rho^A_{L'}(u\otimes g(l)) \\
 &=  (Abb:a\mapsto u(a)g(l))\\
& =(Abb:a\mapsto g(u(a)l))  \\ 
& = (g\circ \_)\circ (Abb:a\mapsto u(a)l)=(g\circ\_)\circ\rho^A_L(a\otimes l). 
 \end{align*}

Wir müssen nun zeigen, dass die Abbildung $ (g\circ\_) $ surjektiv ist. Dies ist klar, da $ (id\otimes g) $ surjektiv ist und die horizontalen Abbildungen Isomorphismen sind. 
\end{proof}

\begin{Konvention}
Seien für den Rest dieses Kapitels $ R $ ein kommutativer Ring und alle Algebren über $ R $ endlich sowie alle Moduln über $ R $ endlich erzeugt. 
\end{Konvention}

\begin{Lemma}\label{natuerlich}
Seien $ A $ und $ B $ zwei $ R $-Moduln. Die Abbildungen
\[
\begin{array}{ccccccc}
A^\vee\otimes_R B^\vee & \xrightarrow{\rho^A_{B^\vee}}& \Hom(A,B^\vee) & \xrightarrow{\sim} & \Hom(A\otimes B,R) & =(A\otimes_R B)^\vee\\
\alpha\otimes\beta     &   \mapsto    & (a\mapsto\alpha(a)\cdot\beta) &     \mapsto        & (a\otimes b\mapsto \alpha(a)\cdot\beta(b))\\
\end{array}
\]
sind im folgenden Sinne natürlich: Gegeben zwei Modulmorphismen
\[ C \xrightarrow{f} A, \]
\[ D \xrightarrow{g} B, \]
so kommutiert folgendes Diagramm
\begin{center}
\begin{tikzpicture}[description/.style={fill=white,inner sep=2pt}]
    \matrix (m) [matrix of math nodes, row sep=3.5em,
    column sep=5.0em, text height=2.0ex, text depth=0.25ex]
    { A^\vee\otimes_R B^\vee &  \Hom_R(A,B^\vee) & (A\otimes_R B)^\vee \\
	  C^\vee\otimes_R D^\vee &  \Hom_R(C,D^\vee) & (C\otimes_R D)^\vee \\
	};
    \path[->,font=\scriptsize]
    (m-1-1) edge node[auto] {$ \rho_{B^\vee}^A $} (m-1-2)
    (m-1-2) edge node[auto] {$ \sim $} (m-1-3)
    (m-2-1) edge node[auto] {$ \rho_{D^\vee}^C $} (m-2-2)
    (m-2-2) edge node[auto] {$ \sim $} (m-2-3)
    (m-1-1) edge node[auto] {$ f^*\otimes g^* $} (m-2-1)
    (m-1-2) edge node[auto] {$ (g^*\circ\_\circ f) $} (m-2-2)
    (m-1-3) edge node[auto] {$ (f\otimes g)^* $} (m-2-3)
    ;   
\end{tikzpicture}
\end{center}

\end{Lemma}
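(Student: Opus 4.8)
The plan is to verify that each of the two squares making up the diagram commutes separately, by chasing a general element through both composites. Since all maps involved are $R$-linear, for the left-hand square it suffices to check commutativity on elementary tensors $\alpha\otimes\beta\in A^\vee\otimes_R B^\vee$; the right-hand square I would check on an arbitrary $\psi\in\Hom_R(A,B^\vee)$. Throughout I use the convention that for a module map $h$ the dual map $h^*$ is precomposition, $h^*(\varphi)=\varphi\circ h$, and the explicit formula for $\rho$ from Satz \ref{projektiv}, namely $\rho^A_{B^\vee}(\alpha\otimes\beta)=(a\mapsto\alpha(a)\cdot\beta)$.

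For the left square, starting from $\alpha\otimes\beta$ and going right-then-down: applying $\rho^A_{B^\vee}$ gives the map $a\mapsto\alpha(a)\cdot\beta$, and then $(g^*\circ\_\circ f)$ yields $c\mapsto\alpha(f(c))\cdot(\beta\circ g)$, where I use $R$-linearity of $g^*$ to pull the scalar $\alpha(f(c))$ out of $g^*$. Going down-then-right: $f^*\otimes g^*$ sends $\alpha\otimes\beta$ to $(\alpha\circ f)\otimes(\beta\circ g)$, and $\rho^C_{D^\vee}$ then produces $c\mapsto(\alpha\circ f)(c)\cdot(\beta\circ g)$, which is the same map. Because both composites are $R$-linear and agree on elementary tensors, they coincide on all of $A^\vee\otimes_R B^\vee$.

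For the right square I would track $\psi\in\Hom_R(A,B^\vee)$, writing the horizontal isomorphism explicitly as $\psi\mapsto(a\otimes b\mapsto\psi(a)(b))$. Right-then-down via $(f\otimes g)^*$ gives $c\otimes d\mapsto\psi(f(c))(g(d))$. Down-then-right: $(g^*\circ\_\circ f)(\psi)=g^*\circ\psi\circ f$ sends $c$ to $\psi(f(c))\circ g$, and applying the horizontal isomorphism gives $c\otimes d\mapsto(\psi(f(c))\circ g)(d)=\psi(f(c))(g(d))$. The two agree, so the square commutes.

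The computation is entirely routine; there is no real obstacle beyond bookkeeping. The only place demanding care is keeping the variance of the dual maps straight — that $f^*$ and $g^*$ act by precomposition while $(f\otimes g)^*$ dualizes the tensor product — and confirming that the middle isomorphism $\Hom_R(A,B^\vee)\cong(A\otimes_R B)^\vee$ is genuinely natural in the two variables, which is exactly what the two element-chases establish.
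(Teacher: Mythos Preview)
Your proof is correct and follows essentially the same element-chasing approach as the paper. The only cosmetic difference is that the paper verifies the outer rectangle and the right square (the left square then follows since the right horizontal map is an isomorphism), whereas you verify the left and right squares directly; the computations themselves are identical.
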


\begin{proof}
Für das äußere Rechteck rechnen wir:\\

\begin{align*}
(f\otimes g)^* \circ \rho_{B^\vee}^A(\alpha\otimes\beta) &=(f\otimes g)^* \circ(Abb:a\otimes b\mapsto\alpha(a)\beta(b)) \\
&= (Abb:c\otimes d\mapsto (\alpha\circ f)(c)(\beta\circ g)(d)\\
&=\rho_{D^\vee}^C(\alpha\circ f\otimes \beta\circ g) \\
& = \rho_{D^\vee}^C\circ (f^* \otimes g^*). \\
\end{align*}

Für das rechte Quadrat sei $ \varphi\in\Hom_R(A,B^\vee) $.
Führen wir zunächst den kanonischen Isomorphismus und dann die Abbildung $ (f\otimes g)^* $ aus, erhalten wir:
\[ \varphi \longmapsto \Big{(}Abb:a\otimes b\mapsto \big{(}\varphi(a)\big{)}(b)\Big{)}\longmapsto \Big(Abb:c\otimes d\mapsto\varphi\big(f(c)\big)\big(g(d)\big)\Big). \]
Führen wir die Abbildung $ (g^*\circ\_\circ f) $ von dem kanonischen Isomorphismus aus erhalten wir gleichermaßen:
\[ \varphi \longmapsto \Big(Abb:c\otimes d\mapsto \big(g^*\circ\varphi\circ f(c)\big)(d)\Big)=\Big(Abb:c\otimes d\mapsto\varphi\big(f(c)\big)\big(g(d)\big)\Big). \]
\end{proof}

\begin{Korollar}\label{Koalgebra}
Sei $ A $ eine unitäre $ R $-Algebra und $ A $ als Modul projektiv. Dann trägt $ A^{\vee} $ eine natürliche Struktur als $ R $-Koalgebra.
\end{Korollar}

\begin{proof}
Wenn $ A $ eine $ R $-Algebra ist, dann haben wir eine assoziative, $ R $-bilineare Abbildung 
\[ \mu:A\otimes_R A\rightarrow A.\]
Dies definiert durch Pullback eine Abbildung
\[
\begin{array}{cccc}
\mu^*:A^\vee & \rightarrow & (A\otimes_R A)^\vee \\
 \varphi & \mapsto & \varphi\circ\mu. \\
\end{array}
\]

Dualisieren wir das Diagramm der Assoziativität von $ \mu $, so erhalten wir folgendes kommutative Diagramm:

\begin{center}
\begin{tikzpicture}[description/.style={fill=white,inner sep=2pt}]
    \matrix (m) [matrix of math nodes, row sep=3.5em,
    column sep=5.5em, text height=1.5ex, text depth=0.25ex]
    { (A\otimes_R A\otimes_R A)^\vee & (A\otimes_R A)^\vee \\
    	      (A\otimes_R A)^\vee & A^\vee   \\
	};
    \path[->,font=\scriptsize]
    (m-1-2) edge node[auto] {$ (\mu\otimes id)^* $} (m-1-1)
    (m-2-1) edge node[auto] {$ (id\otimes\mu)^* $} (m-1-1)
    (m-2-2) edge node[auto] {$ \mu^* $} (m-2-1)
    (m-2-2) edge node[auto] {$ \mu^* $} (m-1-2)
    ;   
\end{tikzpicture}
\end{center}

Da $ A $ projektiv ist, ist die Abbildung $ \rho^A_{A^\vee} $ nach Satz \ref{projektiv} ein natürlicher Isomorphismus und wir haben wie in Lemma \ref{natuerlich} natürliche Isomorphismen:
\[ 
\begin{split}
(A\otimes_R A)^\vee \overset{\sim}{\rightarrow} A^\vee\otimes_R A^\vee \\
(A\otimes_R A\otimes A)^\vee\overset{\sim}{\rightarrow} (A\otimes_R A)^\vee \otimes_R A^\vee\\
(A\otimes_R A\otimes A)^\vee\overset{\sim}{\rightarrow}  A^\vee\otimes_R (A\otimes_R A)^\vee\\
\end{split}
\]
und mit Hilfe von Lemma \ref{natuerlich} erhalten wir folgendes kommutative Diagramm
\begin{center}
\begin{tikzpicture}[description/.style={fill=white,inner sep=2pt}]
    \matrix (m) [matrix of math nodes, row sep=3.5em,
    column sep=5.5em, text height=1.5ex, text depth=0.25ex]
    { A^\vee\otimes_R A^\vee\otimes_R A^\vee & A^\vee\otimes_R A^\vee \\
    	      A^\vee\otimes_R A^\vee & A^\vee   \\
	};
    \path[->,font=\scriptsize]
    (m-1-2) edge node[auto] {$ \mu^*\otimes id^* $} (m-1-1)
    (m-2-1) edge node[auto] {$ id^*\otimes\mu^* $} (m-1-1)
    (m-2-2) edge node[auto] {$ \mu^* $} (m-2-1)
    (m-2-2) edge node[auto] {$ \mu^* $} (m-1-2)
    ;   
\end{tikzpicture}
\end{center}

was die Koassoziativität zeigt. Analog erhalten wir durch Dualisieren aus der Abbildung
\[ \eta:R\rightarrow A \] 
die Abbildung
\[
\begin{array}{cccccc}
 \eta^*: & A^\vee & \rightarrow & R^\vee & \overset{\sim}{\rightarrow} & R \\
 & \varphi & \mapsto & \varphi\circ\eta & \mapsto & \varphi(\eta(1)). \\
\end{array}
\]

Da die Skalarmultiplikationsabbildung ein Isomorphismus ist, erhalten wir wieder mit Hilfe von Lemma \ref{natuerlich} folgendes kommutative Diagramm

\begin{center}
\begin{tikzpicture}[description/.style={fill=white,inner sep=2pt}]
    \matrix (m) [matrix of math nodes, row sep=3.5em,
    column sep=7.5em, text height=2.5ex, text depth=0.25ex]
    { R\otimes_R C  &  C\otimes_R C & C\otimes_R R \\
    	    & C &    \\
	};
    \path[->,font=\scriptsize]
    (m-1-2) edge node[description] {$ \eta^*\otimes id  $} (m-1-1)
    (m-1-2) edge node[description] {$ id\otimes\eta^* $} (m-1-3)
    (m-2-2) edge node[auto] {$ \mu^* $} (m-1-2)
    (m-1-1) edge node[description] {$ Skalarmult. $} (m-2-2)
    (m-1-3) edge node[description] {$Skalarmult.$} (m-2-2)
    ;   
\end{tikzpicture}
\end{center}

was die Koeinheit zeigt.

\end{proof}

%

\begin{Bem}
Ein endlich erzeugter freier Modul $ M $ über einem kommutativen Ring $ R $ ist reflexiv, das heißt, $ M $ und $ (M^\vee)^\vee  $ sind kanonisch isomorph. Der Beweis funktioniert analog zu dem Standardbeweis für Vektorräume aus der Linearen Algebra. 
\end{Bem}

\begin{Bem+Def}\label{etr}
Wir erinnern noch einmal an die Adjunktion von $ \_\otimes_R M $ und $ \Hom(M,\_) $ in der Kategorie der $ R $-Moduln.
Für $ R $-Moduln $ A,M,P $ ist die natürliche Bijektion zwischen den Morphismenräumen definiert durch
\[
\begin{array}{cccc}
 \Hom_R(A\otimes_R M,P) &    \overset{1:1}{\longleftrightarrow} & \Hom_R(M,\Hom_R(A,P)) \\
f & \mapsto & (m\mapsto f(\_,m)) \\
(a\otimes m \mapsto \varphi_m(a)) & \leftarrowtail & (m\mapsto \varphi_m).
\end{array}
\]
Für einen Homomorphismus $ f\in \Hom(A\otimes_R M,P) $ bezeichnen wir mit $ \hat{f} $ den bezüglich dieser Bijektion korrespondierenden Homomorphismus in $ \Hom(M,\Hom_R(A,P)) $. Umgekehrt bezeichnen wir für einen Homomorphismus $ g\in\Hom(M,\Hom_R(A,P)) $ den korrespondierenden Morphismus in $ \Hom(A\otimes_R M,P) $ ebenfalls mit $ \hat{g} $. Es gilt also $ \hat{\hat{f}}=f $.
\end{Bem+Def}

\begin{Satz}\label{Komodul}
Sei $ R $ ein unitärer, kommutativer Ring und $ M $ ein $ R $-Modul. Sei $ A $ eine assoziative, unitäre, endliche $ R $-Algebra, welche als Modul projektiv ist. Dann gibt es eine kanonische 1:1-Beziehung zwischen $ A $-Links-Modulstrukturen und $ A^\vee $-Links-Komodulstrukturen auf $ M $.
\end{Satz}

\begin{proof}

Eine $ A $-Modulstruktur auf $ M $ ist gegeben durch eine assoziative, $ R $-bilineare Abbildung $ m:A\otimes_R M\rightarrow M $, also ein kommutatives Diagramm
\begin{center}
\begin{tikzpicture}[description/.style={fill=white,inner sep=2pt}]
    \matrix (m) [matrix of math nodes, row sep=3.5em,
    column sep=5.5em, text height=1.5ex, text depth=0.25ex]
    { M &  A\otimes_R M \\
    A\otimes_R M & A\otimes_R A\otimes_R M    \\
	};
    \path[->,font=\scriptsize]
    (m-1-2) edge node[auto] {$ m $} (m-1-1)
    (m-2-1) edge node[auto] {$ m $} (m-1-1)
    (m-2-2) edge node[auto] {$ id\otimes m $} (m-1-2)
    (m-2-2) edge node[auto] {$ \mu\otimes id$} (m-2-1)
    ;   
\end{tikzpicture}
\end{center}

Wir möchten nachrechnen, dass folgendes Diagramm kommutiert:

\begin{center}
\begin{tikzpicture}[description/.style={fill=white,inner sep=2pt}]
    \matrix (m) [matrix of math nodes, row sep=3.5em,
    column sep=5.5em, text height=1.5ex, text depth=0.25ex]
    { M &  \Hom_R(A,M) & \\
    \Hom_R(A,M) & \Hom_R(A\otimes_R A,M) & \Hom_R(A,\Hom_R(A,M))  \\
	};
    \path[->,font=\scriptsize]
    (m-1-1) edge node[auto] {$ \hat{m} $} (m-1-2)
    (m-1-1) edge node[auto] {$ \hat{m} $} (m-2-1)
    (m-2-1) edge node[auto] {$ (\_\circ\mu) $} (m-2-2)
    (m-1-2) edge node[auto] {$ (\hat{m}\circ\_) $} (m-2-3)
    (m-2-2) edge node[auto] {$ \simeq $} (m-2-3)
    ;   
\end{tikzpicture}
\end{center}

wobei der Isomorphismus $ \simeq $ die Adjunktion beschreibt. \\
Sei also $ x\in M $. Dann ist
\[ (\hat{m}\circ\_)\circ\hat{m}(x)=(\hat{m}\circ\_)(Abb:a\mapsto m(x\otimes a))=Abb:a\mapsto (Abb:b\mapsto m(m(x\otimes a)\otimes b). \]
Dies entspricht unter der Adjunktion der Abbildung 
\[ a\otimes b\mapsto m(m(x\otimes a)\otimes b) \]
in $ \Hom_R(A\otimes_R A,M) $. Da Algebrenmultiplikation $ \mu $ mit Skalarmultiplikation $ m $ vertauscht, entspricht dies der Abbildung  $ a\otimes b\mapsto m(\mu(a\otimes b)\otimes x) $ in $ \Hom_R(A\otimes_R A,M) $. Dies ist, wie gefordert, die Abbildung $ \hat{m}((\_\circ\mu)(x)) $.\\ 
Da $ A $ projektiv ist, erhalten wir unter Verwendung des natürlichen Isomorphismus \\ $ A^\vee\otimes_R M\cong\Hom_R(A,M) $, sowie der Identifikation aus Lemma \ref{natuerlich} ein kommutatives Diagramm

\begin{center}
\begin{tikzpicture}[description/.style={fill=white,inner sep=2pt}]
    \matrix (m) [matrix of math nodes, row sep=3.5em,
    column sep=5.5em, text height=1.5ex, text depth=0.25ex]
    { M &  A^\vee\otimes_R M & \\
    A^\vee\otimes_R M & A^\vee\otimes_R A^\vee \otimes_R M  \\
	};
    \path[->,font=\scriptsize]
    (m-1-1) edge node[auto] {$ \hat{m} $} (m-1-2)
    (m-1-1) edge node[auto] {$ \hat{m} $} (m-2-1)
    (m-2-1) edge node[auto] {$ \mu^*\otimes id $} (m-2-2)
    (m-1-2) edge node[auto] {$ id\otimes \hat{m} $} (m-2-2)
    ;   
\end{tikzpicture}
\end{center}

Für den $ A $-Modul $ M $ kommutiert außerdem 

\begin{center}
\begin{tikzpicture}[description/.style={fill=white,inner sep=2pt}]
    \matrix (m) [matrix of math nodes, row sep=3.5em,
    column sep=5.5em, text height=1.5ex, text depth=0.25ex]
    { M &  A\otimes_R M \\
     & R\otimes_R M    \\
	};
    \path[->,font=\scriptsize]
    (m-1-2) edge node[auto] {$ m $} (m-1-1)
    (m-2-2) edge node[description] {$ Skalarmult. $} (m-1-1)
    (m-2-2) edge node[description] {$ id\otimes\mu $} (m-1-2)
    ;   
\end{tikzpicture}
\end{center}

Daraus berechnen wir analog die Kommutativität von

\begin{center}
\begin{tikzpicture}[description/.style={fill=white,inner sep=2pt}]
    \matrix (m) [matrix of math nodes, row sep=3.5em,
    column sep=5.5em, text height=1.5ex, text depth=0.25ex]
    { M & \Hom(A,M) \\
     & \Hom(R,M)    \\
	};
    \path[->,font=\scriptsize]
    (m-1-1) edge node[auto] {$ \hat{m} $} (m-1-2)
    (m-2-2) edge node[auto] {$ \sim $} (m-1-1)
    (m-2-2) edge node[auto] {$ \circ f $} (m-1-2)
    ;   
\end{tikzpicture}
\end{center}
 
beziehungsweise

\begin{center}
\begin{tikzpicture}[description/.style={fill=white,inner sep=2pt}]
    \matrix (m) [matrix of math nodes, row sep=3.5em,
    column sep=5.5em, text height=1.5ex, text depth=0.25ex]
    { M &  M\otimes_R A^\vee \\
     & M\otimes_R R\cong M\otimes_R R^\vee    \\
	};
    \path[->,font=\scriptsize]
    (m-1-1) edge node[auto] {$ \hat{m} $} (m-1-2)
    (m-2-2) edge node[auto] {$ \sim $} (m-1-1)
    (m-2-2) edge node[auto] {$ id\otimes f $} (m-1-2)
    ;   
\end{tikzpicture}
\end{center}

Ein $ A $-Modul induziert also eine $ A^\vee $-Komodulstruktur. \\
Sei umgekehrt $ M $ ein $ A^\vee $-Komodul.
Dann wird durch gleiche Argumentation und unter Verwendung der Tatsache, dass $ A $ reflexiv ist, aus einem Diagramm

\begin{center}
\begin{tikzpicture}[description/.style={fill=white,inner sep=2pt}]
    \matrix (m) [matrix of math nodes, row sep=3.5em,
    column sep=5.5em, text height=1.5ex, text depth=0.25ex]
    { M &  A^\vee\otimes_R M & \\
    A^\vee\otimes_R M & A^\vee\otimes_R A^\vee \otimes_R M  \\
	};
    \path[->,font=\scriptsize]
    (m-1-1) edge node[auto] {$ \rho $} (m-1-2)
    (m-1-1) edge node[auto] {$ \rho $} (m-2-1)
    (m-2-1) edge node[auto] {$ \Delta\otimes id $} (m-2-2)
    (m-1-2) edge node[auto] {$ id\otimes\rho$} (m-2-2)
    ;   
\end{tikzpicture}
\end{center}

ein Diagramm der Form

\begin{center}
\begin{tikzpicture}[description/.style={fill=white,inner sep=2pt}]
    \matrix (m) [matrix of math nodes, row sep=3.5em,
    column sep=5.5em, text height=1.5ex, text depth=0.25ex]
    { M &  A\otimes_R M & \\
    A\otimes_R M & A\otimes_R A\otimes_R M  \\
	};
    \path[->,font=\scriptsize]
    (m-1-1) edge node[auto] {$ \hat{\rho} $} (m-1-2)
    (m-1-1) edge node[auto] {$ \hat{\rho} $} (m-2-1)
    (m-2-1) edge node[auto] {$ \Delta^*\otimes id $} (m-2-2)
    (m-1-2) edge node[auto] {$ id\otimes\hat{\rho}$} (m-2-2)
    ;   
\end{tikzpicture}
\end{center}

Die Konstruktionen sind invers zueinander, da $ \hat{\hat{m}}=m $ und $ (\mu^*)^*\cong\mu $ ist. Also gibt es eine kanonische 1:1-Beziehung zwischen $ A $-Modulstrukturen und $ A^\vee $-Komodulstrukturen auf $ M $.

\end{proof}

\begin{Bem}
Wir hätten gleichermaßen zeigen können, dass es eine 1:1 Beziehung zwischen Links-Modulstrukturen und Rechts-Komodulstrukturen gibt. 
Tatsächlich steckt in den Isomorphismen
\[ \Hom(A\otimes_R A,M)\xrightarrow{{\big (\rho^{(A\otimes A)}_M\big )}^{-1}}(A\otimes_R A)^\vee\otimes_R M\xrightarrow{{\big (\rho^A_{A^\vee}\big )}^{-1}\otimes id}A^\vee\otimes A^\vee\otimes M \]
eine gewisse Wahl, da wir die Abbildung $ \rho^A_N $ willkürlich durch
\[
\begin{array}{cccc}
\rho^A_N: & A^\vee\otimes_R N & \rightarrow & \Hom(A,N) \\
 & \varphi\otimes n & \mapsto & (m\mapsto\varphi(m)\cdot n) \\
\end{array}
\]
definiert haben. Definiert man $ \rho^A_N $ stattdessen als
\[
\begin{array}{cccc}
\rho^A_N: & N\otimes_R A^\vee & \rightarrow & \Hom(A,N) \\
 & n\otimes \varphi & \mapsto & (m\mapsto\varphi(m)\cdot n) \\
\end{array}
\]
so erhalten wir durch 

\[
\begin{array}{ccccccc}
B^\vee\otimes_R A^\vee & \xrightarrow{\rho^A_{B^\vee}}& \Hom(A,B^\vee) & \xrightarrow{\sim} & \Hom(A\otimes B,R) & =(A\otimes_R B)^\vee\\
\beta\otimes\alpha     &   \mapsto    & (a\mapsto\alpha(a)\cdot\beta) &     \mapsto        & (a\otimes b\mapsto \alpha(a)\cdot\beta(b))\\
\end{array}
\]
auf $ A^\vee $ genau die opponierte Komultiplikation zu der aus Lemma \ref{natuerlich}. Weiterhin erhalten wir mit dieser Definition von $ \rho^A_N $ die Isomorphismen

\[ \Hom(A\otimes_R A,M)\xrightarrow{{\big (\rho^{(A\otimes A)}_M\big )}^{-1}}M\otimes_R (A\otimes_R A)^\vee\xrightarrow{id\otimes{\big (\rho^A_{A^\vee}\big )}^{-1}}M\otimes A^\vee\otimes A^\vee. \]
Führt man den Beweis mit diesen Isomorphismen durch, so erhält man für jede $ A $-Linksmodulstruktur ein kommutatives Diagramm

\begin{center}
\begin{tikzpicture}[description/.style={fill=white,inner sep=2pt}]
    \matrix (m) [matrix of math nodes, row sep=3.5em,
    column sep=5.5em, text height=1.5ex, text depth=0.25ex]
    { M &  M\otimes_R A^\vee & \\
    M\otimes_R A^\vee & M \otimes_R A^\vee\otimes_R A^\vee  \\
	};
    \path[->,font=\scriptsize]
    (m-1-1) edge node[auto] {$ \hat{m} $} (m-1-2)
    (m-1-1) edge node[auto] {$ \hat{m} $} (m-2-1)
    (m-2-1) edge node[auto] {$ id\otimes \mu^* $} (m-2-2)
    (m-1-2) edge node[auto] {$ \hat{m} \otimes id $} (m-2-2)
    ;   
\end{tikzpicture}
\end{center}
Dies induziert auf $ M $ eine Struktur als Rechts-Komodul über der (nun opponierten) Koalgebra $ A^\vee $.
\end{Bem}

\begin{Lemma}\label{morphismen}
Sei $ A $ eine endliche $ R $-Algebra und als Modul projektiv. Seien $ M $ und $ N $ endlich-dimensionale $ A $-Moduln. Ein $ A $-Modulhomomorphismus $ f:M\rightarrow N $ induziert einen $ A^\vee $-Komodulhomomorphismus und umgekehrt. 
\end{Lemma}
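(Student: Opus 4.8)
Der Plan ist, den Beweis nahtlos an die Konstruktion aus Satz \ref{Komodul} anzuschließen: Die dortige 1:1-Beziehung zwischen $A$-Modulstrukturen und $A^\vee$-Komodulstrukturen entstand durch Zusammensetzen der Adjunktion aus \ref{etr} mit dem natürlichen Isomorphismus $\rho^A_{\_}:A^\vee\otimes_R\_\xrightarrow{\sim}\Hom_R(A,\_)$ aus Satz \ref{projektiv}. Ich würde daher zeigen, dass die Homomorphismus-Bedingung unter genau diesen beiden Schritten erhalten bleibt, sodass der Beweis letztlich auf einen reinen Diagrammvergleich hinausläuft.

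Zunächst würde ich die Modulhomomorphismus-Bedingung in die adjungierte Sprache übersetzen. Bezeichne $m_M:A\otimes_R M\to M$ und $m_N:A\otimes_R N\to N$ die beiden Modulstrukturen und $\hat m_M,\hat m_N$ ihre Adjungierten gemäß \ref{etr}. Ein $R$-linearer Morphismus $f:M\to N$ ist genau dann ein $A$-Modulhomomorphismus, wenn $f\circ m_M=m_N\circ(\mathrm{id}_A\otimes f)$ gilt. Eine kurze Rechnung (Auswerten an $a\otimes x$) zeigt, dass dies äquivalent zur Kommutativität des Quadrats
\[
\begin{array}{ccc}
M & \xrightarrow{\hat m_M} & \Hom_R(A,M)\\
\downarrow f & & \downarrow (f\circ\_)\\
N & \xrightarrow{\hat m_N} & \Hom_R(A,N)
\end{array}
\]
ist, wobei $(f\circ\_)$ das Nachschalten von $f$ bezeichnet.

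Der zweite Schritt ist die Übertragung auf Komoduln mittels der Transformation $\rho^A_{\_}$. Entscheidend ist hier, dass $\rho^A_N$ nach Satz \ref{projektiv} für projektives, endlich erzeugtes $A$ für alle $N$ ein Isomorphismus und zudem natürlich in $N$ ist: Für $f:M\to N$ gilt $(f\circ\_)\circ\rho^A_M=\rho^A_N\circ(\mathrm{id}_{A^\vee}\otimes f)$, was man direkt aus der Formel $\rho^A_N(\varphi\otimes n)=(m\mapsto\varphi(m)\cdot n)$ und der $R$-Linearität von $f$ abliest. Klebt man dieses Natürlichkeitsquadrat an das obige an und verwendet $\rho_M=(\rho^A_M)^{-1}\circ\hat m_M$ sowie $\rho_N=(\rho^A_N)^{-1}\circ\hat m_N$ als Definition der Komodulstrukturen, so folgt durch Verketten die Kommutativität von
\[
\begin{array}{ccc}
M & \xrightarrow{\rho_M} & A^\vee\otimes_R M\\
\downarrow f & & \downarrow (\mathrm{id}_{A^\vee}\otimes f)\\
N & \xrightarrow{\rho_N} & A^\vee\otimes_R N,
\end{array}
\]
also genau die definierende Bedingung für einen $A^\vee$-Komodulhomomorphismus.

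Die Umkehrung verläuft völlig symmetrisch: Da $\rho^A_N$ ein Isomorphismus und $A$ reflexiv ist (womit sich die Modulstruktur aus der Komodulstruktur wie in Satz \ref{Komodul} zurückgewinnen lässt), kann man dieselbe Kette von Quadraten rückwärts lesen. Die eigentliche mathematische Substanz steckt damit bereits vollständig in Satz \ref{projektiv} und Satz \ref{Komodul}; der neue Beweis ist reine Naturalitäts-Buchhaltung. Die Hauptschwierigkeit erwarte ich dementsprechend nicht im begrifflichen, sondern im technischen Bereich — nämlich im konsistenten Umgang mit den Konventionen, also ob $A^\vee$ links oder rechts tensoriert wird und in welcher Variablen $\rho^A_{\_}$ als natürlich verwendet wird; wählt man hier versehentlich die falsche Seite, erhält man statt des erwarteten Komodulhomomorphismus einen über der opponierten Koalgebra (vgl. die Bemerkung nach Satz \ref{Komodul}).
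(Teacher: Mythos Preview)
Your proposal is correct and follows essentially the same route as the paper: both translate the $A$-module homomorphism condition via the adjunction \ref{etr} into commutativity of a square involving $\hat m$ and $(f\circ\_)$ on $\Hom_R(A,-)$, and then use the natural isomorphism $\rho^A_{\_}$ from Satz \ref{projektiv} to carry this over to the $A^\vee$-comodule side. The paper verifies the last step by an explicit element-wise calculation rather than invoking naturality of $\rho^A_{\_}$ as you do, but this is only a difference in presentation, not in substance.
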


\begin{proof}
Sei $ \mu:A\otimes_R M\rightarrow M $ die Multiplikationsabbildung als $ A $-Modul. 
Ein $ A $-Modulhomomorphismus erfüllt $ f\circ\mu=\mu\circ(id\otimes f) $, macht also folgendes Diagramm kommutativ.

\begin{center}
\begin{tikzpicture}[description/.style={fill=white,inner sep=2pt}]
    \matrix (m) [matrix of math nodes, row sep=3.5em,
    column sep=5.5em, text height=1.5ex, text depth=0.25ex]
    { M &  N \\
    A\otimes_R M & A\otimes_R N    \\
	};
    \path[->,font=\scriptsize]
    (m-1-1) edge node[auto] {$ f $} (m-1-2)
    (m-2-1) edge node[auto] {$ id\otimes f $} (m-2-2)
    (m-2-1) edge node[auto] {$ \mu $} (m-1-1)
    (m-2-2) edge node[auto] {$ \mu $} (m-1-2)
    ;   
\end{tikzpicture}
\end{center}

 Wir müssen überprüfen, ob dies genau dann der Fall ist, wenn auch
 
\begin{center}
\begin{tikzpicture}[description/.style={fill=white,inner sep=2pt}]
    \matrix (m) [matrix of math nodes, row sep=3.5em,
    column sep=5.5em, text height=1.5ex, text depth=0.25ex]
    {  A^\vee\otimes_R M &  A^\vee\otimes_R N \\
     M &  N    \\
	};
    \path[->,font=\scriptsize]
    (m-1-1) edge node[auto] {$ id\otimes f $} (m-1-2)
    (m-2-1) edge node[auto] {$  f $} (m-2-2)
    (m-2-1) edge node[auto] {$ \hat{\mu} $} (m-1-1)
    (m-2-2) edge node[auto] {$ \hat{\mu} $} (m-1-2)
    ;   
\end{tikzpicture}
\end{center}

kommutiert. Dafür schreiben wir das letzte Diagramm zunächst mit Hilfe des Isomorphismus aus Satz \ref{projektiv} und den korrespondierenden Morphismen aus Lemma \ref{natuerlich} um und zeigen äquivalent, dass dies genau dann der Fall ist, wenn

\begin{center}
\begin{tikzpicture}[description/.style={fill=white,inner sep=2pt}]
    \matrix (m) [matrix of math nodes, row sep=3.5em,
    column sep=5.5em, text height=1.5ex, text depth=0.25ex]
    { \Hom_R(A,M) &  \Hom_R(A,N) \\
     M &  N    \\
	};
    \path[->,font=\scriptsize]
    (m-1-1) edge node[auto] {$ f\circ $} (m-1-2)
    (m-2-1) edge node[auto] {$  f $} (m-2-2)
    (m-2-1) edge node[auto] {$ \hat{\mu} $} (m-1-1)
    (m-2-2) edge node[auto] {$ \hat{\mu} $} (m-1-2)
    ;   
\end{tikzpicture}
\end{center}

kommutiert.\\
Sei $ x\in M $. Dann gilt
\[(f\circ\_)\circ\hat{\mu}(x)=(f\circ\_)\big (Abb:a\mapsto \mu(x\otimes a) \big)= \big (Abb:a\mapsto f(\mu(x\otimes a)\big ) \]
\[\hat{\mu}\circ f (x)=\big( Abb:a\mapsto \mu(f(x)\otimes a) \big). \]
Das letzte Diagramm kommutiert also genau dann, wenn für alle $ x\in M $ gilt, dass 
\[ \mu(f(x)\otimes a)=f(\mu(x\otimes a) \]
ist. Dies entspricht genau der Bedingung, dass das erste Diagramm kommutiert. 

\end{proof}

Aus Satz \ref{Komodul} und Lemma \ref{morphismen} erhalten wir zusammen direkt:

\begin{Korollar}\label{Mod_Komod}
Sei $ A $ eine endliche $ R $-Algebra und als Modul projektiv. Dann gibt es eine kanonische Äquivalenz von Kategorien zwischen der Kategorie der endlich erzeugten $ A $-Moduln und der Kategorie der endlich erzeugten $ A^\vee $-Komoduln. 
\end{Korollar}

\section{Die Diagrammkategorie als Kategorie von Komoduln}

Wir kommen nun auf die Frage zurück, unter welchen Bedingungen die Diagrammkategorie zu einer Darstellung eine Kategorie von Komoduln über einer Koalgebra ist. In Korollar \ref{Koalgebra} haben wir gesehen, dass zumindest eine endliche $ R $-Algebra $ A $, die als Modul projektiv ist, auf kanonische Weise eine Koalgebra $ A^\vee $ induzieren. Außerdem gilt die Korrespondenz zwischen $ A $-Modul- und $ A^\vee $-Komodulstrukturen auf einem $ R $-Modul nach  Satz \ref{Komodul} ebenfalls nur, wenn $ A $ als Modul projektiv ist. Um einen $ \End(T) $-Modul aus der Diagrammkategorie einer Darstellung $ T $ als \\$ \End(T)^\vee $-Komodul auffassen zu können, müssen wir also eine Bedingung an den Ring $ R $ stellen, welche uns garantiert, dass  $ \End(T) $ projektiv ist. \\
Moduln über Hauptidealringen haben folgende schöne Eigenschaften:

\begin{Satz}\label{frei}
Jeder Untermodul eines freien Moduls über einem Hauptidealring ist wieder frei.
\end{Satz}

\begin{proof}
\cite[Thm7.1]{MR1878556}
\end{proof}

\begin{Satz}
Ein Modul über einem Hauptidealring ist genau dann frei, wenn er projektiv ist.
\end{Satz}

\begin{proof}
\cite[Ch. VII, § 3]{MR0026989}
\end{proof}

Bevor wir zeigen können, dass die Diagrammkategorie eine Kategorie von Komoduln ist, brauchen wir noch ein technisches Lemma. 

\begin{Lemma}\label{rtg}
\begin{enumerate}
\item Die Kategorie der Koalgebren über einem kommutativen Ring $ R $ ist kovollständig.
\item Seien $ A,B $ endliche Koalgebren über $ R $, die als Moduln projektiv sind. Ein Morphismus von Koalgebren $ A\xrightarrow{f}B $ induziert auf natürliche Weise einen Funktor $ A\Komod\xrightarrow{\mathcal{F}}B\Komod $.
\item Sei $ \{(A_i)_{i\in I},(f_{ij})_{i,j\in I})\} $ ein gerichtetes System von $ R $-Koalgebren. Dann gilt für das induzierte direkte System $ \{(A_i\Komod)_{i\in I},(\mathcal{F}_{ij})_{i,j\in I})\} $:
\[ (\varinjlim_I A_i)\Komod\cong\varinjlim_I (A_i\Komod)\]
\end{enumerate}
\end{Lemma}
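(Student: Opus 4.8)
Der Plan für (1) ist zu zeigen, dass der Vergissfunktor von den $R$-Koalgebren nach $R\Mod$ Kolimiten erzeugt; die Kovollständigkeit folgt dann aus der Kovollständigkeit von $R\Mod$. Sei $(C_i)$ ein Diagramm von Koalgebren und $C:=\varinjlim_i C_i$ der Kolimes der unterliegenden Moduln mit Strukturmorphismen $\kappa_i:C_i\to C$. Die Verkettungen $C_i\xrightarrow{\Delta_i}C_i\otimes_R C_i\xrightarrow{\kappa_i\otimes\kappa_i}C\otimes_R C$ bilden einen Kokegel über dem Diagramm --- dies ist gerade die Verträglichkeit der $\Delta_i$ mit den Übergangsmorphismen, die gilt, da diese Koalgebrenhomomorphismen sind ---, sodass die universelle Eigenschaft von $C$ einen eindeutigen Morphismus $\Delta:C\to C\otimes_R C$ liefert; die Koeinheit $\epsilon:C\to R$ erhält man genauso aus den $\epsilon_i$. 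Koassoziativität und Koeinheitsaxiom sind Gleichungen von Morphismen aus $C$, deren Vorschaltung mit jedem $\kappa_i$ übereinstimmt (weil sie in jedem $C_i$ gelten), und folgen daher aus der Eindeutigkeit in der universellen Eigenschaft. Also ist $C$ mit $(\Delta,\epsilon)$ der Kolimes in der Kategorie der $R$-Koalgebren.

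Für (2) definiere ich den Funktor $\mathcal{F}$ durch Vorschieben entlang $f$: einem $A$-Komodul $(M,\rho:M\to M\otimes_R A)$ ordne ich die Struktur $M\xrightarrow{\rho}M\otimes_R A\xrightarrow{id\otimes f}M\otimes_R B$ zu; dass dies ein $B$-Komodul ist, rechnet man direkt aus den Komodulaxiomen von $\rho$ und der Verträglichkeit von $f$ mit $\Delta$ und $\epsilon$ nach, und auf Morphismen lässt $\mathcal{F}$ die unterliegende $R$-lineare Abbildung unverändert. Unter den Voraussetzungen (endlich, projektiv) entspricht $\mathcal{F}$ via der Dualität aus Korollar \ref{Mod_Komod} gerade der Restriktion der Skalare entlang des dualen Algebrenhomomorphismus $f^\vee:B^\vee\to A^\vee$ (Lemma \ref{Lemma_Skalare}); insbesondere ist $\mathcal{F}$ treu, exakt und $R$-linear. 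Diese Identifikation mit der Restriktion der Skalare ist der Grund, die Projektivität vorauszusetzen, und stellt zugleich die Verbindung zum Aufbau von $\C(T)$ her.

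Teil (3) ist der Kern. Nach (1) ist $A:=\varinjlim_I A_i$ eine Koalgebra mit unterliegendem Modul $\varinjlim_I A_i$, und die Funktoren $A_i\Komod\to A\Komod$ aus (2) vertauschen mit den Übergangsfunktoren; da Letztere treu, exakt und $R$-linear sind, liefert die universelle Eigenschaft des Kategorienkolimes (Proposition \ref{Kategorienproposition}) einen Vergleichsfunktor $\Phi:\varinjlim_I(A_i\Komod)\to A\Komod$, von dem zu zeigen ist, dass er eine Äquivalenz ist. Für die wesentliche Surjektivität sei $(M,\rho)$ ein $A$-Komodul mit $M$ endlich erzeugt. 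Da $M\otimes_R\_$ Kolimiten erhält, gilt $M\otimes_R A=\varinjlim_i(M\otimes_R A_i)$; weil $M$ endlich erzeugt und $I$ filtrierend ist, faktorisieren die Bilder eines endlichen Erzeugendensystems unter $\rho$ bereits durch ein einziges $M\otimes_R A_i$, was einen Morphismus $\rho_i:M\to M\otimes_R A_i$ liefert. Die Komodulaxiome für $\rho_i$ sind Gleichungen in $M\otimes_R A_i\otimes_R A_i$, die nach Anwenden von $\kappa_i$ in $M\otimes_R A\otimes_R A=\varinjlim_j(M\otimes_R A_j\otimes_R A_j)$ gelten (die Diagonale ist wegen Filtriertheit kofinal); also gelten sie nach Übergang zu einem hinreichend großen $j\ge i$ schon auf endlicher Stufe, und $M$ wird zu einem $A_j$-Komodul, dessen Vorschieben nach $A$ der Ausgangskomodul ist. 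Treue und Volltreue von $\Phi$ zeigt man analog: Ein Komodulmorphismus über $A$ zwischen Objekten einer Stufe $i$ bzw.\ die Gleichheit zweier solcher Morphismen wird wegen der Treue der Übergangsfunktoren und der Filtriertheit bereits auf einer endlichen Stufe gesehen.

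Der Hauptaufwand liegt in der wesentlichen Surjektivität von $\Phi$, genauer im Herunterziehen der Komodulstruktur auf eine endliche Stufe; entscheidend ist dabei das Zusammenspiel der endlichen Erzeugtheit von $M$ mit dem filtrierten Kolimes $M\otimes_R A=\varinjlim_i M\otimes_R A_i$. Man beachte, dass sich dieser Schritt nicht durch Dualisieren umgehen lässt: Der Kolimes $\varinjlim_I A_i$ ist im Allgemeinen nicht endlich erzeugt, sodass Korollar \ref{Mod_Komod} auf ihn nicht anwendbar ist --- und genau dies ist der Grund, im unendlichen Fall mit Komoduln statt mit Moduln zu arbeiten.
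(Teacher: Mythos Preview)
Your argument is correct and follows essentially the same route as the paper. A few minor differences are worth noting: for (2) you first give the direct coextension construction $(M,\rho)\mapsto (M,(id\otimes f)\circ\rho)$ and then identify it with restriction of scalars along $f^\vee$ via Korollar~\ref{Mod_Komod}, whereas the paper only takes the duality route; and in (3) you are more careful than the paper about two points the paper leaves implicit, namely that after finding a candidate $\rho_i:M\to M\otimes_R A_i$ one may still have to enlarge $i$ to make the comodule axioms hold on the finite level, and that $\Phi$ is fully faithful (the paper only constructs the inverse on objects). One small caveat in your argument for essential surjectivity: the step ``the images of a finite generating system factor through a single $M\otimes_R A_i$, giving $\rho_i$'' tacitly uses that $M$ is finitely \emph{presented}, not just finitely generated, so that relations among the generators are also killed at some finite stage; in the setting of Satz~\ref{Dualitaet} this is automatic since $R$ is noetherian.
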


\begin{proof}
zu (1):\\
Die Kategorie der $ R $-Moduln ist kovollständig. Wir müssen prüfen, dass sich die komultiplikative Struktur auf das Limesobjekt überträgt. Sei $ \{(A_i)_{i\in I},(f_{ij})_{i,j\in I})\} $ ein gerichtetes System von $ R $-Koalgebren. Die Abbildung $ A_k\rightarrow \varinjlim A_i $ und die Komultiplikation $ \Delta $ definieren für alle $ k $ eine Abbildung
\[ A_k\overset{\Delta}{\longrightarrow}A_k\otimes_R A_k\longrightarrow \varinjlim A_i\otimes \varinjlim A_i. \]
Für alle $ f_{jk}:A_k\longrightarrow A_j $ kommutiert 

\begin{center}
\begin{tikzpicture}[description/.style={fill=white,inner sep=2pt}]
    \matrix (m) [matrix of math nodes, row sep=4.8em,
    column sep=3.5em, text height=1.5ex, text depth=0.25ex]
    { 
     A_k & & A_j \\
    A_k\otimes_R A_k & &  A_j\otimes_R A_j \\
      &  \varinjlim A_i\otimes \varinjlim A_i  & \\
	};
    \path[->,font=\scriptsize]
    (m-2-1) edge node[auto] {$ f_{jk}\otimes f_{jk} $} (m-2-3)
    (m-2-1) edge node[auto] {$   $} (m-3-2)
    (m-2-3) edge node[auto] {$ $} (m-3-2)
    (m-1-1) edge node[auto] {$ f_{jk} $} (m-1-3)
    (m-1-1) edge node[auto] {$ \Delta_{A_k} $} (m-2-1)
    (m-1-3) edge node[auto] {$ \Delta_{A_j} $} (m-2-3)
    ;   
\end{tikzpicture}
\end{center}
da das Tensorprodukt rechtsexakt ist, also mit allen direkten Limiten vertauscht. 
Dies induziert nach universeller Eigenschaft des direkten Limes eine eindeutige, mit allen Komultiplikationen verträgliche Abbildung
\[ \varinjlim A_i \longrightarrow \varinjlim A_i\otimes \varinjlim A_i, \]
welche dem Limesobjekt die Struktur einer Koalgebra gibt. Wir wissen, dass die universelle Eigenschaft auf Vektorraumebene erfüllt ist und man prüft leicht, dass die entsprechenden induzierten Abbildungen aus dem Limesobjekt tatsächlich Morphismen von Koalgebren sind. \\
zu (2):\\ Der Koalgebrenhomomorphismus $ f:A\rightarrow B $ induziert einen Algebrenhomomorphismus \\
\[ f^*:B^\vee\rightarrow A^\vee. \] \\
Die Abbildung $ f^* $ induziert via Restriktion der Skalare einen Funktor
\[ A^\vee\Mod\rightarrow B^\vee \Mod. \] 
Da $ A $ und $ B $ reflexiv sind, entspricht dies nach Korollar \ref{Mod_Komod} auf natürliche Weise einem Funktor\\
\[ A\Komod\rightarrow B\Komod. \]
zu (3): \\ 
Nach (2) erhalten wir für $ \{(A_i)_{i\in I},(f_{ij})_{i,j\in I})\} $ ein gerichtetes System \\ $ \{((A_i)\Komod)_{i\in I},(\mathcal{F}_{ij})_{i,j\in I})\} $. Der direkte Limes existiert, da der entsprechende direkte Limes von Moduln über den dualen Algebren existiert. Da die Kategorie der $ R $-Komoduln kovollständig ist, haben wir eine Algebra $ \varinjlim_I (A_i) $ zusammen mit Koalgebrenmorphismen $ \varphi_j:A_j\rightarrow \varinjlim_I (A_i) $ für alle $ j\in I $. Dies induziert ein kommutatives Diagramm

\begin{center}
\begin{tikzpicture}[description/.style={fill=white,inner sep=2pt}]
    \matrix (m) [matrix of math nodes, row sep=3.5em,
    column sep=3.5em, text height=1.5ex, text depth=0.25ex]
    { \cdots A_j\Komod & & A_k\Komod \cdots \\
    	      & (\varinjlim_I A_i)\Komod &   \\
	};
    \path[->,font=\scriptsize]
    (m-1-1) edge node[description] {$ \mathcal{F}_{kj} $} (m-1-3)
    (m-1-1) edge node[description] {$ \phi_{j} $} (m-2-2)
    (m-1-3) edge node[description] {$ \phi_{k} $} (m-2-2)
    ;   
\end{tikzpicture}
\end{center}
 
und nach universeller Eigenschaft des direkten Limes gibt es einen eindeutigen Funktor
\[ u: \varinjlim_I (A_i\Komod)\rightarrow \ (\varinjlim_I A_i)\Komod,  \]
so dass folgendes Diagramm Kommutiert.

\begin{center}
\begin{tikzpicture}[description/.style={fill=white,inner sep=2pt}]
    \matrix (m) [matrix of math nodes, row sep=3.7em,
    column sep=3.5em, text height=1.5ex, text depth=0.25ex]
    { \cdots A_j\Komod & & A_k\Komod \cdots \\
    	      & \varinjlim_I (A_i\Komod) &   \\
	      & (\varinjlim_I A_i)\Komod &   \\
	};
    \path[->,font=\scriptsize]
    (m-1-1) edge node[description] {$ \mathcal{F}_{kj} $} (m-1-3)
    (m-1-1) edge node[description] {$ \psi_{j} $} (m-2-2)
    (m-1-1) edge node[description] {$ \phi_{j} $} (m-3-2)
    (m-1-3) edge node[description] {$ \psi_{k} $} (m-2-2)
    (m-1-3) edge node[description] {$ \phi_{k} $} (m-3-2)
    (m-2-2) edge node[description] {$\exists ! u $} (m-3-2)
    ;   
\end{tikzpicture}
\end{center}
 
Wir konstruieren die Umkehrabbildung zu $ u $:\\
Bezeichne ab sofort $A=\varinjlim_I A_i$ den direkten Limes der Koalgebren. Ein endlich erzeugter $ A $-Komodul $ M $ ist definiert durch eine Abbildung 
\[ m:\rightarrow M\otimes_R A. \] 
Sei $ (x_1,..,x_n) $ ein $ R $-Erzeugendensystem von $ M $. Dann ist $ m(x_i)=\sum_{k=1}^n a_{ki} \otimes x_{k} $ für geeignete Elemente $ a_{ki}\in A  $. Jedes Element $ a_{ki} $ liegt schon in einer $ R $-Koalgebra $ A_i $ des Systems und da das System von Koalgebren gerichtet ist, finden wir eine Koalgebra $ A_l $ so, dass für alle $ k,i $ die endlich vielen Elemente $ a_{ki} $ in $ A_l $ liegen. Da $ (x_1,..,x_n) $ den Komodul $ M $ erzeugen, induziert $ m $ eine Komultiplikation
\[\tilde{m}: M \otimes_R A_l. \]
$ M $ ist also auf natürliche Weise ein $ A_l $-Komodul, daher via $ \psi_l $ auch ein Objekt aus $ \varinjlim_I (A_i\Komod) $. 

\end{proof}

\begin{Satz}\label{Dualitaet}
Sei $ R $ ein Hauptidealring. Bezeichne $ R\Frei $ die Kategorie der endlich erzeugten, freien $ R $-Moduln. Sei $ D $ ein Diagramm und $ T:D\longrightarrow R\Frei $ eine Darstellung. Sei $ A(T):=\varinjlim_{F\subset D}\End(T_F)^\vee $ wobei $ F\subset D $ endliche Teilmengen seien. Dann ist $ A(T) $ auf natürliche Weise eine $ R $-Koalgebra und die Diagrammkategorie $ \C(T) $ ist natürlich äquivalent zur Kategorie der $ A(T) $-Komoduln. 
\end{Satz}

\begin{proof}
Sei zunächst wieder $ D $ endlich. Die Diagrammkategorie $ \C(T) $ ist die Kategorie der endlich erzeugten $ \End(T) $-Linksmoduln. Da $ T $ eine Darstellung in die freien Moduln ist, ist $ \End(T)\subset \prod_{p \in Ob(D)}\End(T_p) $ Untermodul eines endlich erzeugten freien Moduls, also nach Satz \ref{frei} selbst frei. Da Hauptidealringe noethersch sind, ist $ \End(T) $ außerdem als Modul endlich erzeugt. Freie Moduln sind projektiv, also ist $ \End(T)^\vee $ nach Korollar \ref{Koalgebra} eine Koalgebra und nach Satz \ref{Komodul} haben wir eine natürliche Äquivalenz zwischen $ \End(T) $-Linksmoduln und $ \End(T)^\vee $ Rechts-Komoduln.\\
Für beliebige Diagramme $ D $ gilt
\[
  \C(T):  =  \varinjlim (\End(T_F)\Mod) =\varinjlim(\End(T_F)^\vee\Komod)  \overset{\ref{rtg}}{=} (\varinjlim\End(T_F)^\vee)\Komod. 
\]
$ \C(T) $ ist also die Kategorie der endlich erzeugten $ \varinjlim_{F\subset D\text{ endlich}}\End(T_F)^\vee $-Komoduln.
\end{proof}

\chapter{Ausblick: Tannaka-Dualität}
\label{ch:kap_6}

Erinnern wir uns an die Konstruktion der Diagrammkategorie:
Gegeben ein Diagramm  $ D $ und eine Darstellung 
\[ D\overset{T}{\longrightarrow}R\Mod \]
in die endlich erzeugten $ R $-Moduln, so existiert eine universelle abelsche Kategorie $ \C(T) $, so dass $ T $ über 
\[ D\overset{\tilde{T}}{\longrightarrow}\C(T)\overset{ff_T}{\longrightarrow}R\Mod \]
faktorisiert. Der schwierige Teil des Beweises war Noris Satz, dass im Fall $ D=\mathcal{A} $ einer $ R $-linearen abelschen Kategorie und $ T $ einem treuen exakten Funktor die Abbildung 
\[ \tilde{T}:\mathcal{A}\longrightarrow\C(T) \]
schon eine Äquivalenz von Kategorien ist (Kapitel 3). \\
Für den Fall, dass $ R $ ein Körper ist, ist diese Aussage nicht neu. Sie findet sich schon bei Saavedra \cite{MR0338002} und in ähnlicher Form bei Deligne \cite{MR654325} und entspringt einem komplexeren Setting,  nämlich der Dualität zwischen neutralen Tannaka-Kategorien mit Faserfunktoren und den endlich-dimensionalen Darstellungen affiner Gruppenschemata. Diese Dualität nennt man Tannaka-Dualität.
Ziel dieses Kapitels ist es, diese Dualität für Körper präzise zu formulieren. In Kapitel 7 werden wir dann den Zusammenhang zur Kategorie der Nori-Motive klären.\\
Zentral für die Dualität ist die Tatsache, dass die Diagrammkategorie einer Darstellung in Vektorräume eine Kategorie von Komoduln über einer Koalgebra ist. Wir haben in Kapitel 5 gesehen, dass dies auch für Darstellungen gilt, welche Werte in freien endlich erzeugten Moduln über Hauptidealringen annimmt. Tannaka-Dualität sollte sich daher auf diese Fälle verallgemeinern lassen. Ein noch allgemeineres Resultat findet man bei Wedhorn \cite{MR2101076}, der Tannaka-Dualität für den Fall zeigt, dass $ R $ ein Dedekindring ist.

\section{Starre Tensorkategorien}
Wir beginnen nun mit einigen Definitionen über Tensorkategorien. Eine ausführlichere Beschreibung findet man in \cite{MR654325} .

\begin{Def}
Eine Tensorkategorie ist eine Kategorie $ \C $ zusammen mit einem Funktor
\[ 
\begin{array}{cccc}
\otimes: & \C \times \C & \longrightarrow & \C  \\
& (X,Y) & \mapsto & X\otimes Y \\
\end{array}
\]
mit folgenden Daten:
\begin{itemize}
\item \emph{Kommutativität}: Für alle Objekte $ X,Y \in \C $ einem Isomorphismus \\$ \Psi_{X,Y}:X\otimes Y\overset{\sim}{\longrightarrow} Y \otimes X $.
\item \emph{Assoziativität}: Für alle Objekte $ X,Y,Z \in \C $ einem Isomorphismus \\$ \Phi_{X,Y,Z}:(X\otimes Y)\otimes Z \overset{\sim}{\longrightarrow} X \otimes ( Y \otimes Z) $.
\item Einem Objekt $ U\in\C $, genannt \emph{Identitätsobjekt}, zusammen mit einem Isomorphismus 
\[ u:U\longrightarrow U\otimes U, \]
so dass die Abbildung $ X\mapsto U\otimes X $ für alle $ X\in \C  $ eine Äquivalenz von Kategorien ist. Das Identitätsobjekt wird auch oft mit $ \underline{1} $ bezeichnet.
\end{itemize}
\end{Def}

\begin{Def}\label{hoho}
Sei $ (\C,\otimes) $ eine Tensorkategorie. Ist der kontravariante Funktor
\[
\begin{array}{ccc}
\C & \longrightarrow & Set \\
T & \mapsto & \Hom_{\C}(T\otimes X,Y) \\
\end{array}
\]
darstellbar, so nennen wir sein darstellendes Objekt $ \Hom(X,Y) $. Wir haben dann also eine Bijektion 
\[ \Hom_{\C}(T,\Hom(X,Y))\overset{1:1}{\longleftrightarrow}\Hom_{\C}(T\otimes X,Y). \]
Den zu $ id_{\Hom(X,Y)} $ korrespondierenden Morphismus nennen wir
\[ \ev_{X,Y}:\Hom(X,Y)\otimes X\longrightarrow Y. \]
\end{Def}

\begin{Def}\label{hf}
Sei $ (\C,\otimes) $ eine Tensorkategorie. Dann definieren wir das zu $ X $ \emph{duale Objekt} durch $ X^\vee:=\Hom(X,\underline{1}) $, wobei $ \underline{1} $ das Idententitätsobjekt ist.\\
Die Bijektion
\[ \Hom_{\C}(X\otimes X^\vee,\underline{1})\overset{1:1}{\longleftrightarrow}\Hom_{\C}(X,{X^\vee}^\vee) \]
liefert eine zur Evaluationsabbildung
\[ \ev_{X}:X\otimes X^\vee\longrightarrow\underline{1} \]
korrespondierende Abbildung
\[ i_{X}:X\longrightarrow{X^\vee}^\vee. \]
Ist diese Abbildung ein Isomorphismus, so sagen wir das Objekt $ X $ ist \emph{reflexiv}.
\end{Def}

\begin{Bem}\label{vb}
Sei $ I $ eine endliche Indexmenge. Dann korrespondiert der natürliche Morphismus
\[ \big (\otimes_{i\in I}\Hom(X_i,Y_i)\big )\otimes \big (\otimes_{i\in I}X_i\big )\overset{\sim}{\longrightarrow}\otimes_{i\in I}\big (\Hom(X_i,Y_i)\otimes X_i\big )\overset{ev}{\longrightarrow}\otimes_{i\in I}Y_i \]
nach Definition \ref{hf} zu einem Morphismus
\[ \otimes_{i\in I}\Hom(X_i,Y_i)\longrightarrow\Hom(\otimes_{i\in I} Y_i). \]
\end{Bem}

\begin{Def}
Eine Tensorkategorie $ (\C,\otimes) $ heißt \emph{starr}, wenn
\begin{itemize}
\item $ \Hom(X,Y) $ für alle Objekte $ X,Y\in\C $ existiert. 
\item Alle Objekte reflexiv sind. 
\item Die natürliche Abbildung 
\[ \Hom(X_1,Y_1)\otimes\Hom(X_2,Y_2)\longrightarrow\Hom(X_1\otimes X_2,Y_1\otimes Y_2) \]
aus Bemerkung \ref{vb} für alle $ X_1 , X_2 , Y_1 , Y_2\in \C $ ein Isomorphismus ist.
\end{itemize}
\end{Def}

\begin{Def}
Unter einer \emph{abelschen Tensorkategorie} verstehen wir eine Tensorkategorie, die abelsch ist, so dass der Funktor $ \otimes $ biadditiv ist. 
\end{Def}

\begin{Bsp}
Die Kategorie der endlich-dimensionalen $ k $-Vektorräume mit gewöhnlichem Tensorprodukt ist eine starre Tensorkategorie. Die Kategorie der endlich erzeugten $ R $-Moduln über einem Ring $ R $ jedoch im Allgemeinen nicht, da zum Beispiel für $ R=\mathbb{Z} $ Moduln mit Torsion nicht reflexiv sind.
\end{Bsp}

\begin{Bem}
In einer starren Tensorkategorie, die abelsch ist, ist das Tensorprodukt immer biadditiv und exakt. Es vertauscht ganz allgemein mit Limiten und Kolimiten (siehe \cite[S. 118]{MR654325}).
\end{Bem}

\begin{Def}
Seien $ (\C,\otimes) $ und $ (\C',\otimes') $ Tensorkategorien. Ein \emph{Tensorfunktor} zwischen $ \C $ und $ \C' $ besteht aus einem Funktor \[ \mathcal{F}:\C\longrightarrow\C', \]
sowie einem funktoriellen Isomorphismus 
\[c_{X,Y}:\mathcal{F}(X)\otimes\mathcal{F}(Y)\longrightarrow\mathcal{F}(X\otimes Y), \] 
sodass $ c $ verträglich ist mit Assoziativität, Kommutativität und dem Identitätsobjekt, sowie den Identitätsmorphismus erhält. 
\end{Def}

\begin{Def}
Seien $ (\mathcal{F},c) $ und $ (\mathcal{G},d) $ Tensorfunktoren zwischen Tensorkategorien $ \C\rightarrow\C ' $. Eine \emph{natürliche Transformation von Tensorfunktoren} zwischen $ \mathcal{F} $ und $ \mathcal{G} $ ist eine natürliche Transformation von Funktoren $ \tau:\mathcal{F}\longrightarrow\mathcal{G} $, sodass das Diagramm

\begin{center}
\begin{tikzpicture}[description/.style={fill=white,inner sep=2pt}]
    \matrix (m) [matrix of math nodes, row sep=3.0em,
    column sep=5.0em, text height=1.5ex, text depth=0.25ex]
    { \otimes_{i\in I}\mathcal{F}(X_i)  & \mathcal{F}(\otimes_{i\in I} X_i)  \\
	  \otimes_{i\in I}\mathcal{G}(X_i)  & \mathcal{G}(\otimes_{i\in I} X_i) \\
	};
    \path[->,font=\scriptsize]
    (m-1-1) edge node[auto] {$ c $} (m-1-2)
    (m-2-1) edge node[auto] {$ d $} (m-2-2)
    (m-1-1) edge node[auto] {$ \otimes\tau_{X_i} $} (m-2-1)
    (m-1-2) edge node[auto] {$ \tau_{\otimes_{X_i}} $} (m-2-2)
    ;   
\end{tikzpicture}
\end{center}

für endliche Familien $ (X_i)_{i\in I} $ von Objekten aus $ \C $ kommutiert.\\
Wir bezeichnen mit $ \underline{\Hom}(F,G) $ die Menge aller Transformationen zwischen $ \mathcal{F} $ und $ \mathcal{G} $ und mit $ \underline{\Hom}^{\otimes}(F,G) $ die Menge aller Transformationen von Tensorfunktoren zwischen $ \mathcal{F} $ und $ \mathcal{G} $.
\end{Def}

\section{Affine Gruppenschemata und Tannaka-Dualität}

Seien in diesem Abschnitt $ k $ ein Körper und alle Algebren über $ k $ kommutativ.

\begin{Def}
Ein affines Gruppenschema über $ k $ ist ein affines Schema $ G=\Spec A $ über $ k $, zusammen mit einer Gruppenstruktur, genauer 
\begin{itemize}
\item Einem Schemamorphismus
\[ G\times_{\Spec k} G \overset{mult}{\longrightarrow} G \]
\item Einem neutralen Element
\[ \Spec k\overset{e}{\longrightarrow} G \]
\item Einem Schemamorphismus "`Inversenabbildung"' 
\[ G\overset{i}{\longrightarrow}G, \]
\end{itemize}
so dass die Abbildungen die gewöhnlichen Gruppenaxiome (Assoziativität, Inverse, neutrales Element) erfüllen. Seien $ G_1 $ und $ G_2 $ affine Gruppenschemata. Ein Morphismus von Gruppenschemata 
\[ f:G_1\longrightarrow G_2 \]
ist ein Schemamorphismus, der die Gruppenmultiplikation respektiert, für den also gilt
\[f\circ mult_{G_1} = mult_{G_2}\circ (f\times f). \]
\end{Def}

\begin{Bem}
Ein affines Gruppenschema über $ k $ ist genau ein Gruppenobjekt in der Kategorie der affinen Schemata über $ k $.
\end{Bem}

Wir erinnern uns an ein fundamentales Resultat aus der algebraischen Geometrie:

\begin{Satz}\label{Spec1}
Der kontravariante Funktor 
\[
\begin{array}{cccc}
\Spec: & \mathfrak{Alg}_k & \longrightarrow & \mathfrak{aff Sch}_k \\
		&  	A	& \longmapsto & \Spec (A)
\end{array}
\]
definiert eine Äquivalenz von Kategorien zwischen der Kategorie der $ k $-Algebren und der Kategorie der affinen Schemata über $ k $. Der inverse Funktor ist gegeben durch den globalen Schnittfunktor. 
\end{Satz}

\begin{proof}
\cite[S.4 ff]{MR563524}
\end{proof}

\begin{Bsp}
Die Gruppe $ Gl(V) $ für einen endlich-dimensionalen $ k $-Vektorraum $ V $ ist ein affines Gruppenschema. Die zugehörige $ k $-Algebra $ A $ ist gegeben durch 
\[ A:=\frac{k[t,x_{11}\dots x_{ij}\dots x_{nn}]}{\big (t\cdot det\big( (x_{ij})_{ij}\big )= 1\big )}\]
\end{Bsp}

\begin{Bem+Def}
Sei $ G=\Spec A $ ein affines Gruppenschema über $ k $. Dann induzieren die Gruppenabbildungen $ (mult,e,i) $ über den Funktor $ Spec $ Abbildungen zwischen $ k $-Algebren
\[ m:A\longrightarrow A\otimes_k A \]
\[\varepsilon:A\longrightarrow k\]
\[ S: A\longrightarrow A, \]
so dass $ \Delta $ und $ \varepsilon $ die Axiome der Koassoziativität und der Koeinheit einer Koalgebra erfüllen (vgl. Kapitel 5). Die Algebra $ A $ trägt also zusätzlich zu seiner Algebrenstruktur eine natürliche Struktur als Koalgebra und wird dadurch zur Bialgebra über $ k $. \\
Die Inversenabbildung des Gruppenschemas $ G $ erfüllt die Bedingung, dass folgendes Diagramm kommutiert:

\begin{center}
\begin{tikzpicture}[description/.style={fill=white,inner sep=2pt}]
    \matrix (m) [matrix of math nodes, row sep=5.5em,
    column sep=1.1em, text height=1.5ex, text depth=0.25ex]
    { & G\times_{\Spec k} G & 		 & G\times_{\Spec k} G & \\
    G & 					& \Spec k &					& G\\
      & G\times_{\Spec k} G & 		 & G\times_{\Spec k} G & \\
	};
    \path[->,font=\scriptsize]
    (m-2-1) edge node[auto] {$ diag $} (m-1-2)
    (m-1-2) edge node[auto] {$ (i,id) $} (m-1-4)
    (m-1-4) edge node[auto] {$ mult $} (m-2-5)
    (m-2-1) edge node[auto] {$  $} (m-2-3)
    (m-2-3) edge node[auto] {$ e $} (m-2-5)
    (m-2-1) edge node[auto] {$ diag $} (m-3-2)
    (m-3-2) edge node[auto] {$ (id,i) $} (m-3-4)
    (m-3-4) edge node[auto] {$ mult $} (m-2-5)
    ;   
\end{tikzpicture}
\end{center}

Die Abbildung $ S $, genannt Antipodenabbildung, macht also folgendes Diagramm kommutativ:

\begin{center}
\begin{tikzpicture}[description/.style={fill=white,inner sep=2pt}]
    \matrix (m) [matrix of math nodes, row sep=5.5em,
    column sep=2.0em, text height=1.5ex, text depth=0.25ex]
    { & A\otimes_k A & 		 & A\otimes_{k} A & \\
    A & 					&    k &			& A\\
      & A\otimes_{k} A & 		 & A\otimes_{k} A, & \\
	};
    \path[->,font=\scriptsize]
    (m-1-2) edge node[auto] {$ \mu $} (m-2-1)
    (m-1-4) edge node[auto] {$ (S\otimes id) $} (m-1-2)
    (m-2-5) edge node[auto] {$ m $} (m-1-4)
    (m-2-3) edge node[auto] {$ \eta $} (m-2-1)
    (m-2-5) edge node[auto] {$ \varepsilon $} (m-2-3)
    (m-3-2) edge node[auto] {$ \mu $} (m-2-1)
    (m-3-4) edge node[auto] {$ (id \otimes S) $} (m-3-2)
    (m-2-5) edge node[auto] {$ m $} (m-3-4)
    ;   
\end{tikzpicture}
\end{center}

wobei $ \mu $ die Algebrenmultiplikation und $ \eta $ die Einheit auf $ A $ beschreibt. Eine Bialgebra mit einer derartigen Antipodenabbildung  nennt man eine \emph{kommutative Hopf-Algebra}.
\end{Bem+Def}

Aus dieser Konstruktion, sowie Satz \ref{Spec1} folgt sofort:

\begin{Satz}\label{Schema}
Der Funktor
\[ A\longrightarrow \Spec A \] 
definiert eine Äquivalenz von Kategorien zwischen der Kategorie der kommutativen $ k $-Hopf-Algebren und der Kategorie der affinen Gruppenschemata über $ k $.
\end{Satz}

\begin{Def}
Sei $ G=\Spec A $ ein affines Gruppenschema über $ K $. Eine \emph{endlich-dimensionale $ G $-Darstellung auf $ V $}  ist ein endlich-dimensionaler $ k $-Vektorraum $ V $, zusammen mit einem Morphismus von Gruppenschemata
\[ \rho:G\longrightarrow Gl(V). \]
Ein \emph{Morphismus von $ G $-Darstellungen} zwischen $ (\rho_1,V) $ und $ (\rho_2,W) $ ist ein Morphismus von Vektorräumen 
\[f:V\longrightarrow W, \]
so dass für alle $ g\in G $ gilt:
\[f\circ\rho_1(g)=\rho_2(g)\circ f. \]
\end{Def}

\begin{Proposition}\label{Komod}
Sei $ G=\Spec A $ ein affines Gruppenschema über $ k $ und $ V $ ein endlich-dimensionaler $ k $-Vektorraum. Dann gibt es eine kanonische 1:1-Beziehung zwischen $ A $-Komodulstrukturen auf $ V $ und linearen Darstellungen von $ G $ auf $ V $.
\end{Proposition}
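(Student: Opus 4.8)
The plan is to establish the correspondence via the functor of points together with the Yoneda lemma, so that the two group-theoretic axioms of a representation translate into the two defining diagrams of a comodule. First I would describe $G=\Spec A$ as the representable functor $R\mapsto G(R)=\Hom_{k\text{-Alg}}(A,R)$ on commutative $k$-algebras, whose group law sends $\phi,\psi\in G(R)$ to $\mathrm{mult}_R\circ(\phi\otimes\psi)\circ m$, whose unit is $A\xrightarrow{\varepsilon}k\to R$ and whose inversion is induced by the antipode $S$; here $(m,\varepsilon,S)$ is the Hopf structure on $A$ from the remark and definition preceding Satz \ref{Schema}. Dually I would describe $GL(V)$ and the endomorphism monoid $\underline{\End}(V)$ as the functors $R\mapsto\mathrm{Aut}_R(V\otimes_k R)$ and $R\mapsto\End_R(V\otimes_k R)$, using the identification $\End_R(V\otimes_k R)\cong\End_k(V)\otimes_k R$, which is valid since $V$ is finite-dimensional. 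A $G$-representation on $V$ is then precisely a natural transformation $\rho\colon G\to GL(V)$ of group-valued functors.

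The key step is Yoneda. Since $\underline{\End}(V)(R)\cong\End_k(V)\otimes_k R\cong\Hom_k(V,V\otimes_k R)$, natural transformations of set-valued functors from $G=\Hom_{k\text{-Alg}}(A,-)$ to $\underline{\End}(V)$ are in canonical bijection with elements of $\End_k(V)\otimes_k A\cong\Hom_k(V,V\otimes_k A)$, the bijection sending $\rho$ to its universal value $\mu:=\rho_A(\mathrm{id}_A)$, regarded as a $k$-linear map $\mu\colon V\to V\otimes_k A$. It then remains to match the monoid-homomorphism conditions on $\rho$ with the comodule axioms on $\mu$. Applying naturality along $\varepsilon\colon A\to k$ to $\mathrm{id}_A\in G(A)$ turns $\rho(e)=\mathrm{id}$ into the counit identity $(\mathrm{id}_V\otimes\varepsilon)\circ\mu=\mathrm{id}_V$. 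Evaluating multiplicativity on the two universal points $\iota_1,\iota_2\in G(A\otimes_k A)$, given by $a\mapsto a\otimes 1$ and $a\mapsto 1\otimes a$ and having product $m$, turns $\rho(gh)=\rho(g)\rho(h)$ into coassociativity $(\mu\otimes\mathrm{id}_A)\circ\mu=(\mathrm{id}_V\otimes m)\circ\mu$. These are exactly the diagrams defining an $A$-comodule structure on $V$.

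Conversely, any $\mu$ satisfying the comodule axioms defines, via Yoneda, a natural transformation $G\to\underline{\End}(V)$ that is a monoid homomorphism; because $G$ is a group, its image consists of invertible elements, so it factors through $GL(V)$ and yields a representation. Since the two assignments $\rho\mapsto\mu$ and $\mu\mapsto\rho$ are mutually inverse and natural in $V$, this gives the asserted canonical $1{:}1$ correspondence. I expect the main obstacle to be the bookkeeping in the dictionary between $A$-linear endomorphisms of $V\otimes_k R$ and $k$-linear coactions $V\to V\otimes_k R$: one must check that composition in $\End_R(V\otimes_k R)$ corresponds, under $\End_R(V\otimes_k R)\cong\Hom_k(V,V\otimes_k R)$, to the iterated coaction, and that the comultiplication $m$ really is the generic product $\iota_1\cdot\iota_2$, so that multiplicativity translates to coassociativity without an order or side error.
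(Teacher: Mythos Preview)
Your argument is correct and is the standard proof via the functor of points and Yoneda. The paper itself does not give a proof at all: it simply cites \cite[2.2]{MR654325} (Deligne--Milne). What you have written is essentially the argument one finds there (or in Waterhouse), so there is nothing to compare on the level of strategy; you have supplied what the paper outsources.

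One small remark on your bookkeeping worry: the point that needs care is indeed that under the identification $\End_R(V\otimes_k R)\cong \Hom_k(V,V\otimes_k R)$, composition of $R$-linear endomorphisms corresponds to $(f,g)\mapsto (g\otimes \mathrm{id}_R)\circ f$ followed by multiplication in $R$, and that evaluating at the two universal points $\iota_1,\iota_2\in G(A\otimes_k A)$ with $\iota_1\cdot\iota_2 = m$ gives exactly $(\mu\otimes\mathrm{id}_A)\circ\mu=(\mathrm{id}_V\otimes m)\circ\mu$. You have identified this correctly; just be explicit about which side of the tensor product carries which copy of $A$ when you write it out in full, since this is where sign/order mistakes typically creep in.
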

\begin{proof}
{\cite[2.2]{MR654325}}
\end{proof}

\begin{Def}
Eine \emph{neutrale Tannaka-Kategorie} über $ k $ ist eine starre, abelsche, $ k $-lineare Tensorkategorie $ \C $ zusammen mit einem exakten, treuen, $ k $-linearen Tensorfunktor 
\[ T:\C\longrightarrow k\Vect \]
in die Kategorie der endlich-dimensionalen $ k $-Vektorräume. Einen solchen Funktor nennt man \emph{Faserfunktor} von $ \C $.
\end{Def}

\begin{Theorem}[Tannaka-Dualität]\label{Tannaka}
Sei $ \mathcal{C} $ eine neutrale Tannaka-Kategorie mit Faserfunktor $ T $. Dann gibt es ein affines Gruppenschema $ G $, so dass $ T $ eine Äquivalenz von Tensor-Kategorien
\[ \C\longrightarrow \Rep_k(G) \]
induziert. 
\end{Theorem}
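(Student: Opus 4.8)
Der Plan ist, die in den Kapiteln~\ref{ch:kap_3}--\ref{ch:kap_5} entwickelte Maschinerie auf den Fall $ R=k $ eines Körpers anzuwenden und die dabei auftretende Koalgebra anschließend mit Hilfe der Tensorstruktur von $ \C $ zu einer kommutativen Hopf-Algebra anzureichern. Zunächst beobachte ich, dass ein Körper $ k $ insbesondere ein Hauptidealring ist und der Faserfunktor $ T:\C\rightarrow k\Vect $ Werte in endlich-dimensionalen Vektorräumen, also in endlich erzeugten freien $ k $-Moduln annimmt. Da $ \C $ eine $ k $-lineare abelsche Kategorie und $ T $ ein treuer, exakter, $ k $-linearer Funktor ist, liefert Satz~\ref{Aequivalenz} eine Äquivalenz $ \tilde{T}:\C\overset{\sim}{\longrightarrow}\C(T) $ von $ \C $ mit ihrer Diagrammkategorie. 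Nach Satz~\ref{Dualitaet} ist $ \C(T) $ wiederum äquivalent zur Kategorie der Komoduln über der Koalgebra $ A(T):=\varinjlim_{F\subset\C}\End(T_F)^\vee $. Insgesamt erhalte ich damit zunächst eine Äquivalenz abelscher Kategorien
\[ \C\overset{\sim}{\longrightarrow}A(T)\Komod. \]

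Der entscheidende Schritt ist, $ A(T) $ mit einer Struktur als kommutative Hopf-Algebra zu versehen, welche die Tensorstruktur von $ \C $ widerspiegelt. Die Koalgebrenstruktur auf $ A(T) $ ist nach Satz~\ref{Dualitaet} bereits vorhanden. Eine assoziative Multiplikation $ \mu:A(T)\otimes_k A(T)\rightarrow A(T) $ gewinne ich aus dem funktoriellen Isomorphismus $ c_{X,Y}:T(X)\otimes_k T(Y)\overset{\sim}{\rightarrow}T(X\otimes Y) $ des Tensorfunktors: Dualisiert man diesen und geht zum Kolimes über die endlichen Unterdiagramme über, so liefert dies eine wohldefinierte Verknüpfung auf $ A(T) $, unter der das Produkt der zu $ X $ und $ Y $ gehörigen Funktionale dem zu $ X\otimes Y $ gehörigen entspricht. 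Das Identitätsobjekt $ \underline{1} $ liefert vermöge $ T(\underline{1})\cong k $ die Einheit $ \eta:k\rightarrow A(T) $. Die Kommutativität von $ A(T) $ folgt aus der Verträglichkeit von $ c $ mit dem Kommutativitätsisomorphismus $ \Psi_{X,Y} $, und die Bialgebra-Axiome (Verträglichkeit von $ \mu $ mit $ \Delta $ und $ \epsilon $) ergeben sich daraus, dass $ T $ ein Tensorfunktor ist. Die Antipodenabbildung $ S:A(T)\rightarrow A(T) $ konstruiere ich schließlich aus der Starrheit von $ \C $: Die Existenz dualer Objekte $ X^\vee=\Hom(X,\underline{1}) $ zusammen mit der Reflexivität und den Evaluationsmorphismen $ \ev_X $ liefert die gesuchte Antipode, so dass die Diagramme aus der Definition einer kommutativen Hopf-Algebra kommutieren.

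Ist $ A(T) $ als kommutative Hopf-Algebra erkannt, so setze ich $ G:=\Spec A(T) $. Nach Satz~\ref{Schema} ist dies ein affines Gruppenschema über $ k $, und nach Proposition~\ref{Komod} ist die Kategorie $ A(T)\Komod $ kanonisch äquivalent zur Kategorie $ \Rep_k(G) $ der endlich-dimensionalen Darstellungen von $ G $. Durch Verketten aller Äquivalenzen erhalte ich
\[ \C\overset{\tilde{T}}{\longrightarrow}\C(T)\overset{\sim}{\longrightarrow}A(T)\Komod\overset{\sim}{\longrightarrow}\Rep_k(G), \]
wobei nach Konstruktion der Vergissfunktor $ \Rep_k(G)\rightarrow k\Vect $ gerade den Faserfunktor $ T $ zurückgibt. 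Es bleibt zu verifizieren, dass jede Äquivalenz mit den Tensorstrukturen verträglich ist: Die Tensorstruktur auf $ \C(T) $ bzw.\ auf $ A(T)\Komod $ wird so gewählt, dass sie unter $ \tilde{T} $ der von $ \C $ entspricht, und $ \mu $ ist gerade so definiert, dass das Tensorprodukt von Komoduln dem von Darstellungen entspricht. Damit ist die Verkettung ein Tensorfunktor und, als Äquivalenz, eine Äquivalenz von Tensorkategorien, welche von $ T $ induziert wird.

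Die Hauptschwierigkeit erwarte ich im zweiten Schritt, der Konstruktion der Hopf-Algebrenstruktur. Sowohl die Wohldefiniertheit und Assoziativität der Multiplikation auf dem Kolimes als auch -- vor allem -- die Gewinnung der Antipode aus der Starrheit und die vollständige Verifikation sämtlicher Bialgebra- und Antipoden-Axiome sind technisch aufwändig und bilden den eigentlichen Kern der Tannaka-Dualität; die restlichen Schritte sind im Wesentlichen ein Zusammensetzen bereits bewiesener Resultate.
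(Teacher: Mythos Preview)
Your proposal is correct and follows essentially the same approach as the paper's proof: both reduce to the Nori diagram category via Satz~\ref{Aequivalenz} and Satz~\ref{Dualitaet} to obtain $\C\simeq A(T)\Komod$, then enrich $A(T)$ to a commutative Hopf-Algebra using the tensor structure and rigidity (deferring the technical details to Deligne), and finally pass to $G=\Spec A(T)$ via Satz~\ref{Schema} and Proposition~\ref{Komod}. The only cosmetic difference is that the paper phrases the construction of the multiplication via Deligne's bijection $u\mapsto\phi^u$ between maps $A\otimes_k A\to A$ and tensor-functor structures on $A\Komod$, whereas you describe it as dualizing $c_{X,Y}$ and passing to the colimit; these are two descriptions of the same construction.
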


\begin{proof}
Der Beweis stammt von Deligne \cite[S.130ff]{MR654325}. Wir werden hier nur einige Schritte skizzieren und insbesondere den Zusammenhang zu der Diagrammkategorie von Nori klären.\\
Anstatt das Gruppenschema $ G $ zu konstruieren, können wir nach Proposition \ref{Schema} äquivalent eine Hopfalgebra $ A(T) $ über $ k $ angeben. Die Koalgebrenstruktur von $ A(T) $ erhalten wir direkt aus der Theorie über Noris Diagrammkategorie, ohne Verwendung der Tensorstruktur auf $ \C $:\\
Wir fassen die Abbildung
\[ T:\C\longrightarrow k\Vect \]
als Darstellung eines Diagramms $ \C $ auf. Dann faktorisiert $ T $ nach Theorem \ref{thm1} über Noris universelle Diagrammkategorie 
\[ \C\overset{\tilde{T}}{\longrightarrow}\C(T)\overset{ff_t}{\longrightarrow}k\Vect. \]
Da $ \C $ $ k $-linear, abelsch und $ T $ ein Faserfunktor ist, ist der Funktor $ \tilde{T} $ nach Satz \ref{Aequivalenz} eine Äquivalenz von Kategorien. Nach Proposition \ref{Dualitaet} ist $ \C(T) $ äquivalent zur Kategorie der $ A(T) $-Komoduln, wobei 
\[ A(T):=\varinjlim_{E\subset\C}\End(T_{|E})^\vee \]
ist. Die Kategorie $ \C $  ist also äquivalent zu der Kategorie der Komoduln über der Koalgebra $ A(T) $. \\
Als nächstes benötigen wir eine natürliche $ k $-Algebrenstruktur auf $ A(T) $. Eine Abbildung $ u:A\otimes_k A\rightarrow A $ definiert einen Funktor
\[ 
\begin{array}{ccc}
\phi^u: A \Komod\times A\Komod & \longrightarrow  & A\Komod \\
(X,Y) & \longmapsto & X \otimes_k Y, \\
\end{array}
\]
wobei die $ A $-Komodulstruktur auf $ X\otimes_k Y $ gegeben ist durch
\[ X\otimes Y\overset{m\otimes m}{\longrightarrow}X\otimes A\otimes Y\otimes A\cong X\otimes Y\otimes A\otimes A\overset{id\otimes id\otimes u}{\longrightarrow} X\otimes Y\otimes A. \]
Deligne zeigt in \cite[2.16]{MR654325}, dass Abbildung $ u\mapsto\phi^u $ eine Bijektion zwischen Homomorphismen 
\[ A\otimes_k A\longrightarrow A \]
und Funktoren 
\[ 
\phi:A\Komod\times A\Komod \longrightarrow A\Komod
\]
definiert, welche auf $ k $ Vektorräumen $ X,Y $ das gewöhnliche Tensorprodukt $ \phi(X,Y)=X\otimes_k Y $ induziert. Weiter zeigt er, dass die Abbildung $ u $ genau dann eine Multiplikation von unitären Algebren definiert, wenn der Funktor $ \phi^u $ ein Tensorfunktor ist. Da $ \C $ eine Tensorkategorie ist, induziert dies über die Äquivalenz $ \tilde{T} $ eine Tensorstruktur auf $ \C(T) $, also auf $ A(T)\Komod $. Der Tensorfunktor $ T $ definiert nun eine unitäre Algebrenstruktur auf $ A(T) $. 
Die Antipodenabbildungen kann man aus der Tatsache gewinnen, dass $ \C $ als Tensorkategorie starr ist (\cite[2.16 und 1.13]{MR654325}). $ A(T) $ ist also eine Hopfalgebra und $ \Spec A $ nach Proposition \ref{Schema} ein affines Gruppenschema über $ k $.\\
Nach Prop \ref{Komod} schließlich stehen die $ A(T) $-Komodulstrukturen auf einem Vektorraum $ V $ in 1:1-Beziehung zu den $ \Spec A(T) $-Darstellungen auf $ V $, also erhalten wir für $ G:=\Spec A(T) $ eine Äquivalenz von Kategorien
\[\C\cong A(T)\Komod\cong Rep_k(G). \]
\end{proof}

\begin{Satz}\label{6}
Ist die Kategorie $ \C $ nicht starr, so fehlt uns die Antipodenabbildung und $ A(T) $ wird keine Hopf-Algebra, sondern nur eine Bialgebra. $ \Spec A(T) $ ist dann ein affines Monoidschema anstatt eines affines Gruppenschemas.
\end{Satz}

\chapter{Ausblick: Nori-Motive}
\label{ch: Variet}

\section{Nori-Motive als Diagrammkategorie der guten Paare}

In diesem Kapitel werden wir die Kategorie der effektiven gemischten Nori-Motive definieren. Die Konstruktion geht zurück auf Nori \cite{Nori}. Eine veröffentliche Beweisskizze findet man bei Levine \cite{MR2182598}. Diese Kategorie ist eine abelsche, $ \mathbb{Q} $-lineare Tensorkategorie. Tannaka-Dualität aus Kapitel 6 besagt, dass diese Kategorie äquivalent zu den Darstellungen eines gewissen affinen Gruppenmonoids sind. \\

Sei in diesem Kapitel $ k $ ein Teilkörper von $ \mathbb{C} $ mit einer fest gewählten Einbettung.\\
\begin{Konvention}
Unter einer \emph{Varietät} über einem Körper $ k $ verstehen wir stets ein separiertes Schema endlichen Typs über $ k $. 
\end{Konvention}

\begin{Def}
Sei $ (X,\mathcal{O}_X) $ eine affine Varietät über $ \mathbb{C} $.  Dann ist $ X $ isomorph zum Spektrum einer endlich erzeugten $ \mathbb{C} $-Algebra $ \mathbb{C}[x_1,..,x_n]/(f_1,..,f_m) $. Wir definieren den zu $ (X,\mathcal{O}_X) $ \emph{assoziierten komplex-analytischen Raum} $ (X^{an},\mathcal{O}_{X^{an}}) $ wie folgt:
\[ X^{an}:=\{ z\in\mathbb{C}^n|f_1(z)=...=f_m(z)=0\} \]
mit der von $ \mathbb{C}^n $ induzierten analytischen Topologie. 
\[ \mathcal{O}_{X^{an}}:=\mathcal{O}_{\mathbb{C}^n}/(f_1,..,f_m),  \]
wobei $ \mathcal{O}_{\mathbb{C}^n} $ die Garbe der holomorphen Funktionen bezeichnet. 
Für beliebige Varietäten $ (X,\mathcal{O}_X) $ definieren wir den assoziierten komplex-analytischen Raum als die Verklebung der komplex-analytischen Räume zu einer gewählten affinen Überdeckung.
\end{Def}

Eine detailliertere Beschreibung des assoziierten komplex-analytischen Raums findet man in  \cite[B1]{Har}.

\begin{Def}
Sei $ X $ eine Varietät über $ k $ und $ Y $ eine abgeschlossene Untervarietät von $ X $. Wir bezeichnen mit
\[ H^i_B(X,Y):=H^i_{sing}(X_{\mathbb{C}}^{an},Y_{\mathbb{C}}^{an};\mathbb{Q}) \]
die $ i $-te relative singuläre Kohomologie mit Koeffizienten in $ \mathbb{Q} $ der assoziierten komplex-analytischen Räume, auch genannt \emph{Betti-Kohomologie}.
\end{Def}

\begin{Def}[Nori]
Wir definieren das Diagramm $ D^{\text{eff}} $ der \emph{effektiven Paare} wie folgt:
\begin{itemize}
\item Ecken sind Tripel $ (X,Y,i) $, wobei $ X $ eine Varietät über $ k $, $ Y\subset X  $ eine abgeschlossene Untervarietät und $ i\in\mathbb{N}_0 $  ist. 
\item Wir haben zwei Arten von Kanten zwischen Objekten: \begin{itemize}
\item Für einen Morphismus von Varietäten $ f:X \rightarrow X' $ mit $ f(Y)\subset Y' $ und $ i\in\mathbb{N}_0 $ einen Morphismus  ("`Funktorialität"')
\[ f^*:(X',Y',i)\longrightarrow (X,Y,i). \]
\item Für eine Kette $ X\subseteq Y\subseteq Z $ abgeschlossener Untervarietäten über $ k $ und $ i\in\mathbb{N}_0 $ den Morphismus ("`Korand"')
\[ \partial:(Y,Z,i)\longrightarrow (X,Y,i+1). \]
\end{itemize}
\end{itemize}
\end{Def}

\begin{Def}
Sei $ D^{\text{eff}} $ das Diagramm der effektiven Paare. Wir betrachten die Darstellung
\[
\begin{array}{cccc}
H^*_B: & D^{\text{eff}} & \longrightarrow & \mathbb{Q}\Vect \\
 & (X,Y,i) & \mapsto & H^i_B(X,Y),\\
\end{array}
\]
welche einem Tripel $ (X,Y,i) $ die $ i $-te Betti-Kohomologie zuordnet. \\
Dann faktorisiert diese Darstellung gemäß Theorem \ref{thm1} über die universelle abelsche Diagrammkategorie $ \C(H^*_B) $. Diese Kategorie nennen wir $ \MM $, die \emph{Kategorie der effektiven, gemischten Nori-Motive}.
\end{Def}

\begin{Bem}\label{g}
Diese Kategorie ist Noris Kandidat für eine Kategorie der gemischten Motive. Es ist nicht klar, ob und wie jede Kohomologietheorie, welche die Bloch-Ogus Axiome erfüllt, über diese Kategorie faktorisiert. Ist jedoch $ \tilde{H}^\bullet $ eine Kohomologietheorie mit Koeffizienten in $ \mathbb{Q} $, welche Werte in einer abelschen Kategorie annimmt und durch Koeffizientenerweiterung Vergleichsisomorphismen in die singuläre Kohomologie besitzt, dann liefert die universelle Eigenschaft der Diagrammkategorie $ \MM $ einen treuen, $ R $-linearen, exakten Funktor
\[ \MM\longrightarrow \tilde{H}^\bullet, \]
welcher auf $ \mathbb{Q} $-Vektorräumen den Vergleichsisomorphismus induziert. Dies gilt zum Beispiel für  $l$-adisch \'etale Kohomologie und algebraische de Rham-Kohomologie. 
\end{Bem}

Es ist ein elegantes Resultat, dass die Kategorie der effektiven gemischten Nori-Motive schon die Diagrammkategorie eines deutlich kleineren Diagramms ist:

\begin{Def}
Das Diagramm der effektiven sehr guten Paare $ \tilde{D}^{\text{eff}} $ besteht aus dem vollen Unterdiagramm von $ D^{\text{eff}} $ , so dass für Ecken $ (X,Y,i) $ 
\begin{itemize}
\item $ H^j_B(X,Y)=0\hspace{1cm}\text{für }i\neq j \hspace{1cm}\text{und }$
\item entweder gilt \begin{itemize}
\item $ X $ ist affin von Dimension $ i $, $ Y $ ist von Dimension $ i-1 $ und $ X\diagdown Y $ ist glatt \\
\end{itemize}
oder
\begin{itemize}
\item $ X=Y $ und $ dim(X)<i. $  
\end{itemize}
\end{itemize}
\end{Def}

\begin{Theorem}[Nori]
Die Diagrammkategorie $ \C(\tilde{D}^{\text{eff}}) $ der effektiven sehr guten Paare bezüglich der Darstellung in die relative singuläre Kohomologie ist äquivalent zur Diagrammkategorie der effektiven Paare $ \MM $.
\end{Theorem}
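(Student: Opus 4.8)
Der Plan ist, den von der Inklusion der Diagramme $ \tilde{D}^{\text{eff}}\hookrightarrow D^{\text{eff}} $ induzierten Funktor als Äquivalenz zu identifizieren. Da die Darstellung von $ \tilde{D}^{\text{eff}} $ gerade die Restriktion von $ H^*_B $ ist, liefert Proposition \ref{hilfsprop} (angewandt mit $ D_1=\tilde{D}^{\text{eff}} $, $ D_2=D^{\text{eff}} $, $ F $ die Inklusion und $ T=H^*_B $) einen treuen, exakten, $ \mathbb{Q} $-linearen Funktor
\[ \mathcal{F}:\C(\tilde{D}^{\text{eff}})\longrightarrow\C(D^{\text{eff}})=\MM, \]
der mit den Vergissfunktoren nach $ \mathbb{Q}\Vect $ verträglich ist. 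Zu zeigen bleibt, dass $ \mathcal{F} $ wesentlich surjektiv und voll ist; zusammen mit der Treue folgt daraus die Äquivalenz. Das entscheidende geometrische Hilfsmittel wird Noris \emph{Basic Lemma} sein: Für eine affine Varietät $ X $ der Dimension $ n $ und ein abgeschlossenes $ Z\subsetneq X $ existiert ein abgeschlossenes $ Z\subseteq Y\subsetneq X $ mit $ \dim Y\le n-1 $, so dass $ H^i_B(X,Y)=0 $ für $ i\neq n $ gilt. Hieraus gewinnt man zu jeder affinen Varietät eine Filtrierung $ \emptyset=X_{-1}\subset X_0\subset\cdots\subset X_n=X $ mit $ \dim X_j\le j $, deren sämtliche Stufen $ (X_j,X_{j-1},j) $ sehr gute Paare sind.

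Für die wesentliche Surjektivität würde ich zuerst eine Erzeugungseigenschaft ausnutzen, die über einem Körper gilt: Die endlichdimensionale Algebra $ \End(H^*_{B|F}) $ eines endlichen Unterdiagramms $ F $ operiert treu auf $ M:=\bigoplus_{p\in F}H^*_B(p) $, und aus der Einbettung von Linksmoduln $ \End(H^*_{B|F})\hookrightarrow\End_{\mathbb{Q}}(M)\cong M^{\dim_{\mathbb{Q}}M} $ folgt, dass die reguläre Darstellung — und damit jeder endlich erzeugte Modul — ein Subquotient einer endlichen direkten Summe der $ H^*_B(p) $ ist. Somit ist jedes Objekt von $ \MM $ ein Subquotient einer direkten Summe von Motiven $ H^i_B(X,Y) $ beliebiger effektiver Paare, und es genügt, jedes solche $ H^i_B(X,Y) $ in der von den sehr guten Paaren erzeugten abelschen Unterkategorie $ \langle\tilde{T}(\tilde{D}^{\text{eff}})\rangle $ wiederzufinden. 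Mit der obigen Filtrierung identifiziert man $ H^i_B(X,Y) $ als Kohomologie eines Komplexes, dessen Terme Motive sehr guter Paare sind und dessen Differentiale aus den Korand- und Funktorialitätskanten des Diagramms stammen; da $ ff_{T} $ treu und exakt ist, stimmt die in $ \MM $ gebildete Kohomologie nach Anwenden von $ ff_{T} $ mit der singulären Kohomologie überein, liegt also wirklich im Bild von $ \mathcal{F} $.

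Für die Volltreue ist es bequemer, direkt einen inversen Funktor zu konstruieren. Ich würde eine Darstellung $ G:D^{\text{eff}}\to\C(\tilde{D}^{\text{eff}}) $ bauen, die einem Paar $ (X,Y,i) $ die eben beschriebene Kohomologie des aus sehr guten Paaren gebildeten Komplexes zuordnet, so dass $ ff_{\tilde{T}}\circ G=H^*_B $ als Darstellungen von $ D^{\text{eff}} $ gilt. Die universelle Eigenschaft aus Theorem \ref{thm1} liefert dann einen bis auf Isomorphie eindeutigen treuen, exakten Funktor $ L(G):\MM\to\C(\tilde{D}^{\text{eff}}) $. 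Dass $ \mathcal{F} $ und $ L(G) $ zueinander invers sind, folgt aus der Eindeutigkeit in den universellen Eigenschaften: Sowohl $ \mathcal{F}\circ L(G) $ als auch die Identität sind treue, exakte Funktoren, die mit $ \tilde{T} $ und $ ff_{T} $ verträglich sind, stimmen also auf den Bildern der Ecken überein und sind daher nach Theorem \ref{thm1} bzw.\ Lemma \ref{zui} (und Korollar \ref{iso}) natürlich isomorph zur Identität; analog argumentiert man für $ L(G)\circ\mathcal{F} $.

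Die Hauptschwierigkeit wird in der Konstruktion von $ G $ liegen: Man muss zeigen, dass die über die Filtrierungen definierten Objekte bis auf kanonischen Isomorphismus unabhängig von der Wahl der sehr guten Filtrierung sind und dass $ G $ beide Kantentypen respektiert — die Funktorialität in Morphismen $ f:X\to X' $ mit $ f(Y)\subseteq Y' $ sowie die Verträglichkeit mit den Korandabbildungen $ \partial $. Dies verlangt eine \emph{funktorielle}, mit Morphismen und langen exakten Sequenzen verträgliche Verschärfung des Basic Lemma, und zusätzlich die Rückführung des nicht-affinen Falls auf den affinen. Genau hier steckt die wesentliche technische Arbeit, während die Reduktion auf diese Konstruktion und sämtliche Äquivalenzargumente rein kategoriell aus den bereits bewiesenen Resultaten (insbesondere Satz \ref{Aequivalenz} und der universellen Eigenschaft aus Theorem \ref{thm1}) folgen.
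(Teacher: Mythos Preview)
The paper does not actually prove this theorem: its entire proof reads ``\cite[1.6]{paper}'', i.e.\ it defers to an external reference. There is therefore nothing in the paper to compare your argument against at the level of detail you have given.

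That said, your outline is essentially the standard strategy (Nori's own), and you have correctly located the real content: the construction of the representation $G:D^{\text{eff}}\to\C(\tilde{D}^{\text{eff}})$ via filtrations by very good pairs, together with the verification that this is well-defined (independent of the chosen filtration), functorial in morphisms of pairs, and compatible with the coboundary edges. You are also right that the non-affine case requires a separate reduction (typically via \v{C}ech-type resolutions by affine open covers). Once $G$ exists with $ff_{\tilde{T}}\circ G\cong H^*_B$, the categorical part of your argument---using the universal property of Theorem~\ref{thm1} and the uniqueness clause to conclude that $\mathcal{F}$ and $L(G)$ are mutually inverse---is clean and correct. Your essential-surjectivity argument via the regular representation is a nice shortcut over a field, though strictly speaking it becomes redundant once you have $L(G)$.
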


\begin{proof}
\cite[1.6]{paper}
\end{proof}

\begin{Bem}
Die Motivation für diese Konstruktion stammt aus der algebraischen Topologie. Für einen endlichen CW-Komplex $ X $ sei $ X^i $ das $ i $-Skelett von $ X $. Dann gilt für die relative singuläre Homologie des Skeletts:
\[ \operatorname{H_i(X^j,X^{j-1};\mathbb{Z})}=\begin{cases} 0\ , & i\neq j\ ,\\ \text{freie abelsche Gruppe über} \  \\ \text{der Menge der j-Simplizes} , & i=j.\ \end{cases}\]
Außerdem lässt sich die Homologie von $ X $ leicht aus den Gruppen $ H_i(X^j,X^{j-1};\mathbb{Z} $ berechnen. (Siehe zum Beispiel \cite[2.34,2.35]{MR1867354}.)
Diese Idee greift Nori mit der Idee der guten Paare auf. Wir brauchen aber für eine Varietät $ X $ eine entsprechende Filtrierung, welche ähnliche Charakteristika hat wie das Skelett für einen CW-Komplex. Fundamental hierfür ist Noris "`Basic Lemma"'.
\end{Bem}

\begin{Satz}[Basic Lemma von Nori]
Sei $ R $ ein noetherscher Ring. Sei $ X $ eine affine Varietät der Dimension $ n $ über $ k\subset\mathbb{C} $ und sei $ Z\subset X $ eine abgeschlossene Untervarietät der Dimension $ \le n-1 $. Dann existiert eine abgeschlossene Untervarietät $ Y\supset Z$, so dass $ (X,Y,n) $ ein gutes Paar ist und es gilt:
\begin{itemize}
\item $ dim(Y)\le n-1 $.
\item $ H^i_{sing}(X_{\mathbb{C}}^{an},Y_{\mathbb{C}}^{an};R)=0 $ für $ i\neq n $.
\item $ H^n_{sing}(X_{\mathbb{C}}^{an},Y_{\mathbb{C}}^{an};R) $ ist ein freier, endlich erzeugter $ R $-Modul.
\end{itemize}
Darüber hinaus kann man $ Y $ so wählen, dass $ X\diagdown Y $ glatt ist. 
\end{Satz}

\begin{proof}
\cite{MR923133}
\end{proof}

Mit Hilfe des Basic Lemmas lässt sich per Induktion für jede affine Varietät $ X  $ eine Filtrierung
\[ \emptyset=F_{-1}X\subset F_0 X \subset \cdots \subset F_{n-1}X \subset F_n X=X \]
konstruieren, so dass $ (F_jX,F_{j-1}X,j) $ ein sehr gutes Paar ist. 

\section{Nori-Motive als Darstellungen über einem affinen Schema}

\begin{Bem}\label{7}
Wir haben die Kategorie $ \MM $ der gemischten effektiven Motive als die Diagrammkategorie einer Darstellung in die endlich-dimensionalen \\$ \mathbb{Q} $-Vektorräume konstruiert. Da jeder Körper ein Hauptidealring und jeder Vektorraum frei ist, wissen wir aus Satz \ref{Dualitaet}, dass $ \MM $ äquivalent zu der Kategorie der endlich erzeugten Komoduln über der Koalgebra $ A(\tilde{H}^*):=\underset{F\subset D^{\text{eff}}}{\varinjlim}\End(\tilde{H}^*_F)^\vee $ ist.
\[
H^*_B: D^{\text{eff}}  \longrightarrow \mathbb{Q}\Vect
\]
faktorisiert also über
\[
D^{\text{eff}} \overset{\tilde{H}^*_B}{\longrightarrow} \MM=A(\tilde{H}^*)\Komod \overset{ff_{{\tilde{H}^*_B}}}{\longrightarrow}\mathbb{Q}\Vect,
\]
wobei $ ff_{\tilde{H}^*_B} $ den Vergissfunktor in die $ \mathbb{Q} $-Vektorräume beschreibt. 
\end{Bem}

\begin{Satz}
Die Kategorie $ \MM $ der effektiven gemischten Nori-Motive ist eine Tensorkategorie. Der Vergissfunktor $ ff_{\tilde{H}^*} $ wird zu einem Faserfunktor in die $ \mathbb{Q} $-Vektorräume. 
\end{Satz}

\begin{proof}
\cite[1.5]{paper}
\end{proof}

\begin{Korollar}
Die Kategorie der effektiven gemischten Nori-Motive $ \MM $ ist äquivalent zur Kategorie der endlich-dimensionalen Darstellungen des affinen Monoidschemas $\Spec A(\tilde{H}^*) $.
\end{Korollar}

\begin{proof}
Dies ist Tannaka-Dualität: Nach Bemerkung \ref{7} ist 
\[ \MM=A(\tilde{H}^*)\Komod. \]
Die Tensorstruktur auf $ \MM $ gibt $ A(\tilde{H}^*) $ nach Satz \ref{6} die Struktur einer Bialgebra. Ebenfalls nach Satz \ref{6} ist dann $ \Spec A(\tilde{H}^*) $ ein affines Monoidschema und $ \MM $ äquivalent zu den endlich-dimensionalen $ \Spec A(\tilde{H}^*) $-Darstellungen.
\end{proof}

\begin{Bem}
Die Kategorie $ \MM $ ist zwar eine abelsche Tensorkategorie, jedoch nicht starr, da für Objekte $ X,Y $ nicht notwendigerweise das in Definition \ref{hoho} definierte Objekt $ \Hom(X,Y) $ existieren muss. Man kann aber ein entsprechend größeres Diagramm $ D $ definieren, dessen Diagrammkategorie $ \mathcal{MM}_{\text{Nori}} $ auch diese Objekte besitzt \cite[B.19 und 1.15]{paper}. Tatsächlich erhält man $ \mathcal{MM}_{\text{Nori}} $ als die Lokalisierung von $ \MM $ an einem geeigneten Objekt \cite[B.19]{paper}. Die Kategorie  $ \mathcal{MM}_{\text{Nori}} $ ist starr, also eine neutrale Tannaka-Kategorie. Sein Tannaka-Duales ist dann nach Satz \ref{Tannaka} sogar ein affines Gruppenschema.
\end{Bem}

\begin{Bem}
Abschließend möchten wir noch bemerken, dass wir Nori-Motive auch als Diagrammkategorie der Betti-Kohomologie mit anderen Koeffizienten als $ \mathbb{Q} $ hätten definieren können. Tatsächlich haben wir uns in den früheren Kapiteln viel Mühe gegeben, Noris Diagrammkategorie für alle Diagramme zu konstruieren, welche Werte in endlich erzeugten $ R $-Moduln über noetherschen Ringen $ R $ annehmen. 
Nehmen wir also das Diagramm der effektiven, guten Paare und das Diagramm
\[
\begin{array}{cccc}
H^*_R: & D^{\text{eff}} & \longrightarrow & R\Mod \\
 & (X,Y,i) & \mapsto & H^i(X^{an},Y^{an},R)\\
\end{array}
\]
in die singulären Kohomologiegruppen mit Koeffizienten in $ R $, so erhalten wir für diese Darstellung ebenfalls eine abelsche universelle Diagrammkategorie. Damit diese Kategorie allerdings eine Kategorie von Komoduln über eine Koalgebra wird, benötigen wir als Koeffizienten Hauptidealringe und müssen fordern, dass die Darstellung nur Werte in freien $ R $-Moduln annimmt (Satz \ref{Dualitaet}). 
\end{Bem}

\nocite{*}

\bibliography{library}

\cleardoublepage

\vfill
\subsection*{Erklärung}

Hiermit versichere ich, die vorliegende Arbeit selbstständig angefertigt und keine anderen als die angegebenen Quellen und Hilfsmittel benutzt zu haben.

\vspace{1cm}

Freiburg, den 30. Juli 2011
\begin{flushright}\vspace{-0.35cm}$\overline{\qquad\quad\text{ (Jonas von Wangenheim) }\qquad\quad}\quad$\end{flushright}

\end{document}